 \newcommand{\red}{\color{red}}
\newtheorem{thm}{Theorem}[section]
\newtheorem{lem}[thm]{Lemma}
\newtheorem{cor}[thm]{Corollary}
\newtheorem{proposition}[thm]{Proposition}
\newtheorem{claim}[thm]{Claim} 
\newtheorem{example}[thm]{Example} 
\newtheorem*{prob}{Problem}  
\theoremstyle{definition}  
\newtheorem*{defn}{Definition} 
\newtheorem{remark}[thm]{Remark}
\begin{document}

\title{Spectral supersaturation for color-critical graphs}

\author{
Longfei Fang\thanks{School of Mathematics, East China University of Science and Technology, Shanghai 200237, China. Supported by the National Natural Science Foundation of China (No. 12501471). Email: \url{lffang@chzu.edu.cn}.}
\and
Yongtao Li\thanks{Yau Mathematical Sciences Center, Tsinghua University, Beijing, China. Email: \url{ytli0921@hnu.edu.cn}.}
\and 
Huiqiu Lin\thanks{School of Mathematics, East China University of Science and Technology, Shanghai 200237, China. Supported by the National Natural Science Foundation of China (No. 12271162), and the Natural Science Foundation of Shanghai (No. 22ZR1416300). Email: \url{huiqiulin@126.com}.}
\and
Jie Ma\thanks{School of Mathematical Sciences, University of Science and Technology of China, Hefei, Anhui 230026,
China, and Yau Mathematical Sciences Center, Tsinghua University, Beijing 100084, China. Supported by National Key Research and Development Program of China 2023YFA1010201 and National Natural
Science Foundation of China grant 12125106. Email: \url{jiema@ustc.edu.cn}.}
}

\date{\today}
\maketitle

\begin{abstract}
A graph is {\it color-critical} if it contains an edge whose deletion reduces its chromatic number. This class of graphs, including cliques and odd cycles, plays a central role in extremal graph theory. In this paper, following an influential line of research initiated by Bollob\'as--Nikiforov, we study the spectral supersaturation problem for color-critical graphs. Let \(T_{n,r}\) be the \(r\)-partite Tur\'an graph, let \(\mathcal{T}_{n,r,q}\) denote the family of graphs obtained from \(T_{n,r}\) by adding \(q\) edges, and let \(\lambda(G)\) be the spectral radius of a graph \(G\). 
We first prove that for any color-critical graph \( F \) with chromatic number \( r+1 \), there exists \( \delta_F > 0 \) such that for sufficiently large \( n \) and all \( 1 \leq q \leq \delta_F \sqrt{n} \), any \( n \)-vertex graph \( G \) with \( \lambda(G) \ge \min_{T \in \mathcal{T}_{n,r,q}} \lambda(T) \) contains at least \( q \cdot c(n,F) \) copies of \( F \), where \( c(n,F) \) denotes the minimum number of copies of \( F \) created by adding a single edge to \( T_{n,r} \); 
moreover, any extremal graph \( G \) must belong to \( \mathcal{T}_{n,r,q} \).
Next, we prove a spectral supersaturation result for the analogous condition \( \lambda(G) \ge \max_{T \in \mathcal{T}_{n,r,q}} \lambda(T) \), valid for all \( 1 \leq q \leq \delta_F n \).
Together, these results provide a complete resolution to a problem proposed by Ning--Zhai, and establish a spectral counterpart to the well-known results of Mubayi and Pikhurko--Yilma in the extremal supersaturation setting. 
A notable feature of our first result is that the restriction \( q = O(\sqrt{n}) \) is tight up to a constant factor, in contrast to the linear bounds provided by other settings discussed above.
As applications, we extend a result of Liu--Mubayi, and solve a related conjecture by Li--Lu--Peng. 
Our proof is based on a novel spectral incremental technique for graphs close to the Tur\'an graph \( T_{n,r} \), which may be of independent interest. 
\end{abstract}

\section{Introduction} 
\noindent 
The supersaturation problem for a fixed graph \(F\) asks for the minimum number of copies of \(F\) in a host graph with a given number of edges, and is a fundamental topic in extremal graph theory. 
This line of research dates back to a classical theorem of Rademacher (see \cite{Erdos1964}), which states that if \(G\) is a graph on \(n\) vertices with \(e(G)\ge \lfloor n^2/4\rfloor+1\), then \(G\) contains at least \(\lfloor n/2\rfloor\) triangles. Subsequently, Erd\H{o}s \cite{Erdos1964} and Lov\'{a}sz and Simonovits \cite{LS1975} obtained further refinements in counting triangles. Supersaturation for cliques was later studied by Lov\'{a}sz and Simonovits \cite{LS1983} and eventually resolved by Reiher in his breakthrough work \cite{Rei2016}.  
We refer to \cite{LPS2020,LM2022,BC2023,LP2025,Rei2016} and references therein for further background and subsequent developments.

A graph is called {\it color-critical} if it contains an edge whose deletion reduces its chromatic number. This rich family of graphs, which includes cliques and odd cycles, plays a central role in the development of extremal graph theory. Let \(T_{n,r}\) denote the \(n\)-vertex complete balanced \(r\)-partite graph. 
A classical theorem of Simonovits \cite{Sim1966} states that for every color-critical graph \(F\) with chromatic number \(\chi (F)=r+1\), if \(n\) is sufficiently large and \(G\) is an \(n\)-vertex graph containing no copy of \(F\), then \(e(G)\le e(T_{n,r})\), with equality if and only if \(G=T_{n,r}\).
Extending Simonovits' theorem, Mubayi \cite{Mubayi2010} established the corresponding supersaturation result for every color-critical graph \(F\), which was subsequently strengthened by Pikhurko and Yilma \cite{PY2017}. 
We summarize their findings as follows. 
Let \(c(n,F)\) denote the minimum number of copies of \(F\) obtained from the Tur\'{a}n graph \(T_{n,r}\) by adding a single edge; in particular, \(c(n,F)=\Theta\!\left(n^{|F|-2}\right)\) where $|F|$ denotes the order of $F$.

\begin{thm}  
\label{thm-Mubayi}
Let  $F$ be a color-critical graph
with $\chi (F)=r+1$. There exists $\delta_F >0$ such that
if $n$ is sufficiently large, $1\le q \le \delta_F n$, and
$G$ is an $n$-vertex graph with
\begin{equation} \label{eq-Mub}  
e(G)\ge e(T_{n,r}) +q,
\end{equation} 
\begin{enumerate} 
\item[\rm (i)] (Mubayi \cite{Mubayi2010}) then $G$ contains at least $q\cdot c(n,F) $ copies of $F$.
\item[\rm (ii)] (Pikhurko--Yilma \cite{PY2017})
Under the above conditions, if $G$ minimizes the number of copies of $F$, then $G\in \mathcal{T}_{n,r,q}$, where \(\mathcal{T}_{n,r,q}\) is the family of graphs obtained from \(T_{n,r}\) by adding \(q\) edges. 
\end{enumerate}
 \end{thm}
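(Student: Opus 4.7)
The plan is to combine an Erd\H{o}s--Simonovits-type stability theorem for $F$ with a local counting argument driven by the color-critical edge of $F$. Since $F$ is color-critical with $\chi(F)=r+1$, there is an edge $e_F \in E(F)$ such that $F-e_F$ admits a proper $r$-coloring in which the two endpoints of $e_F$ share a color; I fix such a coloring throughout. First I would invoke a robust stability result to reduce to the case where any near-extremal $G$ admits an $r$-partition $V(G)=V_1\cup \cdots \cup V_r$ with $o(n^2)$ edges inside parts and nearly balanced class sizes $|V_i|=n/r+o(n)$. The assumption $q\le \delta_F n$ with $\delta_F$ small is what keeps these error terms under control.

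Next I would carry out a cleaning step: any vertex $v \in V_i$ with fewer than $(1-\eta)|V_j|$ neighbors in some other class $V_j$ can be moved or modified so that the edge count does not decrease while the number of copies of $F$ does not increase; iterating, I may assume every vertex has at least $(1-o(1))|V_j|$ neighbors in each other class. Let $B$ be the set of edges inside parts and $M$ the set of missing crossing edges; since $e(G)\ge e(T_{n,r})+q$ and the partition is nearly balanced, one has $|B|\ge q+|M|-o(n)$.

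For each $uv\in B$ with $u,v\in V_i$, I count embeddings $\phi\colon F\hookrightarrow G$ sending $e_F\mapsto uv$ by extending $\phi$ vertex-by-vertex according to the fixed $r$-coloring of $F-e_F$; the cross-degree bounds from the cleaning step guarantee that at each step the number of available choices is $(1-o(1))$ times the corresponding Tur\'an-graph factor, giving $(1-o(1))c(n,F)$ copies of $F$ through each inside-part edge. A standard overcounting argument, where each copy of $F$ contributes at least one (and $O(1)$) inside-part edges by $\chi(F)=r+1$, plus the bound $|M|\cdot O(n^{|F|-3})=o(|B|\cdot c(n,F))$ for the loss due to missing crossing edges, yields at least $|B|\cdot(1-o(1))c(n,F)\ge q\cdot c(n,F)$ copies of $F$, establishing (i). For (ii), I would trace equality through each estimate: saturation forces $|M|=0$, $|B|=q$, and the partition to be exactly balanced, so that the underlying $r$-partite skeleton equals $T_{n,r}$ and $G\in \mathcal{T}_{n,r,q}$.

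The main obstacle is step three: one must ensure that the $o(1)$ slack coming from stability, cleaning, and the treatment of missing crossing edges remains genuinely smaller than $1$ in the ratio $q\cdot c(n,F)$, uniformly over $1\le q\le \delta_F n$. This is delicate because the slack is a function of $\delta_F$ and of the implicit constants in the stability theorem; the resolution is to pick $\delta_F$ small enough relative to these constants and to replace the naive per-edge counting by an averaging argument that distributes the error evenly over the $|B|$ inside-part edges, ensuring the bound survives for every admissible value of $q$.
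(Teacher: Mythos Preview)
Your outline follows Mubayi's original strategy (stability, then count copies of $F$ through each class-edge) rather than the paper's own argument, which is sketched in Remark~\ref{remark-imply} and goes through the spectral machinery. There, one assumes $G$ minimizes $N_F(G)$, bounds $N_F(G)\le (q+1.01)c(n,F)$ via the matching construction, uses $e(G)>e(T_{n,r})\Rightarrow \lambda(G)>\lambda(T_{n,r})$, and feeds this into the structural Theorem~\ref{second-key} to conclude that $G$ is obtained from some $K_r(n_1,\dots,n_r)$ by adding $\alpha_1$ class-edges and deleting $\alpha_2$ cross-edges with $\alpha_1=O_F(q)$. Minimality of $N_F(G)$ then drives local modifications (as in Claims~\ref{cl-at-most-one}--\ref{cl-class-q+1}) that force $\alpha_2\le 1$, $n_1-n_r\le 3$, $\alpha_1=q$, and finally $\alpha_2=0$ and $K=T_{n,r}$, yielding~(ii) directly. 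The paper's route bypasses your cleaning step and the $(1-o(1))$ slack entirely, at the cost of relying on Theorems~\ref{first-key} and~\ref{second-key}; your route is self-contained but must confront those issues head-on.

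Two concrete cautions about your sketch. First, the bound you wrote as $|B|\ge q+|M|-o(n)$ is in fact exact: since $\sum_{i<j}|V_i||V_j|\le e(T_{n,r})$ for every partition of $[n]$, one has $|B|\ge q+|M|$ with no error, and you must use this exact form---an $o(n)$ slack would be fatal when $q$ is bounded. Second, your cleaning step is not correct as stated: moving a low-cross-degree vertex to another class can decrease $e(G)$ (indeed, if the partition is a max $r$-cut then no move increases cross-edges), and there is no evident local modification that simultaneously preserves the edge count and does not raise $N_F(G)$. The standard repair, and where the real work lies, is either to show such vertices are few and already sit in many copies of $F$, or (under the minimizer hypothesis) to replace such a vertex by a clone of one with full cross-degree, which strictly increases edges and lets you induct. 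Your final paragraph correctly identifies that the $(1-o(1))$ loss is the crux, but ``averaging'' alone does not close the gap; one needs the dichotomy just described.
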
 
 
\noindent We refer the interested reader to \cite{KMP20,MY2025} for more recent developments.

Over the past two decades, many classical problems in extremal graph theory have been revisited using spectral methods, giving rise to the vibrant field known as spectral extremal graph theory. 
The adjacency matrix of an $n$-vertex graph $G$ is $A(G) = [a_{ij}]_{i,j=1}^n$, where $a_{ij} = 1$ if $ij \in E(G)$, and $a_{ij} = 0$ otherwise. The \textit{spectral radius} $\lambda(G)$ of $G$ is the maximum modulus of the eigenvalues of $A(G)$. Let $F$ be a graph with $\chi(F)=r+1$. 
Nikiforov \cite{Niki2009cpc} showed that if $G$ is an $n$-vertex graph with no copy of $F$, then $\lambda(G) \leq \left(1 - \frac{1}{r} + o(1)\right) n$. 
By invoking the property $\lambda(G) \geq \frac{2}{n} e(G)$, this extends the celebrated Erdős–Stone–Simonovits theorem. For color-critical $F$, Nikiforov \cite{Niki2009ejc} extended this and the aforementioned Simonovits’ theorem by proving $\lambda(G) \leq \lambda(T_{n,r})$ for sufficiently large $n$, with equality if and only if $G=T_{n,r}$. 
For more progress, we refer to the survey \cite{Niki2011}, as well as to \cite{BDT2025, Niki2002cpc, ZL2022jctb, ZL2022jgt, LLZ-edge-spectral, LLZ2025+} (by no means a comprehensive list) and the references therein.

The study of counting substructures in a host graph $G$ in terms of spectral radius condition was initialed by 
Bollob\'{a}s and Nikiforov \cite{BN2007}, 
where they showed that every graph $G$ contains at least $\big( \frac{\lambda(G)}{n} - 1 + \frac{1}{r} \big) 
\frac{r(r-1)}{r+1} ( \frac{n}{r})^{r+1}$ copies of $K_{r+1}$.  
Inspired by Rademacher's theorem, Ning and Zhai \cite{NZ2021} proved that 
every graph $G$ on $n$ vertices with 
$  \lambda (G) \ge \lambda (T_{n,2})$  
contains at least $\lfloor \frac{n}{2}\rfloor -1 $ triangles unless $G= T_{n,2}$.  
They \cite{NZ2021} proposed a problem to find a spectral version of Theorem~\ref{thm-Mubayi}. 

\begin{prob}[Ning--Zhai \cite{NZ2021}]    
{\rm (i)} (The general case) Find a spectral version of Mubayi's result. \\ 
{\rm (ii)} (The critical case) Let $q$ be from Theorem \ref{thm-Mubayi}. For $q=1$, find the tight spectral versions when $F$ is some particular color-critical graph, such as triangle, clique, book, odd cycle
or even wheel, etc.  
\end{prob}

\noindent This problem was also selected by Liu and Ning \cite[Sec. 9]{LN2023} as one of “Unsolved problems in spectral graph theory". 
The triangle case of this problem was recently studied by Li, Feng and Peng \cite{LFP2025-jgt,LFP2025-eujc,LFP2025+} (also see \cite{BN2007,CFTZ2020,NZ2021} for the edge version of counting triangles).
To some extent, the spectral condition has broader applicability than the classic extremal condition,\footnote{For example, the condition $e(G) > e(T_{n,r})$ would imply $\lambda(G) > \lambda(T_{n,r})$.} and in this sense, it would be appealing to pursue a spectral supersaturation result for color-critical graphs.

The main contribution of our paper is to establish a spectral supersaturation result for all color-critical graphs, thereby solving the Ning--Zhai problem in full generality. 
The core of our proof relies on the spectral incremental technique for graphs close to the Tur\'an graph \( T_{n,r} \) that we develop.

\subsection{Main results}
We now introduce the first main result of this paper as follows. 
Throughout the rest, let $N_F(G)$ be the number of copies of $F$ in a graph $G$. 

\begin{thm} \label{thm-Y}
    Let $F$ be a color-critical graph
with $\chi (F)=r+1$. There exists $\delta_F >0$ such that 
if $n$ is sufficiently large, $1\le q \le  \delta_F \sqrt{n}$, 
and $G$ is an $n$-vertex graph with
\begin{equation} \label{eq-Y}
     \lambda (G)\ge \min_{T\in \mathcal{T}_{n,r,q}}
      \lambda (T), 
\end{equation}
then $N_F(G) \ge q\cdot c(n,F)$.
Under the above conditions, if $G$ minimizes $N_F(G)$, then $G\in \mathcal{T}_{n,r,q}$.
\end{thm}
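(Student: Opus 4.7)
The plan is to reduce the spectral supersaturation hypothesis to the edge-based supersaturation of Theorem~\ref{thm-Mubayi}, via a careful Perron-vector comparison that exploits the restriction $q=O(\sqrt n)$.

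First I would establish spectral stability. Since the graph obtained from $T_{n,r}$ by placing a matching of $q$ edges inside one part lies in $\mathcal{T}_{n,r,q}$, its spectral radius is $\lambda(T_{n,r})+O(q/n)$, so the hypothesis forces $\lambda(T_{n,r}) \le \lambda(G) \le \lambda(T_{n,r}) + O(1/\sqrt n)$. Combined with a quantitative version of Nikiforov's spectral stability theorem for color-critical $F$, this should yield an $r$-partition $V(G)=V_1\sqcup\cdots\sqcup V_r$ with $|V_i|=n/r+o(n)$, and defect sets $E^+$ (edges of $G$ inside parts) and $E^-$ (non-edges of $G$ crossing parts) satisfying $|E^+|+|E^-|=o(n)$. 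A bootstrap using the eigenvalue equation $\lambda(G)x_v=\sum_{u\sim v}x_u$ then sharpens this to $|E^+|+|E^-|=O(q)$, $|V_i|=n/r+O(1)$, and strong pointwise estimates on the Perron vector $\mathbf{x}$ of $G$, namely $x_v=\tfrac{1}{\sqrt n}\bigl(1+O(q/n)\bigr)$ off a tiny exceptional set.

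Next comes the spectral incremental step. Expanding $\lambda(G)=\mathbf{x}^\top A(G)\mathbf{x}$ and subtracting $\mathbf{x}^\top A(T_{n,r})\mathbf{x}\le \lambda(T_{n,r})$ (for the balanced Tur\'an graph on the same partition) should give
\[
\lambda(G)-\lambda(T_{n,r})\le 2\sum_{uv\in E^+}x_ux_v-2\sum_{uv\in E^-}x_ux_v.
\]
Pairing this with a lower bound $\min_{T\in\mathcal{T}_{n,r,q}}\lambda(T)\ge \lambda(T_{n,r})+\tfrac{2q}{n}-O(q^2/n^2)$ (obtained by testing any $T\in\mathcal{T}_{n,r,q}$ against a close-to-uniform test vector), and using the Perron estimate $x_ux_v=\tfrac{1}{n}(1+O(q/n))$ on most pairs, the hypothesis $q=O(\sqrt n)$ makes every correction term $O(q^2/n^2)=O(1/n)$, strictly less than the per-edge spectral gain $2/n$. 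This should let me conclude $|E^+|-|E^-|\ge q$, and hence $e(G)\ge e(T_{n,r})+q$ up to an $O(1)$ error absorbable via mild part-balancing.

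With the edge hypothesis in place, Theorem~\ref{thm-Mubayi} directly delivers both $N_F(G)\ge q\cdot c(n,F)$ and, when $N_F(G)$ is minimized, $G\in\mathcal{T}_{n,r,q}$. The main obstacle I anticipate is precisely the quantitative Perron control needed in the incremental step: to force $|E^+|-|E^-|\ge q$ rather than merely $\ge q-o(q)$, the pointwise bounds on $\mathbf{x}$ must be sharp enough that the cumulative error across the $O(\sqrt n)$ defect edges stays below $2/n$. This should require an iterative refinement that trims off atypical vertices (those in shorter parts or incident to many defect edges), and is presumably where the ``novel spectral incremental technique'' announced in the abstract plays the essential role. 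It also explains the $\sqrt n$ threshold, since the quadratic error terms become just barely absorbable at that scale and genuinely obstruct the argument beyond it.
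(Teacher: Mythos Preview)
Your reduction to Theorem~\ref{thm-Mubayi} via $e(G) \ge e(T_{n,r}) + q$ has a genuine gap: the spectral hypothesis does \emph{not} imply this edge inequality, and your bootstrap cannot deliver $|V_i| = n/r + O(1)$. The obstruction is that spectral radius penalizes partition imbalance strictly less than edge count does. If $n_1 - n_r = 2k$, then $\lambda(K_r(n_1,\ldots,n_r)) \approx \lambda(T_{n,r}) - \tfrac{2(r-1)k^2}{rn}$ (the spectral equivalent, at the rate $2/n$ per edge, of only $\tfrac{r-1}{r}k^2$ edges), whereas $e(K_r(n_1,\ldots,n_r)) = e(T_{n,r}) - k^2$. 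Hence a graph on a $2k$-imbalanced partition with $|E^+|-|E^-| \approx q + \tfrac{r-1}{r}k^2$ can satisfy $\lambda(G) \ge \lambda(Y_{n,r,q})$ while $e(G) \approx e(T_{n,r}) + q - k^2/r$. The best purely spectral bound on imbalance is $n_1 - n_r = O(\sqrt{\alpha_1})$ (this is exactly what Theorem~\ref{second-key} delivers), so $k^2/r$ can be as large as $\Theta(\alpha_1)$, and your ``$O(1)$ error absorbable via mild part-balancing'' is not available.

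The paper closes this gap by \emph{counting}, not by further spectral sharpening. Under the assumption $N_F(G) \le (q+\varepsilon_1)c(n,F)$ (legitimate by minimality or contraposition, and in any case needed for your stability step, since Nikiforov's result is for $F$-free graphs; one must use the supersaturation--stability of Theorem~\ref{thm-sss}), Lemma~\ref{LEM2.8} yields $N_F(G) \ge (\alpha_1 - \tfrac13)c(n,F)$, forcing $\alpha_1 \le q$. Only with this upper bound in hand does the spectral comparison via Theorem~\ref{first-key} pin down $\alpha_1 = q$, $\alpha_2 = 0$, and $n_1 - n_r \le 1$, giving $G \in \mathcal{T}_{n,r,q}$ directly. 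The Pikhurko--Yilma characterization is never invoked; the structural conclusion is obtained internally.
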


Note that the bound $q=O_F(\sqrt{n})$ in Theorem~\ref{thm-Y} is optimal up to a multiplicative factor; see Example \ref{exampl-q}. 
This presents a phenomenon significantly different from that of Theorem~\ref{thm-Mubayi}.   

As the second result of this paper, we provide the following companion of Theorem \ref{thm-Y}. 

\begin{thm} \label{thm-Z}
Let $F$ be a color-critical graph with $\chi (F)=r+1$. 
There exists $\delta_F>0$ such that 
if $n$ is sufficiently large, 
$0\leq q\leq \delta_F n$, and $G$ is an $n$-vertex graph with 
\[ \lambda (G)\ge \max_{T\in \mathcal{T}_{n,r,q}} \lambda(T), \]
then either $G=L_{n,r,q}$ (defined as below), or $N_F(G)\ge \left(q+1-O_F\big(\frac{q+1}{n}\big)\right)\cdot c(n,F)$. 
\end{thm}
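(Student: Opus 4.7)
The plan is to combine a spectral stability reduction with the spectral incremental technique developed earlier in the paper for Theorem \ref{thm-Y}. Since $\lambda(G) \geq \lambda(L_{n,r,q}) \geq \lambda(T_{n,r})$ and $q \leq \delta_F n$, I would first apply a spectral stability result for color-critical graphs (in the spirit of Nikiforov's bound together with standard refinements) to obtain an $r$-partition $V(G) = V_1 \cup \cdots \cup V_r$ with $|V_i| = n/r + o_F(n)$ and total edit distance $O_F(q)$ between $G$ and the complete $r$-partite graph $K_{V_1, \ldots, V_r}$. Let $B$ denote the set of edges of $G$ lying inside some part, and $M$ the set of non-edges of $G$ between distinct parts; stability then gives $|B| + |M| = O_F(q)$.

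The counting step, invoking the incremental technique, yields
\[
N_F(G) \;\geq\; |B|\cdot c(n,F) \;-\; O_F\!\left(\frac{|B|+|M|}{n}\right) |B|\cdot c(n,F),
\]
since each bad edge in $B$ together with an appropriate $(r-1)$-clique transversal through the other parts creates nearly $c(n,F)$ copies of $F$, with losses bounded by the number of missing inter-part edges and the partition imbalance. I would then split on $|B|$. If $|B| \geq q+1$, the displayed inequality immediately gives $N_F(G) \geq (q+1 - O_F((q+1)/n))\,c(n,F)$, as desired. If $|B| \leq q$, I would complete the non-edges in $M$ to obtain $G^+ \supseteq G$ and rebalance the partition; the resulting graph lies in $\mathcal{T}_{n,r,|B|}$, and hence
\[
\lambda(L_{n,r,q}) \;\leq\; \lambda(G) \;\leq\; \lambda(G^+) \;\leq\; \max_{T \in \mathcal{T}_{n,r,|B|}} \lambda(T) \;\leq\; \lambda(L_{n,r,q}).
\]
Equality throughout would force $|B| = q$, $M = \emptyset$, the partition balanced, and $G = L_{n,r,q}$, contradicting the assumption $G \neq L_{n,r,q}$.

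The principal obstacle I anticipate is establishing the strict monotonicity $\max_{T \in \mathcal{T}_{n,r,q'}} \lambda(T) < \max_{T \in \mathcal{T}_{n,r,q}} \lambda(T)$ for every $q' < q$ in this regime, and showing that completing $M$ and rebalancing the partition are spectrally non-decreasing operations sharp enough to drive the equality case above. This will hinge on a careful Perron-eigenvector comparison between $G$ and $G^+$, and it is also where the ``$+1$'' improvement originates: the unit of spectral excess of $L_{n,r,q}$ over $L_{n,r,q-1}$ must either be realized structurally (forcing $G = L_{n,r,q}$) or converted into one additional bad edge's worth of $F$-copies. Ensuring that the $o_F$ error in partition sizes does not destroy this delicate accounting, especially when $q$ is allowed to scale linearly with $n$, is where the incremental machinery of the paper will be stressed most.
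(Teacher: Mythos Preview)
Your treatment of the case $|B|\le q$ is essentially the paper's: complete the missing cross-edges, then invoke Theorem~\ref{thm-complete-add} (which already maximizes over \emph{all} partitions, so no separate ``rebalancing'' step is needed) to get $\lambda(G)\le \lambda(G^+)\le \lambda(L_{n,r,|B|})\le \lambda(L_{n,r,q})$, with strict inequality somewhere unless $G=L_{n,r,q}$. The monotonicity you flag as the principal obstacle is immediate, since $L_{n,r,q'}$ is a proper subgraph of $L_{n,r,q}$ for $q'<q$.

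The genuine gap is in the case $|B|\ge q+1$. Your displayed counting bound has relative error $O_F\big((|B|+|M|)/n\big)$, and after the structural reduction (Theorem~\ref{second-key}) you only know $|M|=O_F(q)$ and $n_1-n_r=O_F(\sqrt{q})$. Plugging $|B|=q+1$ therefore yields
\[
N_F(G)\ \ge\ (q+1)\Big(1-O_F\big(\tfrac{q}{n}\big)\Big)c(n,F),
\]
which is weaker than the claimed $(q+1)\big(1-O_F(\tfrac{1}{n})\big)c(n,F)$ by a factor of $q$ in the error; for $q$ of linear order this loses everything. The paper closes this gap by first passing to a minimizer of $N_F$ subject to $\lambda(G)\ge\lambda(L_{n,r,q})$ and $G\neq L_{n,r,q}$, and then using two local-modification claims: if $\alpha_2\ge 2$, adding two missing cross-edges and deleting one class-edge strictly raises $\lambda$ and strictly lowers $N_F$; if $n_1-n_r\ge 4$, moving a vertex between extreme parts and deleting one class-edge does the same. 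These force $\alpha_2\le 1$ and $n_1-n_r\le 3$, so the per-edge loss in the counting drops to $O_F(n^{f-3})$ and the final bound follows. Without this reduction to $O(1)$ missing cross-edges and $O(1)$ imbalance, your argument cannot reach the stated error term.
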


Theorem \ref{thm-Z} provides a linear range of \( q = O_F(n) \), which coincides with the range of Theorem~\ref{thm-Mubayi}. 
This range is broader than the (optimal) square root range of Theorem \ref{thm-Y}, by requiring a stronger spectral condition. 
In particular, in the case of $q=0$, we extend the result of Ning and Zhai \cite{NZ2021} on the triangle case to general color-critical graphs. 
We will illustrate how the proof of Theorem \ref{thm-Z} provides a unified framework for reproving Theorem \ref{thm-Mubayi}; see Remark \ref{remark-imply}.  
Combining Theorems~\ref{thm-Y} and \ref{thm-Z}, we provide a resolution to the aforementioned problem of Ning and Zhai \cite{NZ2021}.

Although not essential to the proofs of Theorems~\ref{thm-Y} and \ref{thm-Z}, we include as a side note the determination of the minimizer and maximizer of the spectral radius in the family \( \mathcal{T}_{n,r,q} \).

\begin{defn}
Let ${Y}_{n,r,q}$ be the graph obtained by adding  {\it a matching with $q$ edges} into a {\it largest} partite set of the Tur\'{a}n graph $T_{n,r}$.
Let $L_{n,r,q}$ be the graph obtained by embedding {\it a specified graph $H$ with $q$ edges} into a {\it smallest} partite set of $T_{n,r}$, where  
$H=K_3$ if $q=3$, and $H$ is a star otherwise. 
\end{defn}

\begin{thm}\label{thm-min-max}
Let $r\ge 2$ and $1\le q \le \frac{n}{100r}$. Then for any graph $G\in \mathcal{T}_{n,r,q}$, we have:  
\begin{itemize}
\item[\rm (i)] $\lambda(G)\ge \lambda(Y_{n,r,q})$, with equality if and only if $G=Y_{n,r,q}$; 

\item[\rm (ii)] $\lambda(G)\le \lambda(L_{n,r,q})$, with equality if and only if $G=L_{n,r,q}$.  
\end{itemize} 
\end{thm}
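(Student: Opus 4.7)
The plan is to establish Theorem~\ref{thm-min-max} via local edge-switching arguments driven by the Perron eigenvector of $G \in \mathcal{T}_{n,r,q}$. Write $G = T_{n,r} \cup H$, where $H$ is the graph of the $q$ added edges, all lying within the parts of $T_{n,r}$, and let $\mathbf{x}=(x_v)_{v\in V(G)}$ be the normalized Perron eigenvector of $G$. Since $q \le n/(100r)$, the graph $G$ is a small perturbation of $T_{n,r}$, and standard eigenvalue-perturbation estimates yield $\lambda(G) = (1-1/r+o(1))n$ together with quantitative control $\|\mathbf{x}-\mathbf{x}_0\|_\infty = O(q/n^{3/2})$, where $\mathbf{x}_0$ is the Perron vector of $T_{n,r}$. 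The core identity is the Rayleigh decomposition
\[
\lambda(G) \;=\; 2\sum_{uv\in E(T_{n,r})} x_u x_v \;+\; 2\sum_{uv\in E(H)} x_u x_v,
\]
which we combine with the Rayleigh bound $\lambda(G') \ge \mathbf{x}^\top A(G')\,\mathbf{x}$ for competing graphs $G' \in \mathcal{T}_{n,r,q}$.

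For part (i), let $G$ minimize $\lambda$ in $\mathcal{T}_{n,r,q}$; we argue in three stages. \emph{(i.1) Partite concentration}: if $H$ has edges in two distinct parts $V_i,V_j$, we relocate one edge into the denser of the two parts and verify via a Perron comparison that the swap strictly decreases $\sum_{uv\in E(H)} x_u x_v$, hence $\lambda$. \emph{(i.2) Selecting a largest part}: among configurations where $H$ lies in a single part $V_i$, the minimum is attained when $V_i$ is a largest part of $T_{n,r}$, since the Perron entries of $T_{n,r}$ are strictly larger in smaller parts whenever parts differ in size, so $\sum_{uv} x_u x_v$ on the added edges is smallest when $V_i$ is the largest. \emph{(i.3) Matching structure}: within $V_r$, the matching uniquely minimizes $\lambda$; to distinguish a matching from structures with two added edges sharing a vertex, we expand $\mathbf{x}$ to second order, showing that concentrating the incremental degree $d^+(v)$ on a single vertex $v$ boosts $x_v$ by roughly $x_0\,d^+(v)/\lambda$, which in turn raises the quadratic form $\mathbf{x}^\top A(G)\mathbf{x}$ strictly more than spreading $d^+$ out. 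Part (ii) follows the dual template: partite concentration, then selecting a smallest part, and finally arranging $H$ as a star (or $K_3$ when $q=3$); the $q=3$ exception is handled by the elementary identity $\lambda(K_3)=2 > \sqrt{3} = \lambda(K_{1,3})$ together with its stability under embedding into $T_{n,r}$.

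The main obstacle lies in step (i.3) and its analogue in part (ii): the first-order perturbation $\lambda(G) - \lambda(T_{n,r}) \approx 2\mathbf{x}_0^\top A(H)\mathbf{x}_0$ depends only on how many edges of $H$ fall in each part, so it cannot distinguish different structures of $H$ with the same part distribution. Breaking this tie requires a genuine second-order expansion that captures how $H$ itself perturbs $\mathbf{x}$, and the resulting correction must be tracked carefully when fed back into $\mathbf{x}^\top A(G)\mathbf{x}$. A secondary complication appears in the regular case $r\mid n$, where $\mathbf{x}_0$ is uniform and step (i.2) is vacuous; here the entire selection must come from the symmetry-breaking induced by $H$, which we handle by iterating the Perron equation $\lambda x_v = \sum_{u\sim v} x_u$ twice and isolating the leading $1/n$ correction. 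The uniqueness statements and the $K_3$-versus-star dichotomy at $q=3$ then follow from a routine case analysis once the three structural reductions are in place.
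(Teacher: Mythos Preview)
Your outline identifies the right structural reductions, but there is a genuine gap in the mechanism you propose for carrying them out. In step~(i.1) you write that relocating an edge ``strictly decreases $\sum_{uv\in E(H)} x_u x_v$, hence $\lambda$''. This inference is invalid: with $\mathbf{x}$ the Perron vector of $G$, the Rayleigh inequality gives only $\lambda(G') \ge \mathbf{x}^\top A(G')\,\mathbf{x}$, a \emph{lower} bound on $\lambda(G')$. Showing $\mathbf{x}^\top A(G')\,\mathbf{x} < \mathbf{x}^\top A(G)\,\mathbf{x} = \lambda(G)$ therefore yields no comparison between $\lambda(G')$ and $\lambda(G)$. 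To conclude $\lambda(G') < \lambda(G)$ by Rayleigh you would need to test $A(G)$ against the Perron vector of $G'$, or use a double-eigenvector argument; neither appears in your plan. The same directional problem recurs in~(i.2) and in the matching-versus-path distinction of~(i.3). You correctly flag that first-order perturbation cannot separate structures of $H$ with the same part distribution, but the promised second-order expansion is never actually performed, and the phrase ``stability under embedding into $T_{n,r}$'' for the $K_3$ case is not an argument.

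The paper avoids all of this by a different device: Zhang's characterization (Lemma~\ref{lem-Zhangwenqian}) expresses $\lambda(G)$ implicitly as the largest root of
\[
\sum_{i=1}^{r}\Bigl(1+\tfrac{n_i}{x}+\textstyle\sum_{\ell\ge 1}\tfrac{w_{\ell+1}(H_i)}{x^{\ell+1}}\Bigr)^{-1}=r-1,
\]
so that comparing two candidates $G,G^*$ reduces to comparing the monotone functions $f(x),f^*(x)$ pointwise for $x\ge \delta(T_{n,r})$. Walk counts $w_{\ell+1}(H_i)$ encode the structure of $H_i$ exactly (a matching minimizes them, a star or $K_3$ maximizes them), which makes each of the three reductions a direct inequality between explicit series rather than a perturbative estimate. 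This is what lets the paper handle matching-versus-non-matching and star-versus-$K_3$ without any second-order eigenvector analysis, and it also gives the correct direction automatically since one compares roots of monotone functions rather than Rayleigh quotients.
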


\noindent We would like to point out that part (ii) of Theorem \ref{thm-min-max} for fixed integer $q$ and sufficiently large $n$ was proved in \cite{LZZ2022,FTZ2024}. Here, we provide a linear bound $q\le \frac{n}{100r}$. 

Using the proof arguments of Theorems~\ref{thm-Y} and \ref{thm-Z}, we can also derive some applications in related spectral problems; 
see Theorems \ref{thm-covering} and \ref{thm-T} in Section \ref{sec-applications}.

\subsection{Proof ingredients}
In addition to utilizing the graph removal lemma and spectral supersaturation-stability, 
our proof introduces novel spectral techniques to capture the variation in the spectral radius in relation to the structural differences between the host graph and the Tur\'an graph \( T_{n,r} \), as well as the number of copies of \( F \). 
In this subsection, we present and discuss two key components of our proof, which may have potential applications in other spectral extremal problems.

The first result provides an optimal quantitative estimate for the spectral radius \( \lambda(G) \) of any graph \( G \) that differs from a complete multipartite graph by only a few edges. 
Given a partition $V(G)=V_1\cup \cdots \cup V_r$, 
we say that an edge $e\in E(G)$ is a {\it class-edge} if $e\in E(G[V_i])$ for some $i\in [r]$, and a {\it cross-edge} otherwise.

\begin{thm}\label{first-key}
Let $n$ be sufficiently large and $G$ be a graph  obtained from an $n$-vertex complete $r$-partite graph $K=K_r(n_1,n_2,\dots,n_r)$ by
 adding $\alpha_1$ class-edges and deleting  $\alpha_2$ cross-edges,
where $n_1\geq n_2\geq \cdots \geq n_r$ and $\max\{\alpha_1,\alpha_2\} \le \frac{n}{(10r)^3}$. 
\begin{enumerate}  
\item[\rm (i)] 
If $n_1-n_r \le \frac{n}{100}$, then by denoting $\phi:=\max\{n_1-n_r,2(\alpha_1 +\alpha_2)\}$, we have  
\[  \left| \lambda (G) - \lambda (K) - \frac{2(\alpha_1-\alpha_2)}{n} \right| \le   \frac{56(\alpha_1+\alpha_2)\phi}{n^2}. \]
\item[\rm (ii)] If $n_1-n_r\geq 2k$ for an integer $k\le \frac{n}{(10r)^3}$, then by denoting $\psi:=\max\{3k,2(\alpha_1+\alpha_2)\}$, 
\begin{align*}
\lambda (G) &\leq
    \lambda (T_{n,r}) +\frac{2(\alpha_1-\alpha_2)}{n} -\frac{2(r-1)k^2}{rn} \cdot \Big(1-\frac{28r\psi}{n} \Big)^4 
    +\frac{56(\alpha_1+\alpha_2)\cdot 7r \psi}{n^2} .
    \end{align*}
\end{enumerate}
\end{thm}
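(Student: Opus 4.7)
The plan is to prove both parts via Rayleigh-quotient estimates informed by a precise analysis of the Perron eigenvector of a complete multipartite graph. Let $\mathbf{x}$ be the unit Perron eigenvector of $K=K_r(n_1,\ldots,n_r)$. Since $K$ is complete multipartite, $\mathbf{x}$ is constant on each part $V_i$, and the common value $x^{(i)}$ satisfies $x^{(i)}=S/(\lambda(K)+n_i)$ with $S=\sum_j n_j x^{(j)}$, together with the characteristic equation $\sum_i n_i/(\lambda(K)+n_i)=1$. A key preliminary observation, valid in the near-balanced regime $n_1-n_r\le n/100$ of part~(i), is that $\lambda(K)+n_i$ is very close to $n$ for every $i$; this forces $x^{(i)}=1/\sqrt{n}+O((n_1-n_r)/n^{3/2})$, and hence $|x_ux_v-1/n|=O(\phi/n^2)$ for all $u,v$.

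For the lower bound in (i), the Rayleigh-quotient inequality $\lambda(G)\ge \mathbf{x}^T A(G)\mathbf{x}$ decomposes as
\[
\lambda(G)\ge \lambda(K)+2\sum_{\text{added}}x_ux_v-2\sum_{\text{deleted}}x_ux_v=\lambda(K)+\frac{2(\alpha_1-\alpha_2)}{n}+\varepsilon,
\]
with $|\varepsilon|\le 2(\alpha_1+\alpha_2)\max_{u,v}|x_ux_v-1/n|$, and the preliminary estimate on $\mathbf{x}$ yields the target bound. For the upper bound, let $\mathbf{y}$ be the unit Perron eigenvector of $G$; then $\mathbf{y}^T A(K)\mathbf{y}\le \lambda(K)$ gives $\lambda(G)\le \lambda(K)+2\sum_{\text{added}}y_uy_v-2\sum_{\text{deleted}}y_uy_v$, so the task reduces to estimating $y_uy_v$. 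The key observation is that for any $v\in V_i$ not incident to a perturbation edge, the eigenequation $\lambda(G)y_v=\sum_{u\sim_G v}y_u$ simplifies to $y_v=\sigma_i/\lambda(G)$ with $\sigma_i=\sum_{j\ne i}\sum_{u\in V_j}y_u$; thus $\mathbf{y}$ is constant on each part outside at most $2(\alpha_1+\alpha_2)$ ``exceptional'' vertices, and the part-values $y^{(i)}:=\sigma_i/\lambda(G)$ satisfy essentially the same recursion as the $x^{(i)}$. A bootstrap argument (first a coarse estimate of $\lambda(G)-\lambda(K)$, then entrywise control $|y_v-x_v|=O(\phi/n^{3/2})$, finally a refined estimate of each $y_uy_v$) then matches the lower bound.

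For part~(ii), the strategy is to combine the part~(i) analysis with a quantitative loss-of-spectral-radius estimate under partition imbalance. Taylor-expanding the characteristic equation around $\lambda=\lambda(T_{n,r})$ gives $\lambda(T_{n,r})-\lambda(K)\ge \frac{r-1}{rn}\sum_i(n_i-n/r)^2\cdot(1-O(\psi/n))$, while the hypothesis $n_1-n_r\ge 2k$ forces $\sum_i(n_i-n/r)^2\ge 2k^2$, producing the explicit correction $-\frac{2(r-1)k^2}{rn}$. To absorb a potentially large $n_1-n_r$, the plan is to compare $G$ to an auxiliary complete multipartite graph $K^*$ whose part-gap is essentially $3k$, and to run the part~(i)-type bootstrap relative to $K^*$ with $\psi$ playing the role of $\phi$; the factor $(1-28r\psi/n)^4$ and the constant $7r$ in the perturbation error encode the bookkeeping of this reduction.

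The hardest step is the bootstrap for the upper bound in (i): a naive first-order perturbation yields only an error of $O((\alpha_1+\alpha_2)/n)$, whereas the target is $O((\alpha_1+\alpha_2)\phi/n^2)$. The improvement requires simultaneously exploiting the piecewise-constant structure of $\mathbf{y}$ on typical vertices, the $\Theta(n)$-sized spectral gap of $K$, and a careful Cauchy--Schwarz control over the $O(\alpha_1+\alpha_2)$ exceptional vertices together with the normalization $\|\mathbf{y}\|=1$. Tracking the explicit constants (such as the $56$ in the error term) will require diligence but should follow routinely once the bootstrap structure is in place.
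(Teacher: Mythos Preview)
Your approach to part~(i) is essentially the paper's, though you overcomplicate the upper bound. The paper needs no bootstrap via the spectral gap of $K$ or Cauchy--Schwarz on exceptional vertices: it shows directly that every entry of the Perron vector $\mathbf{y}$ of $G$ satisfies $(1-13\phi/n)/\sqrt{n}\le y_u\le(1+13\phi/n)/\sqrt{n}$, by first comparing each entry to the maximum entry $y_{u^*}$ through the eigen-equations (exploiting that $\mathbf{y}$ is constant on each $V_i$ outside the $\le 2(\alpha_1+\alpha_2)$ perturbed vertices) and then invoking $\|\mathbf{y}\|_2=1$. This is precisely the ``piecewise-constant structure plus normalization'' you mention, done in one pass without iteration.

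For part~(ii) your plan has a genuine gap. The hypothesis places no upper bound on $n_1-n_r$, and when $n_1-n_r$ is large (say of order $n/r$) neither your Taylor expansion of the characteristic equation nor a part-(i)-type estimate against an ``auxiliary $K^*$ of gap $\approx 3k$'' is available: $G$ is a small edge-perturbation of $K$, not of $K^*$, and the Perron entries of $G$ are far from uniform when $K$ is highly unbalanced, so you cannot control $\sum_{\text{added}} y_uy_v-\sum_{\text{deleted}} y_uy_v$ to the required precision. The paper's key device, which you do not describe, is a \emph{maximality reduction}: since the target inequality is an upper bound whose right-hand side does not depend on the individual $n_i$, it suffices to prove it for the $G$ maximizing $\lambda(G)$ over all admissible part-size vectors with $n_1-n_r\ge 2k$; for that extremal $G$ one shows, by a vertex-moving argument (transfer one unperturbed vertex of $V_1$ into the part containing the Perron-maximizer among unperturbed vertices and observe $\lambda$ would strictly increase), that necessarily $n_1-n_r\le 7r\psi$. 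Only after this reduction does part~(i) apply with $\phi'\le 7r\psi$, which is exactly where the factor $7r$ in the error term comes from; and the remaining comparison $\lambda(K)\le\lambda(T_{n,r})-\frac{2(r-1)k^2}{rn}(1-28r\psi/n)^4$ is obtained by iterating a single-vertex-transfer estimate (the paper's Lemma~\ref{lem-move-one}) $k$ times, rather than by a Taylor expansion.
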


The second result provides a structural characterization of a graph \( G \) with a small number of copies of a given color-critical graph \( F \), under the condition \( \lambda(G) \geq \lambda(T_{n,r}) \). Specifically, it states that if \( G \) satisfies this condition and contains \( O(n^{|F|-2}) \) copies of \( F \), then \( G \) can be transformed into an almost balanced \( r \)-partite graph with only a few edge additions or deletions.

\begin{thm}\label{second-key}
Let $0< \varepsilon_1 <\frac{1}{2}$ and $F$ be  color-critical with $\chi(F)=r+1$.
There exist $\delta_F ,\eta_F >0$ such that if $n$ is sufficiently large, $1\le q \le \delta_F {n}$, and $G$ is an $n$-vertex graph with $\lambda (G)\geq \lambda (T_{n,r})$ and $N_F(G) \leq (q+ \varepsilon_1 ) c(n,F)$, 
then $G$ can be obtained from  $K_r(n_1,\dots,n_r)$ by adding $\alpha_1$ class-edges and deleting $\alpha_2$ cross-edges, where $\alpha_2\leq 2\alpha_1\le \frac{4q}{\eta_F}$ and 
$|n_i-n_j| \leq 4\sqrt{\alpha_1}$ for any $i\neq j$. 
\end{thm}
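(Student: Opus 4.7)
The plan is to prove Theorem~\ref{second-key} in three stages: (i) use the graph removal lemma together with Nikiforov's spectral stability to produce a near-Tur\'an partition of $V(G)$; (ii) count copies of $F$ through each class-edge to obtain $\alpha_1\le 2q/\eta_F$; and (iii) invoke Theorem~\ref{first-key}(ii) to turn the spectral hypothesis $\lambda(G)\ge\lambda(T_{n,r})$ into bounds on $\alpha_2$ and on the partition imbalance.

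For stages (i) and (ii): since $N_F(G)\le(q+\varepsilon_1)c(n,F)=O(\delta_F n^{|F|-1})=o(n^{|F|})$, the graph removal lemma yields an $F$-free spanning subgraph $G_0\subseteq G$ with $e(G)-e(G_0)=o(n^2)$; hence $\lambda(G_0)\ge\lambda(G)-o(n)\ge\lambda(T_{n,r})-o(n)$. Combining Nikiforov's spectral Tur\'an inequality $\lambda(G_0)\le\lambda(T_{n,r})$ with its stability counterpart yields a partition $V(G)=V_1\cup\cdots\cup V_r$ with $\bigl||V_i|-n/r\bigr|=o(n)$ relative to which $\alpha_1+\alpha_2=o(n^2)$, well within the range $\max\{\alpha_1,\alpha_2\}\le n/(10r)^3$ required by Theorem~\ref{first-key}. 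For each class-edge $e=uv$ with $u,v\in V_i$, I use $e$ as the color-critical edge of $F$: since $\chi(F-e)\le r$, one fixes an $r$-coloring of $F-e$ and counts embeddings of $F$ into $G$ with this role pinned at $e$. In the skeleton $K=K_r(|V_1|,\dots,|V_r|)$, near-balancedness gives at least $(1-o(1))\,c(n,F)$ such embeddings; each missing cross-edge spoils at most $O(n^{|F|-4})$ of them per fixed $e$, so the total loss is $\alpha_2\cdot O(n^{|F|-4})=o(c(n,F))$. Summing over class-edges and dividing by the bounded multiplicity $\mu_F$ with which each copy of $F$ is overcounted gives $N_F(G)\ge \eta_F\,\alpha_1\,c(n,F)$ for a constant $\eta_F=\eta_F(F)>0$ (essentially $1/(2\mu_F)$); together with $N_F(G)\le(q+\varepsilon_1)c(n,F)<2q\,c(n,F)$ valid for $q\ge 1$, we get $\alpha_1\le 2q/\eta_F$.

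For stage (iii): label $|V_1|\ge\cdots\ge|V_r|$, set $D=|V_1|-|V_r|$ and $k=\lfloor D/2\rfloor$, and apply Theorem~\ref{first-key}(ii) to $G$ relative to the partition to get
\[ \lambda(G)\le \lambda(T_{n,r})+\frac{2(\alpha_1-\alpha_2)}{n}-\frac{2(r-1)k^2}{rn}\bigl(1-o(1)\bigr)+O\!\left(\frac{(\alpha_1+\alpha_2)\psi}{n^2}\right), \]
where $\psi=\max\{3k,2(\alpha_1+\alpha_2)\}=o(n)$ by stage (i). Combining with $\lambda(G)\ge\lambda(T_{n,r})$ and multiplying by $n/2$ yields
\[ \alpha_1-\alpha_2\ge \frac{(r-1)k^2}{r}\bigl(1-o(1)\bigr)-o(\alpha_1+\alpha_2). \]
From this, two conclusions follow. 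First, $k^2\le \tfrac{r}{r-1}\alpha_1(1+o(1))\le 2\alpha_1$, so $|V_i|-|V_j|\le D\le 2\sqrt{2\alpha_1}<4\sqrt{\alpha_1}$. Second, $\alpha_2\le\alpha_1+o(\alpha_1+\alpha_2)$; assuming for contradiction $\alpha_2>2\alpha_1$ forces $\alpha_1=o(\alpha_2)$ and then $\alpha_2=o(\alpha_2)$, a contradiction whenever $\alpha_2>0$, so $\alpha_2\le 2\alpha_1$ once $\alpha_1\ge 1$. In the edge case $\alpha_1=0$, we have $G\subseteq K$ and thus $\lambda(G)\le\lambda(K)\le\lambda(T_{n,r})$, with the equality $\lambda(G)=\lambda(T_{n,r})$ (Perron–Frobenius) forcing $\alpha_2=0$ and balanced partition.

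The technically delicate point is stage (iii): the error $O((\alpha_1+\alpha_2)\psi/n^2)$ in Theorem~\ref{first-key}(ii) becomes $O((\alpha_1+\alpha_2)\psi/n)$ after scaling, and must be genuinely $o(\alpha_1+\alpha_2)$ for the inequality $\alpha_2\le 2\alpha_1$ to survive. Since $\psi$ depends on $D$ — the very quantity being bounded — a short bootstrap is needed: first use the weak bound $D=o(n)$ from stage (i) to certify $\psi=o(n)$, then feed the refined bound $D=O(\sqrt{\alpha_1})$ back into the estimate. The constant $\delta_F$ must be chosen small enough that $\alpha_1+\alpha_2=O(\delta_F n)$ keeps these vanishing terms vanishing uniformly in $q$, and the constant $\eta_F$ from stage (ii) must be matched against the absolute constants appearing in Theorem~\ref{first-key}(ii).
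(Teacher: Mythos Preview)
Your proposal has the right three-stage skeleton but contains two interlocking gaps. In stage~(ii), your estimate ``each missing cross-edge spoils at most $O(n^{|F|-4})$ embeddings per fixed $e$'' is wrong whenever the missing cross-edge shares a vertex with $e=uv$: then it kills $\Theta(n^{|F|-3})$ embeddings. If an endpoint of $e$ has $\Theta(n)$ missing cross-edges---i.e., has degree well below $(1-\tfrac1r)n$---then $N_F(G,e)$ can be $o(c(n,F))$ or zero, and summing over class-edges does not bound $\alpha_1$. Stability alone does not rule out such low-degree vertices. In stage~(iii), spectral stability gives only $\alpha_1+\alpha_2\le\varepsilon n^2$ for a fixed constant $\varepsilon$; this is $\Theta(n^2)$, not $O(n)$, so your claim that it is ``well within the range $\max\{\alpha_1,\alpha_2\}\le n/(10r)^3$'' is false, and Theorem~\ref{first-key}(ii) cannot be applied to $G$. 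Your bootstrap addresses $D=n_1-n_r$ but not $\alpha_2$, and $\alpha_2\le n/(10r)^3$ is a \emph{hypothesis} of Theorem~\ref{first-key}, not merely an input to its error term.

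The paper closes both gaps with one crucial extra step you omit: it defines $S=\{v:d_G(v)\le(1-\tfrac1r-10\varepsilon^{1/3})n\}$ and proves $S=\varnothing$ via a nontrivial spectral contradiction (rewire all edges at $S$ to obtain $G'$ with $\lambda(G')>\lambda(G)$ by Rayleigh, then show $\lambda(G')<\lambda(T_{n,r})$ using Theorem~\ref{first-key} on an auxiliary graph with controlled parameters; see Claim~\ref{CLA3.8}). Once $S=\varnothing$, every class-edge lies between vertices of near-full cross-degree, so an embedding count (Claim~\ref{CLA3.6}) gives $N_F(G,e)\ge\eta_F\, c(n,F)$ for \emph{every} class-edge $e$ and hence $\alpha_1\le 2q/\eta_F$. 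The bound $\alpha_2\le 2\alpha_1$ is then obtained not by applying Theorem~\ref{first-key} to $G$, but to the supergraph of $G$ formed by restoring all but $2\alpha_1$ of the missing cross-edges; this supergraph has $\alpha_2'=2\alpha_1=O(\delta_F n)$, safely within range. Only after $\alpha_2\le 2\alpha_1$ is established can Theorem~\ref{first-key} be applied to $G$ itself to bound $n_1-n_r$.
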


\medskip

The remainder of the paper is organized as follows.
In Section~\ref{section2}, we introduce the necessary preliminaries and extend Nikiforov’s spectral stability result to a more general setting, which we call \emph{spectral supersaturation--stability}.
In Section~\ref{sec2-2}, we prove Theorem~\ref{thm-min-max} and provide an example demonstrating that the bound \(q = O_F(\sqrt{n})\) in Theorem~\ref{thm-Y} is tight up to a constant factor.
Prior to establishing our main results, we prove two key ingredients, Theorems~\ref{first-key} and~\ref{second-key}, in Sections~\ref{sec-proof-first} and~\ref{sec-proof-second}, respectively.
In Section~\ref{sec-4}, we present the proofs of our main results, Theorems~\ref{thm-Y} and~\ref{thm-Z}.
Finally, in Section~\ref{sec-applications}, we conclude with two applications: a spectral analogue of a theorem of Liu and Mubayi on graphs with a prescribed \(F\)-covering number, and a resolution of a conjecture of Li, Lu, and Peng concerning a variant of Theorem~\ref{thm-Y} (see Theorems~\ref{thm-covering} and~\ref{thm-T}).

\section{Preliminaries}\label{section2}

We begin by fixing the following standard notation, which will be used throughout the paper.
We write $\mathcal{T}_{n,r,q}$ for the family of $n$-vertex graphs obtained from the Tur\'{a}n graph $T_{n,r}$ by adding $q$ edges. 
For a subset $S\subseteq V$, let $G[S]$ be the subgraph of $G$ induced by $S$, and let $e(S)=e(G[S])$.  
We write $G[S,T]$ for the induced 
subgraph of $G$
whose edges have one endpoint in $S$ and the other in $T$, and write $e(S,T)$ for the number of edges of $G[S,T]$.  
Let $N(v)$ be the set of neighbors of $v$ and 
let $d(v)=|N(v)|$. 
We denote $N_S(v)= N(v) \cap S$ 
and $d_S(v)=|N_S(v)|$ for simplicity. 
We denote by $\delta (G)$ and $\Delta (G)$ the minimum and maximum degree of $G$, respectively. 
We write $G- S $ for the subgraph of $G$ 
by deleting all vertices of $S$ with its incident edges. 
We write $qK_2$ for a matching with $q$ edges, and write $S_{q+1}$ or $K_{1,q}$ for a star with $q$ edges. We denote by $K_r(n_1,\dots,n_r)$ the complete $r$-partite graph with part sizes $n_1\ge \cdots \ge n_r$. We denote by $N_F(G)$ the number of copies of $F$ in $G$. 
Given an edge $e\in E(G)$, we write $N_F(G,e)$ for the number of copies of $F$ in $G$ that contain the edge $e$. 
By the Perron--Frobenius theorem, there exists a unit nonnegative eigenvector $\bm{x}\in \mathbb{R}^n$ corresponding to $\lambda (G)$. 
For every $v\in V(G)$, we write $x_v$ for the coordinate of $\bm{x}$ corresponding to $v$. 
For every $v\in V(G)$, we have $\lambda (G) x_v= \sum_{u\in N(v)}x_u$, and the Rayleigh quotient gives $\lambda(G) = 2\sum_{uv \in E} x_u x_v$.

To proceed, we first introduce the celebrated graph removal lemma. 

\begin{lem}[Graph removal lemma] 
\label{thm-GRL}
Let $F$ be a graph on $f$ vertices.  
For any $\varepsilon >0$,  
there exists $\delta = \delta(F,\varepsilon) >0$ such that if $G$ is an $n$-vertex graph with at most $\delta n^f$ copies of $F$, 
then $G$ can be made $F$-free 
by removing at most $\varepsilon n^2$ edges. 
\end{lem}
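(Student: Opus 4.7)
The plan is to deduce the statement from Szemer\'edi's regularity lemma combined with the counting (embedding) lemma for regular tuples, following the classical argument of Ruzsa and Szemer\'edi. Since the graph removal lemma is by now standard, I would in practice simply cite it; the outline below records the standard route.

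First, I would apply the regularity lemma to $G$ with a sufficiently small parameter $\varepsilon' = \varepsilon'(F,\varepsilon) > 0$, obtaining an equitable partition $V(G) = V_0 \cup V_1 \cup \cdots \cup V_k$ with $|V_0| \le \varepsilon' n$ and $|V_i| = m$ for $1\le i \le k$, such that all but at most $\varepsilon' \binom{k}{2}$ of the pairs $(V_i,V_j)$ are $\varepsilon'$-regular. The number of parts $k$ is bounded by a function of $\varepsilon'$ only, so by taking $n$ large enough each non-exceptional class has size $\Theta(n/k)$. Next, I would form a subgraph $G' \subseteq G$ by deleting three kinds of edges: edges incident to $V_0$, edges lying inside a single class $V_i$, and edges contained in pairs $(V_i,V_j)$ that are either irregular or have density below a threshold $d = d(F,\varepsilon) > 0$. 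A direct accounting shows that the total number of deleted edges is at most $\varepsilon n^2$, provided $\varepsilon'$ and $d$ are chosen small enough relative to $\varepsilon$.

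The crux is then the following contrapositive claim: if $G'$ still contains a copy of $F$, then its vertices must lie in $f$ distinct classes $V_{i_1},\ldots,V_{i_f}$, and each pair among these classes is $\varepsilon'$-regular with density at least $d$. The counting lemma for regular tuples then guarantees at least
\[ \tfrac{1}{2}\, d^{\,e(F)}\, m^{f} \;\ge\; \delta n^{f} \]
copies of $F$ in $G$, for an appropriate $\delta = \delta(F,\varepsilon) > 0$. Hence, if $G$ contains fewer than $\delta n^f$ copies of $F$, the subgraph $G'$ must be $F$-free, and it was obtained from $G$ by removing at most $\varepsilon n^2$ edges.

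The main point requiring care is balancing the parameters: the threshold $d$ must be small enough that the edges across low-density regular pairs together with those in irregular pairs contribute at most $\varepsilon n^2$ to the removal budget, yet large enough that the counting lemma still produces at least $\delta n^f$ copies of $F$. The latter constraint forces $\delta$ to depend on $d^{e(F)}$ and to inherit the tower-type bounds from the regularity lemma; however, for the applications in this paper only the qualitative existence of $\delta$ is needed, so no further optimization is required.
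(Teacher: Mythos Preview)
Your outline is the standard regularity-lemma proof and is correct as far as it goes, but the paper does not prove this statement at all: it is stated as the well-known Graph Removal Lemma and invoked as a black box (see its use in the proof of Theorem~\ref{thm-sss}). So in practice you should do exactly what you yourself suggested and simply cite it; there is nothing to compare against in the paper's own text.
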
 

The following classical supersaturation result will be used in the forthcoming proof.

\begin{lem}[Erd\H{o}s--Simonovits \cite{ES1983}] 
\label{lem-ES-super}
    Let $F$ be a graph on $f$ vertices with chromatic number $\chi (F)=r+1$. Then for any $\varepsilon >0$, there exist $n_0=n_0(F,\varepsilon)$ and $\delta =\delta(F,\varepsilon) >0$ such that if $G$ is a graph on $n\ge n_0$ vertices with $e(G)\ge (1-\frac{1}{r} + \varepsilon )\frac{n^2}{2}$, then $G$ contains at least $\delta n^f$ copies of $F$.  
\end{lem}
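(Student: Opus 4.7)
The plan is to derive this supersaturation statement from the Erd\H{o}s--Stone--Simonovits theorem via the classical averaging argument on random induced subgraphs. First I would use the Erd\H{o}s--Stone--Simonovits theorem on small orders to fix an integer \(m = m(F, \varepsilon)\) such that every \(m\)-vertex graph with more than \((1 - 1/r + \varepsilon/2)\binom{m}{2}\) edges already contains a copy of \(F\); the existence of such an \(m\) uses only \(\chi(F) = r+1\).

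Next I would sample an \(m\)-subset \(S \subseteq V(G)\) uniformly at random and compare \(\mathbb{E}[e(G[S])]\) with this extremal threshold. Since the edge density of \(G\) is at least \(1 - 1/r + \varepsilon\), a short Markov-type pigeonhole yields that at least a constant fraction \(c(\varepsilon) > 0\) of the \(\binom{n}{m}\) subsets \(S\) satisfies \(e(G[S]) > (1 - 1/r + \varepsilon/2)\binom{m}{2}\), and by the choice of \(m\) each such \(S\) contains at least one copy of \(F\).

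Finally I would convert "many good subsets" into "many copies of \(F\)" via a standard double count: every copy of \(F\) in \(G\) is contained in exactly \(\binom{n-f}{m-f}\) of the \(m\)-subsets of \(V(G)\), hence
\[ N_F(G) \;\ge\; \frac{c(\varepsilon)\binom{n}{m}}{\binom{n-f}{m-f}} \;=\; \frac{c(\varepsilon)}{\binom{m}{f}}\binom{n}{f} \;\ge\; \delta\, n^f \]
for an appropriate \(\delta = \delta(F,\varepsilon) > 0\) and all \(n \ge n_0(F,\varepsilon)\).

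The main obstacle, insofar as there is one, lies in arranging the "\(\varepsilon/2\) gap" at the fixed size \(m\), which requires the full strength of the Erd\H{o}s--Stone--Simonovits theorem rather than just Tur\'an's theorem because \(F\) need not be a clique; everything else is bookkeeping. As an alternative route one could instead derive the lemma directly from the graph removal lemma (Lemma~\ref{thm-GRL}): if \(N_F(G) < \delta n^f\) for sufficiently small \(\delta\), then one could delete \(o(n^2)\) edges to make \(G\) \(F\)-free while keeping its density above \(1 - 1/r + \varepsilon/2\), again contradicting Erd\H{o}s--Stone--Simonovits. Either route yields the same conclusion with essentially the same effort.
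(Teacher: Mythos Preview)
Your argument is correct and is precisely the classical Erd\H{o}s--Simonovits averaging proof; the paper does not supply its own proof of this lemma but simply cites \cite{ES1983}, so there is nothing to compare against. Both the sampling route and the alternative removal-lemma route you sketch are standard and valid.
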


Recall that $c(n,F)$ denotes the minimum number of copies of $F$
in a graph obtained from the Tur\'{a}n graph  $T_{n,r}$ by adding one edge. 
 Mubayi \cite{Mubayi2010} provided a useful estimate of $c(n,F)$.

\begin{lem}[Mubayi \cite{Mubayi2010}]\label{LEM2.5}
For a fixed color-critical graph $F$ with $f$ vertices, 
there exist constants $\alpha_F>0$ and $\beta_F>0$ such that if $n$ is sufficiently large, then
$$\big| c(n, F)-\alpha_F n^{f-2} \big| < \beta_F n^{f-3}.$$
In particular, we have $\frac{1}{2}\alpha_F  n^{f-2} < c(n, F) < 2 \alpha_F n^{f-2}$.
\end{lem}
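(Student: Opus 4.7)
The plan is to count, as a function of $n$, the copies of $F$ that arise when a single edge $e=uv$ is added inside some part $V_i$ of $T_{n,r}$, and then to minimize this count over the choice of $i$. Since $\chi(T_{n,r})=r<r+1=\chi(F)$, the graph $T_{n,r}$ itself is $F$-free, so every copy of $F$ in $T_{n,r}+e$ must contain $e$. Such a copy corresponds to an injective homomorphism $\phi:V(F)\to V(T_{n,r})$ together with a distinguished edge $e'=u'v'\in E(F)$ satisfying $\phi(\{u',v'\})=\{u,v\}$. The restriction $\phi|_{F-e'}$ is then an injective homomorphism into $T_{n,r}$, whose pull-back of the $r$-partition gives a proper $r$-coloring of $F-e'$. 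Hence $\chi(F-e')\le r$, forcing $e'$ to be a color-critical edge of $F$.

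Next, I would fix a color-critical $e'=u'v'$, an index $i\in[r]$, and a proper $r$-coloring $\kappa$ of $F-e'$ with $\kappa(u')=\kappa(v')=i$. Such a coloring exists because in any proper $r$-coloring of $F-e'$ the vertices $u',v'$ must share a color (otherwise one would obtain a proper $r$-coloring of $F$, contradicting $\chi(F)=r+1$), and one is then free to permute color labels. With $\phi(u')=u$ and $\phi(v')=v$ fixed, the number of injective extensions $\phi:V(F)\to V(T_{n,r})$ respecting $\kappa$ equals
\[
(n_i-2)_{|\kappa^{-1}(i)|-2}\cdot\prod_{j\ne i}(n_j)_{|\kappa^{-1}(j)|},
\]
where $(m)_k$ denotes the falling factorial and $n_j\in\{\lfloor n/r\rfloor,\lceil n/r\rceil\}$ for each $j$. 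Since $|n_j-n/r|\le 1$ and $\sum_j|\kappa^{-1}(j)|=f$, expanding the product gives $(n/r)^{f-2}+O(n^{f-3})$, with implicit constant depending only on $F$.

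To conclude, I would sum over all color-critical edges $e'$, the two ways of identifying $\{u',v'\}$ with $\{u,v\}$, and all admissible colorings $\kappa$, and then divide by $|\mathrm{Aut}(F)|$ to pass from injective embeddings to distinct subgraph copies. This gives
\[
N_F(T_{n,r}+e)=\alpha_{F,i}\,n^{f-2}+O(n^{f-3}),
\]
where $\alpha_{F,i}>0$ depends only on $F$ and on the part $V_i$ that contains $e$; positivity is automatic since at least one admissible pair $(e',\kappa)$ exists. Minimizing over $e$ reduces to minimizing over $i\in[r]$, so setting $\alpha_F:=\min_{i\in[r]}\alpha_{F,i}>0$ and letting $\beta_F$ absorb the error terms yields the stated inequality; the ``in particular'' consequence then holds for $n$ large enough that $\beta_F/n<\alpha_F/2$.

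The only real subtlety I anticipate is the bookkeeping required to avoid double counting. Each injective embedding $\phi$ must correspond to a \emph{unique} color-critical edge $e'\in E(F)$; this is immediate because $\phi^{-1}(\{u,v\})$ is a single pair of vertices by injectivity, and that pair must form an edge of $F$ (as $\phi$ sends edges to edges and $uv$ is an edge of $T_{n,r}+e$ but not of $T_{n,r}$). The remainder is a routine expansion of the falling factorials, in which the leading terms $(n/r)^{f-2}$ cancel cleanly and the next-order corrections are linear in $n^{f-3}$ with coefficients bounded in terms of $F$ alone.
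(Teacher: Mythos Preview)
The paper does not supply its own proof of this lemma; it is quoted directly from Mubayi~\cite{Mubayi2010} and used as a black box. Your argument is correct and is essentially the standard one: every copy of $F$ in $T_{n,r}+e$ must use $e$, the preimage of $e$ under an embedding is a critical edge $e'$, and stratifying by the pulled-back proper $r$-coloring of $F-e'$ turns the count into a sum of products of falling factorials, each of size $(n/r)^{f-2}+O(n^{f-3})$.

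One small simplification: the leading constant $\alpha_{F,i}$ you introduce does not actually depend on $i$. Permuting the $r$ colors gives a bijection between the proper $r$-colorings of $F-e'$ with $\kappa(u')=\kappa(v')=i$ and those with any other fixed value of $\kappa(u')=\kappa(v')$, and each such coloring contributes the same leading term $(n/r)^{f-2}$. The dependence on which part receives $e$ therefore lives entirely in the $O(n^{f-3})$ correction (coming from the fact that part sizes differ by at most~$1$), so you can drop the subscript and set $\alpha_F$ directly.
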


For positive integers $n_1,\ldots ,n_r$, let 
$c(n_1,\ldots ,n_r,F)$ be the number of copies of $F$ in the graph obtained from $K_r(n_r,\ldots ,n_r)$ by adding an edge to the part of size $n_1$. 

\begin{lem}[Mubayi \cite{Mubayi2010}] \label{lem-Mub-2}
For a fixed color-critical graph $F$ with $f$ vertices and $\chi (F)=r+1$, 
there exists a constant $d_F>0$ such that if $n$ is sufficiently large, and $n_1+\cdots +n_r=n$ with $\lfloor \frac{n}{r}\rfloor -s \le n_i \le \lceil \frac{n}{r} \rceil +s$ and $s< \frac{n}{3r}$, then $c(n_1,\ldots ,n_r,F) \ge c(n,F) - d_F sn^{f-3} $. 
\end{lem}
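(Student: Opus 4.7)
The plan is to realize $c(n_1,\dots,n_r,F)$ as an explicit polynomial in the part sizes and then control its variation through a multivariate Lipschitz estimate. First I would observe that $K_r(n_1,\dots,n_r)$ is $F$-free (since $\chi(F)=r+1$), so every copy of $F$ in $K_r(n_1,\dots,n_r)+e$ must contain the added edge $e$. Such a copy is determined by: (a) an unordered pair $\{u,v\}\subseteq V(F)$ identified with the endpoints of $e$; (b) a proper $r$-coloring $\chi$ of $F-uv$ satisfying $\chi(u)=\chi(v)=1$; and (c) an injective assignment of the remaining vertices of $F$ to the appropriate parts of $K_r(n_1,\dots,n_r)$ consistent with $\chi$. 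For a fixed pair $(\{u,v\},\chi)$, the number of assignments in (c) is the falling-factorial product $(n_1-2)_{a_1(\chi)}\prod_{i=2}^{r}(n_i)_{a_i(\chi)}$, where $a_i(\chi)$ is the number of color-$i$ vertices of $\chi$ in $V(F)\setminus\{u,v\}$. Summing over all valid pairs and normalizing by the appropriate automorphism factor yields
\[
c(n_1,\dots,n_r,F)=P(n_1;n_2,\dots,n_r),
\]
where $P$ is a polynomial of total degree $f-2$, symmetric in $(n_2,\dots,n_r)$, whose coefficients depend only on $F$.

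Next, I would bound the partial derivatives of $P$. Since $P$ has $O_F(1)$ monomials, each of degree at most $f-2$ with $O_F(1)$ coefficients, every $\partial P/\partial n_i$ is a polynomial of degree $f-3$, and evaluated on the box $[\lfloor n/r\rfloor-s,\lceil n/r\rceil+s]^r\subseteq[0,n]^r$ it satisfies $|\partial P/\partial n_i|\le C_F\,n^{f-3}$ for some constant $C_F=C_F(F,r)$. Applying the mean value inequality along the segment from $(n_1,\dots,n_r)$ to any balanced tuple $(m_1,\dots,m_r)$ with $m_i\in\{\lfloor n/r\rfloor,\lceil n/r\rceil\}$ (where each coordinate differs by at most $s+1$ under the hypothesis $s<n/(3r)$) then gives
\[
\bigl|P(n_1;n_2,\dots,n_r)-P(m_1;m_2,\dots,m_r)\bigr|\le r(s+1)\,C_F\,n^{f-3}.
\]

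Finally, to match the right-hand side with $c(n,F)$, I would invoke the symmetry of $P$ in $(n_2,\dots,n_r)$ to permute the balanced sizes freely, and choose $m_1\in\{\lfloor n/r\rfloor,\lceil n/r\rceil\}$ so that $P(m_1;m_2,\dots,m_r)$ attains the minimum defining $c(n,F)$; this is always possible since $c(n,F)$ is realized by adding an edge to some part of $T_{n,r}$, whose size is either $\lceil n/r\rceil$ or $\lfloor n/r\rfloor$. Combining this with the Lipschitz bound yields
\[
c(n_1,\dots,n_r,F)\ge c(n,F)-r(s+1)\,C_F\,n^{f-3}\ge c(n,F)-d_F\,s\,n^{f-3}
\]
for any $d_F\ge 2rC_F$, as required. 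The main subtlety I would have to handle is the careful combinatorial bookkeeping in the first step: enumerating pairs $(\{u,v\},\chi)$ and the corresponding automorphism corrections in order to verify that $P$ is a genuine polynomial with $F$-bounded coefficients; once this polynomial representation is in place, the remaining estimates are a routine exercise in multivariate calculus on a compact box.
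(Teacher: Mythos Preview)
The paper does not give its own proof of this lemma; it is quoted from Mubayi~\cite{Mubayi2010}. That said, the statement is essentially the special case $\alpha_1=1$, $\alpha_2=0$ of the paper's Lemma~\ref{LEM2.8}, which \emph{is} proved in the paper, so it is natural to compare your argument to that proof.

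Your approach is correct and genuinely different from the paper's. You express $c(n_1,\dots,n_r,F)$ as an explicit polynomial of degree $f-2$ in the part sizes and then apply a multivariate Lipschitz/mean-value bound on the box $[0,n]^r$; the hypothesis $s<n/(3r)$ is used only to keep the segment inside this box. The paper's proof of Lemma~\ref{LEM2.8}, by contrast, avoids any polynomial representation: it embeds a smaller balanced Tur\'an graph $T_{n_0,r}$ (with $n_0=r(\lfloor n/r\rfloor-2\phi)$) inside $K_r(n_1,\dots,n_r)$, counts copies of $F$ through the added edge there as $c(n_0,F)$, and then invokes Lemma~\ref{LEM2.5} to compare $c(n_0,F)$ with $c(n,F)$. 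Your method is more direct and yields the two-sided bound in one stroke, at the cost of the combinatorial bookkeeping (edge/automorphism accounting) that you flag; the paper's embedding argument trades that bookkeeping for a dependence on the separate asymptotic Lemma~\ref{LEM2.5}. Two small points worth tightening: in step~(a) the pair $\{u,v\}$ must be an \emph{edge} of $F$, not just a vertex pair; and the final inequality $r(s+1)C_F\le d_F s$ needs $s\ge 1$, with the case $s=0$ following immediately from the definition of $c(n,F)$ as a minimum.
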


Next,   
we estimate the number of copies of $F$ in a graph $G$ close to $K_r(n_1,\ldots ,n_r)$.

\begin{lem}\label{LEM2.8}
Let $\alpha_1,\alpha_2$ be integers and $n$ be sufficiently large, 
and $G$ be a graph obtained from an $n$-vertex complete $r$-partite graph $K_r(n_1,\dots,n_r)$ by
adding $\alpha_1$ class-edges and deleting $\alpha_2$ cross-edges,
where $n_1\geq  \cdots \geq n_r$.
Set $\phi=\max\{2(\alpha_1+\alpha_2),n_1-n_r\}$. 
Then for any color-critical graph $F$ on $f$ vertices with $\chi (F)=r+1$, 
there exists a constant $\gamma_F >0$ such that
 $$ \alpha_1 c(n,F) - \gamma_F \alpha_1 \phi n^{f-3} \le N_F(G) 
 \le \alpha_1 c(n,F) + \gamma_F \alpha_1\phi n^{f-3}. $$
\end{lem}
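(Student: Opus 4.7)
The plan is to count copies of $F$ by pivoting on the added class-edges. Write $E_c$ for the set of $\alpha_1$ added class-edges and $E_d$ for the set of $\alpha_2$ deleted cross-edges, so $G=(K\cup E_c)\setminus E_d$. The crucial observation, which drives both bounds, is that $\chi(F)=r+1$ prevents any copy of $F$ from embedding into an $r$-partite graph; hence every copy of $F$ in $G$ must contain at least one edge of $E_c$. Moreover, the copies of $F$ in $G$ that use $e\in E_c$ as their unique class-edge are precisely the copies in $(K\cup\{e\})\setminus E_d$ containing $e$.

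For the upper bound, I would start from $N_F(G)\le \sum_{e\in E_c} N_F(G,e)$ (which holds since each copy is counted at least once) and split each $N_F(G,e)$ into (a) copies using $e$ as the unique class-edge and (b) copies using $e$ together with some other $e'\in E_c\setminus\{e\}$. Term (a) is bounded by $N_F(K\cup\{e\},e)$, which by a minor extension of Lemma~\ref{lem-Mub-2} (to an arbitrary part rather than the largest one) is at most $c(n,F)+O_F(\phi\, n^{f-3})$, using the deviation bound $\max_i|n_i-n/r|\le n_1-n_r\le \phi$. For term (b), I would use that the number of copies of $F$ in $K_n$ containing a prescribed pair of edges is $O_F(n^{f-3})$ if the edges share a vertex and $O_F(n^{f-4})$ if they are disjoint, together with the fact that at most $2\alpha_1$ class-edges are incident to the endpoints of $e$. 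Summing over $e\in E_c$ gives $N_F(G)\le \alpha_1 c(n,F)+O_F(\alpha_1\phi\, n^{f-3})+O_F(\alpha_1^2\, n^{f-3})$; the last term is absorbed via $\alpha_1\le \phi/2$, yielding the claimed bound.

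For the lower bound, I would restrict to copies with exactly one class-edge:
\[
N_F(G)\;\ge\;\sum_{e\in E_c} A_e, \qquad A_e:=N_F\bigl((K\cup\{e\})\setminus E_d,\ e\bigr).
\]
The extension of Lemma~\ref{lem-Mub-2} gives $N_F(K\cup\{e\},e)\ge c(n,F)-O_F(\phi\, n^{f-3})$. The copies destroyed by the deletions in $E_d$ are controlled by $\sum_{e_d\in E_d} N_F(K\cup\{e\},\{e,e_d\})$, which by the same incidence/disjointness dichotomy is $O_F(\alpha_2\, n^{f-3})$. Combining these two estimates and using $\alpha_2\le \phi/2$ yields $A_e\ge c(n,F)-O_F(\phi\, n^{f-3})$, and summing over $e\in E_c$ gives the lower bound.

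The only non-routine point is the extension of Lemma~\ref{lem-Mub-2} to counting copies created by adding an edge to an arbitrary part with error $O_F(\phi\, n^{f-3})$. This follows from viewing $N_F(K\cup\{e\},e)$ as a polynomial in the part sizes $n_1,\dots,n_r$ whose value at the balanced sizes agrees with $c(n,F)$ up to $O_F(n^{f-3})$ (by the near-symmetry between the parts of $T_{n,r}$, accounting for the rounding of $\lfloor n/r\rfloor$ versus $\lceil n/r\rceil$), and noting that each $n_i$ deviates from $n/r$ by at most $\phi$. Once this is in place, the remainder is standard double-counting bookkeeping, and I expect the extension of Lemma~\ref{lem-Mub-2} to be the single step that requires some care.
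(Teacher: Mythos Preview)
Your proposal is correct and follows essentially the same approach as the paper: pivot on the $\alpha_1$ class-edges, bound the main term $N_F(K\cup\{e\},e)$ by $c(n,F)\pm O_F(\phi\,n^{f-3})$, and control overlaps between class-edges by the crude $O_F(n^{f-3})$-per-pair estimate together with $\binom{\alpha_1}{2}\le \tfrac12\alpha_1\phi$. The only tactical difference is that for the lower bound the paper restricts to a clean balanced sub-Tur\'an graph on $n_0=r(\lfloor n/r\rfloor-2\phi)$ vertices avoiding all perturbed edges (and then invokes Lemma~\ref{LEM2.5} on $c(n_0,F)$), whereas you keep the full graph and explicitly subtract the copies destroyed by $E_d$; both routes yield the same error term.
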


\begin{proof}
Let $G_{in}$ and $G_{cr}$ be
two graphs induced by edges
in $E(G)\setminus E(K)$
and $E(K)\setminus E(G)$, respectively.
By definition, we get $e(G_{in})=\alpha_1$ and $e(G_{cr})=\alpha_2$.
We write $G_{d}$ for the graph induced by all edges of $G_{in}$ and $ G_{cr}$.
Then $|G_{d}|\leq 2\alpha_1+2\alpha_2$.
Since $n_1-n_r \le \phi$,  we get  
$ \frac{n}{r}-\phi\leq n_r\leq n_1\leq \frac{n}{r}+\phi$, which yields $|V_i\setminus V(G_{d})|\geq |V_i|-|G_d|\geq \frac{n}{r}-2\phi$ for any $i\in [r]$.
For any edge $uv\in E(G_{in})$,
we may assume that $u,v\in V_{i_0}$ for some $i_0\in [r]$, where $[r]=\{1,2,\ldots,r\}$.
Let $V_{i_0}'$ be a subset of
$V_{i_0}$ with size $\lfloor\frac{n}{r}\rfloor-2\phi$
satisfying $V_{i_0}'\cap V(G_{d})=\{u,v\}$,
and let $V_i'$ be a subset of $V_i\setminus V(G_{d})$ with size $\lfloor\frac{n}{r}\rfloor-2\phi$
for each $i\in [r]\setminus \{i_0\}$.
We denote $G_{uv}=G[\cup_{i=1}^{r}V_i']$. 
Equivalently, the graph $G_{uv}$ can be obtained from $T_{n_0,r}$ by adding the edge $uv$,
where $n_0:=r(\lfloor\frac{n}{r}\rfloor-2\phi)$.
Since $n_0\geq n-r(2\phi+1)\geq n-3r\phi$ and $n$ is sufficiently large,
it follows that $n_0^{f-2}\geq  (n-3r\phi)^{f-2}\geq n^{f-2}-3rf\phi n^{f-3}$. 
By Lemma \ref{LEM2.5},
there exist constants $\alpha_F, \beta_F >0$ such that  
\[ c(n_0,F)\geq\alpha_F n_0^{f-2}- \beta_F n_0^{f-3} \ge \alpha_F  n^{f-2}-3\alpha_F rf\phi n^{f-3}-\beta_F n^{f-3}. \] 
By Lemma \ref{LEM2.5} again, we have $ c(n,F)\leq \alpha_F n^{f-2}+ \beta_F n^{f-3}$. 
Thus, we get 
\begin{align*}
N_F(G_{uv})
= c(n_0,F) \geq c(n,F)-(3\alpha_F rf\phi +2\beta_F) n^{f-3}.
\end{align*}
For each edge $uv\in E(G_{in})$, 
any copy of $F$ in $G_{uv}$ contains the only edge $uv$. Thus,  
$$N_F(G) \geq \sum_{uv\in E(G_{in})}N_F(G_{uv})\geq \alpha_1 c(n,F)- O_F( \alpha_1 \phi n^{f-3}).$$
Next, we prove the desired lower bound on $N_F(G)$. 
For two edges $e_i$ and $e_j$,
let $N_F(G,e_i,e_j)$ be the number of copies of $F$ in $G$ containing both $e_i$ and $e_j$.
A copy of $F$ containing $e_i$ and $e_j$ needs at most $f-3$ extra vertices;
these extra vertices can be chosen in at most $n^{f-3}$ ways.
Once the $f$ vertices are fixed,
there are at most $2^{f^2}$ possible choices to select edges
so that the resulting graph is isomorphic to $F$.
Hence, $N_F(G,e_i,e_j)\leq 2^{f^2} n^{f-3}$.
By Lemma \ref{LEM2.5},
there are constants $\alpha_F, \beta_F>0$ such that
$c(n+r\phi,F)\leq\alpha_F (n+r\phi)^{f-2}+\beta_F (n+r\phi)^{f-3}$.
For any edge $e\in E(G_{in})$, we have
\begin{align*}
N_F(G,e)
\leq  c(n+r\phi,F)
\leq \alpha_F n^{f-2}+(\alpha_F fr\phi+2\beta_F) n^{f-3} \leq  c(n,F)+(\alpha_F fr\phi+3\beta_F) n^{f-3},
\end{align*}
where the last inequality holds by Lemma \ref{LEM2.5}. 
Consequently, we get 
\begin{align*}
N_F(G) 
& \leq \sum_{e\in E(G_{in})}N_F(G,e)+\sum_{\{e_i,e_j\}\subseteq E(G_{in})}N_F(G,e_i,e_j) \\
&\leq \alpha_1\Big(c(n,F)+(\alpha_F fr\phi+3\beta_F) n^{f-3}\Big)+\binom{\alpha_1}{2}2^{f^2} n^{f-3}\\
&\leq  
\alpha_1 c(n,F)+ 
O_F\big( (\alpha_1\phi)n^{f-3} \big),
\end{align*} 
where the last inequality holds as $\binom{\alpha_1}{2}\leq \frac12\alpha_1\phi$.
The proof of Lemma \ref{LEM2.8} is complete.
\end{proof}

The following lemma has its roots in \cite[Lemma 4]{Mubayi2010}. 

\begin{lem} \label{lem-BC}
If $G$ is an $n$-vertex graph with $e(G)\ge (1-\frac{1}{r})\frac{n^2}{2} - t$, 
and $V(G)=V_1\cup V_2\cup \cdots \cup V_r$ is a vertex partition such that 
$\sum_{i=1}^r e(V_i) \le s$, then for each $i\in [r]$, 
$$\frac{n}{r} - \sqrt{2(s+t)} \le |V_i| \le  \frac{n}{r}+\sqrt{2(s+t)}. $$ 
\end{lem}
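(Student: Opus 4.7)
The plan is to compare the edge count of $G$ against the trivial upper bound coming from the partition and extract a Cauchy--Schwarz-type deviation inequality. Writing $n_i := |V_i|$, I would first split the edges according to the partition as $e(G) = \sum_{i<j} e(V_i, V_j) + \sum_{i=1}^r e(V_i)$, and use the obvious bound $e(V_i, V_j) \le n_i n_j$ together with the assumption $\sum_i e(V_i) \le s$ to get
\[
e(G) \;\le\; \sum_{1\le i<j\le r} n_i n_j + s \;=\; \frac{1}{2}\Bigl(n^2 - \sum_{i=1}^r n_i^2\Bigr) + s.
\]

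Next I would combine this with the hypothesis $e(G) \ge (1-\tfrac{1}{r})\tfrac{n^2}{2} - t$. Rearranging the resulting inequality and using the identity $(1-\tfrac{1}{r})\tfrac{n^2}{2} = \tfrac{1}{2}(n^2 - n^2/r)$, I get
\[
\sum_{i=1}^r n_i^2 - \frac{n^2}{r} \;\le\; 2(s+t).
\]
This is the key estimate: it says that $\sum n_i^2$ cannot exceed the minimum value $n^2/r$ (attained by the balanced partition) by more than $2(s+t)$.

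Finally, setting $\varepsilon_i := n_i - n/r$ and using $\sum_i \varepsilon_i = 0$, a direct expansion gives $\sum_i n_i^2 = n^2/r + \sum_i \varepsilon_i^2$, so $\sum_i \varepsilon_i^2 \le 2(s+t)$. In particular, $\varepsilon_i^2 \le 2(s+t)$ for each $i$, which is exactly the desired two-sided bound $|n_i - n/r| \le \sqrt{2(s+t)}$. The argument is essentially a one-line convexity computation once the edge split is performed, so I do not anticipate any real obstacle; the only point requiring minor care is ensuring the cross-edge bound $e(V_i,V_j) \le n_i n_j$ is used with equality in the complete multipartite benchmark, so that the deficit $\tfrac{1}{2}(\sum n_i^2 - n^2/r)$ captures the full imbalance penalty.
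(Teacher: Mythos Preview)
Your proposal is correct and essentially identical to the paper's proof: both arguments bound $\sum_{i<j}n_in_j$ from below via $e(G)-s$, rearrange to obtain $\sum_i n_i^2 - n^2/r \le 2(s+t)$, and then recognize the left-hand side as $\sum_i (n_i - n/r)^2$. The only cosmetic difference is that the paper phrases the cross-edge bound through the $r$-partite subgraph $H=G\setminus\bigcup_i G[V_i]$, whereas you write $e(V_i,V_j)\le n_in_j$ directly.
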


\begin{proof}
Let $H$ be the $r$-partite subgraph of $G$ by deleting all edges of $\cup_{i=1}^rG[V_i]$. Then 
\begin{align*}
\sum\limits_{i=1}^r \Bigl( |V_i| - \frac{n}{r} \Bigr)^2 
=  \sum_{i=1}^r |V_i|^2- \frac{n^2}{r}   
 =   \left( \sum\limits_{i=1}^r |V_i| \right)^2 -2 
\left( \sum\limits_{i<j} |V_i| |V_j| \right) -\frac{n^2}{r} .
\end{align*}
Since $\sum_{i<j}|V_i||V_j| \ge e(H)\ge (1-\frac{1}{r})\frac{n^2}{2} - t -s$, we get 
\[ \sum\limits_{i=1}^r \Bigl( |V_i| - \frac{n}{r} \Bigr)^2 \le \left(1-\frac{1}{r} \right)n^2 - 2 e(H)\le 2(s+t). \]
Thus, it follows that $\frac{n}{r} - \sqrt{2(s+t)} \le |V_i|\le \frac{n}{r} + \sqrt{2(s+t)}$ for every $i\in [r]$. 
\end{proof}

The following spectral stability 
can be deduced from Nikiforov \cite[Theorem 2]{Niki2009jgt}. 

\begin{thm}[Nikiforov \cite{Niki2009jgt}] 
\label{spectral-stability}
Let $F$ be a graph with $\chi (F)=r+1$. 
For every $\varepsilon >0$, there exist $\delta=\delta(F,\varepsilon) >0$ 
and $n_0$ such that 
if  $G$ is an $F$-free graph on $n\ge n_0$ vertices  with 
$\lambda (G) \ge (1- \frac{1}{r} -\delta )n$, then 
$G$ can be obtained from $T_{n,r}$ by adding and deleting at most 
$\varepsilon n^2$ edges. 
\end{thm}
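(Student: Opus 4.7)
The plan is to reduce Theorem~\ref{spectral-stability} to the classical edge-version Erd\H{o}s--Simonovits stability theorem, by showing that the spectral hypothesis together with $F$-freeness forces the edge-count sandwich $e(G) = \bigl(1 - \tfrac{1}{r} + o(1)\bigr)\tfrac{n^2}{2}$. The upper bound $e(G) \le \bigl(1 - \tfrac{1}{r} + o(1)\bigr)\tfrac{n^2}{2}$ is immediate from Erd\H{o}s--Stone--Simonovits (equivalently, Lemma~\ref{lem-ES-super}) applied to the $F$-free graph $G$. The matching lower bound $e(G) \ge \bigl(1 - \tfrac{1}{r} - \varepsilon'\bigr)\tfrac{n^2}{2}$ would follow from a Nikiforov-type sharp spectral Tur\'an inequality
\[
\lambda(G)^2 \;\le\; 2\Bigl(1 - \tfrac{1}{r}\Bigr)\, e(G) \;+\; o(n^2),
\]
valid for $F$-free graphs with $\chi(F) = r+1$. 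For $F = K_{r+1}$ this is a well-known sharp bound (essentially a Motzkin--Straus identity at the continuous level), which one proves by a booster / weight-shifting argument on the Perron eigenvector $\bm{x}$ using the identity $\lambda(G) = 2 \sum_{uv \in E(G)} x_u x_v$. For a general color-critical $F$, I would extend this inequality by combining it with the graph removal lemma (Lemma~\ref{thm-GRL}): remove $o(n^2)$ edges from $G$ to eliminate unwanted dense substructures and bring it within the scope of the $K_{r+1}$-free sharp bound, while losing only $o(n)$ in the spectral radius and $o(n^2)$ in the edge count.

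Plugging $\lambda(G) \ge (1 - \tfrac{1}{r} - \delta)n$ into this sharp inequality gives $e(G) \ge \bigl(1 - \tfrac{1}{r} - O(\delta)\bigr)\tfrac{n^2}{2}$, completing the sandwich. Then the edge-version Erd\H{o}s--Simonovits stability applied to the $F$-free graph $G$ yields directly that $G$ is within $\varepsilon n^2$ edge additions and deletions of $T_{n,r}$, once $\delta = \delta(F, \varepsilon)$ is chosen small enough that the intermediate $\varepsilon'$ fed to the classical stability is correspondingly small.

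The main obstacle is establishing the sharp Motzkin--Straus-type spectral Tur\'an inequality uniformly across all color-critical $F$ with $\chi(F) = r+1$. The trivial bound $\lambda(G) \ge 2e(G)/n$ runs in the wrong direction, so $F$-freeness has to be used in an essential way to rule out spectrally-concentrated but globally sparse configurations (such as a near-clique on a small vertex set, which is excluded because, for $n$ large, it would contain $F$). Correctly tracking the $o(n^2)$ edge loss through the removal-lemma perturbation while keeping the spectral shift at $o(n)$, and calibrating all quantitative dependencies so that $\delta(F, \varepsilon) \to 0$ as $\varepsilon \to 0$, is the technical crux of the proof.
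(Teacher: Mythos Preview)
The paper does not prove Theorem~\ref{spectral-stability}; it is quoted as a known result of Nikiforov (deducible from \cite[Theorem 2]{Niki2009jgt}) and used as a black box in the proof of Theorem~\ref{thm-sss}. So there is no in-paper argument to compare against.

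Your proposal is a valid route and, with minor tightening, goes through. The steps are: (i) since $G$ is $F$-free and $\chi(F)=r+1$, $G$ contains no blow-up $K_{r+1}(|F|)$, hence by a standard supersaturation argument $G$ has $o(n^{r+1})$ copies of $K_{r+1}$; (ii) the $K_{r+1}$-removal lemma deletes $o(n^2)$ edges to produce a $K_{r+1}$-free subgraph $G'$ with $\lambda(G')\ge\lambda(G)-\sqrt{2\cdot o(n^2)}=\lambda(G)-o(n)$ by Weyl's inequality (exactly the bound the paper itself uses in the proof of Theorem~\ref{thm-sss}); (iii) Nikiforov's sharp inequality $\lambda(G')^2\le 2\bigl(1-\tfrac{1}{r}\bigr)e(G')$ for $K_{r+1}$-free graphs from \cite{Niki2002cpc} then yields $e(G)\ge e(G')\ge\bigl(1-\tfrac{1}{r}-O(\delta)\bigr)\tfrac{n^2}{2}$; (iv) classical Erd\H{o}s--Simonovits stability applied to the $F$-free $G$ finishes.

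Two small remarks. First, the theorem is stated for arbitrary $F$ with $\chi(F)=r+1$, not only color-critical $F$; your argument already works verbatim in that generality, so there is no need to restrict. Second, your worry about ``spectrally concentrated but globally sparse'' configurations is not an obstacle to the proof: such graphs have $\lambda(G)=o(n)$ and are excluded by the hypothesis $\lambda(G)\ge(1-\tfrac{1}{r}-\delta)n$. The genuine work is the sharp $\lambda^2\le 2(1-\tfrac{1}{r})e$ bound, and your removal-lemma reduction to the $K_{r+1}$-free case handles it cleanly.
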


We extend Theorem \ref{spectral-stability} to graphs with few copies of $F$, 
instead of $F$-free graphs. 

\begin{thm}[Spectral supersaturation-stability] \label{thm-sss}
Let $F$ be a graph on $f$ vertices with $\chi (F)=r+1$. 
For every $\varepsilon >0$, 
there exist $\eta >0, \delta >0$ and $n_0$ such that if $G$ 
is a graph on $n\ge n_0$ vertices with at most 
$\eta n^{f}$ copies of $F$ and 
$ \lambda (G) \ge (1-\frac{1}{r} -\delta )n$, 
then $G$ can be obtained from the $r$-partite Tur\'{a}n graph $T_{n,r}$ by adding and deleting at most $\varepsilon n^2$ edges.
\end{thm}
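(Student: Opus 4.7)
The plan is to combine the graph removal lemma (Lemma~\ref{thm-GRL}) with Nikiforov's spectral stability result (Theorem~\ref{spectral-stability}). Given $\varepsilon > 0$, I will first invoke Theorem~\ref{spectral-stability} with parameter $\varepsilon/2$ to obtain constants $\delta_0 = \delta_0(F, \varepsilon/2) > 0$ and an integer $n_0$. I then set $\varepsilon' := \min\{\varepsilon/2,\, \delta_0^2/8\}$ and apply Lemma~\ref{thm-GRL} with this $\varepsilon'$ to produce the required $\eta = \eta(F, \varepsilon') > 0$, and finally choose $\delta := \delta_0/2$.

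Now take an arbitrary graph $G$ on $n \geq n_0$ vertices satisfying the hypotheses. By the graph removal lemma, there is a spanning subgraph $G' \subseteq G$ that is $F$-free and satisfies $|E(G) \setminus E(G')| \leq \varepsilon' n^2$. The main step is to control the drop $\lambda(G) - \lambda(G')$. Let $\bm{x}$ be the unit nonnegative Perron eigenvector of $G$, and write $H$ for the graph on $V(G)$ formed by the removed edges $E(G) \setminus E(G')$. Since $A(G') = A(G) - A(H)$ and $\|\bm{x}\| = 1$, the Rayleigh quotient yields
\[
\lambda(G') \;\geq\; \bm{x}^{\top} A(G')\,\bm{x} \;=\; \lambda(G) - \bm{x}^{\top} A(H)\,\bm{x} \;\geq\; \lambda(G) - \lambda(H).
\]
The elementary bound $\lambda(H)^2 \leq \sum_i \lambda_i(H)^2 = \|A(H)\|_F^2 = 2\,e(H) \leq 2\varepsilon' n^2$ gives $\lambda(H) \leq \sqrt{2\varepsilon'}\, n \leq (\delta_0/2)\,n$, and hence
\[
\lambda(G') \;\geq\; \left(1 - \tfrac{1}{r} - \delta - \tfrac{\delta_0}{2}\right) n \;\geq\; \left(1 - \tfrac{1}{r} - \delta_0\right) n.
\]

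With this spectral lower bound secured, Theorem~\ref{spectral-stability} applies to the $F$-free graph $G'$ and yields that $G'$ can be obtained from $T_{n,r}$ by adding and deleting at most $(\varepsilon/2)\,n^2$ edges. Since $G$ differs from $G'$ by at most $\varepsilon' n^2 \leq (\varepsilon/2)\,n^2$ edge deletions, the triangle inequality on edit distance shows that $G$ differs from $T_{n,r}$ by at most $\varepsilon n^2$ edges, which is the desired conclusion.

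The one genuinely delicate point, as the above makes clear, is the spectral-gap step: controlling how much $\lambda(G)$ can decrease after removing up to $\varepsilon' n^2$ edges. This is handled by taking the Perron vector of $G$ itself as a test vector for $A(G')$, combined with the crude bound $\lambda(H)\leq\sqrt{2e(H)}$. No finer information about the distribution of the Perron entries is needed, which is exactly why the reduction from the supersaturation regime to the $F$-free regime goes through.
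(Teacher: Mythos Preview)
Your proof is correct and follows essentially the same approach as the paper's own proof: apply the graph removal lemma to pass to an $F$-free subgraph $G'$, bound the spectral radius drop via $\lambda(G')\ge \lambda(G)-\sqrt{2e(G\setminus G')}$ (which is precisely your Rayleigh/test-vector argument), and then invoke Nikiforov's spectral stability on $G'$. The only differences are cosmetic choices of constants.
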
 

\begin{proof}
For any $\varepsilon >0$, let 
$\delta_{\ref{spectral-stability}} =\delta_{\ref{spectral-stability}}(F,\frac{1}{2}\varepsilon)\ll \varepsilon$ be the parameter determined in Theorem \ref{spectral-stability}. Then we set 
$\delta := \frac{1}{2}\delta_{\ref{spectral-stability}}$ and $\eta :=\delta_{\ref{thm-GRL}}(F, \frac{1}{8}\delta_{\ref{spectral-stability}}^4)$ for our purpose.  
Assume that $G$ is an $n$-vertex graph with $\lambda (G) \ge (1-\frac{1}{r} -\delta )n$ 
and $G$ contains at most $\eta n^f$ copies of $F$.
By Lemma \ref{thm-GRL}, 
we can remove at most $\frac{1}{8}\delta_{\ref{spectral-stability}}^4n^2 \le \frac{1}{2}\varepsilon n^2$ edges from $G$
such that the remaining subgraph $G'$ is $F$-free.
The Rayleigh formula gives $\lambda (G')\ge \lambda (G)- \sqrt{2e(G\setminus G')} \ge \left(1-\frac{1}{r} -\delta -\frac{1}{2}\delta_{\ref{spectral-stability}}^2 \right)n > (1- \frac{1}{r} -\delta_{\ref{spectral-stability}})n$. 
By Theorem \ref{spectral-stability}, for sufficiently large $n$, the graph $G'$ differs from $T_{n,r}$ in at most $\frac{1}{2}\varepsilon n^2$ edges. Therefore, we conclude that $G$ can be obtained from $T_{n,r}$ by adding and deleting at most $\varepsilon n^2$ edges. 
\end{proof}

\section{Minimizer and maximizer: Proof of Theorem \ref{thm-min-max}}

\label{sec2-2}

In this section, we prove that $Y_{n,r,q}$ and $L_{n,r,q}$ are the graphs that achieve the minimum and maximum spectral radius over all graphs of $\mathcal{T}_{n,r,q}$, respectively. 
An \emph{$\ell$-walk} is a walk on a sequence of $\ell$ vertices. 
Let $w_{\ell}(G)$ denote the number of $\ell$-vertex walks in a graph $G$. Clearly, we have $w_1(G)=n$ and $w_{2}(G)=2m$. The following lemma \cite{Zhang2024+} provides a characterization of the spectral radius of a graph, which is obtained from a complete $r$-partite graph by adding some class-edges.

\begin{lem}[Zhang \cite{Zhang2024+}] \label{lem-Zhangwenqian}
For each $1\leq i\leq r$, let $H_i$ be a graph with $V(H_i)\subseteq V_i$.
Let $G$ be the graph obtained from $K_r(n_1,\ldots,n_r)$ by adding the edges of $H_i$ into the partite set $V_i$ for each $i\in [r]$,
where $|V_i|=n_i$ and $\sum_{i=1}^rn_i=n$.
If $x>\lambda(H_i)$, then
$\sum_{\ell=1}^\infty\frac{w_{\ell +1}(H_i)}{x^{\ell +1}}$ is convergent.
Moreover, $\lambda=\lambda(G)$ is the largest root of the following equation 
\[ \sum\limits_{i=1}^{r}\frac1{1+\frac{n_i}{x}+\sum\limits_{\ell=1}^\infty\frac{w_{\ell +1}(H_i)}{x^{\ell+1}}}=r-1. \]
\end{lem}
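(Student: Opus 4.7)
The plan is to extract the claimed identity by decomposing the Perron eigenvector $\bm{x}$ of $G$ along the partition $V(G)=V_1\cup\cdots\cup V_r$ and reducing the full eigen-equation to a scalar system in the partial sums $s_i:=\sum_{v\in V_i}x_v$. For each $v\in V_i$, the eigenvalue equation $\lambda x_v=\sum_{u\sim v}x_u$ separates cleanly into an intra-part contribution from $H_i$ and a cross-part contribution equal to $\sum_{j\neq i}s_j=S-s_i$ (where $S:=\sum_j s_j$), yielding the vector equality $(\lambda I-A(H_i))\bm{x}_{V_i}=(S-s_i)\mathbf{1}_{V_i}$, with $A(H_i)$ viewed as an $n_i\times n_i$ matrix with zero rows and columns outside $V(H_i)$.

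To invert $\lambda I-A(H_i)$, I first note that since $G$ is connected and $H_i$ is a proper subgraph (some $V_j$ with $j\neq i$ is nonempty), the Perron--Frobenius theorem gives the strict inequality $\lambda=\lambda(G)>\lambda(H_i)$. Hence the Neumann series $(\lambda I-A(H_i))^{-1}=\lambda^{-1}\sum_{\ell\geq 0}\left(A(H_i)/\lambda\right)^{\ell}$ converges in operator norm, and convergence of the stated scalar series $\sum_{\ell\geq 1}w_{\ell+1}(H_i)/x^{\ell+1}$ for any $x>\lambda(H_i)$ will then follow from the walk-count identity $w_{\ell+1}(H_i)=\mathbf{1}^{T}A(H_i)^{\ell}\mathbf{1}$ together with the standard bound $w_{\ell+1}(H_i)\leq n_i\,\lambda(H_i)^{\ell}$.

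Solving the reduced system gives $\bm{x}_{V_i}=(S-s_i)(\lambda I-A(H_i))^{-1}\mathbf{1}_{V_i}$, and taking the inner product with $\mathbf{1}_{V_i}$ yields $s_i=(S-s_i)f_i(\lambda)$, where
\[
f_i(x):=\frac{n_i}{x}+\sum_{\ell\geq 1}\frac{w_{\ell+1}(H_i)}{x^{\ell+1}}.
\]
Rearranging gives $1/(1+f_i(\lambda))=1-s_i/S$, and summing over $i$ using $\sum_i s_i=S$ telescopes to the desired equation $\sum_{i=1}^{r}1/(1+f_i(\lambda))=r-1$.

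The ``largest root'' part will follow from monotonicity: each $f_i(x)$ is strictly positive and strictly decreasing on $(\max_i\lambda(H_i),\infty)$, so the left-hand side of the identity is strictly increasing there and hence has at most one root on this interval, which must coincide with $\lambda(G)$. The main delicate point is the strict Perron--Frobenius inequality $\lambda(G)>\lambda(H_i)$, which simultaneously legitimizes the Neumann inversion and pins down the correct branch of the scalar equation; a secondary bookkeeping issue is to handle the case when $V(H_i)\subsetneq V_i$ (i.e.\ $H_i$ has isolated vertices in $V_i$), but this is transparent once $A(H_i)$ is interpreted as an $n_i\times n_i$ matrix padded with zeros, since the walk-count identity still holds verbatim.
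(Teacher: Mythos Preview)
The paper does not supply its own proof of this lemma; it is quoted from Zhang~\cite{Zhang2024+} and used as a black box. Your argument is correct and self-contained: the block decomposition of the eigen-equation into $(\lambda I-A(H_i))\bm{x}_{V_i}=(S-s_i)\mathbf{1}$, the Neumann inversion justified by the strict Perron--Frobenius inequality $\lambda(G)>\lambda(H_i)$, the identification $\mathbf{1}^{T}(\lambda I-A(H_i))^{-1}\mathbf{1}=n_i/\lambda+\sum_{\ell\ge 1}w_{\ell+1}(H_i)/\lambda^{\ell+1}$ via the walk-count formula, and the monotonicity argument pinning $\lambda(G)$ down as the unique (hence largest) root on $(\max_i\lambda(H_i),\infty)$ all go through. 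The only implicit hypothesis you use beyond the stated ones is that $G$ is connected (equivalently $r\ge 2$ and all $n_i\ge 1$), which guarantees both $S>0$ and the strict inequality $\lambda(G)>\lambda(H_i)$; this is harmless in every application in the paper.
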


We are ready to present the proof of Theorem \ref{thm-min-max}.

\begin{proof}[{\bf Proof of Theorem \ref{thm-min-max} (i)}]
We begin with some notation.  
Let $V_1,V_2,\ldots,V_r$ be the partite sets of $T_{n,r}$,
where $|V_i|=n_i$ for each $i\in [r]$, and $n_1\geq n_2\ge  \cdots \geq n_r$.
Without loss of generality, 
we may assume that $G$ minimizes the spectral radius of graphs of $\mathcal{T}_{n,r,q}$. 
Our goal is to prove $G=Y_{n,r,q}$. 
Suppose that $G$ is obtained from $T_{n,r}$
by embedding a graph $H_i$ into the partite set $V_i$ for every $i\in [r]$. We denote $q_i:=e(H_i)$ for each $i\in [r]$.  
Then $\sum_{i=1}^{r}e(H_i)=q$. 

\begin{claim}\label{claim.A.8}
For each $i\in [r]$, we have $H_{i}= q_i K_2$, where $q_i\ge 0$ and $\sum_{i=1}^r q_i =q$. 
\end{claim}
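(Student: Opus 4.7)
My plan is to invoke Lemma~\ref{lem-Zhangwenqian} to express $\lambda(G)$ as the largest root of an equation involving the walk counts $w_{\ell+1}(H_i)$ of each $H_i$, and then to exploit the fact that, among graphs with a fixed number of edges, these walk counts are pointwise minimized by a matching. Since $q \leq n/(100r)$, each $\lambda(H_i) \leq \Delta(H_i) \leq q_i \leq q$ is much smaller than $\lambda(G) \geq \lambda(T_{n,r})$, so Lemma~\ref{lem-Zhangwenqian} applies and $\lambda := \lambda(G)$ is the largest root of $\sum_{i=1}^r f_i(\lambda) = r - 1$, where
\[
f_i(\lambda) \;:=\; \frac{1}{1 + \frac{n_i}{\lambda} + \sum_{\ell \geq 1} \frac{w_{\ell+1}(H_i)}{\lambda^{\ell+1}}}.
\]
Each $f_i$ is strictly increasing in $\lambda$ and strictly decreasing in every walk count $w_{\ell+1}(H_i)$, which gives a clean monotonicity principle to exploit.

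Next, I will establish the key walk inequality: for any graph $H$ with $q$ edges and any integer $\ell \geq 1$,
\[
w_{\ell+1}(H) \;\geq\; 2q,
\]
with equality if and only if $H = qK_2$. Indeed, each edge $uv \in E(H)$ produces the two back-and-forth walks $u v u v \cdots$ and $v u v u \cdots$ on $\ell + 1$ vertices, contributing $2q$ walks in total; any walk that is not of this form must traverse two distinct edges sharing a vertex, so additional walks appear precisely when some vertex of $H$ has degree $\geq 2$. In particular, for $\ell = 2$ this recovers the sharp bound $w_3(H) = \sum_v d_H(v)^2 \geq 2q$, with equality iff $H$ is a matching.

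Finally, to derive a contradiction, I assume some $H_i$ is not a matching. Then $w_3(H_i) > 2 q_i$ strictly, while $w_{\ell+1}(H_i) \geq 2 q_i$ for every $\ell \geq 1$. I replace $H_i$ by the matching $q_i K_2$ on $V_i$ (which fits, since $|V_i| \geq \lfloor n/r \rfloor > 2q \geq 2q_i$) to obtain a graph $G^* \in \mathcal{T}_{n,r,q}$. The replacement strictly increases $f_i(\lambda)$ at every $\lambda$ while leaving the other $f_j$'s unchanged, so the equation $\sum_j f_j(\lambda) = r - 1$ is solved at a strictly smaller value, i.e., $\lambda(G^*) < \lambda(G)$, contradicting the minimality of $G$. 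Hence every $H_i$ is a matching $q_i K_2$ with $\sum_i q_i = q$. The main obstacle will be the walk inequality itself: although the counting is elementary, one must carefully characterize the equality case and ensure that the strict gap at $w_3$ translates into a strict decrease in the root $\lambda$; this is handled by the pointwise strict monotonicity of $f_i$ encoded in Lemma~\ref{lem-Zhangwenqian}.
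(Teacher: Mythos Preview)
Your proof is correct and follows essentially the same route as the paper: both invoke Lemma~\ref{lem-Zhangwenqian}, compare the walk-generating series of $H_i$ with that of $q_iK_2$, and conclude via monotonicity that replacing $H_i$ by a matching strictly decreases $\lambda$. Your pointwise bound $w_{\ell+1}(H)\ge 2q$ for all $\ell\ge 1$ is in fact slightly cleaner than the paper's version (which only exploits $\ell\le 2$ and then crudely bounds the tail $\sum_{\ell\ge 3}2q_i/x^{\ell+1}$); just note that your ``equality iff $H=qK_2$'' clause is imprecise at $\ell=1$, where $w_2(H)=2q$ always, though this does not affect the argument since you only use strictness at $\ell=2$.
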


\begin{proof}[Proof of claim]
Suppose on the contrary that there exists $i\in [r]$ such that $H_{i}\neq q_iK_2$ for some $q_i\ge 2$ (in the case $q_i=1$, there is nothing to show). Then $H_i$ contains a path $P_3$ on three vertices. 
Note that $\delta (T_{n,r}) =\lfloor \frac{r-1}{r} n\rfloor >  q > \Delta (H_i) \ge \lambda (H_i)$. 
By Lemma \ref{lem-Zhangwenqian}, for every $j\in [r]$, we see that $\sum_{\ell=1}^\infty\frac{w_{\ell +1}(H_j)}{x^{\ell +1}}$ is convergent for $x\ge \delta (T_{n,r})$. 
In what follows, we prove that for $x\geq \delta(T_{n,r})$,
\begin{align}\label{eq-matchings}
\sum_{\ell=1}^\infty\frac{w_{\ell +1}(H_{i})}{x^{\ell+1}}
>\sum_{\ell=1}^\infty\frac{w_{\ell +1}(q_i K_2)}{x^{\ell+1}}.
\end{align} 
Note that $w_2(H_{i})=2q_i=w_2(q_i K_2)$. 
We see that $w_3(q_i K_2)=2q_i$ and $w_3(H_{i})\geq 2q_i+2$, since each edge of $H_i$ corresponds to two $3$-walks, and each copy of $P_3$ corresponds to two $3$-walks. 
For every $\ell \ge 3$, we have $w_{\ell +1}(q_i K_2)=2q_i$.  
Together with $x\geq \delta(T_{n,r})= \lfloor\frac{r-1}{r}n\rfloor$, this yields
\begin{eqnarray*}
\sum_{\ell=1}^\infty\frac{w_{\ell +1}(H_{i})}{x^{\ell+1}}
-\sum_{\ell=1}^\infty\frac{w_{\ell +1}(q_i K_2)}{x^{\ell+1}}
\geq \frac{2}{x^3} - \sum_{\ell=3}^\infty\frac{2q_i}{x^{\ell+1}}
=\frac{2}{x^3}- \frac{2q_i}{x^{4}}\cdot\frac1{1-\frac{1}{x}}
> 0. 
\end{eqnarray*} 
Thus, we conclude that the inequality \eqref{eq-matchings} holds.

Let $G^*$ be the graph obtained from $G$ by deleting all edges of $H_i$, and adding a matching with $q_i$ edges to the partite set $V_i$. 
We denote $\lambda=\lambda(G)$ and $\lambda^*=\lambda(G^*)$. Then we have $\lambda \ge \delta (T_{n,r})$ and $\lambda^*\ge \delta (T_{n,r})$. 
For every $x\geq \delta(T_{n,r})$, 
we define
\begin{align*}
h(x) &=\frac1{1+\frac{n_i}{x}
+\sum\limits_{\ell=1}^\infty\frac{w_{\ell +1}(H_i)}{x^{\ell+1}}} + \sum\limits_{j\neq i} \frac1{1+\frac{n_j}{x} + \sum\limits_{\ell =1}^{\infty} \frac{w_{\ell +1}(H_j)}{x^{\ell +1}}}-r+1, \\
h^*(x)&= \frac1{1+\frac{n_i}{x}
+\sum\limits_{\ell=1}^\infty\frac{w_{\ell +1}(q_i K_2)}{x^{\ell+1}}} + \sum\limits_{j\neq i} \frac1{1+\frac{n_j}{x} + \sum\limits_{\ell =1}^{\infty} \frac{w_{\ell +1}(H_j)}{x^{\ell +1}}}-r+1.
\end{align*}
By Lemma \ref{lem-Zhangwenqian}, we know that
$\lambda$ is a largest root of $h(x)$, and $\lambda^*$ is a largest root of $h^*(x)$. 
By direct computation, we have
\begin{align*}
h^*(x)-h(x)=\frac1{1+\frac{n_i}{x}
+\sum\limits_{\ell=1}^\infty\frac{w_{\ell +1}(q_i K_2)}{x^{\ell+1}}}-\frac1{1+\frac{n_i}{x}
+\sum\limits_{\ell=1}^\infty\frac{w_{\ell +1}(H_i)}{x^{\ell+1}}}>0,
\end{align*}
where the last inequality follows from \eqref{eq-matchings}. Therefore, we get $h^*(x) > h(x)$ for every $x\ge \delta (T_{n,r})$. 
So $h^*(\lambda)>h(\lambda)=0=h^*(\lambda^*)$, 
which implies $\lambda^* < \lambda$. 
This leads to a contradiction with the minimality of $G$.
Therefore, we get $H_{i}=q_iK_2$ for every $i\in [r]$. 
\end{proof}

\begin{claim}
There exists a unique $i\in [r]$ such that $e(H_{i})=q$, and $H_j=\varnothing$ for each $j\neq i$.
\end{claim}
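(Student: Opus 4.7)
The plan is to derive a contradiction from the assumption that two distinct indices $i,j\in[r]$ satisfy $q_i\ge 1$ and $q_j\ge 1$, following the same template as the proof of Claim~\ref{claim.A.8}. After relabelling I may assume $n_i\ge n_j$. Define $G^*$ to be the graph obtained from $G$ by deleting all $q_j$ edges of the matching $H_j=q_jK_2$ and enlarging the matching in $V_i$ from $q_iK_2$ to $(q_i+q_j)K_2$. Then $G^*\in\mathcal{T}_{n,r,q}$, and to contradict the minimality of $\lambda(G)$ it suffices to show $\lambda(G^*)<\lambda(G)$.

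For this I will invoke Lemma~\ref{lem-Zhangwenqian}. Since $w_\ell(qK_2)=2q$ for every $\ell\ge 2$, the generating series collapses to
\[
\sum_{\ell=1}^\infty\frac{w_{\ell+1}(qK_2)}{x^{\ell+1}}=\frac{2q}{x(x-1)},\qquad x>1.
\]
Thus $\lambda=\lambda(G)$ and $\lambda^*=\lambda(G^*)$ are the largest roots of $h(x)=0$ and $h^*(x)=0$ respectively, where $h$ and $h^*$ agree outside the terms indexed by $i$ and $j$. Setting $u=n_i/x$, $v=n_j/x$, $a=2q_i/(x(x-1))$, and $b=2q_j/(x(x-1))$, a direct algebraic rearrangement gives
\[
h^*(x)-h(x) \;=\; b\left[\frac{1}{(1+v)(1+v+b)}-\frac{1}{(1+u+a)(1+u+a+b)}\right].
\]
Because $u\ge v$ (from $n_i\ge n_j$) and $a>0$ (from $q_i\ge 1$), we have $u+a>v$; hence both factors in the right-hand denominator strictly dominate their counterparts on the left, the bracketed quantity is positive, and the strict inequality $h^*(x)>h(x)$ holds for every $x\ge \delta(T_{n,r})$.

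Combined with the elementary monotonicity of $h^*$ on the range $x\ge \delta(T_{n,r})$ and the fact that both $\lambda$ and $\lambda^*$ lie in this range (since both $G$ and $G^*$ contain $T_{n,r}$ as a subgraph), one concludes $h^*(\lambda)>h(\lambda)=0=h^*(\lambda^*)$, and therefore $\lambda^*<\lambda$, contradicting the minimality of $G$. Consequently at most one index $i\in[r]$ satisfies $H_i\neq\varnothing$; since $\sum_k q_k=q\ge 1$, exactly one does, and that $H_i$ satisfies $e(H_i)=q$. The step I expect to require the most care is precisely this last conversion of the pointwise inequality $h^*(x)>h(x)$ into the strict comparison $\lambda^*<\lambda$: while short, it must be justified via monotonicity of $h^*$ on $x\ge \delta(T_{n,r})$ in the same spirit as the concluding paragraph of the proof of Claim~\ref{claim.A.8}, ensuring that the relevant roots indeed lie in the interval where Lemma~\ref{lem-Zhangwenqian} applies.
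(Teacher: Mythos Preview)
Your proof is correct and follows essentially the same approach as the paper: move the matching from the smaller part $V_j$ into the larger part $V_i$, apply Lemma~\ref{lem-Zhangwenqian}, and verify that the resulting root function satisfies $h^*(x)>h(x)$ on $x\ge\delta(T_{n,r})$ via the monotonicity of $\tfrac{1}{1+t}-\tfrac{1}{1+t+b}$ in $t$. The only cosmetic difference is that you exploit the previous claim to write the walk series for $q_kK_2$ in closed form as $\tfrac{2q_k}{x(x-1)}$, which makes the algebraic comparison slightly more explicit than the paper's version.
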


\begin{proof}[Proof of claim]
Suppose on the contrary that there exist two indices  $i,j\in [r]$ with $i<j$ such that $e(H_{i})>0$ and $e(H_{j})>0$. 
Let $G^*$ be the graph obtained from $G$ by deleting all edges of $H_{j}$,
and adding a copy of $H_{j}$ into $V_{i}\setminus V(H_{i})$.
For simplicity, we denote $\lambda=\lambda (G)$ and $\lambda^*=\lambda(G^*)$.
Let $x\geq \delta(T_{n,r})$. Note that $x\geq q >\lambda(H_k)$. 
By Lemma \ref{lem-Zhangwenqian}, we know that $\lambda$ is the largest root of  
\begin{eqnarray*}
f(x)=\sum\limits_{k\in [r]}\frac1{1+\frac{n_k}{x}
+\sum\limits_{\ell=1}^\infty\frac{w_{\ell +1}(H_k)}{x^{\ell+1}}}-r+1.
\end{eqnarray*}
Similarly, $\lambda^*$ is the largest root of 
\begin{eqnarray*}
f^*(x)=\sum\limits_{k\in [r]\setminus \{i,j\}}\frac1{1+\frac{n_k}{x}+\sum\limits_{\ell=1}^\infty\frac{w_{\ell +1}(H_k)}{x^{\ell+1}}}
+\frac1{1+\frac{n_{i}}{x}+
\sum\limits_{\ell=1}^\infty\frac{w_{\ell +1}(H_{i}\cup H_j)}{x^{\ell+1}}}
+\frac1{1+\frac{n_{j}}{x}}
-r+1.
\end{eqnarray*} 
Note that $w_{\ell+1}(H_i\cup H_j) = w_{\ell +1}(H_i) + w_{\ell +1}(H_j)$. 
By direct computation, we have 
\begin{align*}
f^*(x)-f(x)& =
\left(\frac{1}{1+\frac{n_{j}}{x}}-\frac1{1+\frac{n_{j}}{x}+
\sum\limits_{\ell=1}^\infty\frac{w_{\ell +1}(H_{j})}{x^{\ell+1}}} \right)\\
&\quad -\left(\frac1{1+\frac{n_{i}}{x}+
\sum\limits_{\ell=1}^\infty\frac{w_{\ell +1}(H_{i})}{x^{\ell+1}}}-\frac1{1+\frac{n_{i}}{x}+
\sum\limits_{\ell=1}^{\infty} \frac{w_{\ell +1}(H_{i})}{x^{\ell+1}} + 
\sum\limits_{\ell=1}^{\infty} \frac{w_{\ell +1}(H_{j})}{x^{\ell +1}}}\right)
\end{align*}
Since $n_{i}\geq n_{j}$, 
it follows that $\frac{n_{j}}{x}<\frac{n_{i}}{x}+
\sum_{\ell=1}^\infty\frac{w_{\ell +1}(H_{i})}{x^{\ell+1}}$. 
Observe that $\frac{1}{1+x} - \frac{1}{1+x+y}$ is decreasing with respect to $x$.  So we get $f^*(x) > f(x)$ for every $x\geq \delta(T_{n,r})$. 
Then $f^*(\lambda)>f(\lambda)=0=f^*(\lambda^*).$
Since $f^*(x)$ is strictly increasing on $x$,
we have $\lambda >\lambda^*$,
which contradicts the choice of $G$.
\end{proof}

We conclude that the minimizer $G$ is obtained by adding a non-empty graph $H_i=qK_2$ to a partite set $V_i$ of Tur\'{a}n graph $T_{n,r}$. 
To prove $G=Y_{n,r,q}$, 
it suffices to show that $|V_i|= \lceil \frac{n}{r}\rceil$.

\begin{claim}\label{cl-larger-part}
The non-empty graph $H_i$ is embedded into a largest partite set of $T_{n,r}$. 
\end{claim}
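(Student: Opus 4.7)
The plan is to run an exchange argument analogous to the one used in the previous claim. Suppose for contradiction that $|V_i|<\lceil n/r\rceil$. Since $T_{n,r}$ is balanced, some other partite set $V_j$ has $|V_j|=\lceil n/r\rceil=|V_i|+1$, so $n_j>n_i$. By the preceding claim every $H_k$ with $k\neq i$ is empty, so I may form $G^*$ from $G$ by deleting the $q$ edges of $H_i=qK_2$ from $V_i$ and embedding a fresh copy of $qK_2$ into $V_j$. The goal is to show $\lambda(G^*)<\lambda(G)$, which will contradict the minimality of $G$.

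To compare the two spectral radii, I will apply Lemma~\ref{lem-Zhangwenqian} to $G$ and $G^*$. A walk on a matching stays on a single edge, so $w_{\ell+1}(qK_2)=2q$ for every $\ell\ge 1$, and hence
\[
\sum_{\ell=1}^{\infty}\frac{w_{\ell+1}(qK_2)}{x^{\ell+1}}=\frac{2q}{x(x-1)}=:c(x)
\]
for $x>1$. The lemma then characterizes $\lambda=\lambda(G)$ and $\lambda^*=\lambda(G^*)$ as the largest roots of
\[
f(x)=\frac{1}{1+\tfrac{n_i}{x}+c(x)}+\frac{1}{1+\tfrac{n_j}{x}}+\sum_{k\neq i,j}\frac{1}{1+\tfrac{n_k}{x}}-(r-1)=0
\]
and of the analogous equation $f^*(x)=0$ in which the roles of $n_i$ and $n_j$ in the first two summands are swapped.

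The crucial computation is $f^*(x)-f(x)=\Delta(n_i/x)-\Delta(n_j/x)$, where
\[
\Delta(a):=\frac{1}{1+a}-\frac{1}{1+a+c(x)}=\frac{c(x)}{(1+a)(1+a+c(x))}.
\]
Since $c(x)>0$, the function $a\mapsto\Delta(a)$ is strictly decreasing on $a>0$, so the inequality $n_i<n_j$ forces $f^*(x)>f(x)$ for every $x\ge\delta(T_{n,r})$. Evaluating at $x=\lambda$ yields $f^*(\lambda)>f(\lambda)=0$; because each summand of $f^*$ is increasing in $x$ (its denominator decreases as $x$ grows), $f^*$ is strictly increasing on the relevant range, so $\lambda^*<\lambda$, giving the required contradiction.

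I do not expect a substantial obstacle here: the argument is a direct variant of the exchange employed in the previous claim, and the only new ingredient is identifying the monotone quantity $\Delta(a)$ and noting its monotonicity in $a$, both of which are immediate from the factored form above. The remaining verifications, namely that the series defining $c(x)$ converges for $x\ge\delta(T_{n,r})$ and that each term of $f^*$ is monotone in $x$, are entirely routine.
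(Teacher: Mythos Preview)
Your proof is correct and follows essentially the same exchange argument as the paper: move the matching from the smaller part to a larger one, apply Lemma~\ref{lem-Zhangwenqian}, and use the monotonicity of $a\mapsto \frac{1}{1+a}-\frac{1}{1+a+c}$ together with the monotonicity of $f^*$ to deduce $\lambda^*<\lambda$. The only cosmetic difference is that you compute the walk series for $qK_2$ explicitly as $c(x)=\tfrac{2q}{x(x-1)}$, whereas the paper leaves it as $\sum_{\ell\ge 1} w_{\ell+1}(H_r)/x^{\ell+1}$.
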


\begin{proof}[Proof of claim]
It suffices to consider the case $\lceil\frac{n}{r}\rceil=\lfloor\frac{n}{r}\rfloor +1$.
Suppose on the contrary that $H_r$ is embedded into $V_r$, where $e(H_r)=q$ and $|V_r|=\lfloor\frac{n}{r}\rfloor$.
Let $G^*$ be obtained from $G$ by deleting all edges of $H_{r}$,
and adding a copy of $H_{r}$ into $V_{1}$.
We denote $\lambda=\lambda(G)$ and $\lambda^*=\lambda(G^*)$.
For any $x\geq \delta(T_{n,r}) >\lambda(H_r)$, 
by Lemma \ref{lem-Zhangwenqian},  we know that $\sum_{\ell=1}^\infty\frac{w_{\ell +1}(H_r)}{x^{\ell +1}}$ is convergent.
We define
\begin{align*}
g(x) &=\sum\limits_{i=1}^{r-1} \frac1{1+\frac{n_i}{x}}+\frac1{1+\frac{n_r}{x}
+\sum\limits_{\ell=1}^\infty\frac{w_{\ell +1}(H_r)}{x^{\ell+1}}}-r+1, \\ 
g^*(x) &=\frac1{1+\frac{n_1}{x}
+\sum\limits_{\ell=1}^\infty\frac{w_{\ell +1}(H_r)}{x^{\ell+1}}} + \sum\limits_{i=2}^{r} \frac1{1+\frac{n_i}{x}}-r+1.
\end{align*}
By Lemma \ref{lem-Zhangwenqian}, we know that
$g(\lambda)=0=g^*(\lambda^*).$
By direct computation, we have
\begin{align*}
g^*(x)-g(x) &=\left(\frac1{1+\frac{n_{r}}{x}}-\frac1{1+\frac{n_{r}}{x}+
\sum\limits_{\ell=1}^\infty\frac{w_{\ell +1}(H_{r})}{x^{\ell+1}}}\right)
-\left(\frac{1}{1+\frac{n_{1}}{x}}-\frac1{1+\frac{n_{1}}{x}+
\sum\limits_{\ell=1}^\infty\frac{w_{\ell +1}(H_{r})}{x^{\ell+1}}}
\right). 
\end{align*}
Since $\frac{n_{r}}{x}<\frac{n_{1}}{x}$ for every $x\geq \delta(T_{n,r})$, we see that $g^*(x)>g(x)$.
Consequently, $g^*(\lambda)>g(\lambda)=g^*(\lambda^*).$
As $g^*(x)$ is increasing with respect to $x$,
we get $\lambda >\lambda^*$,
contradicting the choice of $G$.
\end{proof}

By Claim \ref{cl-larger-part}, we have $G=Y_{n,r,q}$,
completing the proof of part (i) of Theorem \ref{thm-min-max}.
\end{proof}

\begin{proof}[{\bf Proof of Theorem \ref{thm-min-max} (ii)}]
In the sequel, we present the proof of part (ii), 
which is slightly more complicated than part (i). 
Assume that $G\in \mathcal{T}_{n,r,q}$ is a graph that achieves the maximum spectral radius. Our goal is to prove that $G=L_{n,r,q}$. Firstly, we show that each $H_i$ is a triangle when $e(H_i)=3$, or a star when $e(H_i)\neq 3$. Secondly, we prove that there is a unique index $i\in [r]$ such that $H_i\neq \varnothing$, and $H_j= \varnothing$ for every $j\neq i$. Thirdly, by the same argument of the above proof of Claim \ref{cl-larger-part}, the unique $H_i$ must be embedded into a smallest partite set of $T_{n,r}$. 

\begin{claim}
    \label{cl-stars} 
    If $V(H_i)\neq\varnothing$, then $H_i= K_3$ for $e(H_i)=3$; 
and $H_i= S_{e(H_i)+1}$ otherwise. 
\end{claim}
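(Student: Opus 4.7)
The plan is to argue by contradiction, in parallel with the proof of Claim \ref{claim.A.8}, using Lemma \ref{lem-Zhangwenqian} to reduce a spectral comparison to a walk-series comparison. Assume $H_i$ is not of the claimed form. Let $H_i^{*}:=K_3$ if $q_i:=e(H_i)=3$, and $H_i^{*}:=S_{q_i+1}$ otherwise; since $|V_i|\ge\lfloor n/r\rfloor\ge q_i+1$ for $n$ large (as $q\le n/(100r)$), $H_i^{*}$ can be embedded in $V_i$. Let $G^{*}$ be the graph obtained from $G$ by deleting all edges of $H_i$ and embedding $H_i^{*}$ in their place, leaving every other partite set and embedded subgraph untouched. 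It will suffice to prove $\lambda(G^{*})>\lambda(G)$, contradicting the maximality of $G$.

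Set $\lambda:=\lambda(G)$ and $\lambda^{*}:=\lambda(G^{*})$; both exceed $\delta(T_{n,r})>q\ge\lambda(H_j)$ for every $j$. By Lemma \ref{lem-Zhangwenqian}, $\lambda$ and $\lambda^{*}$ are the largest roots of $f(x):=\sum_{j\in[r]}1/D_j(x)-(r-1)$ and of $f^{*}(x):=\sum_{j\in[r]}1/D_j^{*}(x)-(r-1)$, respectively, where $D_j(x):=1+n_j/x+W(H_j;x)$, $W(H;x):=\sum_{\ell\ge 1}w_{\ell+1}(H)/x^{\ell+1}$, and $D_j^{*}$ is defined identically except that $H_i$ is replaced by $H_i^{*}$ in the $i$-th coordinate. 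Since only the $i$-th summand differs, it suffices to show $W(H_i^{*};x)>W(H_i;x)$ for all $x\ge\delta(T_{n,r})$: this yields $D_i^{*}(x)>D_i(x)$, hence $f^{*}(x)<f(x)$ on $[\delta(T_{n,r}),\infty)$, and the strict monotonicity of $f$ and $f^{*}$ then forces $\lambda^{*}>\lambda$.

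For the series comparison, $w_2$ depends only on the edge count, so the first distinguishing term is $w_3(H)=\sum_{uv\in E(H)}(d_H(u)+d_H(v))$. Since for every edge $uv$ of a $q_i$-edge graph we have $d_H(u)+d_H(v)\le q_i+1$ (each of the remaining $q_i-1$ edges contributes at most $1$ to $d_H(u)+d_H(v)$), we obtain $w_3(H)\le q_i(q_i+1)$, with equality iff $d_H(u)+d_H(v)=q_i+1$ for every edge. A quick case analysis shows this forces $H=S_{q_i+1}$, with the extra possibility $H=K_3$ when $q_i=3$. Hence $w_3(H_i^{*})\ge w_3(H_i)$ with strict inequality unless $q_i=3$ and $H_i\in\{K_3,S_4\}$; in that exceptional subcase, $w_3(K_3)=12=w_3(S_4)$ while $w_4(K_3)=24>18=w_4(S_4)$, so the leading non-zero coefficient of $W(H_i^{*};x)-W(H_i;x)$ is positive in every case.

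To promote a positive leading coefficient into the pointwise strict inequality on $[\delta(T_{n,r}),\infty)$, one uses standard tail estimates: $w_{\ell+1}(H)\le|V(H)|\lambda(H)^\ell\le 2q_i(\sqrt{2q_i})^\ell$ bounds the tail past the leading order by a geometric series in $\sqrt{2q_i}/x$, while the leading coefficient $w_3(H_i^{*})-w_3(H_i)$ is in fact $\Omega(q_i)$ whenever $q_i\ge 4$ and $H_i\neq S_{q_i+1}$ (the worst case being ``near-star'' graphs, obtained by moving a single edge of $S_{q_i+1}$ between two leaves). This quantitative strengthening, combined with $q\le n/(100r)$ and $x\ge\delta(T_{n,r})=\Theta(n)$, ensures the tail is absorbed by the leading term. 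The main subtlety is the exceptional case $q_i=3$, which is precisely why the extremal graph has two shapes: both $K_3$ and $S_4$ saturate the degree-squared bound at $w_3$, and one has to go one order deeper to $w_4$ to see that $K_3$ wins.
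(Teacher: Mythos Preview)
Your approach via Lemma~\ref{lem-Zhangwenqian} is genuinely different from the paper's proof, which uses the Perron eigenvector and a double-eigenvector edge-switching argument (first showing the vertex of maximum eigenvector entry is dominating in $H_i$, then comparing $K_3$ vs.\ $S_4$ and star vs.\ non-star directly). Your $w_3$ analysis is correct and clean: $w_3(H)=\sum_v d_H(v)^2\le q_i(q_i+1)$ with equality iff every two edges meet, i.e.\ $H\in\{S_{q_i+1},K_3\}$; and the $K_3$ vs.\ $S_4$ tiebreak at $w_4$ is right.

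However, the tail estimate you state does not close the argument in the full range $q\le n/(100r)$. Your bound $w_{\ell+1}(H)\le 2q_i(\sqrt{2q_i})^{\ell}$ gives, for the tail past $\ell=2$,
\[
\sum_{\ell\ge 3}\frac{w_{\ell+1}(H)+w_{\ell+1}(H_i^{*})}{x^{\ell+1}}
=O\!\left(\frac{q_i^{5/2}}{x^{4}}\right),
\]
whereas the leading gain is $(w_3(H_i^{*})-w_3(H_i))/x^3=\Theta(q_i/x^3)$ in the near-star case. Their ratio is $\Theta(q_i^{3/2}/x)$, and with $q_i$ linear in $n$ and $x=\Theta(n)$ this is $\Theta(\sqrt{n})\to\infty$, so the crude tail bound swamps the leading term. (Concretely, for $r=2$, $q_i=n/200$, $x\approx n/2$, your inequality $x\gtrsim q_i^{3/2}$ fails once $n$ exceeds a few times $10^4$.) The difficulty is real: already $w_4(H_i)>w_4(S_{q_i+1})$ for the near-star, so one cannot hope for termwise domination, and bounding $|w_{\ell+1}(H_i^{*})-w_{\ell+1}(H_i)|$ by the sum of the two walk counts throws away the cancellation that actually makes the series positive. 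A sharper bound on the \emph{difference} $w_{\ell+1}(H_i^{*})-w_{\ell+1}(H_i)$ (or an argument organized around the small parameter $q_i/x^2$ rather than $\sqrt{q_i}/x$) would be needed to push your approach through for $q_i$ linear in $n$; as written, the proposal has a gap here. The paper's eigenvector argument sidesteps this entirely.
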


The proof of Claim \ref{cl-stars} is similar to that of \cite{FTZ2024}, so we put it in  Appendix \ref{App}. 

\begin{claim}
There is a unique $i\in [r]$ such that $e(H_{i})=q$, and $H_j=\varnothing$ for every $j \neq i$.
\end{claim}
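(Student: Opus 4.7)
The plan is to proceed by contradiction. Suppose there exist distinct indices $i, j \in [r]$ with $H_i, H_j \neq \varnothing$; after relabeling, assume $n_i \leq n_j$. I will construct a graph $G^* \in \mathcal{T}_{n,r,q}$ satisfying $\lambda(G^*) > \lambda(G)$, contradicting the maximality of $G$.

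The construction of $G^*$ is as follows: delete all $q_j := e(H_j)$ edges of $H_j$ inside $V_j$, erase the subgraph $H_i$ inside $V_i$, and embed inside $V_i$ a single star $S_{q_i+q_j+1}$ rooted at some chosen vertex of $V_i$. Since $q \leq \tfrac{n}{100r}$, the part $V_i$ has $n_i \geq \lfloor n/r \rfloor$ vertices and is therefore large enough to host this star. Thus $G^* \in \mathcal{T}_{n,r,q}$, with $H_i^* = S_{q_i+q_j+1}$, $H_j^* = \varnothing$, and $H_k^* = H_k$ for every $k \neq i, j$.

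Next, I would apply Lemma~\ref{lem-Zhangwenqian} to express $\lambda = \lambda(G)$ and $\lambda^* = \lambda(G^*)$ as the largest roots of strictly increasing functions $f(x)$ and $f^*(x)$ that differ only in the two summands at indices $i$ and $j$. Writing $W_H(x) := \sum_{\ell \geq 1} w_{\ell+1}(H)/x^{\ell+1}$, the crucial analytic input is the super-additivity
\[ W_{S_{q_i+q_j+1}}(x) \;>\; W_{H_i}(x) + W_{H_j}(x) \qquad \text{for all } x \geq \delta(T_{n,r}). \]
Using the closed forms $W_{S_{m+1}}(x) = \frac{m(2x+m+1)}{x(x^2-m)}$ and $W_{K_3}(x) = \frac{6}{x(x-2)}$, this reduces to a power-series expansion in $1/x$: the $1/x^2$ coefficients cancel, and the $1/x^3$ coefficient is strictly positive in every case permitted by Claim~\ref{cl-stars} (of order $q_i q_j$ when both $H_i, H_j$ are stars, of order $q_j$ when $H_i = K_3$, and so on). Combined with $n_i \leq n_j$ and a short algebraic manipulation, the super-additivity yields $f^*(x) < f(x)$ for every $x \geq \delta(T_{n,r})$; the monotonicity of $f^*$ then forces $\lambda^* > \lambda$, giving the desired contradiction.

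The main obstacle is the borderline case $n_i = n_j$. If one merely transplanted $H_j$ into $V_i$ as a disjoint copy, the new walk generating function would satisfy $W_{\text{new}} = W_{H_i} + W_{H_j}$ exactly, and the gains and losses in the two summands would cancel when $n_i = n_j$; a small example (e.g., $K_{5,5}$ with one extra edge in each part) shows that this disjoint-union move actually decreases $\lambda$. It is precisely the $\Theta(1/x^3)$ super-additivity gap produced by consolidating the edges into a single connected star that provides the uniform slack needed for the inequality to go through in all cases.
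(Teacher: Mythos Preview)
Your approach is correct and essentially the same as the paper's: both consolidate the edges from two nonempty $H_i,H_j$ into the smaller of the two parts and compare spectral radii via Lemma~\ref{lem-Zhangwenqian}, with the key analytic step being that the walk generating function of the combined graph exceeds $W_{H_i}+W_{H_j}$ by a $\Theta(1/x^3)$ margin that beats the $O(1/x^4)$ denominator disadvantage. The only cosmetic differences are that the paper uses $K_3$ (rather than $S_4$) as the combined graph when $q_i+q_j=3$, and organizes the final inequality as a pair of explicit ratio bounds (roughly $(W_H-W_{H_j})/W_{H_i}>1+b/(2x)$ versus a denominator ratio $<1+1/(2x)$) rather than your super-additivity framing.
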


\begin{proof}[Proof of claim]
Suppose on the contrary that there exist two indices  $i,j\in [r]$ with $i<j$ such that $e(H_{i})>0$ and $e(H_{j})>0$. By Claim \ref{cl-stars}, we know that each $H_i$ is a triangle or star.
Let $e(H_i)=a$ and $e(H_j)=b$.
Let $G^*$ be the graph obtained from $G$ by deleting all edges of $H_i$ and $H_{j}$,
and embedding a copy of $H$ into $V_{j}$
where $H= K_3$ when $a+b=3$; and $H= S_{a+b+1}$ otherwise.

We denote $\lambda=\lambda (G)$ and $\lambda^*=\lambda(G^*)$. 
Let $x\geq \delta(T_{n,r})$.
Then we have $x\geq q >\lambda(H_k)$.
By Lemma \ref{lem-Zhangwenqian},  we see that $\sum_{\ell=1}^\infty\frac{w_{\ell +1}(H_k)}{x^{\ell +1}}$ is convergent.
Moreover, $\lambda$ is the largest root of 
\begin{eqnarray*}
f(x)=\sum\limits_{k\in [r]}\frac1{1+\frac{n_k}{x}
+\sum\limits_{\ell=1}^\infty\frac{w_{\ell +1}(H_k)}{x^{\ell+1}}}-r+1.
\end{eqnarray*}
Similarly, $\lambda^*$ is the largest root of
\begin{eqnarray*}
f^*(x)=\sum\limits_{k\in [r]\setminus \{i,j\}}\frac1{1+\frac{n_k}{x}+\sum\limits_{\ell=1}^\infty\frac{w_{\ell +1}(H_k)}{x^{\ell+1}}}
+\frac1{1+\frac{n_{i}}{x}}
+\frac1{1+\frac{n_{j}}{x}+
\sum\limits_{\ell=1}^\infty\frac{w_{\ell +1}(H)}{x^{\ell+1}}}
-r+1.
\end{eqnarray*}
By direct computation, we have
\begin{align}\label{eq-difference}
&f^*(x)-f(x)   \nonumber\\
& =
\left(\frac{1}{1+\frac{n_{i}}{x}}-\frac1{1+\frac{n_{i}}{x}+
\sum\limits_{\ell=1}^\infty\frac{w_{\ell +1}(H_{i})}{x^{\ell+1}}} \right)
-\left(\frac1{1+\frac{n_{j}}{x}+
\sum\limits_{\ell=1}^\infty\frac{w_{\ell +1}(H_{j})}{x^{\ell+1}}}-\frac1{1+\frac{n_{j}}{x}+
\sum\limits_{\ell=1}^{\infty} \frac{w_{\ell +1}(H)}{x^{\ell+1}}}\right)  \nonumber\\
&=\frac{\sum\limits_{\ell=1}^\infty\frac{w_{\ell +1}(H_{i})}{x^{\ell+1}}}{\big(1+\frac{n_{i}}{x}\big)\Big(1+\frac{n_{i}}{x}+
\sum\limits_{\ell=1}^\infty\frac{w_{\ell +1}(H_{i})}{x^{\ell+1}}\Big)}
-\frac{\sum\limits_{\ell=1}^\infty\frac{w_{\ell +1}(H)}{x^{\ell+1}} - \sum\limits_{\ell=1}^{\infty} 
\frac{w_{\ell +1}(H_j)}{x^{\ell +1}}}{\Big(1+\frac{n_{j}}{x}+
\sum\limits_{\ell=1}^\infty\frac{w_{\ell +1}(H_{j})}{x^{\ell+1}}\Big) \Big(1+\frac{n_{j}}{x}+
\sum\limits_{\ell=1}^{\infty} \frac{w_{\ell +1}(H)}{x^{\ell+1}}\Big)}
\end{align}

Our goal is to show that $f^*(x) < f(x)$ for every $x\ge \delta (T_{n,r})$. 
By direct computation, for each $t\geq 1$,
we have $w_{2t}(S_{a+1})=a\cdot a^{t-1}+a^t=2a^t$ and $w_{2t+1}(S_{a+1})=a^{t+1}+a^{t}$. 
Since $x\ge \delta (T_{n,r}) \ge \lfloor \frac{r-1}{r}n\rfloor $  and $a\leq q< \frac{n}{100r}$, we get 
\begin{align}\label{eq-bound-star}
\frac{2a}{x^2} + \frac{a^2+a}{x^3} < \sum\limits_{\ell=1}^{\infty} \frac{w_{\ell +1}(S_{a+1})}{x^{\ell+1}}
=\frac{2a}{x^{2}}+\frac{a^2+a}{x^{3}}+\frac{2a^2}{x^{4}}+ \frac{a^3+a^2}{x^5} + \cdots 
< \frac{2a}{x^{2}}+\frac{a^2+a}{x^{3}} + \frac{3a^2}{x^4}.
\end{align}
Meanwhile, we have $w_{t}(K_3)=3\cdot 2^{t-1}$ for each $t\geq 2$.
It is not hard to check that
\begin{align}\label{eq-bound-triangle}
\frac{6}{x^2} + \frac{12}{x^3} < \sum\limits_{\ell=1}^{\infty} \frac{w_{\ell +1}(K_3)}{x^{\ell+1}}
=\frac{6}{x^{2}}+\frac{12}{x^{3}}+\frac{24}{x^{4}}+ \frac{48}{x^5} +\cdots < 
\frac{6}{x^{2}}+\frac{12}{x^{3}} + \frac{27}{x^4}.
\end{align}
Note that (\ref{eq-bound-star}) is consistent with (\ref{eq-bound-triangle}) when $a=3$. 
Recall that $e(H)=a+b$ and $e(H_j)=b$. 
Combining  \eqref{eq-bound-star} with \eqref{eq-bound-triangle},
we obtain
\begin{align}\label{eq-ratio-1}
\frac{\sum\limits_{\ell=1}^\infty\frac{w_{\ell +1}(H)}{x^{\ell+1}}- \sum\limits_{\ell=1}^{\infty} \frac{w_{\ell +1}(H_j)}{x^{\ell +1}} }
{\sum\limits_{\ell=1}^{\infty} \frac{w_{\ell +1}(H_i)}{x^{\ell+1}}}
> \frac{\frac{2a}{x^{2}}+\frac{a^2 + 2ab+a}{x^{3}}  - \frac{3b^2}{x^4}}{\frac{2a}{x^{2}}+\frac{a^2+a}{x^{3}} + \frac{3a^2}{x^4}}
>1+\frac{b}{2x},
\end{align}
where the last inequality holds since $1\le a,b \le q \le \frac{n}{100r}$ and $x\ge \lfloor \frac{r-1}{r}n\rfloor$. 

Note that $\frac{a^2+a}{x^3} + \frac{3a^2}{x^4} \le \frac{a}{x^2}$. Then using (\ref{eq-bound-star}) and (\ref{eq-bound-triangle}) yields 
\begin{align}\label{eq-ratio-2}
&\frac{\Big(1+\frac{n_{j}}{x}+
\sum\limits_{\ell=1}^\infty\frac{w_{\ell +1}(H_{j})}{x^{\ell+1}}\Big)\Big(1+\frac{n_{j}}{x}+
\sum\limits_{\ell=1}^{\infty} \frac{w_{\ell +1}(H)}{x^{\ell+1}}\Big)}{\big(1+\frac{n_{i}}{x}\big)\Big(1+\frac{n_{i}}{x}+
\sum\limits_{\ell=1}^\infty\frac{w_{\ell +1}(H_{i})}{x^{\ell+1}}\Big)}\nonumber\\
&< \frac{\big(1+\frac{n_{i}}{x}+\frac{3b}{x^2}\big)\big(1+\frac{n_{i}}{x}+
\frac{3q}{x^2}\big)}{\big(1+\frac{n_{i}}{x}\big)\big(1+\frac{n_{i}}{x}\big)}
< \Big(1+\frac{3q}{x^2}\Big)^2<1+\frac{1}{2x}, 
\end{align}
where the last inequality holds since $1\le b\le q\le \frac{n}{100r}$ and $x\ge \lfloor\frac{r-1}{r}n \rfloor$. 
Combining with \eqref{eq-difference}, \eqref{eq-ratio-1} and \eqref{eq-ratio-2},
 it follows that $f^*(x)< f(x)$ for every $x\geq \delta(T_{n,r})$.
Then $f^*(\lambda)<f(\lambda)= 0 = f^*(\lambda^*).$
Since $f^*(x)$ is strictly increasing on $x$,
we have $\lambda <\lambda^*$,
which contradicts with the choice of $G$.
\end{proof}

By a similar discussion as in the proof of Claim \ref{cl-larger-part}, 
the graph $H_i$ with $e(H_i)=q$ must be embedded into a smallest partite set of $T_{n,r}$.
This completes the proof of part (ii) Theorem \ref{thm-min-max}. 
\end{proof}

\subsection{Tightness of Theorem \ref{thm-Y}}

In what follows,
we provide an example showing that Theorem \ref{thm-Y} does not hold for $q\ge 2\sqrt{n}$ when $F=K_{r+1}$. 
Recall that $Y_{n,r,q}$ is obtained by adding a matching with $q$ edges to a largest partite set of $T_{n,r}$, and it achieves the minimum spectral radius over all graphs of  $\mathcal{T}_{n,r,q}$ by Theorem~\ref{thm-min-max}. 

\begin{defn}
  We define $T_{n,r,q}$ to be an $n$-vertex graph obtained from an $r$-partite Tur\'{a}n graph $T_{n,r}$ by adding {\it a star with $q$ edges}  into a {\it largest} partite set.   
\end{defn}

The following example shows the tightness of Theorem \ref{thm-Y}. 

\begin{example} \label{exampl-q} 
    If $r\ge 2, n\ge 10$ and $q\ge 2\sqrt{n}$ are positive integers, then 
$ \lambda (T_{n,r,q-1}) > \lambda (Y_{n,r,q})$. 
However, we see that $T_{n,r,q-1}$ contains exactly $(q-1)\cdot c(n,K_{r+1})$ copies of $K_{r+1}$.     
\end{example}

\begin{proof}
    Let $S_q$ be the $q$-vertex star embedded into the largest part $V_1$. Note that $w_2(S_q)=2e(S_q)=2(q-1)$ and $w_2(qK_2)=2q$. 
    Moreover, we have $w_3(S_q)=q^2-q$ and $w_3(qK_2)=2q$. For every $\ell \ge 3$, we see that $w_{\ell +1}(S_q) > 2q =w_{\ell +1}(qK_2)$. It follows that for every $ x \in \big(\delta (T_{n,r}), n\big)$, 
\begin{eqnarray*}
\sum_{\ell=1}^\infty\frac{w_{\ell +1}(S_{q})}{x^{\ell+1}}
-\sum_{\ell=1}^\infty\frac{w_{\ell +1}(qK_2)}{x^{\ell+1}}
\geq \frac{-2}{x^2} + \frac{q^2-3q}{x^3} > 
\frac{-2}{x^2} + \frac{1}{x^2} \frac{q^2-3q}{n}
> 0. 
\end{eqnarray*} 
By a similar argument of Claim \ref{claim.A.8},  we get $\lambda (T_{n,r,q-1}) > \lambda (Y_{n,r,q})$.  
\end{proof}

\section{Estimation on spectral radius: Proof of Theorem \ref{first-key}} 

\label{sec-proof-first}

Before showing Theorem \ref{first-key}, we introduce the following technical lemma. 

\begin{lem}\label{lem-move-one}
    Let $K=K_r(n_1,\dots,n_r)$ be an $n$-vertex complete $r$-partite graph with $n_1\ge  \cdots \ge n_r$. 
    We denote $K' = K_r(n_1,\ldots, n_{i}-1,\ldots, n_{j}+1,\ldots,n_r)$. 
        If $2\le n_i-n_j \le \phi$ for some $i<j$, then 
        \[ \lambda (K')-\lambda (K) \ge  \frac{2(r-1)(n_{i}-n_{j}-1)}{rn} \cdot \Big( 1-\frac{4\phi}{n}\Big)^4.\]  
Under the above condition, if $\phi \le \frac{n}{20}$, then $\lambda(K')-\lambda (K)\le 
\frac{2(r-1)(n_i-n_j -1)}{rn}\big(1+ \frac{8\phi}{n} \big)^4 + \frac{5\phi}{n^2}$. 
\end{lem}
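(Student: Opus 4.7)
The approach is to use Lemma~\ref{lem-Zhangwenqian} applied with every $H_k = \varnothing$ to characterize $\lambda(K)$ and $\lambda(K')$ as the unique positive roots of explicit rational equations, and then run a perturbation analysis. Specifically, $\lambda(K)$ is the unique positive root of
\[
h(x) := \sum_{k=1}^{r} \frac{x}{x+n_k} - (r-1) = 0,
\]
while $\lambda(K')$ is the unique positive root of $\tilde h(x)$, the analogous expression with $n_i$ and $n_j$ replaced by $n_i - 1$ and $n_j + 1$. Each summand $x/(x+n)$ is strictly increasing in $x>0$, so both $h$ and $\tilde h$ are strictly increasing on $(0,\infty)$.

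A direct algebraic manipulation then yields the clean identity
\[
h(x) - \tilde h(x) = \frac{x(n_i - n_j - 1)(2x + n_i + n_j)}{(x+n_i-1)(x+n_i)(x+n_j)(x+n_j+1)},
\]
which is strictly positive under $n_i - n_j \ge 2$. Hence $\tilde h(\lambda(K)) < 0 = \tilde h(\lambda(K'))$, so $\lambda(K') > \lambda(K)$, and the mean value theorem produces
\[
\lambda(K') - \lambda(K) = \frac{h(\lambda(K)) - \tilde h(\lambda(K))}{\tilde h'(\xi)}
\]
for some $\xi \in (\lambda(K), \lambda(K'))$, where $\tilde h'(x) = \sum_k \frac{n'_k}{(x+n'_k)^2}$ and $n'_k$ denotes the adjusted part sizes.

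The remaining task is quantitative estimation of the numerator and denominator in the above identity. Under the natural hypothesis $n_1 - n_r \le \phi$ (consistent with the contexts in which this lemma is used, since $n_i - n_j \le \phi$ alone would be too weak to control the other parts), we have $|n_k - n/r| \le \phi$ for every $k$. Jensen applied to $\sum_k \frac{\lambda}{\lambda+n_k} = r - 1$ gives $\lambda(K) \le (r-1)n/r$, while $\lambda(K) \ge 2e(K)/n$ combined with the elementary edge estimate $e(K) \ge (r-1)n^2/(2r) - O(\phi^2)$ yields $\lambda(K) \ge (r-1)n/r - O(\phi^2/n)$. Consequently each denominator factor $\lambda + n_k$ lies in $n[1 - O(\phi/n),\, 1 + O(\phi/n)]$, the numerator $2\lambda + n_i + n_j$ equals $2n(1 + O(\phi/n))$, and $\tilde h'(\xi) = \frac{1}{n}(1 + O(\phi/n))$. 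Substituting into the MVT identity extracts the leading-order term $\frac{2(r-1)(n_i - n_j - 1)}{rn}$ accompanied by multiplicative corrections of the form $(1 \pm c\phi/n)^{O(1)}$.

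The main obstacle is purely technical bookkeeping: obtaining the precise constants $4$ and $8$ in the exponent bases $(1 - 4\phi/n)^4$ and $(1 + 8\phi/n)^4$ requires carefully combining the upper and lower bounds on the four denominator factors and on $\tilde h'(\xi)$, while the additive correction $\frac{5\phi}{n^2}$ in the upper bound likely arises from a sharper Taylor expansion of $2\lambda + n_i + n_j$ around $2n$ together with the residual $O(\phi^2/n)$ error in $\lambda(K)$. No new conceptual ingredient is needed beyond the characteristic-equation plus perturbation framework.
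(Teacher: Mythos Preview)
Your approach is essentially the same as the paper's. The paper also derives the characteristic equation $\sum_k \frac{n_k}{\lambda+n_k}=1$ (equivalent to your $h(x)=0$), subtracts the two equations for $K$ and $K'$ directly to obtain an exact identity with two equal expressions---one factoring out $\lambda'-\lambda$, the other the explicit rational function in $n_i,n_j$---and then bounds each factor using $\frac{r-1}{r}n-\phi\le\lambda<\lambda'\le\frac{r-1}{r}n+\phi$ (from $\delta(K)\le\lambda$ and $\lambda'\le\Delta(K')$, rather than your Jensen/edge-count bounds). Your mean value theorem packaging is just a cosmetic variant of the paper's direct subtraction; the quantitative estimates are identical. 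One small point: the additive $\tfrac{5\phi}{n^2}$ in the upper bound does not come from a Taylor correction as you speculate, but from the fact that the paper's exact identity carries a residual $\lambda'-\lambda$ in the numerator (the paper bounds it crudely by $2\phi$); your MVT formulation, evaluating $h-\tilde h$ at the single point $\lambda$, avoids this cross term and would in fact give a slightly cleaner upper bound.
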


\begin{proof} 
We denote $V(K)=V_1\cup V_2\cup \cdots \cup V_r$,  where $|V_i|=n_i$ for every $i\in [r]$. 
For notational convenience, we write $\lambda = \lambda (K)$ and $\lambda' = \lambda (K')$. 
Since $K$ is connected,
by the Perron--Frobenius theorem,
there exists a positive unit eigenvector
$\bm{x}$ corresponding to $\lambda (K)$. 
For each $k\in [r]$, we see that $x_u=x_v$ for any $u,v\in V_k$. 
Then we write $x_v=x_k$ for every $v\in V_k$.
Note that 
\[ \lambda x_k=\sum_{u\in N(k)}x_u= \sum_{v\in V(K)}x_v-n_k x_k,\]  
which yields  
$$ (\lambda +n_k)x_k=\sum_{v\in V(K)}x_v. $$
Then $x_k=\frac{1}{\lambda + n_k}\sum_{v\in V(K)}x_v$. 
It follows that 
\begin{equation}\label{equ001}
1 = \sum\limits_{k=1}^r \frac{n_k}{\lambda + n_k}
= \sum_{k \in [r] \setminus \{i, j\}} \frac{n_k}{\lambda + n_k}  
+\frac{n_{i}}{\lambda +n_{i}} 
+\frac{n_{j}}{\lambda + n_{j}}.
\end{equation}
A similar argument on $K'$ yields  
\begin{equation}\label{equ001A}
1=\sum_{k\in [r] \setminus \{i, j\}} \frac{n_k}{\lambda' +n_k}+\frac{n_{i}-1}{\lambda'
+n_{i}-1}+\frac{n_{j}+1}{\lambda' +n_{j}+1}.
\end{equation} 
Note that 
\begin{align*}
\frac{n_{i}}{\lambda +n_{i}}-\frac{n_{i} - 1}{\lambda' + n_{i} - 1}
=\frac{(n_{i}-1)(\lambda' - \lambda) + \lambda'}{(\lambda + n_{i})(\lambda'+n_{i}-1)},
\end{align*}
and
\begin{align*}
\frac{n_{j}}{\lambda +n_{j}}-\frac{n_{j}+1}{\lambda ' + n_{j}+1}
=\frac{(n_{j} + 1)(\lambda' - \lambda) - \lambda'}{(\lambda  + n_{j})(\lambda' + n_{j} + 1)}. 
\end{align*}
Subtracting \eqref{equ001A} from \eqref{equ001}, we get  
\begin{align} 
L&:=\left( \sum\limits_{k\in[r]\setminus \{i,j\}} 
\frac{n_k (\lambda' - \lambda )}{(\lambda + n_k)(\lambda' + n_k)}  \right)
+ \frac{(n_{i}-1)(\lambda'-\lambda )}{(\lambda + n_{i})(\lambda' + n_{i} - 1)}+ \frac{(n_{j} + 1)(\lambda'-\lambda)}{(\lambda +n_{j})(\lambda' +n_{j}+1)} 
\label{eq-L-first}\\
&= \frac{\lambda'}{(\lambda + n_{j})(\lambda' + n_{j} + 1)}
-\frac{\lambda'}{(\lambda + n_{i})(\lambda' +n_{i}-1)} \nonumber \\[3mm]
&=\frac{(n_{i}-n_{j}-2)\lambda +(n_{i}-n_{j})\lambda'
+(n_{i}+n_{j})(n_{i}-n_{j}-1)}{(\lambda + n_{i} - 1)(\lambda + n_{i})(\lambda + n_{j} + 1)(\lambda + n_{j})}\lambda'.\label{eq-L-second}  
\end{align}  
Since $n_i-n_j \ge 2$, 
we obtain from (\ref{eq-L-second}) that $L>0$, which implies  $\lambda' > \lambda$ by using (\ref{eq-L-first}).  
Since $n_1-n_r\le \phi$, we have  
$ \frac{n}{r}-\phi\leq n_r\leq n_1\leq \frac{n}{r}+\phi$. 
Consequently, we get 
\begin{align}\label{EQU003}
\frac{r-1}{r}n-\phi\leq \delta(K)\leq 
\lambda <\lambda' 
\leq \Delta(K')\leq \frac{r-1}{r}n+\phi.
\end{align}
Since $n_1\geq n_2\geq \cdots \geq n_r$, we obtain from (\ref{eq-L-first}) and (\ref{equ001A}) that 
\begin{align} \label{eq-L-upp}
L \leq\frac{\lambda'-\lambda}{\lambda +n_r}\left(\sum\limits_{k\in[r]\setminus \{i,j\}} \frac{n_k}{\lambda' + n_k} + \frac{n_{i}-1}{\lambda' + n_{i} - 1}+ \frac{n_{j} + 1}{\lambda'+n_{j}+1} \right)=\frac{\lambda' -\lambda}{\lambda +n_r} 
\le \frac{\lambda' - \lambda}{n -2\phi}.
\end{align} 
In view of \eqref{EQU003}, we obtain
\begin{align*}
&(n_{i}-n_{j}-2)\lambda +(n_{i}-n_{j})\lambda'
+(n_{i}+n_{j})(n_{i}-n_{j}-1)\\
\ge & (n_i-n_j -1) (\lambda + \lambda' + n_i +n_j) \ge 2(n_{i}-n_{j}-1)(n-2\phi).
\end{align*}
Using (\ref{EQU003}) again, we get 
\[  (\lambda +n_i-1)(\lambda +n_i)(\lambda +n_j+1)(\lambda +n_j)\le (n+2\phi)^4.  \]
Using the second formula of $L$ in \eqref{eq-L-second}, we get 
\begin{equation}
    \label{eq-other}
    L\ge \frac{2(n_i-n_j-1)(n-2\phi)}{(n+2\phi )^4}\left(\frac{r-1}{r}n -\phi  \right).
\end{equation}
Combining (\ref{eq-L-upp}) with (\ref{eq-other}), we obtain 
\[  \lambda'- \lambda \ge \frac{2(n_i-n_j-1)(n-2\phi)^2}{(n+2\phi)^4}\left(\frac{r-1}{r}n -\phi  \right) 
\ge \frac{2(r-1)(n_i-n_j-1)}{rn} \cdot \frac{(1-\frac{2\phi}{n})^3}{(1+ \frac{2\phi}{n})^4},\]
which yields the desired bound since $
(1-\frac{2\phi}{n})^3/(1+\frac{2\phi}{n})^4 > (1-\frac{4\phi}{n})^4$. 

Next, we assume that $\phi \le \frac{n}{20}$. 
Similar to (\ref{eq-L-upp}), we have 
 $L\ge \frac{\lambda'- \lambda}{\lambda +n_1} \ge \frac{\lambda'- \lambda}{n+2 \phi}$. Note that 
 \begin{align*}
&(n_{i}-n_{j}-2)\lambda +(n_{i}-n_{j})\lambda'
+(n_{i}+n_{j})(n_{i}-n_{j}-1)\\
&= (n_i-n_j -1) (\lambda + \lambda' + n_i +n_j)  - \lambda + \lambda' \\
&\le 2(n_{i}-n_{j}-1)(n+2\phi) + 2\phi,
\end{align*}
and $(\lambda +n_i-1)(\lambda +n_i)(\lambda +n_j+1)(\lambda +n_j)\ge ((n-2\phi)^2-1)(n-2\phi)^2$. Therefore, we get  
\[ \frac{\lambda'- \lambda}{n+2\phi} \le L \le 
\frac{2(n_i-n_j-1)(n+2\phi) +2\phi}{(n-3\phi)^4} \left( \frac{r-1}{r}n +\phi \right). \]
Simplifying the above inequality, and using $\phi \le \frac{n}{20}$, we have  
\[ \lambda' -\lambda \le \frac{2(r-1)(n_i-n_j-1)}{rn}\left(1+\frac{8\phi}{n} \right)^4 + \frac{5\phi}{n^2}. \qedhere \]
\end{proof}

In the above, we get by (\ref{eq-L-first}) and (\ref{eq-L-second}) that $\lambda (K') > \lambda (K)$ when $n_i-n_j \ge 2$. 
Iteratively, we obtain that the Tur\'{a}n graph $T_{n,r}$ achieves the maximal spectral radius over all $n$-vertex complete $r$-partite graphs. This recovers a result of Feng, Li and Zhang \cite[Theorem 2.1]{FLZ2007} by a different method.

Now, we are in the position to prove Theorem \ref{first-key}, which provides the asymptotic spectral radius
for a graph sufficiently close to a Tur\'{a}n graph.
This estimate will serve as a crucial tool in determining the increment on spectral radius 
and in proving the subsequent main results.

\begin{proof}[\textbf{\emph{Proof of Theorem \ref{first-key}}}] 
Let $V_1,\ldots ,V_r$ be the vertex parts of $K$ with $|V_i|=n_i$ for every $i\in [r]$. 
Note that $|E(G) \setminus E(K)| = \alpha_1$ and 
$|E(K)\setminus E(G)|= \alpha_2$. 
Let $G_{d}$ be the graph induced by all edges of $(E(G)\setminus E(K))\cup (E(K)\setminus E(G))$.
Then $e(G_{d})=\alpha_1+\alpha_2$ and
$|G_{d}|\leq  2(\alpha_1+\alpha_2)$.

\medskip 
\noindent 
{\bf Proof of part (i).}~Recall that $\phi =\max\{n_1-n_r,2(\alpha_1 +\alpha_2)\}$.    
Then $\frac{n}{r}-\phi\leq n_r\leq n_1\leq \frac{n}{r}+\phi$ and 
$\delta(G)\geq (n-n_1)- \alpha_2\geq \frac{r-1}{r}n-2\phi$
and
$\Delta(G)\leq (n-n_r)+ \alpha_1\leq \frac{r-1}{r}n+2\phi$. 
It follows that 
  \begin{equation}\label{EQU005}
\frac{r-1}{r}n-2\phi\leq \delta(G)\leq \lambda (G) \leq \Delta(G) \leq \frac{r-1}{r}n+2\phi.
\end{equation} 

For each $i\in [r]$, we denote $V_i'=V_i\setminus V(G_{d})$ and $V_i''=V_i\cap V(G_{d})$. 
Let $\bm{x} \in \mathbb{R}^n$ be the unit Perron--Frobenius eigenvector corresponding to $\lambda (G)$. 

\begin{claim}\label{Claim3.1}
Let $u^*\in V(G)$ be such that $x_{u^*}=\max_{v\in V(G)}x_v$.
Then for any $u\in V(G)$, 
\[ x_u\geq \Big(1-\frac{12\phi}{n}\Big)x_{u^*}. \] 
\end{claim}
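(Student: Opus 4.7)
My plan is to exploit the eigen-equation $\lambda x_v = \sum_{w\in N(v)} x_w$ to compare $x_{u^*}$ with an arbitrary $x_u$, bootstrapped through a ``minimum vertex in the part of $u^*$.'' Let $u^*\in V_j$. In $K$, any two vertices in the same part have identical neighborhoods; for $u,u^*\in V_j$ the eigen-equations in $G$ differ only through the symmetric difference of edges coming from $G_d$, giving
\[
\lambda(x_{u^*}-x_u) \;\leq\; \bigl(\deg_{G_d}(u^*)+\deg_{G_d}(u)\bigr)\, x_{u^*} \;\leq\; 2(\alpha_1+\alpha_2)\, x_{u^*} \;\leq\; \phi\, x_{u^*}.
\]
Applying this with $u_0 := \arg\min_{v\in V_j} x_v$ yields $x_{u_0} \geq (1-\phi/\lambda)x_{u^*}$, and hence the crucial lower bound $\sum_{w\in V_j} x_w \geq n_j(1-\phi/\lambda)x_{u^*}$.

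For a general $u\in V_i$ with $i\neq j$, the eigen-equations in $K$ are governed by the neighborhood identities $N_K(u^*)\setminus N_K(u) = V_i$ and $N_K(u)\setminus N_K(u^*) = V_j$. Accounting for the perturbation of at most $\deg_{G_d}(u^*)+\deg_{G_d}(u) \leq \phi$ edges, I plan to derive
\[
\lambda(x_{u^*}-x_u) \;\leq\; \sum_{w\in V_i} x_w - \sum_{w\in V_j} x_w + \bigl(\deg_{G_d}(u^*)+\deg_{G_d}(u)\bigr)\, x_{u^*}.
\]
Combining $\sum_{w\in V_i} x_w \leq n_i x_{u^*}$, the lower bound on $\sum_{w\in V_j} x_w$ from the previous paragraph, and $n_i-n_j \leq n_1-n_r \leq \phi$, the right-hand side is at most
\[
(n_i-n_j)x_{u^*} + \frac{n_j\phi}{\lambda} x_{u^*} + \phi\, x_{u^*} \;\leq\; 2\phi\, x_{u^*} + \frac{n_j\phi}{\lambda} x_{u^*}.
\]
Dividing by $\lambda$ and invoking $\lambda \geq \tfrac{r-1}{r}n - 2\phi$, $n_j \leq \tfrac{n}{r}+\phi$, and $\phi\leq n/100$ from the hypotheses, routine arithmetic delivers $x_{u^*}-x_u \leq (12\phi/n)\, x_{u^*}$, uniformly in $r\geq 2$.

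The main obstacle is the case $i\neq j$: a direct neighborhood-comparison argument bounds only the symmetric difference of neighborhoods in terms of $G_d$-edges, but the part-sum difference $\sum_{V_i} x_w - \sum_{V_j} x_w$ could a priori be as large as $n_i x_{u^*}$, which would swamp the $O(\phi) x_{u^*}$ error term. The key point — and the bootstrap at the heart of the proof — is that one first controls how far any $x_w$ can drop below $x_{u^*}$ \emph{within the single part} $V_j$ (the $i=j$ case, which is easy), and then uses this to lower-bound the entire part-sum $\sum_{V_j} x_w$ by $n_j(1-\phi/\lambda)x_{u^*}$. Once this is in hand, the residual error $(n_i-n_j)x_{u^*}$ is controlled by $n_1-n_r \leq \phi$, which is absorbed into $\phi$ by definition. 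Tracking constants so that the final bound fits within $12\phi/n$ is then only a matter of bookkeeping using (\ref{EQU005}).
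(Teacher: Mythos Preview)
Your argument is correct. The same-part case gives $x_{u_0}\ge(1-\phi/\lambda)x_{u^*}$, hence $\sum_{w\in V_j}x_w\ge n_j(1-\phi/\lambda)x_{u^*}$; the cross-part case then yields
\[
x_{u^*}-x_u\;\le\;\frac{2\phi}{\lambda}x_{u^*}+\frac{n_j\phi}{\lambda^2}x_{u^*},
\]
and plugging in $\lambda\ge\tfrac{r-1}{r}n-2\phi$, $n_j\le\tfrac{n}{r}+\phi$, $\phi\le n/100$ (the worst case being $r=2$, where the right-hand side is below $7\phi/n\cdot x_{u^*}$) gives the claim with room to spare. The bound $\deg_{G_d}(u^*)+\deg_{G_d}(u)\le 2(\alpha_1+\alpha_2)\le\phi$ is fine since any single degree in $G_d$ is at most $e(G_d)=\alpha_1+\alpha_2$.

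Your route differs from the paper's. The paper does not compare $x_u$ to $x_{u^*}$ directly; instead it introduces, for each part, the common entry $x_i$ shared by all vertices of $V_i\setminus V(G_d)$, writes $\sum_{v\in V_i}x_v=(n_i+a_i)x_i$ with $|a_i|\le|V_i''|$, and derives the algebraic identity $(\lambda+n_i+a_i)x_i=\sum_{v\in V(G)}x_v$. This yields \emph{two-sided} control $1-3\phi/n\le x_i/x_{i^*}\le 1+3\phi/n$ on the part-to-part ratios, and then separately $1-5\phi/n\le x_u/x_i\le 1+5\phi/n$ for any $u\in V_i$; combining gives the $12\phi/n$ bound. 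Your bootstrap through the single part $V_j$ is more elementary and avoids the $a_i$ bookkeeping, but yields only the one-sided lower bound needed for Claim~\ref{Claim3.1}. The paper's two-sided estimates are what feed directly into Claim~\ref{Claim3.2} (the bound $x_u\le(1+13\phi/n)/\sqrt{n}$), so if you continue with your approach you will need a separate argument for the upper bound on $x_u$ there.
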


\begin{proof}[Proof of claim] 
For each vertex $v\in V_i'$, by definition, we know that $v$ is adjacent to all vertices of $V_j$ for any $j\neq i$. So we write $x_i$ for the coordinate of $\bm{x}$ corresponding to the vertices of $V_i'$. 
Fix an integer $i\in [r]$. 
If $V_i''\neq \varnothing$,
then we choose a vertex $u_i\in V_i''$ such that  $x_{u_i}=\max_{v\in V_i''}x_v$.
Clearly, we have 
$$ \lambda (G)x_{u_i}-\lambda (G)x_i\leq \sum_{u\in V_i''}x_u\leq \phi x_{u_i}, $$ 
which yields $x_{u_i}\leq \frac{\lambda (G)}{\lambda (G)-\phi}x_i< 2x_i$.
Consequently, we have 
$$ 0<\sum_{v\in V''_i}x_{v}\leq |V_i''|x_{u_i}< 2|V_i''|x_i. $$
Thus, there exists a constant $a_i\in (-|V''_i|,|V''_i|)$ such that
$$\sum_{v\in V''_i}x_{v}=(|V''_i|+a_i) x_i. $$ 
Note that $\sum_{v\in V_i'} x_v =|V_i'| x
_i$. 
Consequently, we have 
\begin{align*}
\sum_{v\in V_i}x_{v}=(|V''_i|+a_i) x_i+|V_i'|x_i=(n_i+a_i) x_i.
\end{align*}  
If $V''_i=\varnothing$, then we can simply set $a_i=0$.
In both cases, we always have $\sum_{v\in V_i}x_{v}=(n_i+a_i) x_i$ for some constant $a_i$ with $|a_i|\le |V_i''|$. 
Combining with $\sum_{v\in V(G)\setminus V_i}x_v = \lambda (G)x_i$,
we obtain
\begin{align}\label{EQU0061}
(\lambda (G)+n_i+a_i)x_i=\sum_{v\in V(G)}x_v.
\end{align} 
In what follows, we show that $x_u\geq \big(1-\frac{12\phi}{n}\big)x_{u^*}$ for any $u\in V(G)$.
Assume that $u\in V_i$ and $u^*\in V_{i^*}$ for some indices $i,i^*\in [r]$, respectively.  
Similarly, let $x_{i^*}$ be the coordinate of $\bm{x}$ corresponding to vertices of $V_{i^*}'$. 
Using \eqref{EQU0061} to $V_{i^*}$, we get 
$$\frac{x_i}{x_{i^*}}=\frac{\lambda (G)+n_{i^*}+a_{i^*}}{\lambda (G)+n_i+a_i}
=1+\frac{n_{i^*}-n_i +a_{i^*}-a_i}{\lambda (G)+n_i+a_i}.$$  
Since $|n_{i^*}-n_i|\leq \phi$ and $|a_i|\le |V_i''|\le \phi$, 
we get $|a_{i^*}-a_i|\leq |V_i''| + | V_{i^*}''|\leq |G_{d}|\leq \phi$ for $i\neq i^*$ and $|a_{i^*}-a_i|=0$ otherwise. 
Using (\ref{EQU005}), we see that  $|\lambda (G)+n_i+a_i| \ge n- 4\phi$.
It follows that
\begin{equation} \label{eq-ratio-one}
    1-\frac{3\phi}{n}\leq \frac{x_i}{x_{i^*}}\leq 1+\frac{3\phi}{n}.
\end{equation}
Recall that $i^*$ is the index where $u^*\in V_{i^*}$. Then we have
$$-\phi x_{u^*}
\leq -\sum_{v\in V(G_{d})}x_v
\leq \lambda (G)x_{u^*}-\lambda (G)x_{i^*}
\leq \sum_{v\in V(G_{d})}x_v\leq \phi x_{u^*},$$
which together with \eqref{EQU005} implies 
\begin{equation} \label{eq-ratio-two}
   1- \frac{3\phi}{n} \le \frac{\lambda (G)-\phi}{\lambda (G)}\leq \frac{x_{i^*}}{x_{u^*}}\leq \frac{\lambda (G)+\phi}{\lambda (G)} \le 1+ \frac{3\phi}{n}.
\end{equation}
Combining (\ref{eq-ratio-one}) with (\ref{eq-ratio-two}), we obtain 
\begin{align}\label{EQU006}
1-\frac{7\phi}{n}\leq \frac{x_{i}}{x_{u^*}}\leq 1+\frac{7\phi}{n}.
\end{align}  
Since $n$ is sufficiently large, we obtain $x_{u^*}\leq 2x_i$. Hence, for any $u\in V_i$, we have 
$$-2\phi x_i\leq -\phi x_{u^*}\leq -\sum_{v\in V(G_{d})}x_v\leq \lambda (G)x_{u}-\lambda (G)x_i\leq \sum_{v\in V(G_{d})}x_v\leq \phi x_{u^*}\leq 2\phi x_i.$$   
Combining with \eqref{EQU005}, since $\phi \le \frac{n}{100}$, we get $\lambda (G)\ge \frac{2n}{5}$ and 
\begin{align*}
1-\frac{5\phi}{n}\leq
\frac{\lambda (G)-2\phi}{\lambda (G)}\leq \frac{x_{u}}{x_{i}}\leq \frac{\lambda (G)+2\phi}{\lambda (G)}\leq 1+\frac{5\phi}{n}.
\end{align*}  
Invoking \eqref{EQU006}, we complete the proof of Claim \ref{Claim3.1}.
\end{proof}

\begin{claim}\label{Claim3.2}
For every $u\in V(G)$, we have $\big(1-\frac{13\phi}{n}\big)\frac{1}{\sqrt{n}}\leq x_u\leq \big(1+\frac{13\phi}{n}\big)\frac{1}{\sqrt{n}}$.
\end{claim}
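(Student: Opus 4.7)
The plan is to combine Claim 3.1 with the unit-norm condition $\sum_{u \in V(G)} x_u^2 = 1$ in essentially a two-line calculation. Write $M := x_{u^*}$ for the largest coordinate. Since every $x_u \le M$, the identity $\sum_u x_u^2 = 1$ forces $M \ge 1/\sqrt{n}$ at once. In the opposite direction, Claim 3.1 provides the uniform lower bound $x_u \ge (1 - 12\phi/n) M$ for every $u$, and substituting this into $\sum_u x_u^2 = 1$ yields
$$1 \ge n (1 - 12\phi/n)^2 M^2, \qquad \text{that is,} \qquad M \le \frac{1}{(1 - 12\phi/n)\sqrt{n}}.$$

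The lower half of Claim 3.2 is then immediate: for every $u$,
$$x_u \ge (1 - 12\phi/n)\, M \ge \frac{1 - 12\phi/n}{\sqrt{n}} \ge \Big(1 - \frac{13\phi}{n}\Big)\frac{1}{\sqrt{n}}.$$
For the upper half $x_u \le M$, a naive Taylor expansion of the bound $M \le 1/((1-12\phi/n)\sqrt{n})$ produces a constant slightly above $13$ when $\phi$ is allowed up to $n/100$, so I would sharpen the norm inequality by isolating the ``clean'' vertices. Since $|V(G_d)| \le 2(\alpha_1 + \alpha_2) \le \phi$, at least $n - \phi$ vertices lie in $\bigcup_i V_i'$; the intermediate estimate inside the proof of Claim 3.1 (namely $x_i/x_{u^*} \in [1 - 7\phi/n, 1 + 7\phi/n]$) shows that each such vertex satisfies $x_u \ge (1 - 7\phi/n) M$. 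Plugging this back,
$$1 \ge (n - \phi)(1 - 7\phi/n)^2 M^2, \qquad \text{hence} \qquad M \le \frac{1}{\sqrt{n - \phi}\, (1 - 7\phi/n)},$$
which simplifies, for $\phi \le n/100$, to $M \le (1 + 13\phi/n)/\sqrt{n}$, delivering the upper bound.

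Structurally, the whole argument is a one-shot application of Claim 3.1 plus unit normalization. The only subtlety, and thus the main obstacle, is calibrating constants in the upper bound so the error terms fit inside $13\phi/n$: the weak form of Claim 3.1 alone is not quite sharp enough, and one must feed in the slightly stronger estimate available on the bulk $\bigcup_i V_i'$ of non-special vertices. Once that refinement is in hand, the rest is elementary arithmetic.
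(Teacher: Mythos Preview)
Your argument is correct and follows exactly the paper's strategy: combine Claim~\ref{Claim3.1} with the normalization $\sum_u x_u^2=1$. The lower bound is identical to the paper's. For the upper bound the paper simply writes $x_{u^*}^2 \le 1/\big(n(1-12\phi/n)^2\big) < 1/(n-24\phi)$ and then asserts $1/(n-24\phi) < 1/n + 25\phi/n^2$, whence $x_{u^*}\le(1+13\phi/n)/\sqrt{n}$; but that middle step is equivalent to $\phi < n/600$, so under the full stated range $\phi\le n/100$ the paper's computation does not quite close. Your refinement---feeding the sharper intermediate estimate \eqref{EQU006}, namely $x_u\ge(1-7\phi/n)x_{u^*}$ on the at least $n-\phi$ vertices of $\bigcup_i V_i'$, back into the norm identity---is precisely what is needed to land on the constant $13$ throughout, so your version is the more careful of the two.
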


\begin{proof}[Proof of claim] 
 Claim \ref{Claim3.1} gives that 
$\big(1-\frac{12\phi}{n}\big)x_{u^*}\leq x_{u}$ for every $u\in V(G)$. Hence,
 \begin{align*}
(n-24\phi)x_{u^*}^2<
n\Big(1-\frac{12\phi}{n}\Big)^2x_{u^*}^2\leq \sum_{u\in V(G)}x_u^2.
\end{align*}  
Note that $1= \sum_{u\in V(G)}x_u^2 \le n x_{u^*}^2$.
Then 
\[  \frac{1}{n}\leq x_{u^*}^2<\frac1{n-24\phi}<\frac{1}{n}+\frac{25\phi}{n^2}. \]  
By Claim \ref{Claim3.1} again, we obtain
\[ 
\Big(1-\frac{12\phi}{n}\Big)\frac{1}{\sqrt{n}}
\leq \Big(1-\frac{12\phi}{n}\Big)x_{u^*}
\leq x_{u}\leq x_{u^*}\leq \sqrt{\frac{1}{n}+\frac{25\phi}{n^2}}
\leq \Big(1+\frac{13\phi}{n}\Big)\frac{1}{\sqrt{n}}. \qedhere 
\]  
\end{proof}

Let $G_{in}$ and $G_{cr}$ be
two graphs induced by the edges
of $E(G)\setminus E(K)$
and $E(K)\setminus E(G)$, respectively. 
Note that $\lambda (K) \ge \bm{x}^{\rm T} A(K)\bm{x}$. 
By Claim \ref{Claim3.2}, we have
\begin{align*}
\lambda (G)-\lambda (K) &\leq
\bm{x}^{\mathrm{T}}\big(A(G)-A(K)\big)\bm{x}
\leq\!\!\!\sum_{uv\in E(G_{in})}\!\!\!2x_ux_v
-\!\!\!\sum_{uv\in E(G_{cr})}\!\!\!2x_ux_v \nonumber\\
&\leq 2\alpha_1\Big(1+\frac{13\phi}{n}\Big)^2\frac{1}{n}-2\alpha_2 \Big(1-\frac{13\phi}{n}\Big)^2\frac{1}{n} \le \frac{2(\alpha_1-\alpha_2)}{n} +\frac{56(\alpha_1+\alpha_2)\phi}{n^2}, 
\end{align*}  
where the last inequality holds since $\phi \le \frac{n}{100}$.

Let $\bm{y}\in \mathbb{R}^n$ be the unit Perron--Frobenius eigenvector of $K$.
If $\alpha_1=\alpha_2=0$, then $G= K$.
So Claim \ref{Claim3.2} can also be applicable for $K$. Then for every $u\in V(K)$, we have 
$$ \Big(1-\frac{13\phi}{n}\Big)\frac{1}{\sqrt{n}} 
\leq y_u\leq 
\Big(1+\frac{13\phi}{n}\Big)\frac{1}{\sqrt{n}}. $$
It follows that
 \begin{align*}
\lambda (G)-\lambda (K) &\geq
\bm{y}^{\mathrm{T}}\big(A(G)-A(K)\big)\bm{y}
\geq\!\!\!\sum_{uv\in E(G_{in})}\!\!\!2y_uy_v
-\!\!\!\sum_{uv\in E(G_{cr})}\!\!\!2y_uy_v \nonumber\\
&\geq 2\alpha_1\Big(1-\frac{13\phi}{n}\Big)^2\frac{1}{n}-2\alpha_2 \Big(1+\frac{13\phi}{n}\Big)^2\frac{1}{n} \ge \frac{2(\alpha_1-\alpha_2)}{n}- \frac{56(\alpha_1+\alpha_2)\phi}{n^2}.
\end{align*}  
Thus, we finished the proof of part (i) of Theorem \ref{first-key}.

\medskip 
\noindent 
{\bf Proof of part (ii).}~
We may assume that $G$ achieves the maximum spectral radius 
over all $n$-vertex graphs that are obtained from $K=K_r(n_1,\ldots ,n_r)$ by
adding $\alpha_1$ class-edges and deleting $\alpha_2$ cross-edges, where $n_1\ge \cdots \ge n_r$ and $n_1-n_r \ge 2k$ for an integer $k\ge 0$. 
Let $G_1$ be such a graph obtained from $K_{t_1+k,t_2,\ldots ,t_r-k}$ by adding $\alpha_1$ class-edges and deleting $\alpha_2$ cross-edges, where $t_1,t_2,\ldots ,t_r$ are as equal as possible and $t_1\geq t_k$. Since $e(T_{n,r}) \ge (1-\frac{1}{r})\frac{n^2}{2} - \frac{r}{8}$, the maximality of $G$ yields
\begin{equation}
    \label{eq-low-bound}
    \lambda (G)\ge \lambda (G_1)\ge \frac{2e(G_1)}{n} = \frac{2(e(T_{n,r}) -k^2+\alpha_1-\alpha_2)}{n} > 
    \frac{r-1}{r}n -\psi, 
\end{equation} 
where the last inequality holds since $\psi :=\max\{3k,2(\alpha_1+\alpha_2)\}$ and $k\le \frac{n}{(10r)^3}$.    
Let $V_1,\dots,V_r$ be the $r$ partite sets of $K$,
where $|V_i|=n_i$ for each $i\in [r]$.
Recall that $G_{d}$ is the graph induced by all edges of $(E(G)\setminus E(K))\cup (E(K)\setminus E(G))$.
Note that $|G_{d}|\leq 2\alpha_1+2\alpha_2\leq \psi$. 
Recall that $V'_i=V_i\setminus V(G_{d})$ and $V''_i=V_i\cap V(G_{d})$.
Let $\bm{x}$ be the non-negative unit eigenvector of $G$. 
By definition, every vertex of $V_i'$ is adjacent to all vertices of $V_j$ for any $j\neq i$. So we may write $x_i$ for the coordinate of $\bm{x}$ corresponding to the vertices of $V_i'$. 
Let $u^*\in V(G)$ be a vertex such that $x_{u^*}=\max_{v\in V(G)}x_v$. 
Let $w^*\in V(G)\setminus V(G_d)$ be a vertex such that $x_{w^{*}}=\max_{v\in V(G)\setminus V(G_{d})}x_v$. Then  
$$\lambda (G)x_{u^*}
\leq \sum_{v\in V(G_{d})}x_v+d_{G}(u^*) x_{w^{*}}
\leq \psi x_{u^*}+n x_{w^{*}},$$
which together with (\ref{eq-low-bound}) gives that $x_{w^{*}}\geq \frac{\lambda (G)-\psi}{n}x_{u^*}\geq \frac{1}{3}x_{u^*}$. 
We assume that $w^*\in V_{i^*}'$ for some index $i^*\in [r]$, and denote 
$W:=\cup_{i\in [r]\setminus\{i^*\}}V_i$.
Since $|G_d|\le 2\alpha_1+2\alpha_2 \le \psi$, we get 
\begin{align*}
 \lambda (G)x_{w^{*}}
= \sum_{v\in N_{G_{d}}(w^{*})}x_v+ \sum_{v\in W}x_v
< \psi \cdot 3x_{w^{*}}+ \sum_{v\in W}x_v.
\end{align*}   
Combining with \eqref{eq-low-bound} yields
\begin{align}\label{EQU007}
\sum_{v\in W}x_v
> \big(\lambda (G)-3\psi\big)x_{w^{*}}>\Big(\frac{r-1}{r}n-4\psi\Big)x_{w^{*}}.
\end{align}  

\begin{claim}\label{Claim3.3A}
We have $ n_1\leq \frac{n}{r}+7\psi$ and $n_1-n_r \le 7r \psi$.
\end{claim}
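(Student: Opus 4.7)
The plan has two steps matching the two inequalities in the claim. In the first step, I combine the lower bound~\eqref{EQU007} on $\sum_{v\in W}x_v$ with a matching upper bound derived from the Perron eigenvector $\bm x$. The key structural observation is that for every $j\in[r]$ and every pair $u,v\in V_j':=V_j\setminus V(G_d)$, the $G$-neighborhoods coincide (both equal $V\setminus V_j$), so $\bm x$ is constant on $V_j'$ with common value $x_j\le x_{w^*}$. Using $|V_j''|\le|V(G_d)|\le\psi$ together with $x_{u^*}\le 3x_{w^*}$, this gives
\[
\sum_{v\in V_j}x_v=|V_j'|x_j+\sum_{v\in V_j''}x_v\le n_jx_{w^*}+\psi x_{u^*}\le(n_j+3\psi)x_{w^*}.
\]
Summing over $j\ne i^*$ and plugging into~\eqref{EQU007} yields $(n-n_{i^*}+3\psi)x_{w^*}>(\tfrac{r-1}{r}n-4\psi)x_{w^*}$, whence $n_{i^*}<n/r+7\psi$.

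The eigenvalue equation $(\lambda(G)+n_j)x_j=S-\epsilon_j$ with $|\epsilon_j|\le\psi x_{u^*}$ and $S=\sum_vx_v$ shows that $x_j$ is approximately decreasing in $n_j$, so $i^*$ corresponds to the smallest $n_j$; consequently the bound on $n_{i^*}$ alone does not bound the largest part. To upgrade to $n_1\le n/r+7\psi$, I invoke the maximality of $G$. Suppose for contradiction $n_1>n/r+7\psi$; then $\sum_jn_j=n$ forces some $j\ne 1$ with $n_1-n_j>7r\psi/(r-1)\ge 2$. Set $K'=K_r(n_1-1,n_2,\ldots,n_j+1,\ldots,n_r)$, which still satisfies the constraint $n_1'-n_r'\ge 2k$ after possibly choosing $j\ne r$. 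By Lemma~\ref{lem-move-one},
\[
\lambda(K')-\lambda(K)\ge\frac{2(r-1)(n_1-n_j-1)}{rn}\Big(1-\frac{4\phi}{n}\Big)^4.
\]
Let $G'$ be obtained from $K'$ by redistributing the $\alpha_1$ class-edges and $\alpha_2$ cross-edges optimally. Using Rayleigh quotients with the Perron vector of $K'$ and the bound $x_{u^*}^2\le 9/n$ available from the preamble, one gets $\lambda(G')\ge\lambda(K')-O(\psi/n)$ and $\lambda(G)\le\lambda(K)+O(\psi/n)$. Combining, $\lambda(G')-\lambda(G)\ge\Omega((n_1-n_j)/n)-O(\psi/n)>0$, contradicting the maximality of $G$. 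An analogous move of a vertex from $V_1$ to $V_r$ (or to another part when $n_1-n_r=2k$ saturates the constraint) establishes $n_1-n_r\le 7r\psi$, completing the claim.

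The main technical obstacle is the bookkeeping comparison of $\lambda(G')$ with $\lambda(G)$. Since some edges switch between class-edge and cross-edge status when a vertex is moved, the redistribution of the fixed counts $\alpha_1$ and $\alpha_2$ under the new partition requires care; fortunately at most $O(\psi)$ edges are affected, so the total Rayleigh-quotient discrepancy is $O(\psi/n)\le O(1/(10r)^3)$, which is dwarfed by the gain $\Omega((n_1-n_j)/n)$ from Lemma~\ref{lem-move-one} as soon as $n_1-n_j\gtrsim r\psi$. Choosing the constants in this comparison produces the precise factor $7$ (resp.\ $7r$) stated in the claim.
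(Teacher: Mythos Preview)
Your step~1 is correct but, as you yourself note, only bounds $n_{i^*}$ (the part containing $w^*$), which is essentially the smallest part; this gives nothing about $n_1$. The real work is in step~2, and there the argument breaks in several places.

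First, you invoke Lemma~\ref{lem-move-one} to get $\lambda(K')-\lambda(K)\ge\frac{2(r-1)(n_1-n_j-1)}{rn}(1-4\phi/n)^4$, but the lemma requires $n_1-n_j\le\phi$. At this point in the proof you have no upper bound on $n_1-n_j$; for all you know $n_1$ could be close to $n$, forcing $\phi$ to be of order $n$ and making $(1-4\phi/n)^4$ useless (or negative). So the ``gain $\Omega((n_1-n_j)/n)$'' is not established.

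Second, the bound $x_{u^*}^2\le 9/n$ is not available from the preamble. All you have before this claim is $x_{w^*}\ge\tfrac13 x_{u^*}$, which gives $x_{u^*}^2\le 9x_{w^*}^2\le 9/|V_{i^*}'|$. But you have no lower bound on $n_{i^*}$ (indeed $i^*$ is the smallest part), so $|V_{i^*}'|$ could be tiny and $x_{u^*}^2$ could be far larger than $9/n$. Consequently your Rayleigh estimates $\lambda(G)\le\lambda(K)+O(\psi/n)$ and $\lambda(G')\ge\lambda(K')-O(\psi/n)$ are unjustified; these are essentially part~(i) of Theorem~\ref{first-key}, whose hypothesis ($n_1-n_r\le n/100$) is exactly what you are trying to prove.

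The paper avoids all of this by working directly with the Perron vector $\bm x$ of $G$ rather than detouring through $\lambda(K)$, $\lambda(K')$. Pick any $u_0\in V_1'=V_1\setminus V(G_d)$ and form $G'$ by deleting all edges at $u_0$ and joining $u_0$ to every vertex of $W$. Because $u_0\notin V(G_d)$, the $\alpha_1$ class-edges and $\alpha_2$ missing cross-edges are untouched, so $G'$ is automatically a valid competitor (it sits over $K_r(n_1-1,\dots,n_{i^*}+1,\dots,n_r)$ with the \emph{same} edge modifications). Then
\[
\lambda(G')-\lambda(G)\ge 2x_{u_0}\Bigl(\sum_{v\in W}x_v-\sum_{v\in N_G(u_0)}x_v\Bigr),
\]
and since $N_G(u_0)=V\setminus V_1$ one bounds $\sum_{v\in N_G(u_0)}x_v\le 3\psi x_{w^*}+(n-n_1)x_{w^*}<(\tfrac{r-1}{r}n-4\psi)x_{w^*}$ using only $x_{u^*}\le 3x_{w^*}$ and $x_v\le x_{w^*}$ for $v\notin V(G_d)$. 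Comparing with~\eqref{EQU007} gives the contradiction directly. No eigenvector normalization, no Lemma~\ref{lem-move-one}, no edge redistribution is needed, and the constant $7$ drops out from $3\psi+4\psi=7\psi$. The second inequality $n_1-n_r\le 7r\psi$ then follows from $n_r\ge n-(r-1)n_1$.
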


\begin{proof}[Proof of claim] 
Suppose on the contrary that $n_1> \frac{n}{r}+7\psi$. Then $n -n_1 < \frac{r-1}{r}n -7\psi$. 
Fix a vertex $u_0\in V_1'$, and 
let $G'$ be the graph obtained from $G$ by deleting all edges incident to $u_0$, 
and adding all edges between $u_0$ and the vertices of $W$. 
So $G'$ can be obtained from $K_r(n_1-1,\dots,n_{i^*}+1,\dots,n_r)$
by adding $\alpha_1$ class-edges and deleting $\alpha_2$ cross-edges. 
Observe that $n_r\le \frac{n}{r}$ and
$(n_1-1)- (n_r+1) \geq 7\psi - 2 > 2k$.  
For any vertex $v\in V(G_d)$, we have $x_v\leq x_{u^*}\leq 3x_{w^{*}}$. Then 
\begin{align*}
\sum_{v\in N_{G}(u_0)}x_{v} \le \psi\cdot 3 x_{w^*} + (n-n_1)x_{w^*}
< \Big(\frac{r-1}{r}n-4\psi\Big)x_{w^{*}}.
\end{align*}  
Combining with \eqref{EQU007} gives
\begin{align*}
\lambda (G')-\lambda (G) \geq \bm{x}^{\mathrm{T}}\big(A(G')-A(G)\big)\bm{x}
                  \geq 2x_{u_0} 
                  \left(\sum\limits_{v\in W}x_v-\sum\limits_{v\in N_{G}(u_0)}x_v\right)>0.
\end{align*}  
Hence, we get $\lambda (G')>\lambda (G^*)$,  contradicting with the maximality of $\lambda(G)$. 
We conclude that $n_1 \leq \frac{n}{r}+7\psi$,
which implies $n_r \geq n-(r-1)n_1 \geq \frac{n}{r}-7(r-1)\psi$.
It follows that $n_1 -n_r\le 7r\psi$. 
\end{proof}

We denote $\phi':=\max\{n_1-n_r, 2(\alpha_1+\alpha_2)\}$. Since $\psi =\max\{3k,2(\alpha_1+\alpha_2)\}$, 
by Claim \ref{Claim3.3A}, we have $\phi' \le 7r\psi $.  
Applying Theorem \ref{first-key} (i), we get 
\begin{equation} \label{eq-bound-K}
    \lambda (G)\le \lambda (K)+\frac{2(\alpha_1-\alpha_2)}{n} + \frac{56(\alpha_1+ \alpha_2) \cdot 7r \psi }{n^2} . 
\end{equation}

To finish the proof of Theorem \ref{first-key} (ii), it suffices to prove the following claim. 

\begin{claim}\label{Claim3.3B}
We have $\lambda (K)\leq  \lambda (T_{n,r}) - \frac{2(r-1)k^2}{rn}\big( 1-\frac{28r\psi}{n} \big)^4$. 
\end{claim}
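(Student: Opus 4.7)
The plan is to start from $K = K^{(0)}$ and build a sequence of complete $r$-partite graphs $K^{(0)}, K^{(1)}, \dots, K^{(k)}$, where $K^{(j)}$ is obtained from $K^{(j-1)}$ by moving one vertex from the part descended from the original $V_1$ (initial size $n_1$) to the part descended from the original $V_r$ (initial size $n_r$). After $j-1$ such shifts these two parts have sizes $n_1 - (j-1)$ and $n_r + (j-1)$, so their difference equals
\[
n_1 - n_r - 2(j-1) \;\ge\; 2(k - j + 1) \;\ge\; 2 \qquad \text{for all } 1 \le j \le k,
\]
which ensures the hypothesis $n_i - n_j \ge 2$ of Lemma \ref{lem-move-one} at every step.

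Next, I would control the factor $(1 - 4\phi/n)^4$ appearing in Lemma \ref{lem-move-one} by a uniform bound. Each shift from a larger part to a smaller one cannot increase the overall max--min gap, so throughout the iteration the difference between the largest and smallest parts remains at most $n_1 - n_r$, which is at most $7r\psi$ by Claim \ref{Claim3.3A}. Taking $\phi = 7r\psi$ uniformly, Lemma \ref{lem-move-one} yields the per-step increment
\[
\lambda\bigl(K^{(j)}\bigr) - \lambda\bigl(K^{(j-1)}\bigr)
\;\ge\; \frac{2(r-1)\bigl(n_1 - n_r - 2(j-1) - 1\bigr)}{rn}\left(1 - \frac{28r\psi}{n}\right)^{\!4}.
\]
Summing from $j = 1$ to $k$ and using $n_1 - n_r - 2(j-1) - 1 \ge 2(k - j + 1) - 1$, the inner sum telescopes as $\sum_{j=1}^{k}\bigl(2(k - j + 1) - 1\bigr) = \sum_{m=1}^{k}(2m - 1) = k^2$. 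Therefore
\[
\lambda\bigl(K^{(k)}\bigr) - \lambda(K) \;\ge\; \frac{2(r-1)k^2}{rn}\left(1 - \frac{28r\psi}{n}\right)^{\!4}.
\]
Since $K^{(k)}$ is an $n$-vertex complete $r$-partite graph and the Tur\'an graph $T_{n,r}$ maximizes the spectral radius within this class (the consequence of Lemma \ref{lem-move-one} noted after its proof), we have $\lambda(K^{(k)}) \le \lambda(T_{n,r})$, and rearranging gives exactly the desired inequality.

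The only delicate point is bookkeeping: one must verify that the fixed source--destination pair of parts continues to satisfy the hypothesis of Lemma \ref{lem-move-one} at every step, and that the global max--min gap remains bounded by $7r\psi$ throughout. The first follows from the starting inequality $n_1 - n_r \ge 2k$ and the fact that we perform at most $k$ shifts on this pair; the second follows from the monotonicity observation above together with Claim \ref{Claim3.3A}. Neither presents a genuine obstacle, so the main effort is the clean arithmetic summation in the middle paragraph.
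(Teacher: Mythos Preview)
Your proposal is correct and essentially identical to the paper's proof: both build the same telescoping sequence $K=K^{(0)},\dots,K^{(k)}$ by repeatedly shifting a vertex from the original largest part to the original smallest part, apply Lemma~\ref{lem-move-one} at each step with the uniform choice $\phi=7r\psi$ (justified by Claim~\ref{Claim3.3A}), and sum the increments $\sum_{m=1}^{k}(2m-1)=k^2$ before invoking $\lambda(K^{(k)})\le\lambda(T_{n,r})$. The only difference is cosmetic—the paper plugs in the lower bound $2k-2i+1$ directly, whereas you first write $n_1-n_r-2(j-1)-1$ and then bound it below.
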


\begin{proof}[Proof of claim]
Recall that $K=K_r(n_1,\ldots ,n_r)$ is a complete $r$-partite graph of order $n$, where $n_1\ge \cdots \ge n_r$ and $n_1-n_r\geq 2k$ for an integer $k\ge 0$. 
We define a sequence of graphs as follows: $K(0)=K$ and $K(i)= K_{r}(n_{1}-i,n_2,\dots,n_{r-1},n_{r}+i)$ for each $i\in [k]$.
By Lemma \ref{lem-move-one}, we have $\lambda (K(i))>\lambda (K(i-1))$.
Since $n_1-n_r \le 7r\psi $, 
by Lemma \ref{lem-move-one} again, for each $i\in [k]$,
we obtain that $(n_1-i+1)-(n_r+i-1)\geq 2k-2i+2\geq 2$ and
$$\lambda (K(i))-\lambda (K(i-1))\geq 
\frac{2(r-1)(2k-2i+1)}{rn} \Big( 1-\frac{28r\psi}{n} \Big)^4.$$
Summing the inequalities over all $i\in [k]$, we get 
\[ \lambda (K(k))-\lambda (K(0))\geq 
\frac{2(r-1)k^2}{rn} \Big( 1-\frac{28r\psi}{n} \Big)^4.\] 
Note that $K(0)=K$ and $\lambda (K(k))\leq \lambda (T_{n,r})$. 
This completes the proof of Claim \ref{Claim3.3B}.
\end{proof}

Combining (\ref{eq-bound-K}) with 
Claim \ref{Claim3.3B}, 
we complete the proof of Theorem \ref{first-key} (ii).
\end{proof}

It is easy to see that if $G$ is a graph obtained from $T_{n,r}$ by adding $\alpha_1$ class-edges and deleting $\alpha_2$ cross-edges, where $\alpha_1> \alpha_2$, then $e(G)> e(T_{n,r})$, which yields $\lambda (G) > \lambda (T_{n,r})$; see, e.g., \cite{ZL2022jgt}. 
Using Theorem \ref{first-key}, we get the following counterpart, which is of independent interest.  

\begin{cor}
Let $\alpha_1=O(1)$ be an integer and $n$ be sufficiently large. Suppose that $G$ is a graph obtained from $K_{r}(n_1,\ldots,n_r)$ by adding $\alpha_1$ class-edges and deleting $\alpha_2$ cross-edges, where $n_1+\cdots +n_r=n$. 
If $\alpha_1 < \alpha_2$, then $ \lambda (G)< \lambda (T_{n,r})$. 
\end{cor}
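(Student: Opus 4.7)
The plan is to apply Theorem~\ref{first-key} directly, using that $\alpha_1-\alpha_2\le -1$ forces the leading correction in $\lambda(G)-\lambda(T_{n,r})$ to be negative of order $1/n$, which will dominate all error terms for $n$ large. Without loss of generality I take $n_1\ge\cdots\ge n_r$ and set $k=\lfloor (n_1-n_r)/2\rfloor$. Since $\alpha_1=O(1)$, I work in the regime (implicit in the statement) where $\alpha_2$ is also controlled so that the smallness hypothesis $\max\{\alpha_1,\alpha_2\}\le n/(10r)^3$ of Theorem~\ref{first-key} holds; the split is by the value of $k$.

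\medskip

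\noindent\textbf{Case $k=0$.} Then $n_1-n_r\le 1$, so $K=T_{n,r}$. With $\phi=\max\{n_1-n_r,\,2(\alpha_1+\alpha_2)\}=O(1)$, Theorem~\ref{first-key}(i) gives
\[
\lambda(G)\,\le\,\lambda(T_{n,r})+\frac{2(\alpha_1-\alpha_2)}{n}+\frac{56(\alpha_1+\alpha_2)\phi}{n^2}\,\le\,\lambda(T_{n,r})-\frac{2}{n}+O\!\left(\tfrac{1}{n^2}\right),
\]
which is strictly less than $\lambda(T_{n,r})$ for $n$ sufficiently large.

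\medskip

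\noindent\textbf{Case $k\ge 1$.} Now $n_1-n_r\ge 2$. Setting $\psi=\max\{3k,\,2(\alpha_1+\alpha_2)\}$, Theorem~\ref{first-key}(ii) yields
\[
\lambda(G)\,\le\,\lambda(T_{n,r})+\frac{2(\alpha_1-\alpha_2)}{n}-\frac{2(r-1)k^2}{rn}\!\left(1-\tfrac{28r\psi}{n}\right)^{\!4}+\frac{392r(\alpha_1+\alpha_2)\psi}{n^2}.
\]
Here $\alpha_1-\alpha_2\le -1$ makes the first correction $\le -2/n$, the $k^2$-term is negative of order $-\Theta(k^2/n)$ (because $(1-28r\psi/n)^4$ is bounded away from $0$ in the admissible range of $\psi$), and the last error is $O(k/n^2)$, which is swallowed by $\Theta(k^2/n)$ once $n$ is large. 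Hence $\lambda(G)<\lambda(T_{n,r})$.

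\medskip

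The main obstacle is not the calculation but the scenario in which $\alpha_2$ (and hence $k$) grows too fast with $n$ for Theorem~\ref{first-key} to apply, i.e.\ $\alpha_2>n/(10r)^3$ or $n_1-n_r>2n/(10r)^3$. In that highly unbalanced/sparse regime, $K$ is already far from $T_{n,r}$ and the conclusion $\lambda(G)<\lambda(T_{n,r})$ should instead be extracted by a direct comparison argument (for example via the Rayleigh quotient and the estimate $e(G)\le e(T_{n,r})-\alpha_2+\alpha_1$), rather than by the precise asymptotics of Theorem~\ref{first-key}.
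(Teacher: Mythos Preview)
Your argument has a genuine gap. The corollary places no upper bound on $\alpha_2$; only $\alpha_1=O(1)$ is assumed. You add the hypothesis $\alpha_2\le n/(10r)^3$ by fiat, and in your concrete estimates you tacitly need even more: the claim $\phi=O(1)$ in Case~$k=0$ and the claim that the last error is $O(k/n^2)$ in Case~$k\ge 1$ both require $\alpha_1+\alpha_2=O(1)$, not merely $\alpha_2\le n/(10r)^3$. Your closing paragraph concedes that the large-$\alpha_2$ regime is not handled, and the suggested fix --- an edge-count/Rayleigh-quotient comparison --- is not a proof: $e(G)<e(T_{n,r})$ does not in general imply $\lambda(G)<\lambda(T_{n,r})$.

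The idea you are missing, and the one the paper uses, is a one-line monotonicity reduction. Since $\alpha_2\ge\alpha_1+1$, add back $\alpha_2-\alpha_1-1$ of the missing cross-edges to obtain a supergraph $G'\supseteq G$; then $\lambda(G)\le\lambda(G')$. Now $G'$ is obtained from the \emph{same} $K_r(n_1,\dots,n_r)$ by adding $\alpha_1'=\alpha_1$ class-edges and deleting exactly $\alpha_2'=\alpha_1+1$ cross-edges, so $\max\{\alpha_1',\alpha_2'\}=O(1)$ and Theorem~\ref{first-key}(ii) with $k=0$ applies directly (regardless of how large the original $\alpha_2$ or $n_1-n_r$ were), giving
\[
\lambda(G')\ \le\ \lambda(T_{n,r})+\frac{2(\alpha_1'-\alpha_2')}{n}+O\!\Big(\frac{1}{n^2}\Big)\ =\ \lambda(T_{n,r})-\frac{2}{n}+O\!\Big(\frac{1}{n^2}\Big)\ <\ \lambda(T_{n,r}).
\]
No case split is needed.
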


\begin{proof}
    We define $G'$ to be a graph obtained from $G$ by adding $\alpha_2-\alpha_1-1$ missing cross-edges. So $\lambda (G)\le \lambda (G')$. Note that the graph $G'$ has missed exactly $\alpha_1 +1$ cross-edges. 
    We denote $\alpha_1'=\alpha_1$ and $\alpha_2'=\alpha_1+1$. Then $\max\{\alpha_1',\alpha_2'\}=O(1)$. Setting $k=0$ and $\psi'=2(\alpha_1' + \alpha_2')$
    in Theorem \ref{first-key} (ii), 
    we obtain that $\lambda (G')\le \lambda (T_{n,r}) + \frac{2(\alpha_1'- \alpha_2')}{n} + O(\frac{1}{n^2})< \lambda (T_{n,r})$, as needed.  
\end{proof}

In the case $\alpha_1= \alpha_2$, it seems complicated to compare $\lambda (G)$ and $\lambda (T_{n,2})$. Let $K_{p,q}^{1,\Gamma}$ be obtained from $K_{p,q}$ by adding a class-edge to the partite set of size $p$, and deleting a cross-edge incident to the added class-edge. Let $s=\frac{n-1}{2}$ and $\ell =\frac{n+1}{2}$ for odd $n$.  
Taking $G=K_{s, \ell}^{1,\Gamma}$, we can compute that $\lambda (G) < \lambda (T_{n,2})$. 
 However, taking $G=K_{\frac{n}{2}, \frac{n}{2}}^{1,\Gamma}$ or $K_{\ell, s}^{1,\Gamma}$, we get $\lambda (G) > \lambda (T_{n,2})$; see \cite[Sec. 3]{LFP2025-jgt}.

\section{Structure of graphs with few $F$-copies: Proof of Theorem \ref{second-key}}

\label{sec-proof-second}

In this section, we present the proof of Theorem \ref{second-key}. 

\begin{proof}[\emph{\textbf{Proof of Theorem \ref{second-key}}}]
Let $t$ be an absolute constant such that $t:=\max\{6r, 2e(F),|F|\}$. 
Assume that $q\le \delta_F {n}$, where $\delta_F > 0$ is a small number depending only on $F$, and it will be determined later.   
It is well-known that $e(T_{n,r})\ge \frac{r-1}{r}\frac{n^2}{2} -\frac{r}{8}$. Then 
\begin{align}\label{ALI01A}
  \lambda(G)\geq \lambda(T_{n,r})\geq \frac{2e(T_{n,r})}{n}\ge \frac{r-1}{r}n-\frac{r}{4n}.
\end{align} 
Note that $N_F(G) \leq (q+\varepsilon_1 ) c(n,F)=o(n^{f})$. 
Let $\varepsilon = (10t)^{-15}$ be a fixed constant. 
For this fixed $\varepsilon$,  let $\delta_{\ref{thm-sss}}$ and $\eta_{\ref{thm-sss}}$ be determined in Theorem \ref{thm-sss}. 
Then we can choose $n$ to be sufficiently large  such that 
$\lambda (G)\ge (1-\frac{1}{r} -\delta_{\ref{thm-sss}})n$ and $N_F(G)\le \eta_{\ref{thm-sss}} n^f$. Thus, Theorem \ref{thm-sss} implies that  
$G$ can be obtained from $T_{n,r}$ by adding and deleting at most $\varepsilon n^2$ edges. 

Now, we proceed with the proof through a sequence of claims.

\begin{claim}\label{CLA3.2}
Let $V(G)=\cup_{i=1}^{r}V_i$ be a partition 
such that $\sum_{1\leq i<j\leq r}e(V_i,V_j)$ is maximized. Then
$\sum_{i=1}^{r}e(V_i)\leq \varepsilon n^2$ and $\big||V_i|-\frac nr\big|\leq\varepsilon^{\frac13} n$
for each $i\in [r]$.
\end{claim}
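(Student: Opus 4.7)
The plan is to deduce both bounds from the already established fact that $G$ can be transformed into $T_{n,r}$ by adding and deleting at most $\varepsilon n^2$ edges, combined with Lemma~\ref{lem-BC}. The key observation is that the natural vertex partition of $T_{n,r}$ provides a witness partition $U_1,\dots,U_r$ of $V(G)$ with few within-part edges, and the extremal choice of $V_1,\dots,V_r$ can only do better.

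More precisely, let $U_1,\dots,U_r$ be the partite sets of $T_{n,r}$ viewed as a partition of $V(G)$. Every edge of $G$ lying inside some $U_i$ must have been added during the transformation from $T_{n,r}$ to $G$, so $\sum_{i=1}^{r}e(U_i)\le \varepsilon n^2$. Since $V_1,\dots,V_r$ maximizes $\sum_{i<j}e(V_i,V_j)$, and since $\sum_{i=1}^{r}e(V_i)+\sum_{i<j}e(V_i,V_j)=e(G)$, it minimizes $\sum_{i=1}^{r}e(V_i)$. Hence
\[
\sum_{i=1}^{r}e(V_i)\le \sum_{i=1}^{r}e(U_i)\le \varepsilon n^2,
\]
giving the first assertion.

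For the part sizes, I first record a lower bound on $e(G)$. Since at most $\varepsilon n^2$ edges were deleted in the transformation, $e(G)\ge e(T_{n,r})-\varepsilon n^2\ge \left(1-\tfrac{1}{r}\right)\tfrac{n^2}{2}-\varepsilon n^2-\tfrac{r}{8}$. Applying Lemma~\ref{lem-BC} with $s=\varepsilon n^2$ and with $t_{\ref{lem-BC}}:=\varepsilon n^2+\tfrac{r}{8}$ (to avoid a name clash with the constant $t$ fixed above), I obtain
\[
\Bigl||V_i|-\tfrac{n}{r}\Bigr|\le \sqrt{2(s+t_{\ref{lem-BC}})}\le \sqrt{4\varepsilon n^2+\tfrac{r}{4}}\le \sqrt{6\varepsilon}\,n
\]
for sufficiently large $n$ and every $i\in[r]$.

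Finally, since $t\ge 6r\ge 12$ and $\varepsilon=(10t)^{-15}$, we have $\varepsilon\le 120^{-15}\ll 216^{-1}$, so $\sqrt{6\varepsilon}\le \varepsilon^{1/3}$. Therefore $\big||V_i|-\tfrac{n}{r}\big|\le \varepsilon^{1/3}n$, completing the proof. No real obstacle is expected here; the only point requiring care is the bookkeeping of which edges from the $T_{n,r}$-transformation count toward $\sum_i e(V_i)$ versus toward the deficit of $e(G)$ from $e(T_{n,r})$, which is resolved by using the natural $T_{n,r}$-partition as a comparison witness.
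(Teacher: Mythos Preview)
Your proof is correct and follows essentially the same approach as the paper: use the $T_{n,r}$ partition as a witness that $\sum_i e(U_i)\le \varepsilon n^2$, invoke the max-cut property to pass to $V_1,\dots,V_r$, lower-bound $e(G)$ from the $\varepsilon n^2$-edge transformation, and apply Lemma~\ref{lem-BC} to obtain $\sqrt{6\varepsilon}\,n<\varepsilon^{1/3}n$. The only cosmetic difference is that you give explicit numerics for $\sqrt{6\varepsilon}\le \varepsilon^{1/3}$ while the paper just writes ``since $\varepsilon$ is sufficiently small.''
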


\begin{proof}[Proof of claim] 
Since $G$ differs from $T_{n,r}$ in at most $\varepsilon n^2$ edges, we have
\begin{equation}\label{EQU009}
e(G)\geq e(T_{n,r})- \varepsilon n^2\geq \frac{r-1}{2r}n^2-\frac{r}{8}-\varepsilon n^2
>\frac{r-1}{2r}n^2-2\varepsilon n^2.
\end{equation}  
Moreover, there exists a partition $V(G)=\cup_{i=1}^{r} U_i$ such that
$\sum_{i=1}^{r}e(U_i)\leq \varepsilon n^2$
and $\big\lfloor\frac nr\big\rfloor\leq |U_i|
\leq\big\lceil\frac nr\big\rceil$ for each $i\in [r]$.
We select a new partition $V(G)=\cup_{i=1}^{r}V_i$
that maximizes $\sum_{i<j}e(V_i,V_j)$.
Consequently, we get $\sum_{i=1}^{r}e(V_i)\leq
\sum_{i=1}^{r}e(U_i)\leq \varepsilon n^2.$ 
Lemma \ref{lem-BC} implies that for each $i\in [r]$, we have 
$\big||V_i|-\frac nr\big|\leq \sqrt{6\varepsilon n^2} < \varepsilon^{\frac13} n$, where the last inequality holds since $\varepsilon$ is sufficiently small.  
\end{proof}

\begin{claim}\label{cl-S-size}
Let $S:=\{v\in V(G): d_G(v)\leq
\big(\frac{r-1}{r}-10\varepsilon^{\frac13}\big)n\}.$
Then $|S|\leq \varepsilon^{\frac13} n$.
\end{claim}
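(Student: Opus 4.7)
The plan is to prove Claim~\ref{cl-S-size} by a double-counting argument on \emph{missing cross-edges} relative to the partition $V(G) = V_1 \cup \cdots \cup V_r$ from Claim~\ref{CLA3.2}. A naive bound on the total degree sum is too weak: since a vertex can have degree up to $n-1$, the cheap inequality $2e(G) \le |S|\cdot(\frac{r-1}{r}-10\varepsilon^{1/3})n + (n-|S|)n$ only gives $|S| = O(\varepsilon n)$, not $O(\varepsilon^{1/3}n)$. So I would instead compare the cross-edges actually present in $G$ against the complete $r$-partite graph $K_r(|V_1|, \ldots, |V_r|)$.

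The first step is to bound the number of missing cross-edges. By Claim~\ref{CLA3.2}, $\sum_i e(V_i) \le \varepsilon n^2$, and combining with \eqref{EQU009} the number of cross-edges in $G$ is at least $e(G) - \varepsilon n^2 \ge e(T_{n,r}) - 2\varepsilon n^2$. Since $\sum_{i<j}|V_i||V_j| \le e(T_{n,r})$ (as the Tur\'an graph maximizes this product sum), the quantity
\[
M := \sum_{i<j}|V_i||V_j| - \bigl(e(G)- \textstyle\sum_i e(V_i)\bigr) \le 2\varepsilon n^2
\]
counts the number of pairs $\{u,v\}$ with $u \in V_i$, $v \in V_j$, $i \ne j$, but $uv \notin E(G)$.

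The second step is to show every $v \in S$ contributes many missing cross-edges. For $v \in V_i \cap S$, the ``cross-deficit'' of $v$ is
\[
m(v) := (n - |V_i|) - \bigl(d_G(v) - d_{V_i}(v)\bigr) \ge n - |V_i| - d_G(v).
\]
Using $|V_i| \le \frac{n}{r} + \varepsilon^{1/3}n$ from Claim~\ref{CLA3.2} and the defining bound $d_G(v) \le (\frac{r-1}{r} - 10\varepsilon^{1/3})n$ for $v\in S$, this yields $m(v) \ge 9\varepsilon^{1/3}n$.

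Finally, since $\sum_{v \in V(G)} m(v) = 2M \le 4\varepsilon n^2$, summing over $S$ gives $9\varepsilon^{1/3}n \cdot |S| \le 4\varepsilon n^2$, so $|S| \le \tfrac{4}{9}\varepsilon^{2/3}n < \varepsilon^{1/3}n$ for $\varepsilon$ small enough. I do not expect any genuine obstacle here; the only point requiring care is choosing to count missing cross-edges (rather than degrees directly) so that the saving of a full factor of $\varepsilon^{2/3}$ over the naive bound materializes.
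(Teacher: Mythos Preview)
Your argument is correct and in fact cleaner than the paper's. The paper proves this claim by contradiction using the Erd\H{o}s--Simonovits supersaturation lemma (Lemma~\ref{lem-ES-super}): assuming $|S|>\varepsilon^{1/3}n$, it removes a set $S_0\subseteq S$ of size $\lfloor\varepsilon^{1/3}n\rfloor$ and shows $e(G-S_0)\ge(\frac{r-1}{2r}+2\varepsilon)n_0^2$, whence $G-S_0$ already contains $\delta_{\ref{lem-ES-super}}n_0^f$ copies of $F$, contradicting the hypothesis $N_F(G)\le(q+\varepsilon_1)c(n,F)$.

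Your route is genuinely different: you never touch the $F$-count and instead double-count missing cross-edges against the complete $r$-partite graph on $V_1,\dots,V_r$. This uses only the edge lower bound \eqref{EQU009} and the conclusions of Claim~\ref{CLA3.2}, and yields the stronger bound $|S|\le\frac{4}{9}\varepsilon^{2/3}n$. The trade-off is that the paper's argument is the ``standard'' move in this context (remove low-degree vertices, apply supersaturation), while yours is more self-contained and does not rely on the copy-count hypothesis at this stage; both are short, but yours avoids invoking an external lemma.
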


\begin{proof}[Proof of claim] 
Suppose on the contrary that $|S|>\varepsilon^{\frac13} n$.
Then there exists a subset $S_0\subseteq S$
with $|S_0|=\lfloor\varepsilon^{\frac13} n\rfloor$.
Setting $n_0=|G-S_0|=n-\lfloor\varepsilon^{\frac13} n\rfloor$ and using \eqref{EQU009}, we obtain
 \begin{align*}
e(G-S_0)&\geq  e(G)-\sum_{v\in S_0}d_G(v)
\geq \frac{r-1}{2r}n^2-2\varepsilon n^2-\varepsilon^{\frac13} n\Big(\frac{r-1}{r}-10\varepsilon^{\frac13}\Big)n\nonumber\\
&=\Big(\frac{r-1}{2r}-\frac{r-1}{r}\varepsilon^{\frac13}
+10\varepsilon^{\frac23}-2\varepsilon\Big)n^2
> \Big( \frac{r-1}{2r}+2\varepsilon \Big) n_0^2,
\end{align*} 
where the last inequality follows since $\varepsilon = (10t)^{-9}$. For this real number $\varepsilon >0$, 
let $\delta_{\ref{lem-ES-super}}$ be the parameter determined in Lemma \ref{lem-ES-super}. 
Then for sufficiently large $n$, we have  
\[ N_F(G-S_0)\ge \delta_{\ref{lem-ES-super}} n_0^{f} > (q+\varepsilon_1)c(n,F), \]  
where the last inequality holds since $q< \delta_F {n}$ and $ c(n,F) \le 2\alpha_F n^{f-2}$ by Lemma \ref{LEM2.5}.  
This leads to a contradiction with $N_F(G)\le (q+ \varepsilon_1) c(n,F)$. 
Thus, we get $|S|\leq \varepsilon^{\frac13} n$.
\end{proof}

\begin{claim}\label{cl-R-size}
Let $R=\cup_{i=1}^{r}R_i$, where
$R_i:=\{v\in V_i: d_{V_i}(v)\geq 2\varepsilon^{\frac13}n\}$.
Then $|R|\leq \frac12\varepsilon^{\frac13}n$.
\end{claim}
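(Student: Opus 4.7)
The plan is a direct double-counting argument using the fact that Claim \ref{CLA3.2} already provides a strong global bound on the total number of edges inside the classes of the partition. Specifically, I would exploit the inequality $\sum_{i=1}^{r} e(V_i) \le \varepsilon n^2$ together with the local degree condition defining $R_i$.

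First, I would rewrite the sum of intra-part degrees. For each $i\in [r]$ we have
\[
2 e(V_i) \;=\; \sum_{v\in V_i} d_{V_i}(v) \;\ge\; \sum_{v\in R_i} d_{V_i}(v) \;\ge\; |R_i|\cdot 2\varepsilon^{\frac13} n,
\]
where the last inequality is the defining property of $R_i$. Summing over $i\in [r]$ and invoking Claim \ref{CLA3.2} gives
\[
2\varepsilon^{\frac13} n \cdot |R| \;=\; \sum_{i=1}^{r} |R_i|\cdot 2\varepsilon^{\frac13} n \;\le\; \sum_{i=1}^{r} 2 e(V_i) \;\le\; 2\varepsilon n^{2}.
\]
Dividing through yields $|R|\le \varepsilon^{\frac23} n$, which is stronger than the stated bound since $\varepsilon=(10t)^{-15}$ is tiny and in particular $\varepsilon^{\frac13}\le \tfrac12$, so $\varepsilon^{\frac23}\le \tfrac12\varepsilon^{\frac13}$. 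There is really no obstacle to overcome here: the work was done in Claim \ref{CLA3.2}, and this claim is essentially a ``Markov-type'' consequence of it. The only thing one needs to double-check is the choice of exponents, namely that the threshold $2\varepsilon^{\frac13} n$ in the definition of $R_i$ is large enough relative to $\varepsilon n^2$ to force $|R|$ to be of order at most $\varepsilon^{\frac23} n$, which fits comfortably below the desired $\tfrac12\varepsilon^{\frac13} n$.
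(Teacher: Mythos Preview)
Your proof is correct and essentially identical to the paper's own argument: both bound $\sum_i e(V_i)$ from below by $|R|\,\varepsilon^{1/3}n$ via the defining degree condition on $R_i$, then invoke the bound $\sum_i e(V_i)\le \varepsilon n^2$ from Claim~\ref{CLA3.2} to deduce $|R|\le \varepsilon^{2/3}n\le \tfrac12\varepsilon^{1/3}n$.
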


\begin{proof}[Proof of claim] 
For each $i\in [r]$, we see that 
{\small \begin{equation*}
  e(V_i)=\sum\limits_{v\in V_i}\frac12d_{V_i}(v)\geq
\sum\limits_{v\in R_i}\frac12d_{V_i}(v)\geq|R_i|\varepsilon^{\frac13}n.
\end{equation*}  }
Using Claim \ref{CLA3.2} gives
$2\varepsilon n^2\geq \sum\limits_{i=1}^{r}e(V_i)\geq
|R|\varepsilon^{\frac13}n,$
and thus $|R|\leq 2\varepsilon^{\frac23}n\leq \frac12\varepsilon^{\frac13}n$.
\end{proof}

We denote $V^*_i :=V_i\setminus (R\cup S)$ for each $i\in [r]$.

\begin{claim}\label{cl-com-nei}
If $u_0\in \cup_{i\in[r]\setminus\{k\}}(R_{i}\setminus S)$ and
$\{u_1,\dots,u_{t^2}\}\subseteq
\cup_{i\in [r]\setminus\{k\}}V^*_i$ for some $k\in [r]$,
then $u_0,u_1,\dots,u_{t^2}$ have at least $\frac{n}{2r^2}$ common vertices in $V^*_{k}$.
\end{claim}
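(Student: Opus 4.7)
\textbf{Proof plan for Claim~\ref{cl-com-nei}.}
The plan is to estimate, for each of the $t^{2}+1$ vertices $u_{0},u_{1},\dots,u_{t^{2}}$, the number of its non-neighbors in $V_{k}$, and then apply a union-bound (inclusion--exclusion) argument inside $V_{k}^{*}$. By Claims~\ref{CLA3.2} and~\ref{cl-R-size}, we have $|V_{k}^{*}|\ge |V_{k}|-|R|-|S|\ge \tfrac{n}{r}-3\varepsilon^{1/3}n$, so the goal reduces to showing that the total number of non-neighbors of $u_{0},\dots,u_{t^{2}}$ inside $V_{k}$ is at most $\tfrac{n}{2r}-O(t^{2}\varepsilon^{1/3})n$. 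The key inputs are: (a) every $u_{j}$ has large total degree because $u_{j}\notin S$; (b) for $j\ge 1$, one additionally has $d_{V_{i}}(u_{j})<2\varepsilon^{1/3}n$ because $u_{j}\notin R$; and (c) for $u_{0}$, the lack of control on $d_{V_{i_{0}}}(u_{0})$ will be replaced by the \emph{maximality of the partition} $V(G)=\cup_{i=1}^{r}V_{i}$.

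For $j\ge 1$, writing $u_{j}\in V_{i}^{*}$, the identity
\[
d_{V_{k}}(u_{j})\;=\;d_{G}(u_{j})-d_{V_{i}}(u_{j})-\sum_{l\ne i,k}d_{V_{l}}(u_{j})\;\ge\;d_{G}(u_{j})-d_{V_{i}}(u_{j})-\sum_{l\ne i,k}|V_{l}|
\]
combined with $d_{G}(u_{j})\ge (\tfrac{r-1}{r}-10\varepsilon^{1/3})n$, $d_{V_{i}}(u_{j})<2\varepsilon^{1/3}n$, and $|V_{l}|\le \tfrac{n}{r}+\varepsilon^{1/3}n$ from Claim~\ref{CLA3.2} readily yields $d_{V_{k}}(u_{j})\ge \tfrac{n}{r}-14\varepsilon^{1/3}n$. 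Hence $u_{j}$ has at most $15\varepsilon^{1/3}n$ non-neighbors in $V_{k}$.

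The main obstacle is the vertex $u_{0}\in R_{i_{0}}\setminus S$, for which we only know $d_{G}(u_{0})\ge (\tfrac{r-1}{r}-10\varepsilon^{1/3})n$ and have no direct upper bound on $d_{V_{i_{0}}}(u_{0})$. The idea is to exploit the choice of partition in Claim~\ref{CLA3.2}: moving $u_{0}$ from $V_{i_{0}}$ to $V_{k}$ changes $\sum_{i<j}e(V_{i},V_{j})$ by $d_{V_{i_{0}}}(u_{0})-d_{V_{k}}(u_{0})$, so by maximality $d_{V_{i_{0}}}(u_{0})\le d_{V_{k}}(u_{0})$. Substituting this into
\[
d_{G}(u_{0})\;=\;d_{V_{i_{0}}}(u_{0})+d_{V_{k}}(u_{0})+\sum_{l\ne i_{0},k}d_{V_{l}}(u_{0})\;\le\;2d_{V_{k}}(u_{0})+\sum_{l\ne i_{0},k}|V_{l}|,
\]
and inserting the size bounds from Claim~\ref{CLA3.2}, one obtains $d_{V_{k}}(u_{0})\ge \tfrac{n}{2r}-6\varepsilon^{1/3}n$. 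Thus $u_{0}$ contributes at most $\tfrac{n}{2r}+7\varepsilon^{1/3}n$ non-neighbors in $V_{k}$, and this is the dominant term in the final bookkeeping.

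Putting everything together, the number of common neighbors of $u_{0},u_{1},\dots,u_{t^{2}}$ lying in $V_{k}^{*}$ is at least
\[
|V_{k}^{*}|-\Big(\tfrac{n}{2r}+7\varepsilon^{1/3}n\Big)-t^{2}\cdot 15\varepsilon^{1/3}n\;\ge\;\tfrac{n}{2r}-(10+15t^{2})\varepsilon^{1/3}n.
\]
Since $r\ge 2$ (the claim is vacuous for $r=1$) and $\varepsilon=(10t)^{-15}$, the error term $(10+15t^{2})\varepsilon^{1/3}n$ is dwarfed by the gap $\tfrac{n}{2r}-\tfrac{n}{2r^{2}}=\tfrac{(r-1)n}{2r^{2}}$, yielding the desired lower bound $\tfrac{n}{2r^{2}}$. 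The only delicate point of the whole argument is the use of the partition-maximality for $u_{0}$; once this trick supplies $d_{V_{k}}(u_{0})\ge \tfrac{n}{2r}-O(\varepsilon^{1/3}n)$, the rest is routine counting.
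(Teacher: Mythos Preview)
Your argument is correct and follows essentially the same route as the paper: bound $d_{V_k}(u_j)$ from below for each $j$ using high total degree ($u_j\notin S$), small in-part degree for $j\ge 1$ ($u_j\notin R$), and partition-maximality for $u_0$, then union-bound inside $V_k^*$. One caveat: your explicit constants $14,15,6,7$ tacitly assume $r$ is bounded; for general $r$ the coefficients on $\varepsilon^{1/3}n$ are of order $r$ (e.g.\ $r{+}10$ rather than $14$), but since $t\ge 6r$ and $\varepsilon=(10t)^{-15}$ the final comparison $\frac{n}{2r}-O(t^3\varepsilon^{1/3})n\ge \frac{n}{2r^2}$ still goes through once this is corrected. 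As a side remark, your pairwise inequality $d_{V_{i_0}}(u_0)\le d_{V_k}(u_0)$ actually yields the stronger bound $d_{V_k}(u_0)\ge \tfrac{n}{2r}-O_r(\varepsilon^{1/3}n)$, whereas the paper uses $d_{V_{i_0}}(u_0)\le \tfrac{1}{r}d_G(u_0)$ to obtain only $d_{V_k}(u_0)\ge \tfrac{n}{r^2}-O_r(\varepsilon^{1/3}n)$; both suffice for the stated conclusion.
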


\begin{proof}[Proof of claim] 
Suppose that $u_0\in R_{i_0}\setminus S$
for some ${i_0}\in [r]\setminus \{k\}$.
Then $d_G(u_0)>(\frac{r-1}{r}-10\varepsilon^{\frac13})n$
as $u_0\notin S$. Claim \ref{CLA3.2} implies  
$d_{V_{i}}(u_0)\leq |V_{i}|
\leq(\frac{1}{r}+\varepsilon^{\frac13})n$ for each $i\in [r]$.
Since $\cup_{i=1}^{r}V_i$
is a partition of $V(G)$ such that $\sum_{ i<j}e(V_i,V_j)$
is maximized, we have
$d_{V_{{i_0}}}(u_0)\leq \frac{1}{r}d_{G}(u_0)$.
Thus,
 \begin{align}\label{EQU010}
d_{V_{k}}(u_0)
&=d_G(u_0)-d_{V_{i_0}}(u_0)-
\sum_{i\in [r]\setminus\{k,i_0\}}d_{V_{i}}(u_0)  \nonumber\\
&\geq \frac{r-1}{r}d_G(u_0)-\big(r-2\big)\Big(\frac{1}{r}
+\varepsilon^{\frac13}\Big)n >\Big(\frac{1}{r^2}-(r+8)\varepsilon^{\frac13}\Big)n >\Big(\frac{1}{r^2}-2t\varepsilon^{\frac13}\Big)n,
\end{align}
where the last inequality holds since $r+8<2t$.

For every $j\in [t^2]$, we may 
assume that $u_j\in V^*_{i_j}$
for some $i_j\in [r]\setminus\{k\}$.
Then $u_j\notin R_{i_j}\cup S$ by the definition of $V^*_{i_j}$.
Hence, we have $d_{V_{i_j}}(u_j)<2\varepsilon^{\frac13}n$ and $d_G(u_j)>\big(\frac{r-1}{r}-10\varepsilon^{\frac13}\big)n$.
Clearly, we get $d_{V_i}(u_j)\leq |V_{i}|\leq
(\frac1r+\varepsilon^{\frac13})n$
for any $i\in [r]$.
Thus,
\begin{align}\label{EQU011}
d_{V_{k}}(u_j)
&=d_G(u_j)-d_{V_{i_j}}(u_j)
-\sum_{i\in [r]\setminus\{k,i_j\}}d_{V_{i}}(u_j) \nonumber\\
&>\Big(\frac{r-1}{r}-12\varepsilon^{\frac13}\Big)n
-\big(r-2\big)\Big(\frac{1}{r}+\varepsilon^{\frac13}\Big)n =\Big(\frac{1}{r}-(r+10)\varepsilon^{\frac13}\Big)n  \geq\Big(\frac{1}{r}-2t\varepsilon^{\frac13}\Big)n,
\end{align}
where the last inequality holds as $r+10\leq 2t$.
Combining \eqref{EQU010} with \eqref{EQU011},
we have
\begin{align*}
\Big|\bigcap_{j=0}^{t^2}N_{V_{k}}(u_j)\Big|
&\geq\sum_{j=0}^{t^2}d_{V_{k}}(u_j)-t^2|V_{k}|\\
&>\Big(\frac{1}{r^2}-2t\varepsilon^{\frac13}\Big)n
+t^2\Big(\frac{1}{r}-2t\varepsilon^{\frac13}\Big)n
-t^2\Big(\frac{1}{r}+\varepsilon^{\frac13}\Big)n\geq \Big(\frac{1}{r^2}-
4t^3\varepsilon^{\frac13}\Big)n.
\end{align*}
By Claims \ref{cl-S-size} and \ref{cl-R-size}, we know that 
$|R\cup S| <2 \varepsilon^{\frac{1}{3}}n$. 
So $(\frac{1}{r^2} - 4t^3\varepsilon^{\frac{1}{3}})n - 2\varepsilon^{\frac{1}{3}}n \ge \frac{n}{2r^2}$ for sufficiently large $n$. 
Thus, there exist at least $\frac{n}{2r^2}$ vertices
in $V_{k}^*$ adjacent to $u_0,u_1,\dots,u_{t^2}$.
\end{proof}

Since $F$ is a color-critical graph with $\chi (F)=r+1$, there exists an edge $uv\in E(F)$ such that 
$F-uv$ is $r$-partite. Thus, 
we may assume that $U_1,U_2,\ldots ,U_r$ are pairwise disjoint vertex sets of $F$, where $U_1$ contains the edge $uv$, and $U_2,\ldots , U_r$ are independent sets of $F$. 
Recall that $N_F(G,uv)$ denotes the number of copies of $F$ in $G$ that contain the edge $uv$. 

\begin{claim}\label{CLA3.6}
If $u\in V^*_i\cup R_i$ and $v\in V^*_i$, and $uv\in E(G[V_i])$ for some $i\in [r]$,
then there exists an absolute constant $\eta_F>0$ such that
$N_F(G,uv)\geq \eta_F \cdot c(n,F)$.
\end{claim}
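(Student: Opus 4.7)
The plan is to count labeled embeddings of $F$ into $G$ that send a color-critical edge of $F$ to the edge $uv$, and then convert this count into a lower bound on $N_F(G,uv)$ by dividing by $|\operatorname{Aut}(F)|$.

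Since $F$ is color-critical with $\chi(F)=r+1$, fix an edge $u^*v^*\in E(F)$ such that $F-u^*v^*$ is $r$-partite, and let $U_1,\ldots,U_r$ be the color classes of $F-u^*v^*$ with $u^*,v^*\in U_1$. Then $U_k$ is independent in $F$ for every $k\geq 2$, and $u^*v^*$ is the unique edge inside $U_1$. Fix any bijection between $\{U_1,\ldots,U_r\}$ and $\{V_1,\ldots,V_r\}$ sending $U_1$ to $V_i$, enumerate $V(F)\setminus\{u^*,v^*\}=\{w_1,\ldots,w_{f-2}\}$, and let $k_j\in[r]$ be the index with $w_j\in U_{k_j}$. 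Starting from $\phi(u^*)=u$ and $\phi(v^*)=v$, I extend $\phi$ one vertex at a time: at step $j$, I pick $\phi(w_j)\in V_{k_j}^*$, distinct from previously chosen images, with $\phi(w_j)\phi(w)\in E(G)$ for every already-embedded $w$ satisfying $ww_j\in E(F)$. Since $F-u^*v^*$ is $r$-partite with the chosen color classes, every such $w$ lies in $U_k$ with $k\neq k_j$, hence $\phi(w)\in V_k$ with $k\neq k_j$; moreover, every previously embedded vertex already lies in $V_k^*$, with the sole possible exception of $u$ (which may be in $R_i$; the edge case $u\in R_i\cap S$ is handled by a direct estimate using $d_{V_{k_j}}(u)\geq d_{V_i}(u)\geq 2\varepsilon^{1/3}n$ coming from the maximality of the partition). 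Applying Claim \ref{cl-com-nei} with target part $V_{k_j}$, taking $u_0:=u$ when $u\in R_i\setminus S$ and omitting $u_0$ otherwise, and letting the remaining required neighbors play the role of $u_1,\ldots,u_s$ with $s\leq f-1\leq t^2$, yields at least $\frac{n}{2r^2}$ common neighbors in $V_{k_j}^*$; excluding the at most $f$ already-used vertices leaves at least $\frac{n}{3r^2}$ admissible choices for $\phi(w_j)$ once $n$ is sufficiently large.

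Multiplying these lower bounds across $j=1,\ldots,f-2$ produces at least $\bigl(\frac{n}{3r^2}\bigr)^{f-2}$ labeled embeddings $\phi:V(F)\to V(G)$ with $\phi(u^*)=u$ and $\phi(v^*)=v$. Since each copy of $F$ in $G$ admits at most $C_F:=|\operatorname{Aut}(F)|$ such labeled embeddings, we conclude
\[
N_F(G,uv)\geq \frac{1}{C_F}\Bigl(\frac{n}{3r^2}\Bigr)^{f-2}.
\]
Using $c(n,F)\leq 2\alpha_F n^{f-2}$ from Lemma \ref{LEM2.5} and choosing $\eta_F:=\bigl(2\alpha_F C_F(3r^2)^{f-2}\bigr)^{-1}$ then yields $N_F(G,uv)\geq \eta_F\cdot c(n,F)$. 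The main technical hurdle is to confirm that the hypotheses of Claim \ref{cl-com-nei} are satisfied at every step of the greedy procedure, which amounts to maintaining the invariant that every previously embedded vertex (with the single possible exception of $u$) lies in the nice set $V^*$; this is enforced automatically by always choosing $\phi(w_j)\in V_{k_j}^*$.
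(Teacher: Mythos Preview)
Your approach is essentially the same as the paper's: both build copies of $F$ by greedily embedding $V(F)$ into $G$, sending the color-critical edge to $uv$ and each color class $U_k$ into the corresponding $V_k^*$, invoking Claim~\ref{cl-com-nei} at each step to guarantee $\Omega(n)$ choices. The paper embeds one color class at a time and counts unordered vertex sets, whereas you embed one vertex at a time and divide labeled embeddings by $|\mathrm{Aut}(F)|$; these are cosmetic differences.

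There is, however, a genuine gap in your handling of the case $u\in R_i\cap S$. The estimate $d_{V_{k_j}}(u)\ge d_{V_i}(u)\ge 2\varepsilon^{1/3}n$ is valid (by maximality of the partition), but it is far too weak for the common-neighborhood argument: intersecting $N_{V_{k_j}}(u)$ with the neighborhoods of the at most $f-2$ other already-embedded vertices (each missing at most $(2t+1)\varepsilon^{1/3}n$ vertices of $V_{k_j}$) gives a lower bound of
\[
d_{V_{k_j}}(u)-(f-2)(2t+1)\varepsilon^{1/3}n\;\le\;\bigl(2-(f-2)(2t+1)\bigr)\varepsilon^{1/3}n,
\]
which is negative for $f\ge 3$. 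So no nontrivial count of admissible $\phi(w_j)$ follows. Note that Claim~\ref{cl-com-nei} itself requires $u_0\in R_i\setminus S$, and the paper's own proof of Claim~\ref{CLA3.6} applies it under that hypothesis without comment; the case $u\in R_i\cap S$ is neither handled there nor ever needed (Claim~\ref{CLA3.7} only invokes Claim~\ref{CLA3.6} with $u\in R_i\setminus S$ in the first part, or with both endpoints in $V_i^*$ in the second). The clean fix is to restrict the hypothesis to $u\in V_i^*\cup(R_i\setminus S)$, which is what both proofs actually establish and all that the sequel requires.
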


\begin{proof}[Proof of claim] 
Without loss of generality, 
we may assume that $i=1$. 
First of all, we can arbitrarily find 
$|U_1|-2$ vertices in $V_1^*$. 
These vertices together with $\{u,v\}$ from a set, say $U_1^*$. 
Note that there are ${|V_1^{*}| \choose |U_1|-2}$ ways to choose such a set $U_1^*$, and then we embed $U_1$ to $U_1^*$.  

Using Claim \ref{cl-com-nei}, 
we know that the vertices of $U_1^*$ have at least $\frac{n}{2r^2}$ common neighbors in $V_2^*$. 
So there are at least ${n/(2r^2) \choose |U_2|}$ ways to choose a subset $U_2^*$ for adding vertices of $U_2$. Applying Claim \ref{cl-com-nei} iteratively, for each $k\in \{3,\ldots ,r\}$, 
we can find a subset $U_k^*$ such that 
$U_k^* \subseteq V_k^*$ with size $|U_k^*|=|U_k|$, and 
$U_k^*$ lies in the common neighbors of vertices of $\cup_{i=1}^{k-1}U_i^*$. 
From Claim \ref{cl-com-nei}, we see that there are at least ${n/(2r^2) \choose |U_k|}$ ways to choose $U_k^*$ for each $k\in \{3,\ldots ,r\}$.  
It is clear that $G[\cup_{k=1}^{r}U_k^*]$ contains a copy of $F$ containing the edge $uv$.
Thus, we get 
$$N_F(G,uv)\geq \binom{|V_1^*|}{|U_1|-2}\binom{{n}/{(2r^2)}}{|U_2|}\cdots \binom{{n}/{(2r^2)}}{|U_r|}=\Omega_F(n^{f-2}),$$
where the last inequality follows by $|V_1^*|\ge |V_1| -|R\cup S|> \frac{n}{r} - 3\varepsilon^{\frac{1}{3}}n > \frac{n}{2r}$ for sufficiently small $\varepsilon$ and sufficiently large $n$. 
Combining with Lemma \ref{LEM2.5} that $ c(n, F) < 2 \alpha_F n^{f-2}$, there exists a constant $\eta_F>0$
such that $N_F(G,uv)\geq \eta_F \cdot c(n,F)$, as needed. 
\end{proof}

\begin{claim}\label{CLA3.7}
We have $R\subseteq S$ and 
$\sum_{i=1}^r e(G[V_i^*]) < q^*$, where $q^*:=\frac{2q}{\eta_F} $.
\end{claim}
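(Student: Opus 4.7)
The plan is to use Claim \ref{CLA3.6} as a counting engine: every class-edge meeting $V_i^*$ forces at least $\eta_F c(n,F)$ copies of $F$, so the hypothesis $N_F(G) \leq (q+\varepsilon_1) c(n,F)$ severely constrains how many such edges can appear. I will first establish $R \subseteq S$ by contradiction, and then derive the bound on $\sum_i e(G[V_i^*])$ from a similar double count.

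To show $R \subseteq S$, suppose for contradiction that some $v \in R_i \setminus S$. The definition of $R_i$ yields $d_{V_i}(v) \geq 2\varepsilon^{1/3}n$, while Claims \ref{cl-S-size} and \ref{cl-R-size} give $|R \cup S| \leq \tfrac{3}{2}\varepsilon^{1/3}n$. Hence $v$ has at least $\tfrac{1}{2}\varepsilon^{1/3}n$ neighbors in $V_i^*$. For every such neighbor $u$, the edge $vu$ is a class-edge with $v \in R_i$ and $u \in V_i^*$, so Claim \ref{CLA3.6} gives $N_F(G,vu) \geq \eta_F c(n,F)$. Summing over all such $u$, and observing that any single copy of $F$ containing $v$ is counted at most $|F|-1$ times (once per edge of the copy incident to $v$), I obtain
\[
(|F|-1)\cdot N_F(G) \;\geq\; \sum_{u \in N(v)\cap V_i^*} N_F(G,vu) \;\geq\; \tfrac{1}{2}\varepsilon^{1/3} n \cdot \eta_F c(n,F).
\]
Comparing with $N_F(G) \leq (q+\varepsilon_1)c(n,F)$ and choosing $\delta_F$ small (for instance $\delta_F < \varepsilon^{1/3}\eta_F/(4|F|)$) contradicts $q \leq \delta_F n$ for large $n$. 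Therefore $R \subseteq S$, and consequently $V_i^* = V_i \setminus S$ for every $i$.

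For the second part, every class-edge $uv \in E(G[V_i^*])$ now has both endpoints in $V_i^*$, so Claim \ref{CLA3.6} applies, giving $N_F(G,uv) \geq \eta_F c(n,F)$. Summing over all class-edges in $\bigcup_i V_i^*$, and noting that each copy of $F$ in $G$ contains at most $e(F)$ class-edges (hence is counted at most $e(F)$ times in the double sum), I get
\[
\Big(\sum_{i=1}^r e(G[V_i^*])\Big) \cdot \eta_F c(n,F) \;\leq\; \sum_{i=1}^r \sum_{uv \in E(G[V_i^*])} N_F(G,uv) \;\leq\; e(F)\cdot N_F(G) \;\leq\; e(F)(q+\varepsilon_1)c(n,F).
\]
This gives $\sum_i e(G[V_i^*]) \leq e(F)(q+\varepsilon_1)/\eta_F$. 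Since $\eta_F$ from Claim \ref{CLA3.6} is defined only up to an $F$-dependent constant, I shrink it to absorb the factor $e(F)$, turning the bound into $\sum_i e(G[V_i^*]) \leq (q+\varepsilon_1)/\eta_F < 2q/\eta_F = q^*$, where the strict inequality uses $q \geq 1$ and $\varepsilon_1 < 1/2$.

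The main obstacle is nothing but the overcounting in these two double sums: a single copy of $F$ may contribute to several edge-indexed terms, once per edge of the copy falling in the appropriate class position. In Step~1 this multiplicity is bounded by $\Delta(F) \leq |F|-1$, and in Step~2 by $e(F) \leq \binom{|F|}{2}$. Both are $O_F(1)$ and can be harmlessly absorbed into the working constant $\eta_F$; with this bookkeeping the argument is otherwise a direct application of Claim \ref{CLA3.6} together with the hypothesis $N_F(G) \leq (q+\varepsilon_1)c(n,F)$.
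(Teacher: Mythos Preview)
Your proof is correct and follows essentially the same route as the paper: use Claim~\ref{CLA3.6} to force many copies of $F$ from class-edges meeting $V_i^*$, then compare against the hypothesis $N_F(G)\le (q+\varepsilon_1)c(n,F)$.

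The one place you differ is in how the double count is handled. The paper writes $N_F(G)\ge \sum_e N_F(G,e)$ without comment; this is literally justified because the copies of $F$ built in the proof of Claim~\ref{CLA3.6} use the designated edge $uv$ as their \emph{only} class-edge, so copies attached to distinct class-edges are genuinely distinct. You instead allow overcounting and bound the multiplicity by $|F|-1$ (for edges through a fixed vertex) or $e(F)$ (for arbitrary class-edges), then absorb these $O_F(1)$ factors into $\eta_F$. Both bookkeeping devices lead to the same conclusion; yours is more self-contained since it does not reach back into the internal construction of Claim~\ref{CLA3.6}.
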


\begin{proof}[Proof of claim] 
Firstly, we show that $R\subseteq S$.
Otherwise,
there exists some $i\in [r]$ such that $R_{i}\setminus S\neq \varnothing$.
Let $u$ be a vertex in $R_{i}\setminus S$. 
Then $u$ has at least $2\varepsilon^{\frac{1}{3}}n$ neighbors in $V_{i}$.  
By Claims \ref{cl-S-size} and \ref{cl-R-size}, we have $|S|\le \varepsilon^{\frac{1}{3}}n$ and $|R|\le \frac{1}{2}\varepsilon^{\frac{1}{3}}n$.  
So $u$ has at least $\frac{1}{2}\varepsilon^{\frac{1}{3}}n$ neighbors in $V_{i}^*$. 
Thus, we can find $q^* $ neighbors of $u$ in $V_{i}^*$, say $v_1,v_2,\dots,v_{q^*}$.
By Claim \ref{CLA3.6}, we have
\begin{align*}
N_F(G) 
\geq \sum_{i=1}^{q^*}N_F(G,uv_i)
\geq q^*  \cdot \eta_F c(n,F)= 2q \cdot c(n,F),
\end{align*}
which contradicts the assumption $N_F(G) \leq (q+\varepsilon_1 )\cdot c(n,F)$.
Thus, we have $R\subseteq S$.

Next, we show that $\sum_{i=1}^re(G[V_i^*])< q^* $.
Suppose to the contrary, then by Claim \ref{CLA3.6}, 
\begin{align*}
N_F(G) \geq \sum_{e\in E(\cup_{i=1}^rG[V_i^*])}N_F(G,e)
\geq  2q \cdot c(n,F),
\end{align*} 
which leads to a contradiction.
This completes the proof of Claim \ref{CLA3.7}.
\end{proof}

In the sequel, we fix a small constant $\delta_F>0$ such that 
\begin{align}\label{align-delta}
\delta_F<\min\left\{ 
\frac{\eta_F}{8r}\varepsilon^{\frac{1}{3}},
\frac{\eta_F}{(10r)^8}, \frac{1}{(10r)^8}\right\}.
\end{align}
Recall that $u^*\in V(G)$ is a vertex such that  $x_{u^*}=\max_{v\in V(G)}x_v$.
Without loss of generality, we may assume that $u^{*}\in V_{1}$.
The eigen-equation gives $\lambda(G) x_{u^*} =\sum_{v\in N(u^*)} x_v \leq d_G(u^*)x_{u^*}$, which together with (\ref{ALI01A}) yields  $d_G(u^*)\geq \lambda(G)\geq \frac{r-1}{r}n-\frac{r}{4n}$.
By the definition in Claim \ref{cl-S-size}, 
we have $u^*\notin S$.
Since $R\subseteq S$ by Claim \ref{CLA3.7}, we see that $u^*\notin R$, so $d_{V_1}(u^*)<2\varepsilon^{\frac13} n$.

\begin{claim}\label{CLA3.8A}
For each $2\leq i\leq r$, 
there is a partition $V_i^*=A_i\sqcup B_i$ such that $|A_i|=\lfloor\frac{1}{r-1}\varepsilon^{\frac13} n\rfloor$
and $B_i$ is an independent set of $G$.
Moreover, we have $\sum_{i=2}^{r}\sum_{v\in B_i}x_v
\geq \big(\lambda(G)-4\varepsilon^{\frac13} n\big)x_{u^{*}}$.
\end{claim}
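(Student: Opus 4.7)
The plan has two components matching the two assertions of the claim. The first is a straightforward packing argument based on Claim~\ref{CLA3.7}, and the second is a short pointwise estimate on the eigen-equation $\lambda(G)x_{u^*}=\sum_{v\in N(u^*)}x_v$ combined with the maximality of $x_{u^*}$.

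For the existence of the partition, I would proceed as follows. By Claim~\ref{CLA3.7}, $\sum_{i=1}^{r}e(G[V_i^*])<q^*=2q/\eta_F$, so in particular each $G[V_i^*]$ has fewer than $q^*$ edges. Picking one endpoint of each such edge yields a vertex cover $C_i$ of $G[V_i^*]$ with $|C_i|\le q^*$. The assumption $q\le \delta_F n$ together with the choice $\delta_F<\frac{\eta_F}{8r}\varepsilon^{1/3}$ from \eqref{align-delta} gives $q^*\le \frac{\varepsilon^{1/3}n}{4r}$, which is far smaller than the target $\lfloor\frac{\varepsilon^{1/3}n}{r-1}\rfloor$. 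Moreover, using Claim~\ref{CLA3.2} together with $|R\cup S|\le \varepsilon^{1/3}n$ (by Claims~\ref{cl-S-size}, \ref{cl-R-size} and \ref{CLA3.7}), we have $|V_i^*|\ge \frac{n}{r}-2\varepsilon^{1/3}n$, which comfortably exceeds $\lfloor\frac{\varepsilon^{1/3}n}{r-1}\rfloor$ for sufficiently small $\varepsilon$. Hence we can enlarge $C_i$ arbitrarily inside $V_i^*$ to obtain $A_i\supseteq C_i$ of size exactly $\lfloor\frac{\varepsilon^{1/3}n}{r-1}\rfloor$, and then $B_i:=V_i^*\setminus A_i$ is independent in $G$ because $A_i$ covers every edge of $G[V_i^*]$.

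For the weight estimate, observe first that $u^*\notin R\cup S$: from $\lambda(G)x_{u^*}=\sum_{v\in N(u^*)}x_v\le d_G(u^*)x_{u^*}$ and \eqref{ALI01A} we get $d_G(u^*)\ge \lambda(G)\ge \frac{r-1}{r}n-\frac{r}{4n}$, forcing $u^*\notin S$, and Claim~\ref{CLA3.7} then gives $u^*\notin R$. Thus $d_{V_1}(u^*)<2\varepsilon^{1/3}n$. Partitioning $V(G)=V_1\cup \bigcup_{i=2}^{r}\bigl(A_i\cup B_i\cup(V_i\cap(R\cup S))\bigr)$ and using $x_v\le x_{u^*}$ for every $v$, the eigen-equation gives
\begin{align*}
\lambda(G)x_{u^*}
&=\sum_{v\in N_{V_1}(u^*)}x_v
+\sum_{i=2}^{r}\sum_{v\in N_{V_i}(u^*)\cap A_i}x_v
+\sum_{i=2}^{r}\sum_{v\in N_{V_i}(u^*)\cap(R\cup S)}x_v
+\sum_{i=2}^{r}\sum_{v\in B_i}x_v\\
&\le \Bigl(2\varepsilon^{1/3}n+\sum_{i=2}^{r}|A_i|+|R\cup S|\Bigr)x_{u^*}
+\sum_{i=2}^{r}\sum_{v\in B_i}x_v.
\end{align*}
Since $\sum_{i=2}^{r}|A_i|\le \varepsilon^{1/3}n$ and $|R\cup S|=|S|\le \varepsilon^{1/3}n$, the right-hand side is at most $4\varepsilon^{1/3}n\cdot x_{u^*}+\sum_{i=2}^{r}\sum_{v\in B_i}x_v$, which rearranges to the desired inequality.

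There is no genuine conceptual obstacle here; the main care needed is purely the bookkeeping of constants, namely verifying that the choice of $\delta_F$ in \eqref{align-delta} is small enough to dominate every error term produced (both in making the vertex covers $C_i$ fit inside $A_i$, and in bounding the ``non-$B_i$'' contributions to the eigen-equation by $4\varepsilon^{1/3}n\cdot x_{u^*}$).
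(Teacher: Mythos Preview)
Your proposal is correct and follows essentially the same approach as the paper: cover the few edges of $G[V_i^*]$ inside a set $A_i$ of the prescribed size (the paper places both endpoints of each edge into $A_i$, bounding by $2q^*$, while you use a vertex cover of size at most $q^*$; both fit comfortably), and then bound the eigen-equation at $u^*$ by splitting $N(u^*)$ into $N_{V_1}(u^*)$, the $A_i$'s, $S$ (equivalently $R\cup S$), and the $B_i$'s. The constant bookkeeping you flag is handled exactly as in the paper via \eqref{align-delta}, and your re-derivation of $u^*\notin R\cup S$ is already established immediately before the claim.
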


\begin{proof}[Proof of claim] 
By Claim \ref{CLA3.7}, we know that $G[V_i^*]$ contains at most $q^*$ edges for each $i\in [2,r]$.
Since $2q^* \le \frac{4\delta_F n}{\eta_F}\le \frac{1}{2r}\varepsilon^{\frac{1}{3}}n$ by (\ref{align-delta}), 
 we can find a subset $A_i \subseteq V_i^{*}$ 
with $|A_i|=\lfloor\frac{1}{r-1}\varepsilon^{\frac13} n\rfloor$ such that
all edges of $G[V_i^*]$ lie in $A_i$.
We denote $B_i:=V_i^*\setminus A_i$.
Clearly, $B_i$ is an independent set of $G$.
Thus,
$$\sum_{i=2}^{r}\sum_{v\in  A_i}x_v
\leq (r-1)\Big\lfloor\frac{1}{r-1}\varepsilon^{\frac13} n\Big\rfloor\cdot x_{u^{*}}
\leq \varepsilon^{\frac13} n x_{u^{*}}.$$
Recall that $d_{V_1}(u^*)<2\varepsilon^{\frac13} n$. 
By Claims \ref{CLA3.7} and \ref{cl-S-size}, we get  $R\subseteq S$ and 
$|S|\le \varepsilon^{\frac{1}{3}}n$. Then 
\begin{align*}
 \lambda(G)x_{u^{*}}&\le \sum_{v\in N_{V_1}(u^{*})}x_v
 +\sum_{v\in N_{S}(u^{*})}x_v
     +\sum_{i=2}^{r}\sum_{v\in A_i}x_v+
   \sum_{i=2}^{r} \sum_{v\in B_i}x_v \nonumber\\
&\le  2\varepsilon^{\frac13} nx_{u^{*}}+ |S| x_{u^{*}}
+\varepsilon^{\frac13} nx_{u^{*}}
+\sum_{i=2}^{r}\sum_{v\in B_i}x_v \le   4\varepsilon^{\frac13} nx_{u^{*}}
+\sum_{i=2}^{r} \sum_{v\in B_i}x_v.
\end{align*}
Consequently, we get 
$\sum_{i=2}^{r}\sum_{v\in B_i}x_v
\geq \big(\lambda(G)-4\varepsilon^{\frac13} n\big)x_{u^{*}}$, as required.
\end{proof}

\begin{claim}\label{CLA3.8}
We have $S=\varnothing$.
\end{claim}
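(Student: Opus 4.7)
My plan is to proceed by contradiction: suppose some $v_0 \in S$ exists, and split into two cases according to whether $v_0 \in R$.

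\textbf{Case 1: $v_0 \in R$.} I adapt the template from the proof of Claim \ref{CLA3.7}. Since $v_0 \in R_k$ for some $k$ gives $d_{V_k}(v_0) \ge 2\varepsilon^{1/3}n$, while $|S| \le \varepsilon^{1/3}n$ and $R \subseteq S$ (Claim \ref{CLA3.7}), the vertex $v_0$ has at least $\varepsilon^{1/3}n$ class-neighbors in $V_k^*$. Because $q^* = 2q/\eta_F < \varepsilon^{1/3}n/(4r)$ by \eqref{align-delta}, I select $q^*$ such neighbors $v_1,\ldots,v_{q^*} \in V_k^*$. Each edge $v_0 v_l$ is a class-edge with $v_0 \in R_k \subseteq V_k^* \cup R_k$ and $v_l \in V_k^*$, so Claim \ref{CLA3.6} gives $N_F(G, v_0 v_l) \ge \eta_F \cdot c(n,F)$. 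By choosing the $|U_1|-2$ extra vertices in each $F$-embedding from $V_k^* \setminus \{v_1,\ldots,v_{q^*}\}$ (feasible since $|V_k^*| \gg q^*$), the resulting $q^*$ families of $F$-copies are pairwise disjoint, whence $N_F(G) \ge q^* \eta_F c(n,F) = 2q \cdot c(n,F) > (q+\varepsilon_1) c(n,F)$ (using $q \ge 1 > \varepsilon_1$), contradicting the hypothesis. Hence $R = \varnothing$.

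\textbf{Case 2: $v_0 \in S$ with $R = \varnothing$.} I establish an imbalance between the total class-edges $\alpha_1$ and missing cross-edges $\alpha_2$ of $G$ relative to $K = K_r(|V_1|,\ldots,|V_r|)$. Each $v \in S$ has at least $9\varepsilon^{1/3}n$ missing cross-edges (using $d(v) \le (\frac{r-1}{r} - 10\varepsilon^{1/3})n$ and $|V_j| \le n/r + \varepsilon^{1/3}n$ from Claim \ref{CLA3.2}), so $\alpha_2 \ge \frac{9}{2}|S|\varepsilon^{1/3}n$. Conversely, $R = \varnothing$ forces $d_{V_{j(v)}}(v) < 2\varepsilon^{1/3}n$ for every $v$, which combined with Claim \ref{CLA3.7} yields $\alpha_1 \le q^* + 2|S|\varepsilon^{1/3}n$. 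Together with \eqref{align-delta}, $\alpha_2 - \alpha_1 \ge \frac{5}{2}|S|\varepsilon^{1/3}n - q^* > 0$ whenever $|S| \ge 1$. Since $n_1 - n_r \le 2\varepsilon^{1/3}n \le n/100$ by Claim \ref{CLA3.2}, Theorem \ref{first-key}(i) then implies
\[
\lambda(G) \le \lambda(K) + \frac{2(\alpha_1-\alpha_2)}{n} + \frac{56(\alpha_1+\alpha_2)\phi}{n^2} \le \lambda(T_{n,r}) + \frac{2(\alpha_1-\alpha_2)}{n} + \frac{56(\alpha_1+\alpha_2)\phi}{n^2},
\]
and because $\alpha_2 - \alpha_1 \gg (\alpha_1+\alpha_2)\phi/n$ once $\phi$ is small, this gives $\lambda(G) < \lambda(T_{n,r})$, contradicting the hypothesis.

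The \textbf{main obstacle} is that Theorem \ref{first-key}(i) requires $\max(\alpha_1,\alpha_2) \le n/(10r)^3$, which is not automatically guaranteed once we only know $|S| \le \varepsilon^{1/3}n$. I plan to handle this by a preliminary spectral-incremental reduction: while $|S|$ is too large to invoke Theorem \ref{first-key}, I perform a switch $G \mapsto G - v_0 + v_0'$ where $v_0'$ is a new vertex with $N(v_0') = N_G(u^*) \setminus \{v_0\}$ (a twin of $u^*$). A Rayleigh quotient computation with the test vector obtained from the Perron vector $\bm{x}$ of $G$ by setting $x_{v_0'} := x_{u^*}$, together with $x_{v_0} \le (d(v_0)/\lambda(G))x_{u^*} \le (1 - c\varepsilon^{1/3})x_{u^*}$ for some $c > 0$, yields $\lambda(G^*) > \lambda(G)$ strictly while reducing $|S|$ by at least one. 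Iterating at most $|S(G)| \le \varepsilon^{1/3}n$ times terminates at a graph $G'$ with $S(G') = \varnothing$ and $\lambda(G') > \lambda(G) \ge \lambda(T_{n,r})$, on which the Theorem \ref{first-key}(i) hypothesis is met (since $\alpha_1' \le q^*$ and $\alpha_2'$ is then controlled) and the imbalance argument above applies to force $\lambda(G') < \lambda(T_{n,r})$, the contradiction. The hardest technical step is ensuring that the $N_F$-hypothesis survives each switch, which requires bounding copies of $F$ through $v_0'$ by exploiting the color-critical structure of $F$ together with the concentration of mass of $\bm{x}$ on the independent set $B$ guaranteed by Claim \ref{CLA3.8A}.
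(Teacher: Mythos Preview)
Your Case 1 contains a real gap. You invoke Claim~\ref{CLA3.6} for the edge $v_0 v_l$ with $v_0 \in R_k$, but by Claim~\ref{CLA3.7} you already know $R \subseteq S$, so in fact $v_0 \in R_k \cap S$. Although the statement of Claim~\ref{CLA3.6} is phrased for $u \in V_i^* \cup R_i$, its proof passes through Claim~\ref{cl-com-nei}, which explicitly assumes $u_0 \in R_{i_0} \setminus S$: the point is that $u_0 \notin S$ forces $d_G(u_0) > (\tfrac{r-1}{r} - 10\varepsilon^{1/3})n$, and this is what yields the lower bound \eqref{EQU010} on $d_{V_k}(u_0)$. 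A vertex $v_0 \in R_k \cap S$ carries no such degree bound; in principle it could have all of its neighbours inside $V_k$, in which case no copy of $F$ (having $\chi(F)=r+1\ge 3$) can contain the edge $v_0 v_l$ at all. So you cannot conclude $N_F(G,v_0 v_l) \ge \eta_F\, c(n,F)$, and Case~1 does not establish $R = \varnothing$. Without that, the bound $\alpha_1 \le q^* + 2|S|\varepsilon^{1/3}n$ in Case~2 also collapses, since a vertex of $R\cap S$ may contribute $\Theta(n)$ class-edges.

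Your Case~2 is, as you concede, incomplete: the hypothesis $\max\{\alpha_1,\alpha_2\} \le n/(10r)^3$ of Theorem~\ref{first-key} need not hold for $G$, and your iterative switching would require controlling $N_F$ after each switch --- a step you leave open and which is not obviously tractable. The paper sidesteps both difficulties with a single global rewiring: delete \emph{all} edges incident to $S$, then join each vertex of $S$ to every vertex of $\bigcup_{i\ge 2}B_i$. A Rayleigh computation with the Perron vector of $G$ and Claim~\ref{CLA3.8A} gives $\lambda(G')>\lambda(G)$. Relative to the new partition $V_1' = V_1\cup S$, $V_i' = V_i\setminus S = V_i^*$ (using $R\subseteq S$), the graph $G'$ has fewer than $q^*$ class-edges (by Claim~\ref{CLA3.7}) and misses at least $|S|\sum_{i\ge 2}|A_i| > 2q^*$ cross-edges. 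Adding back cross-edges until exactly $2q^*$ are missing yields $G''$ with $\alpha_1 < q^*$, $\alpha_2 = 2q^*$, both $O_F(q)$; Theorem~\ref{first-key}(ii) with $k=0$ then gives $\lambda(G'')<\lambda(T_{n,r})$, whence $\lambda(G)<\lambda(G')\le\lambda(G'')<\lambda(T_{n,r})$, the contradiction. The key idea you are missing is to manufacture an auxiliary graph whose structural parameters are bounded \emph{by construction}, rather than trying to re-derive such bounds via the $N_F$-hypothesis on a modified graph.
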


\begin{proof}[Proof of claim] 
Suppose to the contrary that $S\neq \varnothing$.
Let $G'$ be the graph obtained from $G$ by deleting all edges incident to the vertices of $S$,
and then adding all edges between $S$ and $\cup_{i=2}^{r}B_i$. 
Firstly, we show that $\lambda(G) < \lambda(G')$.
For any $u\in S$, we have
$d_G(u)\leq \big(\frac{r-1}{r}-10\varepsilon^{\frac13}\big)n$. 
Then 
 \begin{align*}
\sum_{v\in N_G(u)}x_{v} \le \Big(\frac{r-1}{r}n-10\varepsilon^{\frac13}n\Big) x_{u^*} \le (\lambda(G)-9\varepsilon^{\frac13} n)x_{u^{*}}.
\end{align*}
Combining with Claim \ref{CLA3.8A} gives
\begin{align*}
  \lambda(G')-\lambda(G) \geq \bm{x}^{\mathrm{T}}\big(A(G')-A(G)\big)\bm{x}
                  \geq 2\sum\limits_{u\in S}x_{u}\Big(\sum\limits_{i=2}^{r}\sum\limits_{v\in B_i}x_v-\sum\limits_{v\in N_G(u)}x_v\Big)>0.
\end{align*}
Hence, we obtain that $\lambda(G) < \lambda(G')$.

Now, we are going to prove $\lambda(G')<\lambda(T_{n,r})$.
We denote $V'_{1}:=V_{1}\cup S$ and $V'_i:=V_i\setminus S$ for each $i\in [2,r]$.
Let $K'$ be the complete $r$-partite graph with partite sets $V_1',V_2',\dots,V_r'$.
Observe that $G'$ can be obtained from $K'$ by adding at most $\sum_{i=1}^r e(G[V_i^*])< q^* $ class-edges, and deleting at least $|S| \sum_{i=2}^r |A_i|$ cross-edges since there is no edges between $S$ and $A_i$ for each $i\in [2,r]$. 
From Claim \ref{CLA3.8A}, we see that $|A_i| = \lfloor\frac{1}{r-1}\varepsilon^{\frac{1}{3}}n \rfloor$ for every $i\in [2,r]$, then $|S|\sum_{i=2}^r |A_i| >\frac{1}{2} \varepsilon^{\frac{1}{3}}n> 2q^*$ as (\ref{align-delta}). 
Let $G''$ be the graph obtained from $G'$ by adding some missing cross-edges such that $G''$ misses exactly $2q^*$ cross-edges. Since $G'' $ is a proper subgraph of $G'$, we get $\lambda (G')< \lambda (G'')$. 

Finally, we are ready to apply 
 Theorem \ref{first-key} (ii) to $G''$ and show that $\lambda (G'')< \lambda (T_{n,r})$. By setting $\alpha_1 =\sum_{i=1}^re(G[V_i^*])$, $\alpha_2=2q^*$ and $k=0$, we get $\alpha_1 < q^* $ and $\psi :=2(\alpha_1+\alpha_2)< 6q^*$. 
In view of (\ref{align-delta}), we have $q^*\leq \frac{2\delta_F n}{\eta_F}\le \frac{n}{(10r)^7}$.  Applying Theorem \ref{first-key} (ii) yields 
 \[ \lambda(G'') \le \lambda (T_{n,r}) + \frac{-2q^*}{n} + \frac{56\cdot 3q^* \cdot 7r\cdot 6q^*}{n^2} <\lambda(T_{n,r})-\frac{q^*}{n}. \] 
We conclude that $\lambda (G'')<\lambda(T_{n,r})$. Combining the above arguments, we conclude that $\lambda(G) < \lambda (T_{n,r})$, which contradicts with the assumption (\ref{ALI01A}).
Therefore, we have $S=\varnothing$.
\end{proof}

Claim \ref{CLA3.8} implies that   $R=\varnothing$ and $S=\varnothing$. 
By definition, we get $V_i^*=V_i$ for every $i\in [r]$. 
We write $K=K_{V_1,\ldots ,V_r}$ for the complete $r$-partite graph on $V_1,\ldots ,V_r$, and denote  $n_i=|V_i|$ for each $i\in [r]$, where $n_1\ge n_2\ge \cdots \ge n_r$.
Let $G_{in}$ and $G_{cr}$ be
two subgraphs of $G$ induced by the edges
of $E(G)\setminus E(K)$
and $E(K)\setminus E(G)$, respectively.
We denote $e(G_{in})=\alpha_1$ and $e(G_{cr})=\alpha_2$.

\begin{claim}\label{CLA3.9CA}
We have $\alpha_1\le q^* $ and $\alpha_2\leq 2\alpha_1$, where $q^* $ is defined in Claim \ref{CLA3.7}. 
\end{claim}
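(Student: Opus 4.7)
The first inequality $\alpha_1 \le q^*$ is essentially free. Since Claim \ref{CLA3.7} gave $R \subseteq S$ and Claim \ref{CLA3.8} just established $S = \varnothing$, we have $R = \varnothing$ and $V_i^* = V_i$ for every $i \in [r]$; hence $\alpha_1 = \sum_{i=1}^r e(G[V_i]) = \sum_{i=1}^r e(G[V_i^*]) < q^*$ by Claim \ref{CLA3.7}.

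The strategy for the harder inequality $\alpha_2 \le 2\alpha_1$ will be proof by contradiction. Assume $\alpha_2 \ge 2\alpha_1 + 1$, so that $\alpha_2 - \alpha_1 > \alpha_1$ and consequently $\alpha_1 + \alpha_2 < 3(\alpha_2 - \alpha_1)$; the plan is to derive $\lambda(G) < \lambda(T_{n,r})$, contradicting the standing hypothesis. The main obstacle is that Theorem \ref{first-key}(i), which is the natural tool, requires $\max\{\alpha_1,\alpha_2\} \le n/(10r)^3$: the bound $\alpha_1 \le q^* \le 2\delta_F n/\eta_F \le 2n/(10r)^8$ provides this for $\alpha_1$ by our choice of $\delta_F$ in \eqref{align-delta}, but no such a priori cap is available for $\alpha_2$. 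To sidestep this, I will introduce an auxiliary graph $G^\sharp$: if $\alpha_2 \le \lfloor n/(10r)^3\rfloor$ set $G^\sharp := G$; otherwise obtain $G^\sharp$ from $G$ by adding back an arbitrary subset of $\alpha_2 - \lfloor n/(10r)^3\rfloor$ missing cross-edges of $K$. Then $G \subseteq G^\sharp$ (so $\lambda(G) \le \lambda(G^\sharp)$), and $G^\sharp$ is obtained from $K$ by adding $\alpha_1^\sharp = \alpha_1$ class-edges and deleting $\alpha_2^\sharp := \min\{\alpha_2, \lfloor n/(10r)^3\rfloor\}$ cross-edges; crucially $\alpha_2^\sharp > 2\alpha_1$ in either case, since $\lfloor n/(10r)^3\rfloor \gg 2\alpha_1$.

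Theorem \ref{first-key}(i) then applies to $G^\sharp$ relative to $K$: the hypothesis $n_1 - n_r \le n/100$ follows from Claim \ref{CLA3.2}, which yields $n_1 - n_r \le 2\varepsilon^{1/3} n$ with $\varepsilon = (10t)^{-15}$. Using the standard fact $\lambda(K) \le \lambda(T_{n,r})$ (the Tur\'an graph maximises the spectral radius among $n$-vertex complete $r$-partite graphs), the theorem gives
\[
\lambda(G^\sharp) \;\le\; \lambda(T_{n,r}) - \frac{2(\alpha_2^\sharp - \alpha_1)}{n} + \frac{56(\alpha_1 + \alpha_2^\sharp)\phi^\sharp}{n^2},
\]
where $\phi^\sharp = \max\{n_1 - n_r,\, 2(\alpha_1+\alpha_2^\sharp)\}$. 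A short split on $\phi^\sharp$ completes the argument, using $\alpha_1 + \alpha_2^\sharp < 3(\alpha_2^\sharp - \alpha_1)$: if $\phi^\sharp = n_1 - n_r \le 2\varepsilon^{1/3}n$ then the error is bounded by $336\varepsilon^{1/3}(\alpha_2^\sharp - \alpha_1)/n$, which is dwarfed by $2(\alpha_2^\sharp - \alpha_1)/n$ as $\varepsilon^{1/3} = (10t)^{-5}$ is minuscule; if instead $\phi^\sharp = 2(\alpha_1+\alpha_2^\sharp) \le 4n/(10r)^3$ then the error is bounded by $1344(\alpha_2^\sharp - \alpha_1)/(n(10r)^3)$, likewise negligible compared to $2(\alpha_2^\sharp - \alpha_1)/n$ for $r \ge 2$. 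In either case the error is strictly dominated by $2(\alpha_2^\sharp - \alpha_1)/n$, yielding $\lambda(G) \le \lambda(G^\sharp) < \lambda(T_{n,r})$, the desired contradiction. The delicate point throughout is precisely the capping construction of $G^\sharp$, which exploits monotonicity of the spectral radius to reduce the problem to a regime where Theorem \ref{first-key}(i) can be applied with error terms that remain controlled by the (still-large) deficit $\alpha_2^\sharp - \alpha_1$.
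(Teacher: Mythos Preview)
Your proof is correct and follows essentially the same approach as the paper: add back missing cross-edges to pass to an auxiliary graph with controlled $\alpha_2$, use monotonicity of the spectral radius, and then apply Theorem~\ref{first-key} to derive $\lambda(G)<\lambda(T_{n,r})$. The only minor differences are that the paper caps at exactly $2\alpha_1$ missing cross-edges (rather than at $\lfloor n/(10r)^3\rfloor$) and invokes part~(ii) of Theorem~\ref{first-key} with $k=0$ (which already incorporates $\lambda(K)\le\lambda(T_{n,r})$), yielding slightly simpler numerics without the case split on $\phi^\sharp$.
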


\begin{proof}[Proof of claim]  
By Claim \ref{CLA3.7}, we see that $\alpha_1 =\sum_{i=1}^r e(G[V_i]) \leq q^* $. 
Suppose to the contrary that $\alpha_2>2\alpha_1$.
 Then we define $G'$ as the graph obtained from $G$ by adding $(\alpha_2-2\alpha_1)$ missing cross-edges. 
 So $\lambda (G) < \lambda (G')$. Note that $G'$ has missed exactly $2\alpha_1$ cross-edges. 
We denote $\alpha_1':=\alpha_1$ and $\alpha_2':=2\alpha_1$. Setting 
$k=0$ and $\psi'=2(\alpha_1' +\alpha_2')$ in Theorem \ref{first-key} (ii), we have 
\begin{align*}
\lambda (G')
\leq \lambda (T_{n,r}) +\frac{2(-\alpha_1)}{n}
+ \frac{56\cdot 3\alpha_1 \cdot 7r \cdot 6 \alpha_1}{n^2} 
< \lambda (T_{n,r}) - \frac{\alpha_1}{n}, 
\end{align*} 
where the last inequality holds since $\alpha_1 \le q^* < \frac{n}{(10r)^7}$ by (\ref{align-delta}), and $ \frac{56\cdot 3\alpha_1 \cdot 7r \cdot 6 \alpha_1}{n^2} < \frac{0.1\alpha_1}{n}$.  Thus, we conclude that $\lambda(G)<\lambda(G')<\lambda(T_{n,r})$, contradicting with the assumption (\ref{ALI01A}).
\end{proof}

\begin{claim}\label{CLA3.9CC}
We have $n_1-n_r < 4\sqrt{\alpha_1}$. 
\end{claim}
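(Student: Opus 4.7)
The plan is to argue by contradiction using Theorem~\ref{first-key}(ii): a substantial gap $n_1 - n_r$ forces a spectral deficit that outweighs the gain from the $\alpha_1$ added class-edges, violating (\ref{ALI01A}). The case $\alpha_1 = 0$ is immediate: Claim~\ref{CLA3.9CA} then also gives $\alpha_2 = 0$, so $G = K_r(n_1,\ldots,n_r)$; since $T_{n,r}$ is the unique spectrum-maximizer among $n$-vertex complete $r$-partite graphs (a consequence of Lemma~\ref{lem-move-one}), assumption (\ref{ALI01A}) forces $G = T_{n,r}$ and hence $n_1 - n_r \leq 1$.

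Assume henceforth that $\alpha_1 \geq 1$, and suppose for contradiction that $n_1 - n_r \geq 4\sqrt{\alpha_1}$. Set $k := \lfloor (n_1 - n_r)/2 \rfloor$, so $2k \leq n_1 - n_r$; a short integer check (the worst ratio, $k^2/\alpha_1 = 3$, being attained at $\alpha_1 = 3$ with $k = 3$) yields $k^2 \geq 3\alpha_1$. From Claim~\ref{CLA3.9CA} and the choice of $\delta_F$ in (\ref{align-delta}) we have $\alpha_1 \leq q^{\ast} \leq 2\delta_F n/\eta_F$ and $\alpha_2 \leq 2\alpha_1$, so $\max\{\alpha_1,\alpha_2,k\} \leq n/(10r)^3$, and the hypotheses of Theorem~\ref{first-key}(ii) are satisfied. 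Writing $\psi := \max\{3k, 2(\alpha_1+\alpha_2)\}$, that theorem gives
\[
\lambda(G) - \lambda(T_{n,r}) \leq \frac{2(\alpha_1-\alpha_2)}{n} - \frac{2(r-1)k^2}{rn}\Big(1 - \frac{28r\psi}{n}\Big)^{\!4} + \frac{56(\alpha_1+\alpha_2)\cdot 7r\psi}{n^2}.
\]

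The next step is routine estimation of the three summands. The first is at most $2\alpha_1/n$. Since $\alpha_1/n \leq 2\delta_F/\eta_F$ can be made arbitrarily small by (\ref{align-delta}), one has $\psi/n = o(1)$, hence $(1 - 28r\psi/n)^4 \geq 1 - o(1)$; combined with $k^2 \geq 3\alpha_1$ and $\tfrac{2(r-1)}{r} \geq 1$ for $r \geq 2$, the middle term is at most $-\tfrac{3\alpha_1}{n}(1 - o(1))$. The error term satisfies $(\alpha_1+\alpha_2)\psi = O(\alpha_1^2)$ and is therefore $o(\alpha_1/n)$. Summing gives $\lambda(G) - \lambda(T_{n,r}) \leq -\alpha_1/n + o(\alpha_1/n) < 0$, contradicting (\ref{ALI01A}), and completing the proof.

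The only delicate point is ensuring that the $(1 - o(1))$ slack in the middle term and the quadratic remainder do not absorb the leading gap between the positive contribution $2\alpha_1/n$ and the imbalance penalty $3\alpha_1/n$. This is guaranteed by the quantitative smallness of $\delta_F$ in (\ref{align-delta}), which forces $\alpha_1/n$ to be arbitrarily small. Beyond this, no new ideas beyond Theorem~\ref{first-key}(ii) and the preceding claims are required.
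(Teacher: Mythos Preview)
Your overall strategy matches the paper's: apply Theorem~\ref{first-key}(ii) and show that a gap $n_1-n_r\ge 4\sqrt{\alpha_1}$ would force $\lambda(G)<\lambda(T_{n,r})$, contradicting (\ref{ALI01A}). The paper takes $k=\lfloor 2\sqrt{\alpha_1}\rfloor$, which keeps $\psi\le 6\alpha_1$ automatically; you instead take $k=\lfloor (n_1-n_r)/2\rfloor$, which makes the integer inequality $k^2\ge 3\alpha_1$ cleaner but decouples $\psi$ from $\alpha_1$.

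That decoupling is where your write-up has a genuine gap. You assert $k\le n/(10r)^3$, then $\psi/n=o(1)$, then $(\alpha_1+\alpha_2)\psi=O(\alpha_1^2)$, each time citing only Claim~\ref{CLA3.9CA} and the smallness of $\alpha_1/n$. But Claim~\ref{CLA3.9CA} bounds only $\alpha_1$ and $\alpha_2$; with your choice of $k$, the quantity $\psi=\max\{3k,2(\alpha_1+\alpha_2)\}$ may equal $3\lfloor (n_1-n_r)/2\rfloor$, which is not controlled by $\alpha_1$ at all. In particular the claim $(\alpha_1+\alpha_2)\psi=O(\alpha_1^2)$ is simply false as stated. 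The repair is easy: invoke Claim~\ref{CLA3.2}, which gives $n_1-n_r\le 2\varepsilon^{1/3}n$ with $\varepsilon=(10t)^{-15}$ and $t\ge 6r$, so $k$ and hence $\psi$ are at most a tiny absolute constant times $n$; then both the factor $(1-28r\psi/n)^4$ and the error $56(\alpha_1+\alpha_2)\cdot 7r\psi/n^2$ are bounded by (say) $0.9$ and $0.1\alpha_1/n$ respectively, and your contradiction goes through. Alternatively, switching to the paper's $k=\lfloor 2\sqrt{\alpha_1}\rfloor$ yields $3k\le 6\alpha_1$ and hence $\psi\le 6\alpha_1$ directly, at which point your $O(\alpha_1^2)$ estimate becomes correct.
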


\begin{proof}[Proof of claim]  
Suppose on the contrary that $n_1-n_r \geq 4\sqrt{\alpha_1}\ge 2k$, where $k:=\lfloor 2\sqrt{\alpha_1} \rfloor$. 
Let $\psi=\max\{3k,2(\alpha_1+\alpha_2)\}$ be defined in Theorem \ref{first-key}. 
Since $\alpha_2\le 2\alpha_1$, we have $\psi \le 6\alpha_1 $. 
Using \eqref{align-delta} gives 
 $\frac{56(\alpha_1+\alpha_2)7r\psi}{n^2}
 \leq \frac{8000r\alpha_1^2}{n^2}\le \frac{0.1\alpha_1}{n}$. Then   
\begin{align*}
\lambda (G)\le \lambda (T_{n,r}) + \frac{2\alpha_1}{n} - \frac{2(1-\frac{1}{r})\cdot 3\alpha_1 }{n} \cdot 0.8 + \frac{0.1\alpha_1}{n} <\lambda(T_{n,r}),
\end{align*} 
which contradicts with the assumption (\ref{ALI01A}). 
So we get $n_1-n_r < 4\sqrt{\alpha_1}$. 
\end{proof}

This completes the proof of Theorem \ref{second-key}.
\end{proof}

\section{Proofs of main results}

\label{sec-4}

In this section, we prove Theorems \ref{thm-Y} and  \ref{thm-Z}. 
Recall that $Y_{n,r,q}$ is obtained by adding a matching with $q$ edges into a largest partite set of $T_{n,r}$; and $L_{n,r,q}$ is obtained by adding a star with $q$ edges when $q\neq 3$, and a triangle when $q=3$, into a smallest partite set of $T_{n,r}$.  Theorem \ref{thm-min-max} shows that 
$Y_{n,r,q}$ (resp. $L_{n,r,q}$) minimizes (resp. maximizes) the spectral radius of graphs in  $\mathcal{T}_{n,r,q}$.

\subsection{Proof of Theorem \ref{thm-Y}}

Assume that $G$ is a graph on $n$ vertices with  $\lambda(G)\geq \lambda(Y_{n,r,q})$, where $n$ is sufficiently large, $1\le q \le  \delta_F \sqrt{n}$, and the constant $\delta_F >0$ will be determined later. Without loss of generality, we may further assume that $G$ contains the minimum number of copies of $F$. Our goal is to prove that $G$ is obtained from the $r$-partite  Tur\'{a}n graph $T_{n,r}$ by adding some $q$ edges.

Let $\varepsilon_1 \in (0,\frac{1}{2})$ be any fixed real number and $\delta_{\ref{second-key}}$ 
be the parameter determined in Theorem \ref{second-key}. 
Let $\alpha_F$ and $\gamma_F$ be the constants defined in Lemma \ref{LEM2.5} and Lemma \ref{LEM2.8}, respectively. 
Now, we choose $\delta_F >0$ to be a sufficiently small constant such that 
$$\delta_F^2\leq \min\left\{\delta^2_{\ref{second-key}}, \frac{\varepsilon_1\alpha_F}{4\gamma_F}, \frac{\eta_F^2}{(10r)^5},\frac{\alpha_F\eta_F^2}{240\gamma_F}, \frac{1}{(10r)^5}\right\}.$$
Then $1\leq q < \delta_{\ref{second-key}} n$.  
First of all, we need to find an upper bound on $N_F(Y_{n,r,q})$. 
By Lemma \ref{LEM2.5}, we have $c(n,F)> \frac{1}{2}\alpha_F n^{f-2}$. 
Setting $\alpha_1=q$ and $\phi =\max\{2q,1\}=2q$ in Lemma \ref{LEM2.8}, we get 
\[ \gamma_F\alpha_1\phi n^{f-3}= \gamma_F \cdot 2q^2 \cdot n^{f-3} \le \varepsilon_1 \cdot \frac{1}{2}\alpha_F n^{f-2} \le \varepsilon_1 \cdot c(n,F).\]   
Thus, Lemma \ref{LEM2.8} gives 
\[ N_F(Y_{n,r,q})\le \alpha_1 c(n,F) + \gamma_F \alpha_1\phi n^{f-3}\le \big(q+\varepsilon_1\big) \cdot c(n,F).\]  
The minimality of copies of $F$ implies 
\begin{equation} \label{eq-F-upper}
N_F(G)\leq N_F(Y_{n,r,q})\le (q+\varepsilon_1 )\cdot c(n,F).
\end{equation} 
Note that $\lambda (G)\ge \lambda(Y_{n,r,q}) > \lambda (T_{n,r})$. 
Applying Theorem \ref{second-key}, 
we know that $G$ can be obtained from $K_r(n_1,\dots,n_r)$ by adding $\alpha_1$ class-edges and deleting $\alpha_2$ cross-edges, where $\alpha_1 \leq \frac{2q}{ \eta_F}$ for some constant $\eta_F>0$, 
and $\alpha_2\leq 2\alpha_1 $ and $n_1\geq  \cdots\geq n_r$ are integers with  $n_1-n_r < 4\sqrt{\alpha_1}$. 

\begin{claim}
    We have $\alpha_2\le \alpha_1\le q$. 
\end{claim}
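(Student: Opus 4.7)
The plan is to combine Theorem~\ref{first-key} with Lemma~\ref{LEM2.8} and the hypothesis $\lambda(G) \ge \lambda(Y_{n,r,q})$. Since $n_1 - n_r < 4\sqrt{\alpha_1}$ and $\alpha_2 \le 2\alpha_1$, the parameter $\phi := \max\{n_1 - n_r,\, 2(\alpha_1 + \alpha_2)\}$ satisfies $\phi \le 6\alpha_1$ throughout (for $\alpha_1 \ge 1$), and this uniform control on $\phi$ is what drives all the estimates below.

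First, I would derive the bound $\alpha_1 \le q$. The lower bound in Lemma~\ref{LEM2.8} (with $\phi \le 6\alpha_1$) yields $N_F(G) \ge \alpha_1 c(n,F) - 6\gamma_F \alpha_1^2 n^{f-3}$. Comparing with \eqref{eq-F-upper} and invoking $c(n,F) \ge \tfrac{1}{2}\alpha_F n^{f-2}$ from Lemma~\ref{LEM2.5}, one obtains
\[
\alpha_1 \le q + \varepsilon_1 + \frac{12\gamma_F \alpha_1^2}{\alpha_F\, n}.
\]
Plugging in $\alpha_1 \le 2q/\eta_F$ and $q \le \delta_F\sqrt{n}$ makes the last term at most $48\gamma_F \delta_F^2/(\alpha_F \eta_F^2) \le 1/5$ by the choice of $\delta_F$. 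Since $\varepsilon_1 < 1/2$ and $\alpha_1$ is an integer, this forces $\alpha_1 \le q$.

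Next, I would obtain $\alpha_2 \le \alpha_1$ via a two-sided spectral comparison. Applying Theorem~\ref{first-key}(i) to $Y_{n,r,q}$, viewed as $T_{n,r}$ plus $q$ class-edges (so the associated $\phi$ equals $2q$), yields the lower bound
\[
\lambda(Y_{n,r,q}) \ge \lambda(T_{n,r}) + \frac{2q}{n} - \frac{112q^2}{n^2}.
\]
Applying Theorem~\ref{first-key}(i) to $G$ relative to $K := K_r(n_1,\dots,n_r)$, and using $\lambda(K) \le \lambda(T_{n,r})$ (since $T_{n,r}$ maximizes the spectral radius among $n$-vertex complete $r$-partite graphs), yields
\[
\lambda(G) \le \lambda(T_{n,r}) + \frac{2(\alpha_1 - \alpha_2)}{n} + \frac{1008\, \alpha_1^2}{n^2}.
\]
Chaining these two estimates through $\lambda(G) \ge \lambda(Y_{n,r,q})$, and bounding $q^2/n,\, \alpha_1^2/n \le \delta_F^2$ via $\alpha_1 \le q \le \delta_F\sqrt{n}$, yields $\alpha_1 - \alpha_2 \ge q - O(\delta_F^2)$. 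Since $\delta_F$ is chosen small enough that this $O(\delta_F^2)$ error is strictly less than $1$, the integrality of $\alpha_1, \alpha_2, q$ forces $\alpha_1 - \alpha_2 \ge q$, and in particular $\alpha_2 \le \alpha_1 \le q$.

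The main obstacle is that the upper-bound error for $\lambda(G)$ coming from Theorem~\ref{first-key}(i) carries an $\alpha_1^2/n^2$ contribution which, under $\alpha_1 = \Theta(q)$, is only $o(1/n)$ when $q = O(\sqrt{n})$. This is precisely the origin of the square-root threshold in Theorem~\ref{thm-Y}, and the hypothesis $q \le \delta_F\sqrt{n}$ is essential to ensure that the accumulated error stays below $1$ so that the final integrality step can close the argument.
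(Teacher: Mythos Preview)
Your argument is correct, and for $\alpha_1\le q$ it is essentially the paper's argument (Lemma~\ref{LEM2.8} plus \eqref{eq-F-upper} and the a~priori bound $\alpha_1\le 2q/\eta_F$), just with the two halves of the claim proved in the opposite order.

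For $\alpha_2\le\alpha_1$, however, you take a genuinely different and in fact stronger route than the paper. The paper only needs the crude inequality $\lambda(G)\ge\lambda(T_{n,r})$ here: applying Theorem~\ref{first-key}(ii) with $k=0$ gives $\lambda(G)\le\lambda(T_{n,r})+\frac{2(\alpha_1-\alpha_2)}{n}+o(1/n)$, and comparing with $\lambda(G)\ge\lambda(T_{n,r})$ immediately forces $\alpha_2\le\alpha_1$. You instead use the full hypothesis $\lambda(G)\ge\lambda(Y_{n,r,q})$, apply Theorem~\ref{first-key}(i) to both $G$ (together with $\lambda(K)\le\lambda(T_{n,r})$) and $Y_{n,r,q}$, and deduce the much sharper $\alpha_1-\alpha_2\ge q$. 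Combined with $\alpha_1\le q$ this already yields $\alpha_1=q$ and $\alpha_2=0$, which is most of the content of the paper's \emph{next} claim. So your approach costs a little more input (the comparison with $Y_{n,r,q}$ rather than just $T_{n,r}$) but buys more output, effectively merging this claim with the next one. The paper's version has the advantage of isolating $\alpha_2\le\alpha_1$ as a consequence of the weaker spectral assumption $\lambda(G)\ge\lambda(T_{n,r})$ alone.
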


\begin{proof}[Proof of claim]  
Setting $k=0$ and $\psi =2(\alpha_1+\alpha_2)$ in Theorem \ref{first-key} (ii), we have $\psi \le 6\alpha_1 \le \frac{12q}{\eta_F}\le \frac{12\delta_F \sqrt{n}}{\eta_F}$ and 
$\lambda (G)\le \lambda (T_{n,r}) + \frac{2(\alpha_1-\alpha_2)}{n} + \frac{1}{n}$, 
which together with $\lambda(G)\geq \lambda (T_{n,r})$ gives $\alpha_2\leq \alpha_1$. 

Let $\phi := \max\{2(\alpha_1+\alpha_2), n_1-n_r\}$ be defined in Lemma \ref{LEM2.8}.  
From the above argument, we see that  $\alpha_1 \le 
\frac{2q}{\eta_F}  \le \frac{2\delta_F \sqrt{n}}{\eta_F}$, which  yields  
 $\phi\leq 6 \alpha_1\le \frac{12\delta_F \sqrt{n}}{\eta_F}$ for sufficiently large $n$. 
By the choice of $\delta_F$, we have 
$ \gamma_F \alpha_1 \phi n^{f-3} \le  24 \gamma_F {\delta_F^2} {\eta_F^{-2}} n^{f-2} \le \frac{1}{6} \alpha_F n^{f-2} \le \frac{1}{3} c(n,F)$,  
where the last inequality holds by Lemma \ref{LEM2.5}. 
Thus, Lemma \ref{LEM2.8} yields
\begin{align*}
N_F(G) \geq \alpha_1 c(n,F) - \gamma_F \alpha_1 \phi n^{f-3} 
    \geq \big(\alpha_1-\tfrac{1}{3}\big) c(n,F).
\end{align*}
Recall in (\ref{eq-F-upper}) that $N_F(G) \leq (q+\varepsilon_1) c(n,F)$ and $\varepsilon_1< \frac{1}{2}$. Thus, we obtain $\alpha_1\leq q$. 
\end{proof}

To complete the proof, it suffices to prove the following claim.

\begin{claim}
    We have $\alpha_1=q$, $\alpha_2=0$ and $n_1-n_r \le 1$.  
\end{claim}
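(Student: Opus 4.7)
The plan is to combine upper and lower spectral estimates from Theorem~\ref{first-key} with the hypothesis $\lambda(G)\ge \lambda(Y_{n,r,q})$, and to use the integrality of $\alpha_1-\alpha_2$, to pin down all three quantities simultaneously.

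First, I would apply Theorem~\ref{first-key}(i) to $Y_{n,r,q}$ itself: since $Y_{n,r,q}$ is obtained from $T_{n,r}$ by adding a matching of $q$ class-edges inside a largest part, we have $\alpha_1=q$, $\alpha_2=0$, and $\phi=\max\{1,2q\}=2q$, so
$$\lambda(Y_{n,r,q})\;\ge\;\lambda(T_{n,r})+\frac{2q}{n}-\frac{112\,q^2}{n^2}.$$
This is the sharp lower bound the rest of the argument will compare against.

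Next, I would split on the value of $n_1-n_r$, noting that the previous claim already gives $n_1-n_r<4\sqrt{\alpha_1}\le 4\sqrt{q}$ and $\alpha_2\le\alpha_1\le q\le\delta_F\sqrt{n}$, so the smallness hypotheses of Theorem~\ref{first-key} are easily met. If $n_1-n_r\ge 2$, set $k=\lfloor(n_1-n_r)/2\rfloor\ge 1$ and apply Theorem~\ref{first-key}(ii); with $\psi=\max\{3k,2(\alpha_1+\alpha_2)\}=O(\sqrt n)$ and $\delta_F$ small enough that $(1-28r\psi/n)^4\ge \tfrac12$, this gives
$$\lambda(G)\;\le\;\lambda(T_{n,r})+\frac{2(\alpha_1-\alpha_2)}{n}-\frac{(r-1)k^2}{r n}+\frac{C_r\,q^2}{n^2}$$
for some constant $C_r$. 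Combining this with the lower bound from the first paragraph and using $\alpha_1-\alpha_2\le q$, one obtains $k^2\le C_r'\,q^2/n\le C_r'\,\delta_F^2<1$ for $\delta_F$ small, contradicting $k\ge 1$. Hence $n_1-n_r\le 1$, that is, $K_r(n_1,\dots,n_r)=T_{n,r}$.

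With $n_1-n_r\le 1$ in hand, I would apply Theorem~\ref{first-key}(i) once more, now with $\phi\le \max\{1,2(\alpha_1+\alpha_2)\}\le 6\alpha_1$, to conclude $\lambda(G)\le \lambda(T_{n,r})+\tfrac{2(\alpha_1-\alpha_2)}{n}+O_r(q^2/n^2)$. Comparing with the earlier lower bound on $\lambda(Y_{n,r,q})$ yields $\alpha_1-\alpha_2\ge q-O_r(q^2/n)>q-1$ for $\delta_F$ small; since $\alpha_1-\alpha_2$ is a nonnegative integer bounded above by $\alpha_1\le q$, this forces $\alpha_1-\alpha_2=q$, whence $\alpha_1=q$ and $\alpha_2=0$, completing the claim. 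The main obstacle is calibrating $\delta_F$ so that both the imbalance loss $\tfrac{(r-1)k^2}{rn}$ (in the case $k\ge 1$) and the matching gain $\tfrac{2q}{n}$ dominate the ambient $O_r(q^2/n^2)$ error; this comparison works precisely when $q=O(\sqrt n)$, which is why the threshold $q\le\delta_F\sqrt n$ is tight, in line with Example~\ref{exampl-q}.
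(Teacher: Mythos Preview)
Your proposal is correct and follows essentially the same approach as the paper: both compare the lower bound $\lambda(Y_{n,r,q})\ge \lambda(T_{n,r})+\tfrac{2q}{n}-O(q^2/n^2)$ from Theorem~\ref{first-key}(i) against the upper bound on $\lambda(G)$ from Theorem~\ref{first-key}(ii), and then invoke $\alpha_2\le\alpha_1\le q$ together with integrality. The only organizational difference is that the paper applies Theorem~\ref{first-key}(ii) once, with $k=\lfloor\frac{n_1-n_r}{2}\rfloor$ unrestricted, and obtains the single inequality $\frac{(r-1)k^2}{r}(1-o(1))+\alpha_2\le \alpha_1-q+\tfrac14$, from which $\alpha_1=q$, $\alpha_2=0$, and $k=0$ drop out simultaneously; you instead split into the cases $n_1-n_r\ge 2$ and $n_1-n_r\le 1$, handling them in turn. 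This is a purely cosmetic difference.
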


\begin{proof}
Setting $k:=\lfloor \frac{n_1-n_r}{2}\rfloor \le 2\sqrt{\alpha_1} \le 2\sqrt{q}$ and $\psi :=\max\{3k, 2(\alpha_1+\alpha_2) \}\le 6q$ in Theorem \ref{first-key} (ii), we have 
$\frac{56(\alpha_1+\alpha_2)7r\psi}{n^2}\le \frac{56\cdot 2q \cdot 7r\cdot 6q}{n^2}\le \frac{1}{4n}$. 
Using Theorem \ref{first-key} (ii) yields 
$$\lambda (G)\leq \lambda (T_{n,r}) + \frac{2(\alpha_1-\alpha_2)}{n} - \frac{2(r-1)k^2}{rn}(1-o(1)) + \frac{1}{4n}.$$  
On the other hand,  applying Theorem \ref{first-key} (i) to $Y_{n,r,q}$, we have
\begin{align*}\label{EQU-997}
\lambda(G)\geq \lambda(Y_{n,r,q})
\ge \lambda (T_{n,r}) + \frac{2q}{n}-\frac{1}{4n}.
\end{align*}
Combining the above bounds gives $\frac{(r-1)k^2}{r}(1-o(1)) + \alpha_2 \le \alpha_1 -q+ \frac{1}{4}$. Note that $0\le \alpha_2\leq \alpha_1 \leq q$. 
It follows that $\alpha_1=q$ and $\alpha_2=k=0$, which implies $n_1-n_r\in \{0,1\}$. 
\end{proof}

We conclude that $K_r(n_1,\dots,n_r)=T_{n,r}$ and  $G$ is obtained from $T_{n,r}$ by adding some $q$ edges within its partite sets.  Therefore, we see that 
$G$ contains at least $q\cdot c(n,F) $ copies of $F$.

\subsection{Proof of Theorem \ref{thm-Z}}

Before showing the proof of Theorem \ref{thm-Z}, we need the following results. 

\begin{lem} \label{lem6.2A}
    Let $r\ge 2$ be fixed and $n$ be sufficiently large. 
    Let $G$ be an $n$-vertex graph obtained from $K_{r}(n_1,\ldots ,n_r)$ by adding $q$ class-edges, where $n_1\ge \cdots \ge n_r$ and $q\ge 1$. 
    Let $G'$ be the graph obtained from $K_r(n_1,\ldots ,n_i-1,\ldots ,n_{j}+1,\ldots ,n_r)$ by adding the same $q$ class-edges in the same partite set as in $G$. If $0<\varepsilon <1$ and $\phi :=\max\{n_1-n_r, q\}\le \frac{\varepsilon}{600r}n$, then 
   $$ \left| \lambda (G')- \lambda (G)- \frac{2(r-1)(n_i-n_j-1)}{rn} \right| \le 
   \frac{(n_i-n_j+1)\varepsilon}{10rn}.$$
\end{lem}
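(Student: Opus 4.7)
The plan is to reduce the comparison of $\lambda(G)$ and $\lambda(G')$ to that of their underlying complete multipartite skeletons $K:=K_r(n_1,\ldots,n_r)$ and $K':=K_r(n_1,\ldots,n_i-1,\ldots,n_j+1,\ldots,n_r)$, apply Lemma~\ref{lem-move-one} to these skeletons, and then pass back to $G,G'$. Since each skeleton has part-size spread at most $2\phi$ and $\phi\le\varepsilon n/(600r)\le n/(10r)^3$, Theorem~\ref{first-key}(i) applies with $\alpha_1=q$, $\alpha_2=0$ both to $G$ as a perturbation of $K$ and to $G'$ as a perturbation of $K'$, yielding
\[\lambda(G)=\lambda(K)+\tfrac{2q}{n}+E_G,\qquad\lambda(G')=\lambda(K')+\tfrac{2q}{n}+E_{G'},\]
so that the $\tfrac{2q}{n}$ terms cancel upon subtraction and $\lambda(G')-\lambda(G)=[\lambda(K')-\lambda(K)]+(E_{G'}-E_G)$.

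For the skeleton comparison, Lemma~\ref{lem-move-one} applies directly when $n_i-n_j\ge 2$, producing the desired main term $\frac{2(r-1)(n_i-n_j-1)}{rn}$ with multiplicative error factor $(1\pm c\phi/n)^4$ and an additive $O(\phi/n^2)$ contribution. For the marginal cases $n_i-n_j\in\{0,1\}$, I would apply Lemma~\ref{lem-move-one} to the reverse move $K'\to K$ (where the relevant gap now exceeds~$2$), or derive the estimate directly from the defining equation $\sum_k n_k/(\lambda+n_k)=1$ for $\lambda(K)$ via an implicit-function expansion around the symmetric configuration. A straightforward constant-chase using $\phi\le\varepsilon n/(600r)$ shows that this contribution is within $\frac{(n_i-n_j+1)\varepsilon}{10rn}$ with room to spare.

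The hard part is controlling $E_{G'}-E_G$. Adding the individual bounds yields only $O(q\phi/n^2)=O(\phi^2/n^2)$, which is a constant in $n$ when $\phi\asymp\varepsilon n/r$; this exceeds the target error when $n_i-n_j$ is small (since the target is then of order $1/n$). To overcome this, I exploit the fact that the $q$ class-edges sit at identical positions in $G$ and $G'$, so their spectral contributions nearly cancel. Refining the uniform estimate $x_v\approx 1/\sqrt{n}$ of Claim~\ref{Claim3.2} to the partite-specific expansion $x_v\approx(1-a_k/n)/\sqrt{n}$ for $v\in V_k$ (where $a_k:=n_k-n/r$), each class-edge in $V_k$ contributes $2/n-4a_k/n^2+O(\phi^2/n^3)$ to $\bm{x}^{T}(A(G)-A(K))\bm{x}$, and likewise for $G'$ with $a_k$ replaced by $a_k':=n_k'-n/r$. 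Since $a_k$ shifts by exactly $\pm 1$ only for $k\in\{i,j\}$ in passing from $K$ to $K'$, the difference telescopes to $|E_{G'}-E_G|=O(q/n^2)\le O(\phi/n^2)$, which is comfortably within the target bound. Combining this with the skeleton estimate gives the claimed inequality.
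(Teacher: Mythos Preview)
Your decomposition $\lambda(G')-\lambda(G)=[\lambda(K')-\lambda(K)]+(E_{G'}-E_G)$ and the use of Lemma~\ref{lem-move-one} for the skeleton are fine, and you correctly identify that the naive triangle inequality on $E_{G'},E_G$ is too crude. The gap is in your sketch for $|E_{G'}-E_G|=O(q/n^2)$.

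The partite-specific expansion $x_v\approx(1-a_k/n)/\sqrt{n}$ with error $O(\phi^2/n^{3/2})$ is the expansion of the Perron vector $\bm{y}_K$ of the \emph{skeleton} $K$, not of $\bm{x}_G$. For $\bm{x}_G$, a class-edge endpoint $u\in V_k$ has an extra perturbation $(x_G)_u-x_k=(1/\lambda_G)\sum_{w:uw\in E(B)}(x_G)_w\asymp d_B(u)/(n\sqrt{n})$, so each class-edge contributes an additional $O((d_B(u)+d_B(v))/n^2)$ beyond your formula; summed over edges this is $\Theta(\sum_v d_B(v)^2/n^2)=\Theta(q^2/n^2)$ in general (e.g.\ when $B$ is a star). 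More fundamentally, the Rayleigh method only yields $\bm{y}_K^TB\bm{y}_K\le\lambda(G)-\lambda(K)\le\bm{x}_G^TB\bm{x}_G$, and the width of this interval is exactly this second-order term $\Theta(q^2/n^2)$. Even though the \emph{endpoints} of the intervals for $E_G$ and $E_{G'}$ differ by only $O(q/n^2)$ (your telescoping), the actual values $E_G,E_{G'}$ can sit anywhere in their respective intervals, so you only conclude $|E_{G'}-E_G|=O(q^2/n^2)$. When $n_i-n_j$ is bounded and $q\asymp\phi\asymp\varepsilon n/r$, this is of order $\varepsilon^2/r^2$, which swamps the target $\varepsilon/(rn)$ for large $n$. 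To close this you would need a genuine second-order perturbation computation showing that the $\Theta(q^2/n^2)$ terms are the same for $(G,K)$ and $(G',K')$ up to $O(q/n^2)$---plausible, since $B$ is fixed and only the ambient eigendecomposition moves slightly, but not established by the sketch.

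The paper avoids this decomposition entirely. It uses Lemma~\ref{lem-Zhangwenqian} to write $\lambda(G)$ as the largest root of $\sum_k\eta_k/(\lambda+\eta_k)=1$ with $\eta_k:=n_k+\sum_{\ell\ge1}w_{\ell+1}(H_k)/\lambda^\ell$, i.e.\ the \emph{same} equation as for $\lambda(K)$ but with $n_k$ replaced by $\eta_k$. A short claim shows $\eta_k-n_k=O(\delta^2 n)$ and, crucially, that the $\eta_k$'s for $G$ and $G'$ differ only by $O(\delta^2|\lambda'-\lambda|)$. The computation of Lemma~\ref{lem-move-one} then carries over verbatim with $n_k\to\eta_k$, giving the stated bound directly without separating skeleton from class-edges.
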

The proof of Lemma \ref{lem6.2A} follows a similar approach to that of Lemma \ref{lem-move-one} (we postpone the detailed proof to Appendix \ref{Appendix-B}, which utilizes Lemma \ref{lem-Zhangwenqian} as a key component). 
Applying Lemma \ref{lem6.2A}, we can prove the following result, which slightly extends the part (ii) of Theorem \ref{thm-min-max}.  

\begin{thm} \label{thm-complete-add}
    Let $1\le q\le \frac{n}{(20)^3r}$ and $n$ be sufficiently large. Suppose that $G$ achieves the maximum spectral radius over all graphs that are obtained from $K=K_r(n_1,\ldots ,n_r)$ by adding $q$ class-edges, where $n_1\ge \cdots \ge n_r$ are any integers satisfying $\sum_{i=1}^r n_i=n$. Then $G=L_{n,r,q}$.  
\end{thm}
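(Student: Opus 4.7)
The plan is to reduce to the balanced setting already treated in Theorem~\ref{thm-min-max}(ii), by first proving that the underlying partition $(n_1,\ldots,n_r)$ of any maximizer must be the Tur\'an partition; then Theorem~\ref{thm-min-max}(ii) pinpoints $G=L_{n,r,q}$.

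\textbf{Step 1 (coarse bound on $n_1-n_r$).} Since $L_{n,r,q}\in \mathcal{T}_{n,r,q}$ is one of the candidates, maximality gives $\lambda(G)\ge \lambda(L_{n,r,q})\ge \lambda(T_{n,r})+\tfrac{2q}{n}-O_r\!\big(q^2/n^2\big)$ via Theorem~\ref{first-key}(i). On the other hand, applying Theorem~\ref{first-key}(ii) to $G$ with $\alpha_1=q$, $\alpha_2=0$, and $k=\lfloor(n_1-n_r)/2\rfloor$ yields an upper bound of the form
\[\lambda(G)\le \lambda(T_{n,r})+\frac{2q}{n}-\frac{2(r-1)k^2}{rn}\big(1-o(1)\big)+O_r\!\big(q^2/n^2\big).\]
Comparing the two estimates gives $k^2=O_r(q)$, so $n_1-n_r=O_r(\sqrt{q})=o(n/r)$.

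\textbf{Step 2 (single vertex move via Lemma~\ref{lem6.2A}).} Assume for contradiction that $n_1-n_r\ge 2$. The coarse bound from Step~1 together with $q\le n/((20)^3r)$ ensures $\phi:=\max\{n_1-n_r,q\}\le \tfrac{\varepsilon n}{600r}$ for a fixed small $\varepsilon>0$ once $n$ is large, so Lemma~\ref{lem6.2A} is applicable. Moreover, each partite set satisfies $n_i\ge n/r-O(\sqrt{q})\gg 2q+1$, so we may choose a vertex $v\in V_1$ not incident to any class-edge. Moving $v$ from $V_1$ to $V_r$ produces a graph $G'$ obtained from $K_r(n_1-1,n_2,\ldots,n_{r-1},n_r+1)$ by adding the same $q$ class-edges in the same partite sets as in $G$. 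Lemma~\ref{lem6.2A} then gives
\[\lambda(G')-\lambda(G)\ge \frac{2(r-1)(n_1-n_r-1)}{rn}-\frac{(n_1-n_r+1)\varepsilon}{10rn}>0,\]
contradicting the maximality of $G$. Therefore $n_1-n_r\le 1$, which together with $\sum_i n_i=n$ forces $(n_1,\ldots,n_r)$ to be the Tur\'an partition.

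\textbf{Step 3 (conclude via Theorem~\ref{thm-min-max}(ii)).} Since $K=T_{n,r}$, we have $G\in \mathcal{T}_{n,r,q}$, and Theorem~\ref{thm-min-max}(ii) identifies $L_{n,r,q}$ as the unique graph in $\mathcal{T}_{n,r,q}$ achieving the maximum spectral radius. Hence $G=L_{n,r,q}$.

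\textbf{Main obstacle.} The principal technical step is the quantitative two-sided comparison in Step~1, which hinges on applying Theorem~\ref{first-key} with error terms that are sharp enough to yield $(n_1-n_r)^2=O_r(q)$ (rather than a weaker linear bound). Once this coarse bound is in hand, the single vertex move in Step~2 and the invocation of Theorem~\ref{thm-min-max}(ii) in Step~3 are direct.
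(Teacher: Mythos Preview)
Your proposal is correct and follows essentially the same three-step approach as the paper: a coarse bound $n_1-n_r=O(\sqrt{q})$ via Theorem~\ref{first-key}(ii), then Lemma~\ref{lem6.2A} to force $n_1-n_r\le 1$, and finally Theorem~\ref{thm-min-max}(ii). The only cosmetic difference is that in Step~1 the paper fixes $k=\lfloor 2\sqrt{q}\rfloor$ (which automatically satisfies the hypothesis $k\le n/(10r)^3$ of Theorem~\ref{first-key}(ii), something you should check when taking $k=\lfloor(n_1-n_r)/2\rfloor$) and compares directly against $\lambda(T_{n,r})$ rather than $\lambda(L_{n,r,q})$.
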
 

\begin{proof}
The maximality of $G$ gives $\lambda(G)\geq \lambda(L_{n,r,q})>\lambda(T_{n,r})$. 
Firstly, we claim that $n_1-n_r < 4\sqrt{q}$.
Otherwise, suppose that $n_1-n_r \geq 4\sqrt{q}\ge 2k$, where $k:=\lfloor 2\sqrt{q} \rfloor$.
Setting $\alpha_1=q$, $\alpha_2=0$ and $\psi=\max\{3k,2(\alpha_1+\alpha_2)\}$ in Theorem \ref{first-key} (ii), we have 
 $\frac{56(\alpha_1+\alpha_2)7r\psi}{n^2}  \le \frac{0.1 q}{n}$. Then
\begin{align*}
\lambda (G)\le \lambda (T_{n,r}) + \frac{2q}{n} - \frac{2(1-\frac{1}{r})\cdot 3q}{n} \cdot 0.8 + \frac{0.1q}{n} <\lambda(T_{n,r}).
\end{align*} 
This leads to a contradiction.
Thus, we get $n_1-n_r < 4\sqrt{q}$. 

Secondly, we show that $n_1-n_r\le 1$. 
Otherwise, if $n_1-n_r\ge 2$, then let $G'$ be obtained from $K_r(n_1-1,n_2,\ldots,n_r+1)$ by adding $q$ class-edges in the same way as in $G$.
Setting $\varepsilon=\frac{3}{40}$ in Lemma~\ref{lem6.2A}, we can derive $\lambda(G') > \lambda(G)$, a contradiction. 
Therefore, we conclude that $G$ is obtained from $T_{n,r}$ by adding some $q$ class-edges. Consequently, Theorem \ref{thm-min-max} (ii) implies $G=L_{n,r,q}$. 
\end{proof}

Now, we are ready to prove Theorem \ref{thm-Z}. 

\begin{proof}[{\bf Proof of Theorem \ref{thm-Z}}]
Assume that $G$ is a graph on $n$ vertices with $\lambda (G)\ge \lambda (L_{n,r,q})$, where $0 \leq q\leq \delta_F n$ for some sufficiently small constant $\delta_F>0$ and $n$ is sufficiently large. 
Suppose that $G\neq L_{n,r,q}$. 
Our goal is to prove $N_F(G)\ge (q+1) \left( c(n,F)-O_F(n^{f-3}) \right)$.  
Without loss of generality, we may assume that $G$ achieves the minimum number of copies of $F$. 
If $N_F(G) > (q+1.01)c(n,F)$, then we are done. 
We now assume that $N_F(G) \le (q+1.01) c(n,F)$. 
 Using Theorem \ref{second-key} (with $q\leftarrow q+1$), 
we see that $G$ is obtained from $K_r(n_1,\ldots ,n_r)$ by adding $\alpha_1$ class-edges and deleting $\alpha_2$ cross-edges, where 
$\alpha_2 \le 2\alpha_1 \le \frac{4(q+1)}{\eta_F}$ for a constant $\eta_F>0$, and $n_1-n_r< 4\sqrt{\alpha_1} $.

\begin{claim} \label{cl-at-most-one}
    We have $\alpha_2 \le 1$. 
\end{claim}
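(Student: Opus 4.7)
The plan is to suppose for contradiction that $\alpha_2 \ge 2$ and derive a lower bound on $N_F(G)$ exceeding the standing assumption $N_F(G) \le (q+1.01)\,c(n,F)$. The strategy splits into a spectral step forcing $\alpha_1 \ge q+2$ and a counting step via Lemma \ref{LEM2.8}.

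For the spectral step, I would form the auxiliary graph $G^{+}$ obtained from $G$ by restoring all $\alpha_2$ missing cross-edges; then $G^{+}$ arises from $K_r(n_1, \ldots, n_r)$ by adding only $\alpha_1$ class-edges, so Theorem \ref{thm-complete-add} yields $\lambda(G^{+}) \le \lambda(L_{n,r,\alpha_1})$. Applying the Rayleigh quotient with the Perron eigenvector $\bm{x}$ of $G$, together with the near-uniformity estimate $x_u = (1 \pm O(q/n))/\sqrt{n}$ already established in Claim \ref{Claim3.2} of the proof of Theorem \ref{first-key}, each restored cross-edge $uv$ contributes $2x_ux_v \approx 2/n$ to the spectral radius, giving
\[
\lambda(L_{n,r,\alpha_1}) \ge \lambda(G^{+}) \ge \lambda(G) + \tfrac{2\alpha_2}{n}\bigl(1-O(q/n)\bigr) \ge \lambda(L_{n,r,q}) + \tfrac{4}{n}\bigl(1-O(q/n)\bigr).
\]
On the other hand, Theorem \ref{first-key}(i) applied to $L_{n,r,\alpha_1}$ and $L_{n,r,q}$ produces $\lambda(L_{n,r,\alpha_1}) - \lambda(L_{n,r,q}) \le 2(\alpha_1-q)/n + O(q^2/n^2)$. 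Comparing these two and using integrality of $\alpha_1 - q$ (after choosing $\delta_F$ small so the cumulative $O(q^2/n)$ error is strictly less than $1$) forces $\alpha_1 \ge q+2$.

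For the counting step, Lemma \ref{LEM2.8} applied with $\phi = \max\{n_1 - n_r,\, 2(\alpha_1+\alpha_2)\} = O(\alpha_1) = O(q)$ gives
\[
N_F(G) \ge \alpha_1\, c(n,F) - \gamma_F \alpha_1 \phi\, n^{f-3} \ge (q+2)\,c(n,F) - O\!\bigl(q^2 n^{f-3}\bigr).
\]
Since $c(n,F) \ge \tfrac{1}{2}\alpha_F n^{f-2}$ by Lemma \ref{LEM2.5}, the error term is $O(q^2/n)\cdot c(n,F)$, which is at most $0.5\,c(n,F)$ once $\delta_F$ is chosen small enough. Hence $N_F(G) \ge (q+1.5)\,c(n,F) > (q+1.01)\,c(n,F)$, contradicting the standing hypothesis and completing the proof.

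The main obstacle will be calibrating $\delta_F$ so that both error sources---the spectral $O(q^2/n)$ arising from Theorem \ref{first-key}(i) and the $F$-counting error $O(q^2/n)\cdot c(n,F)$ from Lemma \ref{LEM2.8}---remain strictly below the integer and multiplicative gaps on which the argument relies, paralleling the constraints imposed in Theorem \ref{second-key}. A subtle point is that these two errors share the same $q^2/n$ scaling, so both can be controlled by a single sufficiently small choice of $\delta_F$ in terms of the absolute constants $\alpha_F, \gamma_F, \eta_F$; verifying strict monotonicity of $\lambda(L_{n,r,q'})$ in $q'$ (needed to pass from $\lambda(L_{n,r,\alpha_1}) > \lambda(L_{n,r,q})$ to $\alpha_1 > q$) is a routine but necessary auxiliary step that also follows from Theorem \ref{first-key}(i).
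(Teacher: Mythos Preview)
Your argument has a genuine gap: both the spectral step and the counting step rely on controlling an error of order $q^2/n$, and this cannot be done in the linear range $q \le \delta_F n$ of Theorem~\ref{thm-Z}. Concretely, $q^2/n$ can be as large as $\delta_F^2 n$, which tends to infinity with $n$ regardless of how small $\delta_F$ is chosen. So the claim that ``choosing $\delta_F$ small makes the cumulative $O(q^2/n)$ error strictly less than $1$'' is false, and likewise the counting error $\gamma_F \alpha_1 \phi\, n^{f-3}$ in Lemma~\ref{LEM2.8} is of order $q^2 n^{f-3}$, which can far exceed $c(n,F) = \Theta(n^{f-2})$ when $q$ is linear. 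Your approach would work for $q = O(\sqrt{n})$ (and is close in spirit to the proof of Theorem~\ref{thm-Y}), but not here.

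The paper's proof circumvents this by exploiting the minimality of $N_F(G)$ directly, without first pinning down $\alpha_1$. Assuming $\alpha_2 \ge 2$, one forms $G'$ from $G$ by adding two missing cross-edges and deleting one class-edge. The Rayleigh quotient (via Claim~\ref{Claim3.1}) gives $\lambda(G') > \lambda(G)$, since the gain $4(1-O(q/n))^2 x_{u^*}^2$ exceeds the loss $2x_{u^*}^2$. On the other hand, the deleted class-edge $e$ lies in at least $c(n,F) - O((s+\alpha_2)n^{f-3})$ copies of $F$ (using Lemma~\ref{lem-Mub-2}), while the two added cross-edges create at most $O(\alpha_1 n^{f-3})$ new copies; the net change is a decrease of $c(n,F) - O(q\, n^{f-3})$, which is positive for small $\delta_F$ since the error is now only $O(q/n)$ relative to $c(n,F)$. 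Thus $G'$ has strictly larger spectral radius and strictly fewer copies of $F$, contradicting the choice of $G$. The key structural point is that the paper establishes $\alpha_2 \le 1$ and $n_1 - n_r \le 3$ \emph{before} attempting any global count of $N_F(G)$, precisely so that the eventual counting error (in Claim~\ref{cl-class-q+1}) is $O(n^{f-3})$ rather than $O(q^2 n^{f-3})$.
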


\begin{proof}[Proof of claim]
Suppose on the contrary that $\alpha_2\geq 2$.
Let $G'$ be the graph obtained from $G$ by adding two missing cross-edges, and then deleting one class-edge. 
Let $\bm{x} \in \mathbb{R}^n$ be the unit Perron--Frobenius  eigenvector corresponding to $\lambda (G)$.
Let $u^*$ be a vertex of $V(G)$ such that $x_{u^*}=\max_{v\in V(G)}x_v$.
By Claim \ref{Claim3.1}, we get 
$x_u\geq \big(1-\frac{12\phi}{n}\big)x_{u^*}$  for every $u\in V(G)$.
Using the Rayleigh quotient, we have 
\begin{align*}
  \lambda(G')
  \geq \lambda (G)+ 4\Big(1-\frac{12\phi}{n}\Big)^2 x_{u^*}^2 -2x_{u^*}^2 > \lambda (G).
\end{align*}
We denote $s:=n_1-n_r $, where $s\le 4\sqrt{\alpha_1}$. Then $\frac{n}{r} -s \le n_r \le n_1\le \frac{n}{r}+s$. Using Lemma \ref{lem-Mub-2}, there exists a constant $d_F >0$ such that 
$c(n_1,\ldots ,n_r ,F) \ge c(n,F) - d_F s n^{f-3}$.
For a fixed class-edge $e\in E(G_{in})$,  
we see that each cross-edge of $E(G_{cr})$ is contained in at most $2^{f^2} n^{f-3}$ copies of $F$ containing $e$ (we consider the case that the cross-edge intersects $e$, otherwise it is counted at most $2^{f^2}n^{f-4}$ times).
Counting the number of copies of $F$ in $G$ containing $e$, we get
\[ N_F(G,e) \ge c(n_1,\ldots,n_r,F) - (\alpha_2-2) \cdot 2^{f^2}n^{f-3} \ge c(n,F) -s\cdot d_F  n^{f-3} - \alpha_2 \cdot 2^{f^2} n^{f-3}.  \]
In addition, after deleting one class-edge from $G$, we see that adding two missing cross-edges  creates at most $2\alpha_1 \cdot 2^{f^2} n^{f-3}$ new copies of $F$. 
Consequently, we get 
\begin{align*}
N_F(G') &\leq N_F(G)+2\alpha_1 \cdot 2^{f^2}n^{f-3}  - N_F(G,e)  \\
& \leq N_F(G) +2\alpha_1\cdot 2^{f^2}n^{f-3}  - c(n,F) + s\cdot d_F  n^{f-3} + \alpha_2 \cdot 2^{f^2} n^{f-3} < N_F(G),
\end{align*}
where the last inequality holds since $\alpha_2\le 2 \alpha_1=O_F(q)$, $s= O_F(\sqrt{q})$ and $q\le \delta_F n$ for sufficiently small $\delta_F >0$, and thus by Lemma \ref{LEM2.5}, $2\alpha_1\cdot 2^{f^2}n^{f-3}  + s\cdot d_F  n^{f-3} + \alpha_2 \cdot 2^{f^2} n^{f-3} < \frac{1}{2}\alpha_F n^{f-2} < c(n,F)$. 
So, we conclude that $G'$ is a graph with $\lambda (G')> \lambda (G)$ and $N_F(G')< N_F(G)$, which contradicts with the choice of $G$. So we have $\alpha_2\leq 1$.
\end{proof}

\begin{claim} \label{cl-at-most-10}
    Let $s:=n_1-n_r$. Then we have $s \le 3$. 
\end{claim}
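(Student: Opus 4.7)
My plan is to mimic the comparison argument already used in Claim \ref{CLA3.9CC}, but with the lower bound $\lambda(T_{n,r})$ replaced by the sharper lower bound $\lambda(L_{n,r,q})$ coming from the hypothesis $\lambda(G) \ge \lambda(L_{n,r,q})$. Suppose $s \ge 4$; then $s \ge 2k$ with $k := \lfloor s/2 \rfloor \ge 2$. The first step is to pin down $\alpha_1$. Recall from Theorem~\ref{second-key} that $\alpha_2 \le 2\alpha_1 \le 4(q+1)/\eta_F$, that $s < 4\sqrt{\alpha_1}$, and by Claim~\ref{cl-at-most-one} that $\alpha_2 \le 1$. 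In particular $\phi := \max\{2(\alpha_1+\alpha_2), s\} = O_F(q+1)$, so Lemma~\ref{LEM2.8} combined with Lemma~\ref{LEM2.5} and the assumption $N_F(G) \le (q+1.01)c(n,F)$ gives
\[ \alpha_1 \cdot c(n,F) \le N_F(G) + \gamma_F \alpha_1\phi\, n^{f-3} \le \bigl( q + 1.01 + O_F(\delta_F)\bigr) c(n,F), \]
so, choosing $\delta_F$ small enough and using that $\alpha_1\in\mathbb Z$, we conclude $\alpha_1 \le q+1$.

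Next I apply Theorem~\ref{first-key}(ii) to $G$ with this $k$ and $\psi := \max\{3k, 2(\alpha_1+\alpha_2)\} = O_F(q+1)$. Since $q \le \delta_F n$ with $\delta_F$ small, the error term $\tfrac{56(\alpha_1+\alpha_2)\cdot 7r\psi}{n^2} = O_F\!\bigl(\tfrac{(q+1)^2}{n^2}\bigr)$ is $o(1/n)$, and $\bigl(1-\tfrac{28r\psi}{n}\bigr)^4 = 1-o(1)$. Together with $\alpha_1-\alpha_2 \le q+1$, this yields
\[ \lambda(G) \le \lambda(T_{n,r}) + \frac{2(q+1)}{n} - \frac{2(r-1)k^2}{rn}\bigl(1-o(1)\bigr) + o(1/n). \]
On the other hand, $L_{n,r,q}$ is obtained from $T_{n,r}$ (whose part-sizes differ by at most $1$) by adding $q$ class-edges, so Theorem~\ref{first-key}(i), applied with $\alpha_1 \leftarrow q$, $\alpha_2 \leftarrow 0$ and $\phi \leftarrow \max\{1,2q\}$, gives
\[ \lambda(L_{n,r,q}) \ge \lambda(T_{n,r}) + \frac{2q}{n} - O\!\left(\frac{q^2}{n^2}\right) = \lambda(T_{n,r}) + \frac{2q}{n} - o(1/n). \]

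Combining these two estimates with the hypothesis $\lambda(G) \ge \lambda(L_{n,r,q})$ and multiplying by $n$ yields
\[ \frac{2(r-1)k^2}{r}\bigl(1-o(1)\bigr) \le 2 + o(1), \]
so $k^2 \le \frac{r}{r-1}(1+o(1)) \le 2+o(1)$ for $r \ge 2$, forcing $k \le 1$ for $n$ large. Consequently $s = n_1 - n_r \le 2k+1 \le 3$, as claimed. The only delicate point is the bookkeeping on error terms, which I expect to be routine given the choice of $\delta_F$ sufficiently small relative to $\eta_F, \alpha_F, \gamma_F$ and $r$; no new conceptual ingredient is needed beyond what has already been set up.
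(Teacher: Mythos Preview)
Your spectral-comparison approach has a genuine gap, stemming from the range of $q$ in Theorem~\ref{thm-Z}: here $q$ may be as large as $\delta_F n$, not $O(\sqrt n)$. Consequently your error estimates fail. First, with $\alpha_1,\phi=O_F(q+1)$, the Lemma~\ref{LEM2.8} error $\gamma_F\alpha_1\phi\,n^{f-3}$ is $O_F\bigl((q+1)^2/n\bigr)\,c(n,F)=O_F\bigl(\delta_F(q+1)\bigr)\,c(n,F)$, not $O_F(\delta_F)\,c(n,F)$; so you cannot deduce $\alpha_1\le q+1$ at this stage (the slack can be of order $\delta_F q$, which is unbounded). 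Second, the spectral error $\tfrac{56(\alpha_1+\alpha_2)\cdot 7r\psi}{n^2}=O_F\bigl((q+1)^2/n^2\bigr)$ is of order $\delta_F^2$---a small constant, not $o(1/n)$---and the same holds for the $O(q^2/n^2)$ error in your lower bound on $\lambda(L_{n,r,q})$. Tracking these through the comparison yields only $k^2\le (\alpha_1-\alpha_2-q)+O_F(\delta_F^2 n)$, which for $q\sim\delta_F n$ is no better than the bound $s<4\sqrt{\alpha_1}$ already supplied by Theorem~\ref{second-key}. (Your argument would indeed work in the setting of Theorem~\ref{thm-Y}, where $q=O(\sqrt n)$ makes all of these errors genuinely $o(1/n)$; the whole point of Theorem~\ref{thm-Z} is that this regime is different.)

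The paper instead uses a local swap exploiting the minimality of $N_F(G)$. Assuming $s\ge 4$, it moves one vertex from the largest part to the smallest (Lemma~\ref{lem6.2A} gives a spectral gain of at least $\tfrac{6(r-1)}{rn}\cdot 0.9$) and then deletes a single class-edge (spectral cost roughly $\tfrac{2}{n}$ via Claim~\ref{Claim3.2}), producing $G''$ with $\lambda(G'')>\lambda(G)$. Meanwhile the deleted class-edge destroys about $c(n,F)$ copies of $F$, whereas the rebalancing (together with restoring the at most one missing cross-edge) creates only $O_F(\alpha_1)\,n^{f-3}=O_F(\delta_F)\,c(n,F)$ new ones, so $N_F(G'')<N_F(G)$, contradicting minimality. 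The key advantage is that this compares two nearby graphs, so the relevant spectral increments are controlled to precision $O(1/n)$ regardless of how large $q$ is.
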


\begin{proof}[Proof of claim]
Suppose on the contrary that $s \geq 4$.
Let $G'$ be the graph obtained from $K_r(n_1-1,n_2,\ldots ,n_r+1)$ by adding $\alpha_1$ class-edges in the same way as $G$, and let $G''$ be obtained from $G'$ by deleting one class-edge. 
Observe that $n_1-n_r \ge 4$. 
Applying Lemma~\ref{lem6.2A}, we have 
$ \lambda(G') > \lambda (G) + \frac{6(r-1)}{rn} \cdot 0.9$. 
Let $\bm{x}'\in \mathbb{R}^n$ be the unit Perron--Frobenius  eigenvector corresponding to $\lambda (G')$. 
By Claim \ref{Claim3.2}, we have 
$x_u'\leq \big(1 + \frac{13\phi}{n}\big) \frac{1}{\sqrt{n}}$ for every $u\in V(G')$. 
Then the Rayleigh quotient gives   
\[ \lambda (G'') \ge \lambda (G') - 2\Big( 1+ \frac{13\phi}{n}\Big)^2 \frac{1}{n} \ge \lambda (G) 
+\frac{6(r-1)}{rn} \cdot 0.9 -2\Big( 1+ \frac{13\phi}{n}\Big)^2 \frac{1}{n}. \]
Thus, we see that $\lambda (G'')>\lambda (G)$ since $\phi =O_F(q)$ and $q\le \delta_F n$ for sufficiently small $\delta_F >0$. 

Note that $G'$ does not miss any cross-edge, and $G$ misses at most one cross-edge by Claim \ref{cl-at-most-one}.  
Each new copy of $F$ in $G'$ contains a class-edge of $G$, and moreover it contains the added  missing cross-edge of $G$ or the new vertex of the partite set of size $n_r+1$. Then we have $N_F(G') \le N_F(G) + 2\cdot \alpha_1 \cdot 2^{f^2} n^{f-3} $. For a fixed class-edge $e\in E(G')$, we see that $N_F(G',e) = c(n_1-1,\ldots ,n_r+1,F) \ge c(n,F) - s \cdot d_F n^{f-3}$ by Lemma \ref{lem-Mub-2}.  
Consequently, we obtain 
\begin{align*}
    N_F(G'')&\leq N_F(G)  + 2\alpha_1 \cdot 2^{f^2} n^{f-3} - N_F(G',e) \\ 
    &\le N_F(G) + 2\alpha_1 \cdot 2^{f^2}n^{f-3} - c(n,F) + s \cdot d_F n^{f-3} < N_F(G).
\end{align*}
Therefore, we find a graph $G''$ such that $\lambda (G'') > \lambda (G) $ and $N_F(G'') < N_F(G)$. This contradicts with the choice of $G$.
Thus, we must have $n_1-n_r\leq 3$.
\end{proof}

\begin{claim} \label{cl-class-q+1}
We have $\alpha_1= q+1$. 
\end{claim}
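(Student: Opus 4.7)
The strategy is to bound $\alpha_1$ from both sides by $q+1$, using two contradiction arguments that leverage the structural facts proved so far: $\alpha_2\le 1$ (Claim~\ref{cl-at-most-one}), $n_1-n_r\le 3$ (Claim~\ref{cl-at-most-10}), and $\alpha_1=O_F(q+1)$.

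For the lower bound $\alpha_1\ge q+1$, I suppose for contradiction $\alpha_1\le q$. Restoring the (at most one) missing cross-edge of $G$ and adjoining $q-\alpha_1\ge 0$ arbitrarily placed class-edges produces a padded graph $\widehat{G}\supseteq G$ that is obtained from $K_r(n_1,\ldots,n_r)$ by adding exactly $q$ class-edges. Theorem~\ref{thm-complete-add} then yields $\lambda(G)\le\lambda(\widehat{G})\le\lambda(L_{n,r,q})$, with equality in the second inequality only when $\widehat{G}=L_{n,r,q}$. The hypothesis $\lambda(G)\ge\lambda(L_{n,r,q})$ forces equality throughout, so in particular $G=\widehat{G}=L_{n,r,q}$, contradicting $G\ne L_{n,r,q}$.

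For the upper bound $\alpha_1\le q+1$, I suppose $\alpha_1\ge q+2$ and delete a single class-edge $uv$ of $G$ to form a competitor $G^{\sharp}$ with $\alpha_1-1\ge q+1>q$ class-edges, so $G^{\sharp}\ne L_{n,r,q}$. The $F$-count drop is strict: by Lemma~\ref{lem-Mub-2} and $n_1-n_r\le 3$, the number of $F$-copies through $uv$ in the complete multipartite base $K_r(n_1,\ldots,n_r)$ is at least $c(n,F)-O_F(n^{f-3})$, while the at most one missing cross-edge of $G$ affects only $O_F(\alpha_1 n^{f-3})$ of them, yielding $N_F(G)-N_F(G^{\sharp})\ge c(n,F)-O_F(\alpha_1 n^{f-3})>0$. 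To verify $\lambda(G^{\sharp})\ge\lambda(L_{n,r,q})$, applying Theorem~\ref{first-key}(ii) to $G$ and Theorem~\ref{first-key}(i) to $L_{n,r,q}$ (with $\alpha_2\le 1$, $\phi=O_F(q+1)$, and $\alpha_1-\alpha_2\ge q+1$) furnishes a surplus $\lambda(G)-\lambda(L_{n,r,q})\ge \tfrac{2}{n}-O_F\bigl(\tfrac{(q+1)^2}{n^2}\bigr)$, while the Rayleigh quotient combined with Claim~\ref{Claim3.2} bounds the decrement $\lambda(G)-\lambda(G^{\sharp})=2x_ux_v\le \tfrac{2}{n}+O_F\bigl(\tfrac{q+1}{n^2}\bigr)$. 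Choosing $uv$ inside a smallest partite set so that $x_u,x_v$ are minimized, and, in the sub-case $n_1-n_r\ge 2$, exploiting the extra $\Omega(1/n)$ slack in Theorem~\ref{first-key}(ii), closes the gap and yields $\lambda(G^{\sharp})\ge\lambda(L_{n,r,q})$. This contradicts the minimality of $N_F(G)$, so $\alpha_1\le q+1$, and combined with the lower bound we conclude $\alpha_1=q+1$.

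The main obstacle is precisely the delicate balance in the spectral comparison: the surplus $\tfrac{2}{n}$ and the Rayleigh decrement $\tfrac{2}{n}$ are of the same leading order, so guaranteeing $\lambda(G^{\sharp})\ge\lambda(L_{n,r,q})$ requires exploiting the careful edge choice and the additional deficit in the unbalanced sub-case of Theorem~\ref{first-key}(ii); without these refinements, the $O_F\bigl((q+1)^2/n^2\bigr)$ tail can dominate in the regime $q=\Omega(\sqrt{n})$ and the naive comparison fails.
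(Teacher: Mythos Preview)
Your lower bound ($\alpha_1\ge q+1$) is essentially the paper's argument: restore the missing cross-edges, dominate by a graph with $q$ class-edges, and invoke Theorem~\ref{thm-complete-add}; the equality discussion is the same.

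Your upper bound, however, diverges from the paper and contains a genuine gap. The paper does \emph{not} construct a competitor $G^{\sharp}$ at all. It simply counts $F$-copies: since $\alpha_2\le 1$ and $s=n_1-n_r\le 3$ are both $O(1)$, every class-edge $e$ satisfies $N_F(G,e)\ge c(n,F)-O_F(n^{f-3})$, whence
\[
N_F(G)\ge \alpha_1\bigl(c(n,F)-O_F(n^{f-3})\bigr)\ge (q+2)c(n,F)-O_F\bigl((q+1)n^{f-3}\bigr).
\]
For $q\le\delta_F n$ with $\delta_F$ small, the error is below $0.99\,c(n,F)$, so $N_F(G)>(q+1.01)c(n,F)$, contradicting the standing assumption $N_F(G)\le(q+1.01)c(n,F)$ made at the start of the proof of Theorem~\ref{thm-Z}. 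No spectral inequality is needed.

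Your competitor argument breaks down at $\lambda(G^{\sharp})\ge\lambda(L_{n,r,q})$. Three concrete problems: (a)~Theorem~\ref{first-key}(ii) is an \emph{upper} bound on $\lambda(G)$, so it cannot yield the surplus $\lambda(G)-\lambda(L_{n,r,q})\ge\tfrac{2}{n}-\cdots$; you would need the lower half of Theorem~\ref{first-key}(i) over $K=K_r(n_1,\ldots,n_r)$, but then $\lambda(K)\le\lambda(T_{n,r})$ with an $\Omega(1/n)$ deficit whenever $n_1-n_r\ge 2$, eroding the surplus. (b)~Vertices in \emph{smaller} parts of a near-complete multipartite graph have \emph{larger} Perron coordinates (cf.\ $(\lambda+n_k)x_k=\sum_v x_v$), so your edge-choice heuristic points the wrong way; moreover, there is no guarantee any class-edge lies in the largest part. (c)~Even with optimal choices, Claim~\ref{Claim3.2} forces $2x_ux_v\ge\tfrac{2}{n}-O_F\bigl(\tfrac{q+1}{n^2}\bigr)$ while the surplus is only $\tfrac{2}{n}-O_F\bigl(\tfrac{(q+1)^2}{n^2}\bigr)$; for $q=\Theta(\delta_F n)$ the latter error is a \emph{constant} $\Theta(\delta_F^2)$, dwarfing the $\tfrac{2}{n}$ leading term. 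Your closing paragraph correctly identifies this regime as the obstacle, but the refinements you list do not resolve it.
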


\begin{proof}[Proof of claim] 
If $\alpha_1\le q$, then 
let $G'$ be the graph obtained from $G$ by adding all possible missing cross-edges. 
Then $\lambda (G)\leq \lambda(G') \leq \lambda (L_{n,r,q})$ by Theorem \ref{thm-complete-add}.  
Since $G\neq L_{n,r,q}$, we derive $\lambda (G)< \lambda (L_{n,r,q})$, contradicting with the assumption. Next, suppose on the contrary that $\alpha_1 \ge q+2$.  
Using Lemma \ref{lem-Mub-2}, there exists some constant $d_F>0$ such that 
$c(n_1,\ldots ,n_r,F) \ge  c(n,F) - s\cdot d_F  n^{f-3}$.   
For each fixed class-edge $e\in E(G_{in})$, a missing cross-edge of $E(G_{cr})$ may destroy at most $2^{f^2} n^{f-3}$ copies of $F$ containing $e$ (we consider the case where the miss-edge intersects $e$, otherwise it is counted at most $2^{f^2}n^{f-4}$ times). 
Counting the number of copies of $F$ in $G$ containing $e$, we get  
\[ N_F(G,e)\ge 
c(n_1,\ldots ,n_r, F) - \alpha_2 \cdot 2^{f^2} n^{f-3} 
\ge c(n,F) - s\cdot d_F n^{f-3} - \alpha_2 \cdot 2^{f^2}n^{f-3}. \]
Recall that $q+2\le \alpha_1 \le  \frac{2(q+1)}{\eta_F}$. 
It follows that 
\begin{align}
N_F(G) &\ge \alpha_1 \cdot \left(c(n,F) - \bigl( s\cdot d_F + \alpha_2 \cdot 2^{f^2}\bigr) n^{f-3}\right) \label{eq-lower-NF} \\ 
&\ge (q+2)\cdot c(n,F) 
- \tfrac{2(q+1)}{\eta_F} \cdot \bigl( s\cdot d_F + \alpha_2 \cdot 2^{f^2}\bigr) n^{f-3}. \notag 
\end{align}
By Claims \ref{cl-at-most-one} and \ref{cl-at-most-10}, we have $\alpha_2\le 1$ and $s\le 3$. 
Since  $q\leq \delta_F n$ for a sufficiently small constant $\delta_F>0$, we see that the above $O_F(qn^{f-3})$ term is strictly less than $0.99 c(n,F)$, 
which yields $N_F(G)> (q+1.01)\cdot c(n,F)$, a contradiction. So we have $\alpha_1 = q+1$.
\end{proof}

From the above claims, we know that $\alpha_2 \le 1$, $s\le 3$ and $\alpha_1=q+1$. 
It follows from (\ref{eq-lower-NF}) that $N_F(G)\ge (q+1) \big( c(n,F) - (3d_F + 2^{f^2})n^{f-3} \big)=  
(q+1) \left( c(n,F) -O_F(n^{f-3}) \right)$. 
By Lemma \ref{LEM2.5}, we know that $c(n,F)=\Theta_F(n^{f-2})$. Thus, we get 
$N_F(G)\ge (q+1)\big(1-O_F(\frac{1}{n}) \big)c(n,F)$, 
as desired. 
\end{proof}

\begin{remark} \label{remark-imply}
Our proof framework of Theorem~\ref{thm-Z} can be applied to prove Theorem \ref{thm-Mubayi}. 
To see this, under the assumptions that $e(G)\ge e(T_{n,r})+q+1$ and $G$ minimizes $N_F(G)$ for any $0\leq q\leq \delta_F n$ where $\delta_F$ is sufficiently small and satisfies Theorem~\ref{thm-Z}, 
we show that $G$ is obtained from $T_{n,r}$ by adding $q+1$ edges.
Firstly, since $q\leq \delta_F n$ and $\delta_F$ is sufficiently small,
considering the graph $G_0$ obtained from $T_{n,r}$ by adding a matching of size $q+1$ to a partite set,
we have $N_F(G)\leq N_F(G_0)\leq (q+1)c(n,F)+\binom{q+1}{2}\cdot O_F(n^{f-4})\leq (q+1.01)c(n,F)$, which is the same as the beginning of Theorem~\ref{thm-Z}.
Secondly, we see that $\lambda (G)> \lambda (T_{n,r})$.  
Otherwise, assuming that $\lambda (G)\le \lambda (T_{n,r})$, 
then by $\lambda (G)\ge \frac{2}{n}e(G)$, 
we get $e(G)\le \lfloor \frac{n}{2} \lambda (G)\rfloor 
\leq \lfloor \frac{n}{2} \lambda (T_{n,r})\rfloor = e(T_{n,r})$. By Theorem~\ref{second-key} (with $q \leftarrow q+1$), $G$ is obtained from $K=K_r(n_1,\ldots ,n_r)$ by adding $\alpha_1$ class-edges and deleting $\alpha_2$ cross-edges, where $\alpha_2 \le 2\alpha_1 \le {4(q+1)}/{\eta_F}$ for some constant $\eta_F >0$. 
Since $e(T_{n,r})+q+1\le e(G)=e(K) + \alpha_1 - \alpha_2$ and $e(K)\le e(T_{n,r})$, we have $\alpha_1 \ge q+1$. 
Using a similar argument of Claims \ref{cl-at-most-one} and \ref{cl-at-most-10}, the minimality of $N_F(G)$ implies $\alpha_2 \le 1$ and $s=n_1-n_r \le 3$. If $\alpha _1\ge q+2$, then $N_F(G) > (q+1.01)c(n,F)$ by (\ref{eq-lower-NF}), a contradiction. 
So we get $\alpha_1=q+1$, which yields $\alpha_2=0$ and $K=T_{n,r}$. 
We conclude that $G$ is obtained from $T_{n,r}$ by adding $q+1$ edges.\qed
\end{remark}

\section{Applications}\label{sec-applications}

\subsection{Graphs with given $F$-covering number}

Given a graph $F$, 
the {\it $F$-covering number} $\tau_F(G)$ of a graph  $G$ is defined to be the minimum size of a subset $ S\subseteq V(G)$ such that every copy of $F$ in $G$ hits at least one vertex of $S$. In 2021, Xiao and Katona \cite{XK2021} studied the stability of Erd\H{o}s--Rademacher's result and proved that if $G$ is an $n$-vertex graph with 
$e(G)> \lfloor n^2/4\rfloor$ and $\tau_{K_3}(G)\ge 2$, then $G$ contains at least $n-2$ triangles; 
see \cite{LM2022, BC2023} for related extensions.  
Recently, Li, Feng and Peng \cite{LFP2025+} studied 
the spectral stability for triangles and proved that 
 if $n\ge 28s^2$ and $G$ is an $n$-vertex graph with  $\lambda (G)\ge \lambda (T_{n,2})$ and $\tau_{K_3}(G)\ge s$, then $G$ has at least $\frac{1}{2}sn -5s^2$ triangles. 
 The bound is tight up to an $O(1)$ term.

 In 2022, 
 Liu and Mubayi \cite{LM2022} provided a stability variant of Theorem \ref{thm-Mubayi}:  
Let $s>t \ge 1$ be fixed integers and $F$ be a color-critical graph on $f$ vertices with $\chi (F)=r+1\ge 3$. 
Then there exists $C>0$ such that if $n$ is sufficiently large and $G$ is an $n$-vertex graph with $e(G)\ge e(T_{n,r}) +t$ and $\tau_F(G)\ge s$, then $N_F(G)\ge s\cdot c(n,F) - Cn^{f-3}$. 
We prove a spectral extension of this result. 

\begin{thm}\label{thm-covering}
Let $s\ge 1$ and $r\geq 2$ be fixed integers.
Let $F$ be color-critical and $\chi (F)=r+1$. 
Then there exists a constant $C=C(F,s)>0$ such that if  $n$ is sufficiently large and 
 $G$ is a graph on $n$ vertices with $\lambda (G)\ge \lambda (T_{n,r})$ and $\tau_F(G) \ge s$, then $N_F(G)\ge s\cdot c(n,F) - Cn^{f-3}$. 
\end{thm}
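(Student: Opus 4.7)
The plan is to combine the structural result of Theorem~\ref{second-key} with the elementary observation that every copy of $F$ in a graph sufficiently close to an $r$-partite graph must contain a class-edge. First, I would dispose of the trivial case: if $N_F(G) \ge s \cdot c(n, F)$, the desired inequality already holds, so henceforth I may assume $N_F(G) < s \cdot c(n, F)$.

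Under this assumption together with $\lambda(G) \ge \lambda(T_{n,r})$, I would apply Theorem~\ref{second-key} with parameters $q = s$ and $\varepsilon_1 = \tfrac{1}{4}$, which is legitimate because $s$ is a fixed constant and hence $1 \le s \le \delta_F n$ once $n$ is large. This yields that $G$ arises from some complete $r$-partite graph $K := K_r(n_1, \ldots, n_r)$ by adding $\alpha_1$ class-edges and deleting $\alpha_2$ cross-edges, where
\[
\alpha_2 \le 2\alpha_1 \le \frac{4s}{\eta_F} \quad \text{and} \quad |n_i - n_j| \le 4\sqrt{\alpha_1} = O_F(\sqrt{s}).
\]

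The crux of the argument is establishing $\alpha_1 \ge s$. Since $\chi(F) = r+1$ while $K$ (as well as any subgraph of $K$ obtained by deleting cross-edges) is $r$-chromatic, every copy of $F$ in $G$ must use at least one of the $\alpha_1$ class-edges. Therefore, picking one endpoint of each class-edge yields a vertex subset of size at most $\alpha_1$ which intersects every copy of $F$, showing $\tau_F(G) \le \alpha_1$. Combined with the hypothesis $\tau_F(G) \ge s$, this immediately gives $\alpha_1 \ge s$.

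To conclude, I would invoke Lemma~\ref{LEM2.8} with $\phi := \max\{2(\alpha_1 + \alpha_2), n_1 - n_r\} = O_F(s)$, which yields
\[
N_F(G) \ge \alpha_1 \cdot c(n, F) - \gamma_F \alpha_1 \phi \cdot n^{f-3} \ge s \cdot c(n, F) - C \cdot n^{f-3},
\]
where $C = C(F, s)$ can be taken as $O_F(s^2)$. The entire argument is a clean consequence of the tools established earlier in the paper, so I do not anticipate a serious obstacle; the only point requiring care is verifying admissibility of the parameter choice in Theorem~\ref{second-key}, which holds for every fixed $s \ge 1$ and sufficiently large $n$.
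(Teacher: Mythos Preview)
Your proof is correct and follows essentially the same approach as the paper: apply Theorem~\ref{second-key} to obtain the near-$r$-partite structure, observe $\tau_F(G)\le \alpha_1$ by covering class-edges, and conclude via Lemma~\ref{LEM2.8}. The paper additionally passes to a minimizer of $N_F(G)$ and compares with $Y_{n,r,s}$ to pin down $\alpha_1=s$ exactly, but your direct use of the bound $\alpha_1\le 2s/\eta_F$ from Theorem~\ref{second-key} already suffices for the stated conclusion and is a slight streamlining.
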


Theorem \ref{thm-covering} extends the above result of Liu and Mubayi, since every graph $G$ with $e(G)> e(T_{n,r})$ must satisfy the spectral condition $\lambda (G)> \lambda (T_{n,r})$. The case $s=1$ in Theorem \ref{thm-covering} implies that if $\lambda (G)\ge \lambda (T_{n,r})$ and $G\neq T_{n,r}$, then $N_F(G)\ge c(n,F) - O(n^{f-3})$. This  extends the result of Ning--Zhai \cite{NZ2021} on the triangle case to general color-critical graphs. 
In addition, 
the above bound on $N_F(G)$ in Theorem \ref{thm-covering} is tight up to an error term. For example, taking $G=Y_{n,r,s}$ or $L_{n,r,s}$, we see that $N_F(G)\le s\cdot c(n,F) + C' n^{f-3}$  for some constant $C'=C'(F,s)>0$ using Lemma \ref{LEM2.8}.

\begin{proof}[{\bf Proof of Theorem \ref{thm-covering}}]
Suppose that $G$ is an $n$-veretx graph with  $\lambda(G)\geq \lambda(T_{n,r})$ and $\tau_F(G) \ge s\ge 1$. 
Trivially, we have $G\neq T_{n,r}$ since  $\tau_F(T_{n,r})=0$. 
Among all such graphs $G$, we may further assume that $G$ achieves the minimum number of copies of $F$. 
Since $\lambda(Y_{n,r,s})>\lambda(T_{n,r})$ and  $\tau_F(Y_{n,r,s})=s$,  by the choice of $G$, and applying Lemma \ref{LEM2.8}, we have
\begin{equation}\label{eq-F-upper-thm1-6} 
N_F(G) \leq N_F(Y_{n,r,s})=s\cdot c(n,F)+O(n^{f-3}).
\end{equation}
By Lemma \ref{LEM2.5}, we see that $c(n,F) > \frac{1}{2}\alpha_F n^{f-2} $. Then for any fixed $\varepsilon_1\in (0,\frac{1}{2})$, we have $N_F(G)\le (s + \varepsilon_1) c(n,F)$ as long as $n$ is sufficiently large. 
Setting $q=s$ in Theorem \ref{second-key}, we see that $G$ is obtained from $K=K_r(n_1,\dots,n_r)$ by adding $\alpha_1$ class-edges and deleting $\alpha_2$ cross-edges, where $\alpha_1$ and $\alpha_2$ are nonnegative  integers such that $\alpha_1 \leq \frac{2s}{\eta_F}$ for some $\eta_F>0$, 
and $\alpha_2\leq 2\alpha_1 $, and $n_1\geq \cdots\geq n_r$ are positive integers such that $n_1-n_r < 4\sqrt{\alpha_1}$. 

We claim that $\alpha_2\le \alpha_1\le s$.  
Indeed, setting $k=0$ and $\psi =2(\alpha_1+\alpha_2)$ in Theorem \ref{first-key} (ii), we have $\psi \le 6\alpha_1=O(1)$ and 
$\lambda (G)\le \lambda (T_{n,r}) + \frac{2(\alpha_1-\alpha_2)}{n} + O(\frac{1}{n^2})$, 
which together with $\lambda(G)\geq \lambda (T_{n,r})$ gives $\alpha_2\leq \alpha_1$, as needed. 
Let $\phi := \max\{2(\alpha_1+\alpha_2), n_1-n_r\}$ be defined in Lemma \ref{LEM2.8}.  
From the above argument, we see that  $\phi\leq 6 \alpha_1 =O(1)$. 
Then we have $\gamma_F \alpha_1 \phi n^{f-3} < \frac{1}{6}\alpha_Fn^{f-2} \le \frac{1}{3}c(n,F)$, where the last inequality holds by Lemma \ref{LEM2.5}. 
Consequently, applying Lemma \ref{LEM2.8} yields
$N_F(G) \geq \alpha_1 c(n,F) - \gamma_F \alpha_1 \phi n^{f-3} 
    \geq \big(\alpha_1-\frac{1}{3}\big) c(n,F)$. 
Combining with (\ref{eq-F-upper-thm1-6}), we obtain $\alpha_1\leq s$.

We claim that $\alpha_1= s$.
Suppose on the contrary that $\alpha_1\leq s-1$. 
Let $S_0$ be a minimal vertex set covering all edges of $E(G)\setminus E(K)$. 
Then $|S_0|\leq \alpha_1\leq s-1$.
Observe that $G-S_0$ is $F$-free and every copy of $F$ in $G$ hits at least one vertex of $S_0$,  
we get $\tau_F(G)\leq |S_0|\leq s-1$, a contradiction. So $\alpha_1=s$. 
Applying Lemma \ref{LEM2.8}, it follows that 
 $N_F(G)\ge s c(n,F)- \gamma_F s \phi n^{f-3}$, where $\phi := \max\{2(\alpha_1+\alpha_2),n_1-n_r\}\le 4s$. 
Setting $C:=4 \gamma_F s^2$,  we have 
 $N_F(G) \ge s c(n,F) - C n^{f-3}$.
\end{proof}

\subsection{A related variant on Theorem \ref{thm-Y}}

Recall that $T_{n,r,q}$ is the graph obtained by 
adding {\it a star with
$q$ edges} into a {\it largest} partite set of  $T_{n,r}$. In the case where $r$ divides $n$, and $q\neq 3$, we see by definition that $T_{n,r,q}=L_{n,r,q}$. 
Incidentally, we have $\lambda (Y_{n,r,q}) < \lambda (T_{n,r,q}) \le \lambda (L_{n,r,q})$. In this section, we prove that the range of $q$ in Theorem \ref{thm-Y} can be extended to be linear under the  condition $\lambda (G)\ge \lambda (T_{n,r,q})$, instead of the stronger condition $\lambda (G)\ge \lambda (L_{n,r,q})$ in Theorem \ref{thm-Z}. This result confirms a conjecture of Li, Lu and Peng \cite{LLP2024}.  

\begin{thm} \label{thm-T}
   Let $F$ be a color-critical graph with  $\chi (F)=r+1$. There exists an absolute constant  $\delta_F >0$ such that
if $n$ is sufficiently large, $1\leq q \le  \delta_F n$, and $G$ is an $n$-vertex graph with 
$\lambda (G)\ge \lambda (T_{n,r,q})$,
then $G$ contains at least $q\cdot c(n,F) $ copies of $F$.   
\end{thm}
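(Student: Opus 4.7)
The plan is to fix $G$ attaining the minimum of $N_F$ among $n$-vertex graphs with $\lambda(G)\ge\lambda(T_{n,r,q})$ and show $N_F(G)\ge q\cdot c(n,F)$. First I would argue that $N_F(T_{n,r,q})=q\cdot c(n,F)$ exactly: since $F$ is color-critical with $\chi(F)=r+1$, every copy of $F$ in a graph obtained from $T_{n,r}$ by adding class-edges uses exactly one class-edge (the image of the critical edge of $F$), so the $q$ star edges of $T_{n,r,q}$ contribute pairwise disjoint families of $F$-copies, each of size exactly $c(n,F)$ because the star sits in a largest part. Minimality then gives $N_F(G)\le q\cdot c(n,F)<(q+\tfrac12)c(n,F)$, and Theorem~\ref{second-key} extracts a partition $V(G)=V_1\cup\cdots\cup V_r$ with $n_1\ge\cdots\ge n_r$ so that $G$ differs from $K=K_r(n_1,\ldots,n_r)$ by $\alpha_1$ added class-edges and $\alpha_2$ deleted cross-edges, with $\alpha_2\le 2\alpha_1\le 4q/\eta_F$ and $n_1-n_r<4\sqrt{\alpha_1}$.

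I would next adapt the swap and rebalance arguments of Claims~\ref{cl-at-most-one} and~\ref{cl-at-most-10} in the proof of Theorem~\ref{thm-Z}: swapping two missing cross-edges for one added class-edge strictly raises $\lambda$ (by the Rayleigh quotient) and strictly lowers $N_F$ (by Lemma~\ref{lem-Mub-2}), giving $\alpha_2\le 1$; similarly, for $s:=n_1-n_r\ge 4$ the rebalancing move of Lemma~\ref{lem6.2A} followed by deletion of one class-edge raises $\lambda$ and lowers $N_F$, forcing $s\le 3$. The key step remaining is to establish $\alpha_1\ge q$. Assuming $\alpha_1\le q-1$, restore the $\alpha_2$ missing cross-edges of $G$ to produce $G^+\supseteq G$, which equals $K$ with only $\alpha_1$ class-edges added. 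Theorem~\ref{thm-complete-add} then gives
\[
 \lambda(T_{n,r,q})\le\lambda(G)\le\lambda(G^+)\le\lambda(L_{n,r,\alpha_1})\le\lambda(L_{n,r,q-1}),
\]
which I would refute by proving the direct spectral inequality $\lambda(T_{n,r,q})>\lambda(L_{n,r,q-1})$.

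Proving this inequality is, I believe, the main obstacle: the error terms in Theorem~\ref{first-key}(i) are of size $O(q^2/n^2)$, whereas the gap to detect is of finer order $2/n$, so Theorem~\ref{first-key} alone is insufficient in the linear regime $q\le\delta_F n$. Instead I would apply Lemma~\ref{lem-Zhangwenqian} directly: both $\lambda(T_{n,r,q})$ and $\lambda(L_{n,r,q-1})$ are the largest roots of equations $f_T(x)=0$ and $f_L(x)=0$ that differ only in the walk-generating series $g_q(x)=\sum_{\ell\ge 1}w_{\ell+1}(S_{q+1})x^{-\ell-1}$ attached to $V_1$ in $T_{n,r,q}$ and $h_{q-1}(x)=\sum_{\ell\ge 1}w_{\ell+1}(S_q)x^{-\ell-1}$ attached to $V_r$ in $L_{n,r,q-1}$ (the subcase $q-1=3$ with $H=K_3$ is handled identically with $w_\ell(K_3)$ in place of $w_\ell(S_q)$). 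Expanding
\[
 f_T(x)-f_L(x)=-\frac{g_q(x)}{(1+n_1/x)(1+n_1/x+g_q(x))}+\frac{h_{q-1}(x)}{(1+n_r/x)(1+n_r/x+h_{q-1}(x))}
\]
and using $w_2(S_{q+1})=2q$, $w_2(S_q)=2(q-1)$, together with $|n_1-n_r|\le 1$ for the Tur\'an partition, shows that the dominant term of $f_T(x)-f_L(x)$ is $-2/\bigl(x^2(1+n_r/x)^2\bigr)$, which is negative and of order $1/n^2$, while the higher-order walk contributions are of order $O(q/(nx^3))=O(\delta_F/n^2)$ and are strictly dominated once $\delta_F$ is small and $n$ is large. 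Since $f_T$ is strictly increasing in $x$, evaluation at $x=\lambda(L_{n,r,q-1})$ gives $f_T(\lambda(L_{n,r,q-1}))<0$, whence $\lambda(T_{n,r,q})>\lambda(L_{n,r,q-1})$, the desired contradiction.

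With $\alpha_1\ge q$, $\alpha_2\le 1$, and $s\le 3$ in hand, the final counting again exploits color-criticality: $F$-copies through distinct added class-edges are disjoint, so $N_F(G)\ge\sum_e\bigl(c(n_1,\ldots,n_r,V_{k(e)},F)-O_F(\alpha_2 n^{f-3})\bigr)$. When $s\le 1$ and $\alpha_2=0$ this is at once $\ge\alpha_1\cdot c(n,F)\ge q\cdot c(n,F)$. In the residual subcases $\alpha_2=1$ or $s\in\{2,3\}$, I would sharpen the previous lower bound to $\alpha_1\ge q+1$ by combining the estimate $\lambda(K)\le\lambda(T_{n,r})-2(r-1)\lfloor s/2\rfloor^2/(rn)(1-o(1))$ from Theorem~\ref{first-key}(ii) with the same walk-series refinement used above to trim the $O(q^2/n)$ slack in the inequality $\alpha_1-\alpha_2-(r-1)\lfloor s/2\rfloor^2/r\ge q-O(q^2/n)$; the extra class-edge then absorbs the $O_F(q\,n^{f-3})\ll c(n,F)$ loss coming from Lemma~\ref{lem-Mub-2}, yielding $N_F(G)\ge q\cdot c(n,F)$ in all cases.
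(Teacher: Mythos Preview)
Your proposal follows the paper's overall strategy closely and correctly identifies the crucial spectral inequality $\lambda(T_{n,r,q})>\lambda(L_{n,r,q-1})$ (this is Lemma~\ref{lem-Z-less-T} in the paper, proved there by precisely the walk-series comparison you sketch). There are two points to flag.

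First, the disjointness claim is false. It is not true in general that distinct class-edges yield disjoint families of $F$-copies: for instance with $F$ the book $B_2=K_1+P_3$ and $r=2$, a copy of $B_2$ in $T_{n,2,q}$ can use two star edges simultaneously (map the degree-$3$ vertex to $V_2$ and the three leaves to $v^*,u_1,u_2\in V_1$). Consequently $N_F(T_{n,r,q})$ need not equal $q\cdot c(n,F)$ exactly. This is harmless: simply assume $N_F(G)\le q\cdot c(n,F)$ for contradiction (as the paper does) rather than deriving it from minimality against $T_{n,r,q}$. The lower bound $N_F(G)\ge\alpha_1\bigl(c(n,F)-O(n^{f-3})\bigr)$ still holds, because the copies being counted in the lower bound for each class-edge $e$---namely those lying already in $K+e$---use only $e$ as a class-edge and are therefore genuinely disjoint across different $e$.

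Second, and more seriously, your handling of the residual cases $\alpha_2=1$ or $s\in\{2,3\}$ has a real gap. You propose to ``trim the $O(q^2/n)$ slack'' in the inequality $\alpha_1-\alpha_2-(r-1)\lfloor s/2\rfloor^2/r\ge q-O(q^2/n)$ via a walk-series refinement, but this cannot work as stated: the $O(q^2/n)$ error in Theorem~\ref{first-key} is incurred for a \emph{general} $G$ whose class-edge configuration is unknown, so there is no specific walk-series to exploit on the $G$ side. What is actually needed is a second explicit comparison between two concrete graphs, namely
\[
\lambda(L_{n,r,q})<\lambda(T_{n,r,q})+\tfrac{0.9}{n}
\]
(Lemma~\ref{lem-upper-Z} in the paper, proved via Lemma~\ref{lem6.2A}). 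Once this is available, if for example $\alpha_2=1$ and $\alpha_1=q$, then restoring the missing cross-edge and invoking Theorem~\ref{thm-complete-add} yields $\lambda(G)\le\lambda(L_{n,r,q})-\tfrac{2}{n}(1-o(1))<\lambda(T_{n,r,q})$, a contradiction; the case $s\ge2$ is handled similarly via Lemma~\ref{lem6.2A}. The paper uses Lemma~\ref{lem-upper-Z} to force $\alpha_2=0$ and $s\le1$ directly (having first pinned $\alpha_1=q$), but your variant of pushing to $\alpha_1\ge q+1$ in the residual cases would also go through once Lemma~\ref{lem-upper-Z} is in hand---it is the missing ingredient, not an optional refinement.
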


We point out that when $F=K_{r+1}$, the graph $T_{n,r,q}$ is the unique graph that satisfies the condition of Theorem \ref{thm-T} and contains the minimum number of copies of $K_{r+1}$. 
Before showing the proof of Theorem \ref{thm-T}, we prove the following lemmas. Recall that $L_{n,r,q}$ is obtained from $T_{n,r}$ by adding a graph $H$ into a smallest partite set, where $H=K_3$ if $q=3$; or  $H=S_{q+1}$ if $q\neq 3$.

\begin{lem} \label{lem-Z-less-T}
    Let $r\ge 2$ and $1\le q \le \frac{n}{100r}$. Then $\lambda (L_{n,r,q-1}) < \lambda (T_{n,r,q})$.
\end{lem}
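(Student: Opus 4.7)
The plan is to apply Lemma~\ref{lem-Zhangwenqian}. Let $V_1,\ldots,V_r$ be the parts of $T_{n,r}$ with $|V_i|=n_i$ and $n_1\ge\cdots\ge n_r$, so $n_1-n_r\in\{0,1\}$. In $T_{n,r,q}$ the star $S_{q+1}$ (with $q$ edges) is embedded in $V_1$; in $L_{n,r,q-1}$ the graph $H$ (with $q-1$ edges) is embedded in $V_r$, where $H=K_3$ if $q=4$, $H=S_q$ if $q\ne 4$ and $q\ge 2$, and $H$ is empty if $q=1$. Set
$$A(x) := \sum_{\ell\ge 1}\frac{w_{\ell+1}(S_{q+1})}{x^{\ell+1}},\qquad B(x) := \sum_{\ell\ge 1}\frac{w_{\ell+1}(H)}{x^{\ell+1}},$$
which converge for $x\ge \delta(T_{n,r})>q$. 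Lemma~\ref{lem-Zhangwenqian} then gives that $\lambda(T_{n,r,q})$ and $\lambda(L_{n,r,q-1})$ are the largest roots of
$$f(x) := \frac{1}{1+\tfrac{n_1}{x}+A(x)}+\sum_{i=2}^r\frac{1}{1+\tfrac{n_i}{x}}-(r-1),\qquad g(x) := \sum_{i=1}^{r-1}\frac{1}{1+\tfrac{n_i}{x}}+\frac{1}{1+\tfrac{n_r}{x}+B(x)}-(r-1),$$
respectively. Both $f$ and $g$ are strictly increasing on $[\delta(T_{n,r}),\infty)$, since $A,B$ are strictly decreasing positive functions of $x$ in this range.

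The strategy is to show $g(x)>f(x)$ for every $x\ge \delta(T_{n,r})$; this will give $g(\lambda(T_{n,r,q}))>f(\lambda(T_{n,r,q}))=0=g(\lambda(L_{n,r,q-1}))$ and, by monotonicity of $g$, force $\lambda(L_{n,r,q-1})<\lambda(T_{n,r,q})$. Direct computation yields
$$g(x)-f(x) = \frac{A(x)}{(1+\tfrac{n_1}{x})(1+\tfrac{n_1}{x}+A(x))} - \frac{B(x)}{(1+\tfrac{n_r}{x})(1+\tfrac{n_r}{x}+B(x))},$$
and clearing denominators with $\Delta := \tfrac{n_1-n_r}{x}\in\{0,1/x\}$ reveals the numerator as
$$(A(x)-B(x))\bigl(1+\tfrac{n_r}{x}\bigr)^2 \;-\; \Delta\cdot B(x)\cdot\bigl(2+\tfrac{2n_r}{x}+\Delta+A(x)\bigr).$$

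The engine of positivity is the walk-count gain $A(x)-B(x)\ge 2/x^2$, which I would verify case-by-case. For $H=S_q$, $w_2(S_{q+1})-w_2(S_q)=2q-2(q-1)=2$ and $w_{\ell+1}(S_{q+1})\ge w_{\ell+1}(S_q)$ for all $\ell\ge 2$ since $S_q\subseteq S_{q+1}$, giving $A-B\ge 2/x^2$. For $H=K_3$ (the case $q=4$), the explicit formulas $w_{2t}(S_5)=2\cdot 4^t$, $w_{2t+1}(S_5)=5\cdot 4^t$ and $w_\ell(K_3)=3\cdot 2^{\ell-1}$ yield $w_\ell(S_5)\ge\tfrac{4}{3}w_\ell(K_3)$ termwise, so $A-B\ge \tfrac{2}{x^2}$ comes already from $\ell=1$. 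For $q=1$ the inequality is trivial since $B\equiv 0$ and $A(x)\ge 2/x^2$.

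The main technical point, and the only real obstacle, is absorbing the small discrepancy $n_1-n_r\le 1$: in the numerator, the first term is at least $2(1+\tfrac{n_r}{x})^2/x^2\ge 2/x^2$, while the second is bounded above by $(1/x)\cdot O(q/x^2)\cdot O(1)=O(q/x^3)$ using $A(x),B(x)=O(q/x^2)$ and $1+\tfrac{n_r}{x}=\Theta(1)$ in the regime $x\ge \delta(T_{n,r})=\Theta(n)$. Because $q\le n/(100r)$ forces $q/x=O(1/r)=o(1)$, the walk-count gain dominates the correction, the numerator is strictly positive, and hence $g(x)>f(x)$ throughout $[\delta(T_{n,r}),\infty)$, completing the proof. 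The framework is uniform across the three cases for $H$, so no extra case analysis is needed beyond establishing the walk-count inequality above.
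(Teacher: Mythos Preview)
Your proposal is correct and follows essentially the same approach as the paper: apply Lemma~\ref{lem-Zhangwenqian}, show the defining function for $L_{n,r,q-1}$ exceeds that for $T_{n,r,q}$ on $[\delta(T_{n,r}),\infty)$, and conclude by monotonicity. The paper organizes the difference as in its equation~\eqref{eq-diffe} and compares ratios of numerators and denominators, while you expand the numerator directly; both reductions rest on the same two ingredients, namely the walk-count gain $A-B\ge 2/x^2$ and the fact that the $n_1-n_r\le 1$ correction is of order $q/x^3$. One cosmetic point: writing ``$q/x=O(1/r)=o(1)$'' is imprecise since $r$ is fixed, but what you actually need (and have) is that $q/x$ is bounded by a sufficiently small absolute constant, which $q\le n/(100r)$ guarantees.
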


\begin{proof}
We denote by $n_1\ge n_2\ge \cdots \ge n_r$ the sizes of $r$ partite sets of $T_{n,r}$. Let $x\in \mathbb{R}$ be a variable with $x> \delta (T_{n,r}) = \lfloor \frac{r-1}{r} n\rfloor$. 
    By Lemma \ref{lem-Zhangwenqian}, we know that $\lambda (L_{n,r,q-1})$ is the largest root of 
    \[  f(x)= \sum_{k=1}^{r-1} \frac{1}{1+\frac{n_k}{x}} + \frac{1}{1+\frac{n_r}{x} +\sum\limits_{\ell =1}^{\infty} \frac{w_{\ell+1}(H)}{x^{\ell +1}} }. \]
Similarly, $\lambda (T_{n,r,q})$ is the largest root of 
\[ f^*(x)= \frac{1}{1+ \frac{n_1}{x} + \sum\limits_{\ell =1}^{\infty} \frac{w_{\ell+1}(S_{q+1})}{x^{\ell +1}} } + \sum_{k=2}^r \frac{1}{1+ \frac{n_k}{x}} . \]
To prove $\lambda (L_{n,r,q-1}) < \lambda (T_{n,r,q})$, it suffices to show $f(x) > f^*(x)$ for every $x> \delta (T_{n,r})$. 
Note that 
\begin{align}
    f(x)- f^*(x) &= \left(  \frac{1}{1+\frac{n_r}{x} +\sum\limits_{\ell =1}^{\infty} \frac{w_{\ell+1}(H)}{x^{\ell +1}} } -\frac{1}{1+ \frac{n_1}{x} + \sum\limits_{\ell =1}^{\infty} \frac{w_{\ell+1}(S_{q+1})}{x^{\ell +1}} }  \right) - 
    \left( \frac{1}{1+ \frac{n_r}{x}} - \frac{1}{1+ \frac{n_1}{x}} \right)  \notag \\
    & = \frac{\frac{n_1-n_r}{x} +\sum\limits_{\ell =1}^{\infty} \frac{w_{\ell+1}(S_{q+1})}{x^{\ell +1}} - \sum\limits_{\ell =1}^{\infty} \frac{w_{\ell+1}(H)}{x^{\ell +1}} }{\Big( 1+\frac{n_r}{x} +\sum\limits_{\ell =1}^{\infty} \frac{w_{\ell+1}(H)}{x^{\ell +1}} \Big) 
    \Big( 1+\frac{n_1}{x} +\sum\limits_{\ell =1}^{\infty} \frac{w_{\ell+1}(S_{q+1})}{x^{\ell +1}} \Big)} 
    - \frac{\frac{n_1-n_r}{x}}{\Big( 1+ \frac{n_r}{x}\Big) \Big( 1+\frac{n_1}{x}\Big)} \label{eq-diffe}
\end{align}
Using (\ref{eq-bound-star}) and (\ref{eq-bound-triangle}), we obtain that 
\[  \sum\limits_{\ell =1}^{\infty} \frac{w_{\ell+1}(S_{q+1})}{x^{\ell +1}} - \sum\limits_{\ell =1}^{\infty} \frac{w_{\ell+1}(H)}{x^{\ell +1}} 
> \frac{2}{x^2} + \frac{2q}{x^3} - \frac{3(q-1)^2}{x^4} >0,  \]
where the last inequality holds for $q\le \frac{n}{100r}$ and $x> \delta (T_{n,r})$. In the case $n_1=n_r$, we can see from (\ref{eq-diffe}) that $f(x)-f^*(x) >0$. Next, we consider the case $n_1=n_r+1$. Note that 
\begin{equation} \label{eq-ratio-large} \frac{\frac{n_1-n_r}{x} +\sum\limits_{\ell =1}^{\infty} \frac{w_{\ell+1}(S_{q+1})}{x^{\ell +1}} - \sum\limits_{\ell =1}^{\infty} \frac{w_{\ell+1}(H)}{x^{\ell +1}}}{\frac{x_1-n_r}{x}} > 
1 + \frac{2}{x} + \frac{2q}{x^2} - \frac{3(q-1)^2}{x^3} > 1+ \frac{2}{x}. \end{equation}
Since $q\le \frac{n}{100r}$ and $x> \delta (T_{n,r})$, we get $\frac{q^2+q}{x^3} + \frac{3q^2}{x^4} \le \frac{q}{x^2}$, then by (\ref{eq-bound-star}) and (\ref{eq-bound-triangle}) again, 
\begin{align}
   & \frac{\Big( 1+\frac{n_r}{x} +\sum\limits_{\ell =1}^{\infty} \frac{w_{\ell+1}(H)}{x^{\ell +1}} \Big) 
    \Big( 1+\frac{n_1}{x} +\sum\limits_{\ell =1}^{\infty} \frac{w_{\ell+1}(S_{q+1})}{x^{\ell +1}} \Big)}{\Big( 1+ \frac{n_r}{x}\Big) \Big( 1+\frac{n_1}{x}\Big)} \notag \\
    & < \frac{\Big( 1+ \frac{n_r}{x} + \frac{3(q-1)}{x^2}\Big) \Big( 1+\frac{n_1}{x}+ \frac{3q}{x^2}\Big)}{\Big( 1+ \frac{n_r}{x}\Big) \Big( 1+\frac{n_1}{x}\Big)} <  
    1+ \frac{6q-3}{x^2} + \frac{9q(q-1)}{x^4} < 1+ \frac{2}{x}. \label{eq-ratio-small}
\end{align}
Using (\ref{eq-diffe}), and combining with (\ref{eq-ratio-large}) and (\ref{eq-ratio-small}), we obtain that $f(x) - f^*(x) >0$ holds for every $q\le \frac{n}{100r}$ and $x> \delta (T_{n,r})$. So we get $\lambda (L_{n,r,q-1}) < \lambda (T_{n,r,q})$, as desired.
\end{proof}

The following lemma is also needed for proving Theorem \ref{thm-T}. 

\begin{lem} \label{lem-upper-Z}
There exists a constant $\delta >0$ such that if  $n$ is sufficiently large and $q$ is a positive integer satisfying $q\le \delta n$, then $\lambda (L_{n,r,q}) < \lambda (T_{n,r,q}) + \frac{0.9}{n}$. 
\end{lem}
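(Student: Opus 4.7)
The plan is to use the characteristic-equation formulation of Lemma \ref{lem-Zhangwenqian}, in the spirit of the proof of Lemma \ref{lem-Z-less-T}: express $\lambda(L_{n,r,q})$ and $\lambda(T_{n,r,q})$ as largest roots of explicit functions $g(x)$ and $g^*(x)$, upper-bound $g^*(\lambda^*)-g(\lambda^*)$, lower-bound $g'$ on the relevant interval, and conclude via the mean value theorem. A naive attempt based on Theorem \ref{first-key}(i) applied separately to $L_{n,r,q}$ and $T_{n,r,q}$ gives only $|\lambda(L_{n,r,q})-\lambda(T_{n,r,q})| \leq 224 q^2/n^2$, which is strong enough when $q = O(\sqrt{n})$ but not in the linear regime $q \leq \delta n$; this is precisely why a finer comparison is needed.

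If $r \mid n$ then all parts of $T_{n,r}$ have size $n/r$, so $L_{n,r,q}$ and $T_{n,r,q}$ are isomorphic whenever $q \neq 3$, while for $q=3$ the only difference is embedding $K_3$ versus $S_4$ in same-size parts; since the walk counts $w_\ell(K_3)$ and $w_\ell(S_4)$ agree up to order $x^{-3}$ and first differ at $x^{-4}$, Lemma \ref{lem-Zhangwenqian} yields $\lambda-\lambda^* = O(1/n^3) \ll 0.9/n$ in this case. Assume henceforth $r \nmid n$, so $n_1-n_r = 1$. Set $\sigma(x) := \sum_{\ell\geq 1} w_{\ell+1}(H)/x^{\ell+1}$ (with $H=K_3$ if $q=3$ and $H=S_{q+1}$ otherwise) and $\sigma^*(x) := \sum_{\ell\geq 1} w_{\ell+1}(S_{q+1})/x^{\ell+1}$; by Lemma \ref{lem-Zhangwenqian}, $\lambda := \lambda(L_{n,r,q})$ is the largest root of
\[
g(x) = \sum_{k=1}^{r-1} \frac{1}{1+n_k/x} + \frac{1}{1+n_r/x+\sigma(x)} - (r-1),
\]
and $\lambda^* := \lambda(T_{n,r,q})$ is the largest root of
\[
g^*(x) = \frac{1}{1+n_1/x+\sigma^*(x)} + \sum_{k=2}^{r} \frac{1}{1+n_k/x} - (r-1).
\]
Both $\lambda,\lambda^*$ lie in $\bigl(\tfrac{r-1}{r}n,\tfrac{r-1}{r}n+3q/n\bigr]$ by Theorem \ref{first-key}(i), and $\lambda \geq \lambda^*$ by Theorem \ref{thm-min-max}(ii).

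The central step is to bound $g^*(\lambda^*)-g(\lambda^*)$. Summing the geometric series of star walk counts yields the closed form $\sigma^*(x) = q(2x+q+1)/\bigl(x(x^2-q)\bigr)$, from which $\sigma^*(\lambda^*) = O(q/n^2)$ and, using \eqref{eq-bound-star}--\eqref{eq-bound-triangle}, $|\sigma-\sigma^*|(\lambda^*) = O(1/n^4)$ (vanishing unless $q=3$). Writing
\[
g^*(x)-g(x) = \left[\frac{1}{1+n_r/x}-\frac{1}{1+n_r/x+\sigma(x)}\right] - \left[\frac{1}{1+n_1/x}-\frac{1}{1+n_1/x+\sigma^*(x)}\right]
\]
exactly as in the manipulation after \eqref{eq-diffe}, and substituting the above estimates (where the gap $n_1-n_r = 1$ contributes an extra factor $1/\lambda^* = O(1/n)$ through the numerator $A-B$), one obtains $g^*(\lambda^*)-g(\lambda^*) = O(q/n^3)$.

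To finish, $g$ is strictly increasing on $[\lambda^*,\lambda]$, so the mean value theorem gives $\lambda-\lambda^* = [g^*(\lambda^*)-g(\lambda^*)]/g'(\xi)$ for some $\xi$ in this interval. Direct differentiation and the estimate $\xi = \tfrac{r-1}{r}n + O(q/n)$ yield the lower bound $g'(\xi) \geq \sum_{k=1}^{r-1}(n_k/\xi^2)/(1+n_k/\xi)^2 = (r-1)/(rn) + O(q/n^2)$. Combining, $\lambda-\lambda^* \leq C(r) q/n^2 \leq C(r)\delta/n$, so choosing $\delta$ with $C(r)\delta < 0.9$ completes the proof. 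The main technical obstacle is to keep the constants explicit rather than sweeping them into $o(1)$ when $q$ is linear in $n$, since in that regime the perturbations $\sigma, \sigma^*$ are not vanishingly small compared with $1 + n_k/x$; the exceptional $q=3$ case is handled uniformly through the $|\sigma-\sigma^*| = O(1/n^4)$ bound noted above.
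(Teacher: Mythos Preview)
Your argument is correct and is in the same spirit as the paper's, since both rest on the walk-count characteristic equation of Lemma~\ref{lem-Zhangwenqian}. The organizational difference is that the paper first isolates the ``move one vertex between parts'' estimate as Lemma~\ref{lem6.2A} and then observes that, when $n_1-n_r=1$ and $q\neq 3$, the graph $L_{n,r,q}$ is exactly the $G'$ of that lemma applied to $G=T_{n,r,q}$ with $n_i-n_j-1=0$; this yields $|\lambda(L_{n,r,q})-\lambda(T_{n,r,q})|\le \varepsilon/(5rn)$ immediately. You instead carry out the corresponding characteristic-equation comparison by hand (bounding $g^{*}-g$ and $g'$) for this specific pair, which amounts to re-deriving the relevant special case of Lemma~\ref{lem6.2A}. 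For $q=3$ the paper switches to a short Rayleigh-quotient argument using the eigenvector estimates of Claim~\ref{Claim3.2}, whereas you fold this case into the same framework via $|\sigma-\sigma^{*}|=O(1/n^{4})$; both give the required $O(1/n^{2})$ bound. One small slip: your stated interval $(\tfrac{r-1}{r}n,\tfrac{r-1}{r}n+3q/n]$ for $\lambda,\lambda^{*}$ ignores the $O(1)$ offset between $\lambda(T_{n,r})$ and $\tfrac{r-1}{r}n$, but this is harmless since all you actually use is $\xi+n_k=n(1+o(1))$.
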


\begin{proof}
    First of all, we treat the case $q=3$. 
    Note that $T_{n,r,3}$ can be obtained from $L_{n,r,3}$ by deleting the edges of triangle in a smallest partite set, and then adding a star $K_{1,3}$ into a largest partite set. 
   Let $\bm{x}\in \mathbb{R}^n$ be the unit Perron--Frobenius eigenvector corresponding to $\lambda (L_{n,r,3})$. By Claim \ref{Claim3.2}, we know that $(1-\frac{78}{n}) \frac{1}{\sqrt{n}} \le x_u \le (1 + \frac{78}{n}) \frac{1}{\sqrt{n}}$ for every $u\in V(L_{n,r,3})$. Thus, the Rayleigh quotient gives 
   $\lambda (T_{n,r,3}) \ge \lambda (L_{n,r,3}) - 6(1+\frac{78}{n})^2\frac{1}{n} + 6(1-\frac{78}{n})^2\frac{1}{n} =\lambda(L_{n,r,3}) - \frac{1872}{n^2}$, as needed. 
   
   We consider the case $q\neq 3$. 
 If $n_1=n_r$, then $T_{n,r,q}=L_{n,r,q}$, and we are done. 
Next, we assume that $n_1-n_r=1$. 
   Observe that  $L_{n,r,q}$ can be obtained from $K_{r}(n_1-1,n_2,\ldots,n_{r-1},n_r+1)$ by  
   adding a star with $q$ edges into the partite set of size $n_1-1$.    
   By Lemma \ref{lem6.2A}, there exists $\delta>0$ such that if $ q\leq \delta n$ and $q\neq 3$, then
   $\lambda (L_{n,r,q})- \lambda (T_{n,r,q})< \frac{0.9}{n}$. 
   This completes the proof.
\end{proof}

Now, we present the proof of Theorem \ref{thm-T}. 

\begin{proof}[{\bf Proof of Theorem \ref{thm-T}}]
 We may assume that $G$ minimizes the number of copies of $F$. 
If  $N_F(G)> qc(n,F)$, then we are done. 
We now assume that $N_F(G)\le qc(n,F)$. 
Applying Theorem \ref{second-key}, we know that 
 $G$ is obtained from $K_r(n_1,\ldots ,n_r)$ by adding $\alpha_1$ class-edges and deleting $\alpha_2$ cross-edges, where $\alpha_2\le 2\alpha_1 \le \frac{4q}{\eta_F}$ for some constant $\eta_F >0$, and $n_1 \ge \cdots \ge n_r $ satisfies $n_1-n_r \le 4\sqrt{\alpha_1}$. 

We claim that $\alpha_1= q$. 
If $\alpha_1\le q-1$, then using Theorem \ref{thm-complete-add} and Lemma \ref{lem-Z-less-T}, we see that $\lambda (G)\le \lambda (L_{n,r,q-1}) < \lambda (T_{n,r,q})$, which leads to a contradiction.  
Using a similar argument of Claims \ref{cl-at-most-one} and \ref{cl-at-most-10}, we can show that $\alpha_2 \le 1$ and $ n_1-n_r \le 3$. 
Consequently, if $\alpha_1\ge q+1$, then we get from (\ref{eq-lower-NF}) that  $N_F(G)\ge (q+1)\cdot \big( c(n,F) - O_F(n^{f-3}) \big) > qc(n,F)$, a contradiction.  

It suffices to prove that $n_1-n_r \le 1$ and $\alpha_2 =0$. Suppose on the contrary that $n_1-n_r \ge 2$. Let $G'$ be the graph obtained $K_{r}(n_1-1,n_2,\ldots ,n_{r-1},n_r+1)$ by adding $q$ class-edges in the same way as in $G$. 
By Lemma \ref{lem6.2A}, we get $\lambda (G')> \lambda (G) + \frac{2(r-1)}{rn}\cdot  0.9 \ge  \lambda (T_{n,r,q}) + \frac{0.9}{n}$. On the other hand, we see from Theorem \ref{thm-complete-add} that $\lambda (G') \le \lambda (L_{n,r,q})$. 
Combining with these two bounds, we obtain $\lambda (T_{n,r,q}) + \frac{0.9}{n} < \lambda (L_{n,r,q})$,  a contradiction with Lemma \ref{lem-upper-Z}. 
Suppose for the sake of contradiction that 
$\alpha_2=1$. Then let $G''$ be the graph obtained from $G$ by adding this missing cross-edge. Let $\bm{x}\in \mathbb{R}^n$ be the unit Perron--Frobenius eigenvector corresponding to $\lambda (G)$. By Claim \ref{Claim3.2}, we know that $x_u \ge (1-\frac{13\phi}{n})\frac{1}{\sqrt{n}}$, where $\phi =2(q+1)$. It follows that $\lambda (G'') \ge \lambda (G) + 2(1-\frac{13\phi}{n})^2\frac{1}{n}> \lambda (T_{n,r,q}) + \frac{0.9}{n}$. Similarly, applying Theorem \ref{thm-complete-add}, we get $\lambda (G'') \le \lambda (L_{n,r,q})$. This leads to a contradiction by Lemma \ref{lem-upper-Z}. Thus, we conclude that $G$ is obtained from $T_{n,r}$ by adding $q$ edges, and so $N_F(G)\ge qc(n,F)$. 
\end{proof}

\appendix 

\section{Proof of Claim \ref{cl-stars}} 

\label{App}

Before proceeding with the proof of Claim \ref{cl-stars}, we introduce two lemmas.

\begin{lem}[See \cite{WXH2005}]\label{lem5.2}
Assume that $G$ is a connected graph with $u,v\in V(G)$
and $w_1,\ldots,w_s$ $\in N_G(v)\setminus (N_G(u)\cup \{u\})$.
Let $\bm{x}=(x_1,x_2,\ldots,x_n)^\mathrm{T}$ be the unit Perron--Frobenius vector of $G$,
and $G'=G-\{vv_i: i\in [s]\}+\{uv_i: i\in [s]\}$.
If $x_u\geq x_v$, then $\lambda(G')>\lambda(G)$.
\end{lem}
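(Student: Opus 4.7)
\medskip
\noindent
\textbf{Proof plan for Lemma \ref{lem5.2}.}
My approach is the standard Rayleigh quotient comparison, combined with a short argument to upgrade the resulting weak inequality to a strict one. Let $\bm{x}$ denote the unit Perron--Frobenius eigenvector of $G$, so $\bm{x}>\bm{0}$ coordinatewise since $G$ is connected. The first step is to observe that
\begin{align*}
A(G')-A(G) \;=\; \sum_{i=1}^{s}\bigl(E_{u w_i}+E_{w_i u}-E_{v w_i}-E_{w_i v}\bigr),
\end{align*}
where $E_{ab}$ denotes the elementary matrix with a single $1$ in entry $(a,b)$. Consequently,
\begin{align*}
\bm{x}^{\mathrm T}\bigl(A(G')-A(G)\bigr)\bm{x} \;=\; 2(x_u-x_v)\sum_{i=1}^{s} x_{w_i}.
\end{align*}

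Since $\bm{x}$ is a unit vector, the Rayleigh principle gives
$\lambda(G')\ge \bm{x}^{\mathrm T}A(G')\bm{x}=\lambda(G)+2(x_u-x_v)\sum_{i=1}^{s}x_{w_i}$. Using $\bm{x}>\bm{0}$, the hypothesis $x_u\ge x_v$ immediately yields $\lambda(G')\ge\lambda(G)$, and if $x_u>x_v$ strictly, we are done.

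The only delicate case is $x_u=x_v$, in which Rayleigh alone delivers only a weak inequality. To promote it to a strict one, I would argue by contradiction: suppose $\lambda(G')=\lambda(G)$. Then $\bm{x}$ attains the maximum Rayleigh quotient of $A(G')$, hence is a nonnegative eigenvector of $A(G')$ associated with $\lambda(G')$. Comparing the eigen-equation at the vertex $v$ in $G$ and in $G'$,
\begin{align*}
\lambda(G)x_v \;=\; \sum_{z\in N_G(v)} x_z, \qquad
\lambda(G')x_v \;=\; \sum_{z\in N_{G'}(v)} x_z \;=\; \sum_{z\in N_G(v)} x_z-\sum_{i=1}^{s} x_{w_i},
\end{align*}
and subtracting gives $\sum_{i=1}^{s}x_{w_i}=0$, contradicting the positivity of $\bm{x}$. (If $G'$ happens to be disconnected, the same conclusion follows by restricting $\bm{x}$ to the component containing $v$ and applying the same eigen-equation argument there.) Therefore $\lambda(G')>\lambda(G)$ in all cases.

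The main obstacle is this equality case $x_u=x_v$: the Rayleigh quotient alone is insufficient, and one must invoke the characterization of equality in the variational principle together with the local eigen-equation at $v$. Everything else is a routine expansion of $A(G')-A(G)$.
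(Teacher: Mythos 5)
Your proof is correct, and it is the standard Rayleigh-quotient argument with the proper treatment of the degenerate case $x_u = x_v$, where one must use the equality characterization of the variational principle (so that $\bm{x}$ becomes a $\lambda(G')$-eigenvector of $A(G')$) and then read off a contradiction from the eigen-equation at $v$. The paper itself does not reprove this lemma but cites it from Wu–Xiao–Hong \cite{WXH2005}, and your argument matches the standard proof found there; your parenthetical about disconnected $G'$ is in fact unnecessary, since for a symmetric matrix any unit vector attaining the top Rayleigh quotient is automatically a top eigenvector regardless of connectivity.
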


The following lemma can be found in \cite{NZ2021b}. 

\begin{lem} \label{lem5.3}
Let $G$ be a graph with $m$ edges. Then
$\sum_{v\in V(G)}d^2_G(v)\leq m^2+m.$
\end{lem}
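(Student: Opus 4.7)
The plan is to reduce the inequality to a per-edge bound via a standard double-counting identity. First I would rewrite
\[
\sum_{v\in V(G)}d_G(v)^2
= \sum_{v\in V(G)} \sum_{u\in N_G(v)} d_G(v)
= \sum_{uv\in E(G)} \bigl(d_G(u)+d_G(v)\bigr),
\]
where the last equality groups the terms according to the edge $uv$ that produces each contribution: the summand $d_G(v)$ appearing in the outer sum for vertex $v$ and inner index $u$ gets paired with the symmetric contribution $d_G(u)$ from vertex $u$ with inner index $v$. This converts the problem into bounding $d_G(u)+d_G(v)$ for each of the $m$ edges.

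Next I would establish the per-edge inequality $d_G(u)+d_G(v)\le m+1$ for every $uv\in E(G)$ by elementary inclusion--exclusion on incident edges. Letting $E_u$ and $E_v$ denote the edges of $G$ incident to $u$ and $v$ respectively, we have $|E_u|=d_G(u)$, $|E_v|=d_G(v)$, and $E_u\cap E_v=\{uv\}$ (since $G$ is a simple graph). Hence
\[
d_G(u)+d_G(v)-1 = |E_u\cup E_v| \le |E(G)| = m,
\]
which rearranges to $d_G(u)+d_G(v)\le m+1$. Summing this over the $m$ edges yields $\sum_v d_G(v)^2 \le m(m+1)=m^2+m$, as claimed.

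I do not foresee a substantive obstacle: the entire argument rests on the double-counting identity in the first step and the trivial set-theoretic bound in the second step. The only point of care is ensuring the intersection $E_u\cap E_v$ is exactly $\{uv\}$, which uses simplicity of $G$ but nothing more; no deeper structural input about $G$ is required, in keeping with the stated generality of the lemma.
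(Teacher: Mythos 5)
Your proof is correct. The paper does not give a proof of this lemma at all; it simply cites it from \cite{NZ2021b}, so there is nothing in the paper to compare against. Your argument is the standard elementary one and is sound: the double-counting identity $\sum_v d_G(v)^2 = \sum_{uv\in E(G)}\bigl(d_G(u)+d_G(v)\bigr)$ is verified correctly, and the per-edge bound $d_G(u)+d_G(v)\le m+1$ follows exactly as you say from $E_u\cap E_v=\{uv\}$ in a simple graph. One small remark: an equally short variant, which avoids the explicit inclusion--exclusion, is to observe that $\sum_v d_G(v)(d_G(v)-1)$ counts ordered pairs of distinct edges sharing a vertex, which is at most $m(m-1)$ since two distinct edges of a simple graph share at most one vertex; adding $\sum_v d_G(v)=2m$ gives $m^2+m$. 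Both routes are essentially the same double-count viewed from different ends.
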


Now, we provide the proof of Claim \ref{cl-stars}. 

\begin{proof}[{\bf Proof of Claim \ref{cl-stars}}]
Let $u^*\in V_i$ be a vertex such that $x_{u^*}=\max_{u\in V_i}x_u$.
Then $u^*\in V(H_i)$.
We first prove that
$u^*$ is a dominating vertex of $H_i$.
Suppose for the sake of contradiction that there is a vertex $u_1\in V(H_i)\setminus N(u^*)$.
Since $\delta(H_i)\geq 1$,
there must be an edge $u_1u_2\in E(H_i)$.
We define $G'=G -\{u_1u_2\}+\{u^*u_1\}.$
Then $G'\in\mathcal{T}_{n,r,q}$ and Lemma \ref{lem5.2} implies $\lambda(G')>\lambda(G)$, a contradiction. 
Since $u^*$ is a dominating vertex,
if $1\leq e(H_i)\leq 2$, then $H_i$ is a star.

If $e(H_i)=3$, then $H_i=K_3$ or $H_i=S_4$.
We need to show that $H_i=K_3$.
Suppose on the contrary that $H_i=S_4$.
We may assume that $E(H_i)=\{u^*u_j: j\in [3]\}$.
We define $G'=G-\{u_3u^*\}+\{u_1u_2\}$.
Then $G'\in \mathcal{T}_{n,r,q}$ and  $\lambda(G')\leq \lambda (G)$.  
By symmetry,
we have $x_{u_1}=x_{u_2}=x_{u_3}$.
For convenience,
we denote $\lambda := \lambda (G)$ and  $x_{\overline{V}_i} := \sum_{u\in V(G)\setminus V_i}x_u$. Then $\lambda x_{u^*}=x_{\overline{V}_i}+3x_{u_1}$
and $\lambda x_{u_1}=x_{\overline{V}_i}+x_{u^*}$.
It follows that 
$\lambda x_{u^*} \!-\!\lambda x_{u_1}\!=\!3x_{u_1}\!-\!x_{u^*}$,
which implies $x_{u^*}\!=\!\frac{\lambda+3}{\lambda+1}x_{u_1}$. 
Let $\lambda'=\lambda(G')$ and
$\bm{y}=(y_1,\ldots,y_n)^\mathrm{T}$ be the unit Perron--Frobenius vector of $G'$. 
Since $\{u^*,u_1,u_2\}$ forms a triangle in $G'[V_i]$, we have $y_{u^*}=y_{u_1}=y_{u_2}$ by symmetry.
Since $\lambda' y_{u^*}=y_{\overline{V_i}}+2y_{u^*}$
and $\lambda' y_{u_3}=y_{\overline{V_i}}$, 
we get 
$\lambda'y_{u^*}-\lambda'y_{u_3}=2y_{u^*}$,
which yields $y_{u_3}=\frac{\lambda'-2}{\lambda'}y_{u^*}
\leq\frac{\lambda-2}{\lambda}y_{u_1}$. 
Using the double-eigenvector technique, 
we see that
\begin{align*}
\bm{x}^\mathrm{T}\bm{y}(\lambda'-\lambda)
=\bm{x}^\mathrm{T}\big(A(G')-A(G)\big)\bm{y} &=(x_{u_1}y_{u_2}+x_{u_2}y_{u_1})-
(x_{u_3}y_{u^*}+x_{u^*}y_{u_3}) \\ 
&=x_{u_1}y_{u_1}-x_{u^*}y_{u_3}.
\end{align*}
Since $x_{u^*}y_{u_3}\leq
\frac{\lambda+3}{\lambda+1}x_{u_1} \cdot 
\frac{\lambda-2}{\lambda}y_{u_1}<x_{u_1}y_{u_1},$
it follows that $\lambda'>\lambda$, which leads to a contradiction.
Therefore, we have $H_i=K_3$. 
So the case $e(H_i)=3$ has been proved.

We consider the remaining case where $e(H_i)\geq4$.
Since $u^*$ is a dominating vertex, we have $d_{H_i}(u^*)\geq3$.
Our goal is to prove that $H_i$ is a star. 
Suppose on the contrary that $H_i$ is not a star.
Let $H_i'=H_i-\{u^*\}$. We denote $a=|V(H_i')|\ge 3$ and $b=e(H_i')\ge 1$. Note that $a +b = e(H_i)\le q$. 
We fix some vertices $v_1,v_2,\ldots,v_b\in V_i\setminus V(H_i)$. 
Let $G'$ be the graph obtained from $G$
by deleting $b$ edges in $H_i'$
and adding $b$ edges from $u^*$ to $v_1,v_2,\ldots,v_b$.
We see that $G'\in \mathcal{T}_{n,r,q}$,
and thereby $\lambda(G')\leq\lambda(G)$.
Let $\lambda'=\lambda(G')$ and
$\bm{y}=(y_1,y_2,\ldots,y_n)^\mathrm{T}$ be the unit Perron--Frobenius vector of $G'$.

Observe that $V(H_i)\cup\{v_1,v_2, \ldots ,v_b\}$
induces a star $S_{a+b+1}$ in $G'$.
By symmetry, we have $y_{u}=y_{v_1}=\cdots =y_{v_b}$
for every vertex $u\in V(H_i)\setminus \{u^*\}$.
We write $y_{\overline{V}_i}:=\sum_{u\in V(G)\setminus V_i}y_u$.
Then 
$\lambda'y_{u^*}=y_{\overline{V_i}}+(a+b)y_{v_1}$
and $\lambda'y_{v_1}=y_{\overline{V_i}}+y_{u^*}$,
which give
$y_{u^*}=\frac{\lambda'+a+b}{\lambda'+1}y_{v_1}
\geq\frac{\lambda+a+b}{\lambda+1}y_{v_1}.$
Then
\begin{align}\label{eq5.9}
\bm{x}^\mathrm{T}\bm{y}(\lambda'-\lambda)
&=\bm{x}^\mathrm{T}\big(A(G')-A(G)\big)\bm{y}
=\sum_{1\leq j\leq b}(x_{u^*}y_{v_j}+x_{v_j}y_{u^*})
-\!\!\sum_{uv\in E(H_i')}\!\!(x_uy_v+x_vy_u) \nonumber\\
&\geq b\Big(x_{u^*}+\frac{\lambda+a+b}{\lambda+1}x_{v_1}\Big)y_{v_1}
-\!\!\sum_{uv\in E(H_i')}\!\!(x_u+x_v)y_{v_1}.
\end{align} 
Since $N_{G}(v_1)=\overline{V_i}$, we have $\lambda x_{v_1}=x_{\overline{V_i}}$. 
Observe that $x_{v_1}\le x_u$ for every vertex $u\in V(H_i)$. Then $\lambda x_{u^*} =  \sum_{u\in N_{G}(u^*)} x_u \geq x_{\overline{V_i}} + a x_{v_1}.$
Moreover, we get 
$\frac{\lambda+a+b}{\lambda+1}\lambda x_{v_1}
\geq(\lambda+a+b-2)x_{v_1}=x_{\overline{V_i}}+(a+b-2)x_{v_1}$. Consequently, we get 
\begin{equation}
    \label{eq-first-term}
    \lambda x_{u^*} + \frac{\lambda +a+b}{\lambda +1} \lambda x_{v_1}
    \ge 2x_{\overline{V_i}} + (2a+b-2)x_{v_1}. 
\end{equation}
Note that
$\sum_{uv\in E(H_i')}(x_u+x_v)=\sum_{u\in V(H_i')}d_{H_i'}(u)x_u.$
For every $u\in V(H_i')$, we have 
 $\lambda x_u\leq x_{\overline{V}_i}+ x_{u^*} +d_{H_i'}(u)x_{u^*}$.
Since $e(H_i')=b$, Lemma \ref{lem5.3} gives  $\sum_{u\in V(H_i')}d^2_{H_i'}(u)\leq b^2+b$.
Thus, 
\begin{align}\label{eq-second-term}
\sum_{uv\in E(H_i')}\lambda (x_u+x_v)
&\leq \sum_{u\in V(H_i')} \Big( d_{H_i'}(u)x_{\overline{V_i}}+ d_{H_i'}(u)x_{u^*}+d^2_{H_i'}(u) x_{u^*}\Big) \nonumber\\
&\leq 2bx_{\overline{V_i}}+(b^2+3b)x_{u^*}.
\end{align} 
Multiplying $\lambda $ to (\ref{eq5.9}), and combining (\ref{eq-first-term}) and (\ref{eq-second-term}),  we obtain that
\begin{align}\label{eq5.11}
\bm{x}^\mathrm{T}\bm{y}(\lambda'-\lambda) \cdot {\lambda}
&\geq b\Big(\lambda x_{u^*}+\frac{\lambda+a+b}{\lambda+1}\lambda x_{v_1}\Big) y_{v_1}
- \sum_{uv\in E(H_i')} \lambda (x_u+x_v) y_{v_1} \nonumber\\
&\geq b(2a+b-2)x_{v_1} y_{v_1}-(b^2+3b)x_{u^*} y_{v_1}.
\end{align}
Since $\lambda x_{v_1}=x_{\overline{V_i}}$ and 
$\lambda x_{u^*}
\leq x_{\overline{V_i}}+a x_{u^*}$, we get $x_{v_1}=\frac{1}{\lambda}x_{\overline{V_i}}$ and  $x_{u^*}\leq\frac{1}{\lambda-a}x_{\overline{V_i}}$.
Then \eqref{eq5.11} gives
\begin{align}\label{eq5.11A}
\bm{x}^\mathrm{T}\bm{y} (\lambda'-\lambda) \lambda 
 \geq \frac{(2ab-5b)\lambda-ab(2a+b-2)}{\lambda(\lambda-a)} 
 x_{\overline{V_i}} y_{v_1}.
\end{align}
If $a\leq 5$,
then $b\leq \binom{a}{2}\leq 10$ and $ab(2a+b-2)\leq 900$.
Since $a\geq3$ and $b\geq1$, we have
$$(2ab-5b)\lambda-ab(2a+b-2)\geq b\lambda-900>0.$$
If $a>5$, then $2a+b-2<2q\le \frac{n}{50r}$.
Since $\lambda\geq \delta(G)\ge \lfloor \frac{r-1}{r}n\rfloor $,
we have $2a+b-2<\lambda$. Then 
$$
(2ab-5b)\lambda-ab(2a+b-2)
=(ab-5b)\lambda+ab\big(\lambda-(2a+b-2)\big)>0.
$$ 
Consequently,
in both cases,
by combining (\ref{eq5.11A}),
we obtain $\bm{x}^\mathrm{T}\bm{y} (\lambda'-\lambda) {\lambda}>0$,
which leads to $\lambda' > \lambda$, a contradiction.
Therefore, we conclude that $H_i$ must be a star when $e(H_i)\neq 3$. 
\end{proof}

\section{Proof of Lemma \ref{lem6.2A}}

\label{Appendix-B}

\begin{proof}
Let $\varepsilon \in (0,1)$ and $\delta=\frac{\varepsilon}{600r}$. 
Let $\phi=\max\{n_1-n_r ,q\}=\delta n$. 
Assume that $V_1,\ldots ,V_r$ are partite sets of $K_r(n_1,\ldots ,n_r)$, and $G$ is obtained from $K_r(n_1,\ldots ,n_r)$ by adding a graph $H_i$ into the partite set $V_i$ for each $i\in [r]$.  
For simplicity, we denote $\lambda :=\lambda(G)$ and $\eta_k :=n_k+\sum_{\ell=1}^\infty\frac{w_{\ell +1}(H_k)}{\lambda^{\ell}}$ for every $k\in [r]$. 
By Lemma \ref{lem-Zhangwenqian}, we know that
 $\lambda$ satisfies 
\begin{equation}\label{equ001AHH}
\sum\limits_{k\in [r]\setminus \{i,j\}}\frac{\eta_k}{\lambda+\eta_k}
+\frac{\eta_i}{\lambda+\eta_i}
+\frac{\eta_j}{\lambda+\eta_j}=1.
\end{equation}
Similarly, we write $\lambda' :=\lambda (G')$ and 
$\eta_k':=n_k+\sum_{\ell=1}^\infty\frac{w_{\ell +1}(H_k)}{(\lambda')^{\ell }}$ for every $k\in [r]$. Then $\lambda'$ satisfies 
\begin{equation}\label{equ001AH}
\sum\limits_{k\in [r]\setminus \{i,j\}} \frac{\eta_k'}{\lambda'+\eta_k}
+\frac{\eta_i'-1}{\lambda'+\eta_i'-1}
+\frac{\eta_j'+1}{\lambda'+\eta_j'+1}
=1.
\end{equation} 
Since $n_1-n_r \le \phi$, we get 
$\frac{n}{r}- \phi \leq n_r\leq n_1\leq  \frac{n}{r}+ \phi$.
Invoking the fact $\lambda (G)\le \Delta (G)$, we get
$\frac{r-1}{r}n - 2\phi
\leq \lambda,\lambda' 
\leq \frac{r-1}{r}n +2\phi$.  
Next, we prove that $\eta_k - n_k = O(\delta^2n)$ and $\eta_k - \eta_k'=O(\delta^3n)$. 

\begin{claim} \label{cl-small-gap}
For every $k\in [r]$, we have $\eta_k -n_k \leq 100\delta^2n$ and $\eta_k-\eta_k'\le 150\delta^2|\lambda'-\lambda|$.
\end{claim}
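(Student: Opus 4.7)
The strategy is to exploit the fact that each $H_k$ is very sparse: $e(H_k)\le q$ and consequently $\Delta(H_k)\le q$, so walks in $H_k$ admit the crude bound $w_{\ell+1}(H_k) \le 2e(H_k)\Delta(H_k)^{\ell-1} \le 2q^\ell$ for $\ell\ge 1$. Meanwhile the preceding lines establish the uniform bounds $\tfrac{r-1}{r}n - 2\phi \le \lambda, \lambda' \le \tfrac{r-1}{r}n + 2\phi$, so the ratio $q/\min(\lambda,\lambda') \le 2r\delta/(r-1)$ is tiny for $\delta$ small. Hence $\sum_\ell w_{\ell+1}(H_k)/\lambda^\ell$ is majorized by a convergent geometric progression with ratio $q/\lambda \ll 1$, which is the engine for both estimates.

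\textbf{First inequality.} For $\eta_k - n_k \le 100\delta^2 n$, I would just compute
\[
\eta_k - n_k = \sum_{\ell=1}^\infty \frac{w_{\ell+1}(H_k)}{\lambda^\ell}
\le \sum_{\ell=1}^\infty \frac{2q^\ell}{\lambda^\ell}
= \frac{2q/\lambda}{1-q/\lambda} = O(\delta),
\]
which is trivially smaller than $100\delta^2 n$ for $n$ large since the right-hand side is of order $n$. This bound is very loose on purpose.

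\textbf{Second inequality.} For the sharper bound $\eta_k - \eta_k' \le 150\delta^2 |\lambda'-\lambda|$, the plan is to write
\[
\eta_k - \eta_k' = \sum_{\ell=1}^\infty w_{\ell+1}(H_k)\left(\frac{1}{\lambda^\ell}-\frac{1}{(\lambda')^\ell}\right),
\]
factor out $\lambda'-\lambda$ through the identity
\[
\frac{1}{\lambda^\ell}-\frac{1}{(\lambda')^\ell} = (\lambda'-\lambda)\cdot\frac{\sum_{j=0}^{\ell-1}\lambda^j(\lambda')^{\ell-1-j}}{\lambda^\ell(\lambda')^\ell},
\]
and bound the resulting fraction in absolute value by $\ell\cdot\min(\lambda,\lambda')^{-(\ell+1)}$ up to a factor $(\max/\min)^{\ell-1}$ that is harmlessly bounded since $\lambda/\lambda' = 1 + O(1/n)$ on the relevant range. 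Inserting $w_{\ell+1}(H_k)\le 2q^\ell$ reduces the task to estimating $\sum_{\ell\ge 1}\ell (q/\min(\lambda,\lambda'))^{\ell-1}$, which equals $(1-q/\min(\lambda,\lambda'))^{-2}=O(1)$. Combining these pieces yields
\[
|\eta_k-\eta_k'| \le O(1)\cdot \frac{q}{\min(\lambda,\lambda')^2}\cdot |\lambda'-\lambda| = O\!\Big(\frac{\delta}{n}\Big)|\lambda'-\lambda|,
\]
which is well under $150\delta^2 |\lambda'-\lambda|$ once $n$ is large.

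\textbf{Main obstacle.} The calculations are mechanical once one sees that $q/\lambda$ drives a convergent geometric series. The only things requiring care are (i) the sign of $\lambda-\lambda'$, which is irrelevant since $\eta_k-\eta_k'$ always carries the opposite sign and the inequality is vacuous in one direction; (ii) keeping the extra factor of $\ell$ from the telescoping identity so that one obtains $\sum \ell x^{\ell-1} = (1-x)^{-2}$ rather than the plain geometric sum; and (iii) verifying that the hidden constants comfortably fit under $100$ and $150$, for which the slack built into the statement leaves ample room. The claim itself is a technical lemma whose role is to let the subsequent proof of Lemma~\ref{lem6.2A} replace $\eta_k$ by $n_k$ in the equations \eqref{equ001AHH} and \eqref{equ001AH} with error absorbable into the desired formula for $\lambda'-\lambda$, in analogy with Lemma~\ref{lem-move-one}.
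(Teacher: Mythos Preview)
Your proposal is correct and follows the same strategy as the paper: bound $w_{\ell+1}(H_k)$ crudely by a power of $q$ and sum the resulting geometric series, whose ratio $q/\lambda=O(\delta)$ is tiny. The paper differs only in mechanics---it uses the slightly weaker bound $w_{\ell+1}(H_k)\le 2q^{\ell+1}$ and, for the second inequality, absorbs the factor $\ell$ via $\ell\le 2^{\ell}$ so as to reapply the first-part estimate at $x=\lambda/2$, whereas you sum $\sum_{\ell}\ell x^{\ell-1}=(1-x)^{-2}$ directly; one small imprecision in your write-up is that $M/m=1+O(\delta)$ rather than $1+O(1/n)$, but this is indeed harmless because $(M/m)^{\ell-1}$ is absorbed into the convergent ratio $qM/m^2=O(\delta)$.
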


\begin{proof}[Proof of claim] 
Since $H_i$ has at most $q$ edges,
we have $|V(H_i)|\le 2q$ and $\Delta (H_i)\le q-1$ and $w_{\ell +1} (H_i)\le 2q \cdot (q-1)^{\ell} < 2q^{\ell +1}$ for every $\ell \ge 1$. 
Let $x>0 $ be a variable such that $x\geq \frac{n}{6}$. 
Since $q\le \delta n \le 6\delta x$, we have 
$w_{\ell +1}(H_i) \le (9\delta x)^{\ell +1}$,  which yields 
$\sum_{\ell =1}^{\infty} \frac{w_{\ell +1}(H_i)}{x^{\ell }} \le x\sum_{\ell =1}^{\infty} (9\delta )^{\ell +1} \le (10\delta)^2x$. 
Consequently, setting $x= \lambda$, it follows that $\eta
_k-n_k = \sum_{\ell =1}^{\infty} \frac{w_{\ell +1}(H_k)}{\lambda^{\ell}} \le 100\delta^2 n$, 
as desired.

Without loss of generality, we may assume that $\lambda<\lambda'$;
the case where $\lambda \geq \lambda'$ is analogous and hence omitted herein.
By definition, we get $\eta_k\geq \eta_k'$. 
Direct computation yields
\[ \frac{1}{\lambda^{\ell }}-\frac{1}{(\lambda')^{\ell }}
=\frac{(\lambda')^{\ell }-\lambda^{\ell }}{(\lambda \lambda')^{\ell }} 
\leq \frac{(\lambda'-\lambda)\cdot \ell \cdot (\lambda')^{\ell -1}}{(\lambda \lambda')^{\ell}} =\frac{(\lambda' -\lambda) \cdot \ell }{\lambda^{\ell } \lambda'}. \]
Note that $\frac{n}{3}< \lambda , \lambda' < n$. It follows that 
\[\eta_k-\eta_k'
=\sum\limits_{\ell=1}^\infty\frac{w_{\ell +1}(H_k)}{\lambda^{\ell}}-\sum\limits_{\ell=1}^\infty\frac{w_{\ell +1}(H_k)}{(\lambda')^{\ell }}
\leq \frac{\lambda' -\lambda }{\lambda'} 
\sum\limits_{\ell=1}^\infty\frac{  w_{\ell +1}(H_k)}{(\lambda/2)^{\ell }}
\leq \frac{\lambda'-\lambda}{n/3}\cdot \frac{100\delta^2 \lambda}{2}, \]
as needed. This completes the proof of Claim \ref{cl-small-gap}. 
\end{proof} 

Recall that $n_i-n_j \le 2\phi$ and $\lambda ' -\lambda \le 4\phi$, 
where $\phi =\delta n < \frac{n}{600r}$. 
Moreover, $\lambda' \le \frac{r-1}{r}n +2\phi \le 2r n_k\le 2r\eta_k$. Claim \ref{cl-small-gap} gives 
$\eta_k -n_k \le 100\delta \phi$ and 
$\eta_k - \eta_k' \le 150\delta^2|\lambda'-\lambda|$. For notational convenience, we write $a=b\pm c$ if $b-|c| \le a \le b+|c|$.  
Thus, it yields that 
\begin{align*}
\frac{\eta_k}{\lambda' +\eta_k}-\frac{\eta_k'}{\lambda' + \eta_k'}
=\frac{(\eta_k-\eta_k')\lambda'}{(\lambda' + \eta_k)(\lambda'+\eta_k')}
=0 \pm \frac{\delta \cdot\eta_k (\lambda' - \lambda )}{(\lambda + \eta_k)(\lambda' + \eta_k)}.
\end{align*}
Consequently, we have 
\begin{align}\label{align-666A}
\frac{\eta_k}{\lambda+\eta_k}-\frac{\eta_k'}{\lambda' + \eta_k'}
&=\Big(\frac{\eta_k}{\lambda+\eta_k}-\frac{\eta_k}{\lambda'+\eta_k}\Big)
+\Big(\frac{\eta_k}{\lambda'+\eta_k}-\frac{\eta_k'}{\lambda' + \eta_k'}\Big)\nonumber\\
&=\frac{\eta_k (\lambda' - \lambda )}{(\lambda+ \eta_k)(\lambda'+n_{k}^{*})}
\pm \delta \cdot\frac{\eta_k (\lambda' - \lambda )}{(\lambda + \eta_k)(\lambda' + \eta_k)}.
\end{align}
Similarly, we have 
\begin{align}\label{align-666B}
\frac{\eta_i}{\lambda +\eta_i}-\frac{\eta_i' - 1}{\lambda' + \eta_i'- 1}
= \frac{(\eta_i -1)(\lambda' - \lambda)+\lambda ' }{(\lambda + \eta_i)(\lambda'+\eta_i-1)}
  \pm \delta \cdot \frac{(\eta_i-1)(\lambda' -\lambda)}{(\lambda + \eta_i)(\lambda'+\eta_i-1)}  
\end{align}
and 
\begin{align}\label{align-666C}
\frac{\eta_j}{\lambda +\eta_j}-\frac{\eta_{j}'+1}{\lambda' + \eta_{j}' +1}
=\frac{(\eta_j +1)(\lambda' - \lambda) - \lambda'}{(\lambda + \eta_j)(\lambda'+\eta_j+1)}
 \pm \delta \cdot \frac{(\eta_j +1)(\lambda' -\lambda)}{(\lambda + \eta_j)(\lambda'+\eta_j+1)}
\end{align}
Subtracting \eqref{equ001AH} from \eqref{equ001AHH}, and using \eqref{align-666A}, \eqref{align-666B} and \eqref{align-666C}, we get
\begin{align*}
&(1 \!\pm \delta)\left(\sum\limits_{k\in [r]\setminus \{i,j\}} 
\frac{\eta_k (\lambda' - \lambda )}{(\lambda\!+\!\eta_k)(\lambda'\!+\!\eta_k)}
+ \frac{(\eta_i -1)(\lambda'-\lambda )}{(\lambda \!+\! \eta_i)(\lambda' \!+\! \eta_i \!-\! 1)}
+ \frac{(\eta_j+1)(\lambda'-\lambda)}{(\lambda \!+\! \eta_j)(\lambda' \!+ \! \eta_j\!+\! 1)}\right)\\
&= \frac{\lambda'}{(\lambda + \eta_j)(\lambda' + \eta_j + 1)}
-\frac{\lambda'}{(\lambda + \eta_i)(\lambda' +\eta_i-1)} \nonumber \\[3mm]
&=\frac{(\eta_i-\eta_j-2)\lambda +(\eta_i-\eta_j)\lambda'
+(\eta_i+\eta_j)(\eta_i-\eta_j-1)}{(\lambda + \eta_i - 1)(\lambda + \eta_i)(\lambda + \eta_j + 1)(\lambda + \eta_j)}\lambda' :=L.
\end{align*}
Since $\eta_k= \frac{n}{r}\pm 2\delta n$ 
and $\lambda'+\eta_r = n\pm 4\delta n$, 
using the first formula of $L$ and (\ref{equ001AHH}), we obtain 
\begin{align}\label{align-639H}
L= (1\pm \delta)\cdot\frac{\lambda'-\lambda}{n \pm 4\delta n}\left(\sum\limits_{k\in [r]\setminus \{i,j\}}\frac{\eta_k}{\lambda+\eta_k}
+\frac{\eta_i-1}{\lambda+\eta_i}
+\frac{\eta_j +1}{\lambda+\eta_j}\right)
= (1\pm 8\delta)\cdot\frac{\lambda'-\lambda}{n}.
\end{align}
Next, we will use the second formula of $L$. 
The following are straightforward.   
\begin{align*}
 (\eta_{i}-\eta_{j}-2)\lambda
&= (n_{i}-n_{j}-2)\frac{r-1}{r}n\pm(n_i-n_j+1)4\delta n, \\
(\eta_{i}- \eta_{j})\lambda'
&= (n_{i}-n_{j})\frac{r-1}{r}n\pm (n_{i}-n_{j})2\delta n,\\
 (\eta_{i}-\eta_{j}-1) (\eta_{i}+\eta_{j})
&= (n_{i}-n_{j}-1)\frac{2n}{r}\pm (n_i-n_j+1)2\delta n.
\end{align*}
Combining with the above three estimations, we obtain
\begin{align*}
(\eta_i-\eta_j-2)\lambda +(\eta_i-\eta_j)\lambda'
+(\eta_i+\eta_j)(\eta_i-\eta_j-1)
= (n_{i}-n_{j}-1)2n\pm (n_i-n_j+1)8\delta n.
\end{align*}
On the other hand, we have $\lambda' =(\frac{r-1}{r} \pm 2\delta )n$ and 
\[ (\lambda +\eta_i-1)(\lambda +\eta_i)(\lambda +\eta_j+1)(\lambda +\eta_j)= n^4(1\pm 8\delta). \]   
Applying the second formula of $L$, we get 
\begin{equation} \label{eq-L-second-pm}
    L= \frac{(n_i-n_j-1)2(r-1)}{rn^2} \pm \frac{30\delta(n_i-n_j+1) }{n^2}
\end{equation}
Combining (\ref{align-639H}) with (\ref{eq-L-second-pm}), it follows that 
$\lambda'-\lambda = \frac{2(r-1)(n_1-n_r-1)}{rn}\pm \frac{60\delta(n_1-n_r+1)}{n}$, as desired.  
\end{proof}

\end{document}